\numberwithin{equation}{subsection}
\DeclareMathAlphabet{\mathpzc}{OT1}{pzc}{m}{it}
\newtheorem{proposition}{Proposition}[section]
\newtheorem{definition}[proposition]{Definition}
\newtheorem{lemma}[proposition]{Lemma}
\newtheorem{theorem}[proposition]{Theorem}
\newtheorem{corollary}[proposition]{Corollary}
\newtheorem{conjecture}[proposition]{Conjecture}
\newcommand{\C}{\mathbb{C}}
\newcommand{\CC}{\mathbf{C}}
\DeclareMathOperator{\reach}{reach}
\DeclareMathOperator{\glob}{glob}
\DeclareMathOperator{\Glob}{Glob}
\DeclareMathOperator{\Int}{integ}
\DeclareMathOperator{\image}{image}
\DeclareMathOperator{\sgn}{sgn}
\DeclareMathOperator{\kl}{Kl}
\DeclareMathOperator{\Val}{Val}
\DeclareMathOperator{\V}{\mathcal V}
\DeclareMathOperator{\Gr}{Gr}
\DeclareMathOperator{\AGr}{\overline{ Gr}}
\DeclareMathOperator{\CP}{\C P}
\newcommand{\CPn}{\CP^n}
\DeclareMathOperator{\CH}{\C H}
\DeclareMathOperator{\ch}{ch}
\DeclareMathOperator{\mass}{\operatorname {mass}}
\DeclareMathOperator{\Tan}{\operatorname {Tan}}
\DeclareMathOperator{\Sym}{\operatorname {Sym}}
\DeclareMathOperator{\nor}{\operatorname{nor}}
\DeclareMathOperator{\ang}{\operatorname{Ang}}
\DeclareMathOperator{\nul}{\operatorname{Null}}
\DeclareMathOperator{\sn}{\operatorname{sn}}
\DeclareMathOperator{\cs}{\operatorname{cs}}
\DeclareMathOperator{\ct}{\operatorname{ct}}
\DeclareMathOperator{\tn}{\operatorname{tn}}
\DeclareMathOperator{\pd}{pd}
\DeclareMathOperator{\Ch}{Ch}
\DeclareMathOperator{\id}{id}
\newcommand\vol{\operatorname{vol}}
\newcommand \valun{\Val^{U(n)}}
\newcommand\Hom{\mathbf{Hom}}
\newcommand{\curvuinf}{\mathrm{Curv }^{U(\infty)}}
\newcommand{\curvinfty}{\mathcal {C}}
\newcommand{\Cn}{\mathbb{C}^n}
\newcommand{\curv}{\mathrm{Curv }}
\newcommand{\curvun}{{\mathrm{Curv}}^{U(n)}}
\newcommand{\val}{\mathrm{Val}}
\newcommand{\restrict}[2]{\left. #1 \right|_{#2}}
\newcommand{\R}{\mathbb{R}}
\newcommand{\Rn}{\mathbb{R}^n}
\newcommand{\K}{\mathcal{K}}
\newcommand{\Ksm}{\mathcal{K}^{sm}}
\newcommand{\barun}{\overline{U(n)}}
\newcommand{\baron}{\overline{O(n)}}
\newcommand{\eps}{\epsilon}
\newcommand{\area}{\operatorname{area}}
\newcommand\pf{\operatorname{Pf}}
\newcommand\spann{\operatorname{span}}
\newcommand{\en}{\mathfrak{n}}
\newcommand{\el}{\mathfrak{l}}
\newcommand\piinv{\pi^{-1}}
\newcommand \bargo{\overline {G_o}}
\newcommand \cmeas{\operatorname{\C\mathbf {Meas}}}
\newcommand\manyderivs{\left.\frac{\partial^k}{\partial \lambda_1 \cdots \partial \lambda_k}\right|_{\lambda=0}}
\title{ Integral geometry of complex space forms}
\author{Andreas Bernig, Joseph H.G. Fu, and Gil Solanes}
\thanks{ AB supported by DFG grants BE 2484/3-1 and BE 2484/5-1. JHGF supported by NSF grant DMS-1007580. GS supported by FEDER/MEC grant MTM2009-07594.}
\begin{document}
\begin{abstract}
We show how Alesker's theory of  valuations on manifolds gives rise to an algebraic picture of the integral geometry of any Riemannian isotropic space.
We then apply this method to give a thorough account of the integral geometry of the complex space forms, i.e. complex projective space, complex hyperbolic space and complex euclidean space. In particular, we compute the family of kinematic formulas for invariant valuations and invariant curvature measures in these spaces. In addition to new and more efficient  framings of the tube formulas of  Gray and the kinematic formulas of  Shifrin, this approach yields a new formula expressing the volumes of the tubes about a totally real submanifold in terms of its intrinsic Riemannian structure. We also show by direct calculation that the Lipschitz-Killing valuations stabilize the subspace of  invariant angular curvature measures, suggesting the possibility that a similar phenomenon holds for all Riemannian manifolds. We conclude with a number of open questions and conjectures. 
\end{abstract}

\email{bernig@math.uni-frankfurt.de}
\email{fu@math.uga.edu}
\email{solanes@mat.uab.cat}

\address{Institut f\"ur Mathematik, Goethe-Universit\"at Frankfurt,
Robert-Mayer-Str. 10, 60054 Frankfurt, Germany}
\address{ Department of Mathematics, 
University of Georgia, 
Athens, GA 30602, USA}
\address{Departament de Matem\`atiques, Universitat Aut\`onoma de Barcelona, 08193 Bellaterra, Spain}

\maketitle
\tableofcontents
\setcounter{tocdepth}{200}

\section{Introduction} 
\subsection{Classical integral geometry} As originally proposed by Blaschke \cite{blaschke}, the subject of integral
geometry is essentially the study of  {\it kinematic formulas}, the most famous being the euclidean {\it principal
kinematic formula} 
\begin{equation}\label{eq:pkf1}
\int_{\baron} \chi(A\cap gB) \, dg = \omega^{-1}_n\sum_{i+j = n} \binom n i^{-1}\omega_i \omega_j \mu_i(A) \mu_j(B),
\end{equation}
which holds for all sufficiently nice compact subspaces $A,B \subset \Rn$, where $\baron :=O(n) \ltimes \Rn$ is the
euclidean group. Here $\chi$ is the Euler characteristic and $\omega_k$ is the volume of the unit ball of dimension $k$.
The $\mu_k$ are the {\it intrinsic volumes}, which may be expressed as certain integrals of curvature of $A,B$ if these
bodies have smooth boundary. In particular, $\mu_0=\chi$ and $\mu_n$ is the volume. Hadwiger proved that the $\mu_k$
span the space of  continuous $\baron$-invariant {\it convex valuations} (i.e. finitely additive functionals on convex sets). For each $k$ there is a formula of this type
with $\chi$ replaced by $\mu_k$ in the integrand.  Similar formulas exist also in the real space forms of nonzero
curvature, i.e. in the sphere and in hyperbolic space \cite{santalo}.  

One special case of obvious interest occurs when $B$ is taken to be a geodesic ball of small radius $r$. In this case
the left hand side of \eqref{eq:pkf1} may be interpreted as the volume of the tubular neighborhood of radius $r$ about
$A$. This yields H. Weyl's famous tube formula \cite{weyl}.

Subsequently Federer \cite{cm} showed that each $\mu_k$ admits a localization:  for $A\subset \Rn$ as above there exists
a signed {\it curvature measure} $\Phi_k(A,\cdot)$ such that
$$
\Phi_k(A,A) = \mu_k(A), \quad k = 0,\dots,n.
$$
Again, when $A$ is smooth the quantity $\Phi_k(A,U)$ is an integral of curvature over  $\partial A\cap U$.
Furthermore, there are local versions of the global kinematic formulas described above, e.g.
\begin{equation}\label{eq:local pkf1}
\int_{\baron} \Phi_0(A\cap gB,U \cap gV) \, dg = \omega^{-1}_n\sum_{i+j = n} \binom n i^{-1}\omega_i \omega_j
\Phi_i(A,U) \Phi_j(B,V)
\end{equation}
for any open sets $U,V$.

Several authors have given formulas of this type for subspaces of
 the complex space forms, i.e. complex euclidean space $\C^n$,  complex projective space and complex hyperbolic space:
Abbena, Gray and Vanhecke \cite{abbena et
al}; Gray and Vanhecke \cite{grayvan}; Gray \cite{gray85,gray}; Shifrin \cite{shifrin81,shifrin84}; and Tasaki
\cite{tasaki00,tasaki03}. The papers \cite{abbena et al} and \cite{grayvan} gave a local formula for the volumes of
tubes about smooth submanifolds $M$ of a complex space form, 
 and \cite{gray85, gray} expressed the local tube formula in terms of familiar geometric quantities when $M$ is a smooth
complex analytic submanifold.
 
  Meanwhile,  \cite{shifrin81, shifrin84} studied kinematic formulas in complex projective spaces at both the local and
the global level, but only in the case where both subspaces $A,B$ are complex analytic. Just as in the case of the real
space forms, the local and the global versions of the formulas that emerge are again formally identical. The same happens if $A,B$ are real submanifolds of complementary dimensions. This case was treated in \cite{befu11, tasaki00, tasaki03}.

\subsection{Integral geometry and valuations} More recently, Alesker introduced a range of new algebraic structures on
the space $\Val(V)$ of continuous translation-invariant convex valuations on a general finite-dimensional real vector
space $V$ \cite{ale01, ale03, ale04}. In particular, if $V$ is euclidean and $H$ is a compact subgroup of the orthogonal
group $O(V)$ acting transitively on the unit sphere of $V$, then the space $\Val^H(V)$ of $H$-invariant elements of
$\Val(V)$ is finite-dimensional. Such groups $H$ have been classified \cite{borel49, mont-sam}. Furthermore for each
$\mu \in \Val^H(V)$ there exists a kinematic formula for $\mu$, analogous to \eqref{eq:pkf1}. Using this approach, the
integral geometry of the spaces $(V,H)= (\Cn,U(n)),(\Cn,{SU(n)}), (\R^7,{G_2}), (\R^8, {Spin(7)})$ has been successfully
analyzed in
\cite{be09,be11, be12, befu11}.

Subsequently, Alesker introduced a theory of {\it valuations on manifolds} \cite{ale04, ale05a, ale05b, ale05d, ale10,
alefu05}. The basic theme of the present paper is to exploit this theory to give a natural and penetrating approach to integral geometry in curved manifolds.  An element $\phi
$ belonging to the space $\V(M)$ of valuations on a smooth manifold $M$ is a real-valued functional $\phi =
[[\alpha,\beta]]$ on sufficiently smooth subsets $P\subset M$, given as 
$$
\phi(P) = \int_P \alpha + \int_{N(P)} \beta.
$$
Here $\alpha, \beta $ are smooth differential forms living on $M$ and the sphere bundle $SM$ respectively, and
$N(P)\subset SM$ is the space of outward normal vectors to $P$. 

Thus  the second term may be viewed as a total curvature integral.
 It is therefore natural to consider also the corresponding localized functional $\Phi = [\alpha,\beta]$ given by
$$
\Phi(P,U) = \int_{P\cap U} \alpha + \int_{N(P)\cap \pi^{-1}U} \beta
$$
for Borel subsets $U \subset P$, where $\pi:SM\to M$ is the projection. Such a localized functional is called a {\it
curvature measure}, and the vector space that they comprise is denoted by $\curvinfty(M)$. In general, the natural
projection $\curvinfty(M) \to \V(M)$ is surjective but not injective.

 Let now  $M$ be Riemannian and $G$ a Lie group of isometries. If $\Phi$ as above is invariant under the action of $G$,
then $\alpha,\beta$ may also be taken to be invariant.  It follows that the spaces $\curvinfty^G(M),\V^G(M)$ of
invariant curvature measures and  valuations are finite-dimensional precisely when $(M,G)$  is an {\it isotropic space},
i.e.  when $G$ acts transitively
on the unit tangent bundle $SM$ (\cite{ ale03, ale-be09}).
Such spaces have been classified
(\cite{borel49, helgason, mont-sam,  tits}): the list consists of all euclidean spaces $M=V$ under the action of $ G:= H
\ltimes T$, for groups $H$ as above, where $T$ is the group of  translations; together with the real, complex,
quaternionic and octonian space forms under their full isometry groups. The members of these families with nonzero
curvature coincide with the Riemannian symmetric spaces of rank one.

 In this setting there are kinematic formulas for $(M,G)$ analogous to \eqref{eq:pkf1} and \eqref{eq:local pkf1}
(cf.\cite{fu90}). They are encoded by the {\it kinematic operators}
\begin{align*}
K_G&: \curvinfty^G(M) \to \curvinfty^G(M) \otimes \curvinfty^G(M) \\
k_G&:\V^G(M) \to \V^G(M) \otimes \V^G(M).
\end{align*}
We call these the {\it local} and {\it global} kinematic operators respectively.
The latter is the image of the former under the natural projection $\curvinfty^G(M) \to \V^G(M)$.

From this perspective, the classical cases described above arise by taking $M$ to be a general real space form and $G$
to be the full group of isometries of $M$. It is natural to consider all of these spaces together, as a one-parameter
family indexed by the curvature $\lambda \in \R$--- in fact, they are all isomorphic in the sense of Theorem
\ref{thm_transfer} below. The subject is further simplified by the fact that the projection from invariant curvature
measures to invariant valuations is bijective. It follows that
the kinematic formulas at the local and global levels are formally identical.

The case of the complex space forms is much more complicated. The paper \cite{befu11} applies the Alesker theory of
valuations to study the curvature zero case, i.e. $\Cn$ under the action of the hermitian isometry group $\barun:=U(n)
\ltimes \Cn$, and computes the global kinematic operator $ k_{\overline{U(n)}}$. However, the map from invariant
curvature measures to invariant valuations is {\it not} injective, so this gives only partial information about the
local kinematic operator $ K_{\overline{U(n)}}$. Due to the same difficulty, in the complex space forms of non-zero
curvature even the  global kinematic formulas were unknown in general. In complex dimensions $n=2,3$ the principal
kinematic formula $k_G(\chi)$ was found in \cite{pa02}. In higher dimensions, a very special case of this formula was
calculated in \cite{ags}.

\subsection{Results of the present paper}
Here we build on the approaches of \cite {ags, befu11} to compute both the global and the local kinematic operators for
all complex space forms. We give also several geometric consequences, recovering and extending all of the classically
known results.

\subsubsection{General theory} In Section \ref{sect:basics} we give an account of the integral geometry of
general isotropic spaces from the valuations perspective. Our approach is new, and several of the facts and definitions
presented here (in particular, the Lipschitz-Killing algebra and the concept of angularity) have not appeared
previously in the literature. 

We define (Definition \ref{def:curv & val} and the Remarks following) the space $\curvinfty(M)$ of smooth curvature measures on a smooth manifold $M$,
and the globalization map $\curvinfty(M)\to \V(M)$ {to} the space of smooth valuations on $M$. We also consider the
spaces $\curv(V)$ and $\Val(V)$ of translation-invariant curvature  measures and valuations on a finite-dimensional real
vector space $V$; if $V=T_xM$, the tangent space to $M$ at a point $x$, these serve as infinitesimal versions of general
elements of $\curvinfty(M), \V(M)$. We take note of the Alesker product on $\V(M)$, and show (Proposition \ref{prop_def_module}) that
$\curvinfty(M)$ is  a module over $\V(M)$. This new algebraic structure turns out to be crucial in
the study of local kinematic formulas, and also in some of the global results.

If $M$ is Riemannian, then there is a natural identification (Proposition \ref{prop_isom_curv_general}) of the curvature measures on $M$ with the sections of the vector bundle of translation-invariant curvature measures on the tangent spaces $T_xM$. We expound (Section \ref{sect:LK algebra}) on Alesker's observation that any Riemannian manifold admits a canonical {\it Lipschitz-Killing subalgebra} 
$LK(M) \subset \V(M)$, consisting of valuations given by integrating the classical Lipschitz-Killing curvatures. They
correspond bijectively (Section \ref{sect:LK cms}) to a subspace of {\it Lipschitz-Killing curvature measures}, distinguished by the natural
geometric property of {\it angularity} (Section \ref{sect:angular cm}). This property is key to the initial calculations of Section \ref{sect:t & tau}.

 If $M$ is isotropic under the action of the group $G$, we define (Theorem \ref{thm:basic K}, Section \ref{sect:global and semilocal}) the local and global kinematic operators $K_G,k_G$,
for the spaces $\curvinfty^G(M),\V^G(M)$. These operators may be regarded as endowing each  space with the structure of
a cocommutative and coassociative coalgebra. We give a precise version (Theorem \ref{thm_transfer}) of Howard's Transfer Principle \cite{howard},
showing that the coalgebra $\curvinfty^G(M)$ is canonically isomorphic to the corresponding coalgebra of invariant
elements of $\curv(T_xM)$ for a representative point $x$. We also give a precise account (Section \ref{sect:an cont 1}) of how  integral geometric
relations in positively curved isotropic spaces $M$  may be analytically continued to yield a family of relations valid
also for negatively curved spaces in the same family (or vice versa).
 We prove a new version (Theorem \ref{thm_ftaig}), applicable (in addition to the known compact and euclidean cases) to hyperbolic spaces, of what
may be regarded as the {\it fundamental theorem of algebraic integral geometry}, which states that $k_G$ is adjoint to
the restricted Alesker multiplication map. We also define (Section \ref{sect:global and semilocal}) the {\it semi-local} kinematic operator $\bar k_G$ which is
similarly related to the structure of $\curvinfty^G(M)$ as a module over $\V^G(M)$.

\subsubsection{Complex space forms} In Section \ref{sect:global} we begin our investigation of the complex space forms.
It is natural to view these spaces as comprising a family $\{\CP^n_\lambda\}_{\lambda \in \R,n \in \mathbb N} $, where
$n$ is the complex dimension and $\lambda$ is one-fourth of the holomorphic sectional curvature. The case $\lambda =0$
reduces to the case $(M,G) = (\Cn, \barun)$ studied in \cite{befu11}.

 We first recall (Section \ref{subsect:invariant}) the construction of the invariant curvature measures from \cite{befu11, pa02}. By the
Transfer Principle, we may canonically identify the coalgebra of invariant curvature measures on any $\CP^n_\lambda$
with the special case $\lambda = 0$, i.e. the space $\curvun$ of $\barun$-invariant curvature measures on $\CP^n_0 =
\Cn$. Following \cite{ags, pa02} we describe (Proposition \ref{prop_kernels_glob}) the kernels of the globalization maps to the corresponding algebras
$\V^n_\lambda:= \V^G(\CPn_\lambda)$ of invariant valuations. We show (Proposition \ref{prop:generators}) that  $\V^n_\lambda$ is generated by the
Lipschitz-Killing subalgebra $LK(\CP^n_\lambda) \simeq\R[t]/(t^{2n+1})$ together with the valuation $s$ given by the
integral of the Euler characteristic of  intersections with generic totally geodesic  complex hypersurfaces, and show (Proposition \ref{prop_mu2st})
how to express polynomials in $s,t$ as globalizations of elements of $\curvun$.  
Our main result in this section is an explicit algebra isomorphism $I_\lambda:\V^n_0 \to \V^n_\lambda$ (Theorem \ref{thm:1st iso}),
which yields an expression (Theorem \ref{thm:pkf lambda}) for the principal kinematic formula that is formally independent of the curvature $\lambda$. This determines the global kinematic operators $k_\lambda$ completely.
We give also a family of additional isomorphisms (Theorem \ref{thm_other_isom}), including two particularly striking examples, although we do not use
them elsewhere in this paper.

Section \ref{section: tube formulas} is devoted mainly to the tube formula, which results from the evaluation of the
principal kinematic formula on geodesic balls. Remarkably, the resulting global tube formula (Theorem \ref{thm:general tube}) turns out to be formally
parallel to the corresponding formula in $\Cn$ in a way that is not apparent in the formulations of Gray. In the
context of a general Riemannian manifold $M$ we show (Theorem \ref{thm:local tube}) how to recover  local tube formulas from the global ones using the
first variation operator introduced in \cite{befu11} (note that these local tube formulas are {\it not} given directly
by the local or semi-local kinematic formulas); applying this to the case $M= \CP^n_\lambda$ we recover (Corollary \ref{cor:gray local}) the local tube
formulas of Gray and Vanhecke. We give a brief discussion of kinematic formulas as applied to complex analytic
submanifolds, recovering Shifrin's results and placing them in context (Theorem \ref{thm:kc gamma}, Corollary \ref{thm:kc gamma}).
We also give new global tube formulas for subspaces of totally real submanifolds $N \subset \CP^n_\lambda$ (i.e.
submanifolds that are isotropic with respect to the K\"ahler form):  these have a particularly simple form, expressible
solely in terms of the Lipschitz-Killing valuations of $N$ as a Riemannian submanifold (Theorem \ref{thm:real tube}).

In Section \ref{sect:module_cn}
we study $\curvun$ as a module over the $\V^n_0$. Our calculations turn on two key points: first, that the action of the
valuation $s \in \V^n_\lambda$ on $\curvun$ is formally independent of $\lambda$ (Proposition \ref{prop:s indep}); and second, that a certain natural
subspace $\mathrm{Beta} \subset \curvun$ is stabilized by this action (Proposition \ref{prop:sb sub b}). As a consequence of these computations, we show
that the $\V^n_0$-module $\curvun$ is spanned by two elements (Theorem \ref{thm_free_module}).

Finally, in Section \ref{sect:module cpn} we give in explicit form the local kinematic formulas for complex space
forms, which is one of the main results of this paper. These formulas contain more information than the global kinematic
formulas $k_\lambda$. 
To do so, we use the isomorphism $I_\lambda^{-1}:\V^n_\lambda \to \V^n_0$ of Section
\ref{sect:global} to embed the family of global kinematic operators $k_\lambda$ into a common model. Differentiating
with respect to $\lambda$, we extract enough information to determine the curvature-independent local kinematic operator
$K:\curvun \to \curvun\otimes \curvun$ (Theorem \ref{thm:local kin}). To
complete the picture, we also compute the structure of $\curvun$ as a module over $\V^n_\lambda$ (Theorem
\ref{thm_module_cpn}). As an application, we show by direct computation that the Lipschitz-Killing subalgebra in
$\V^n_\lambda$ stabilizes the
subspace of angular curvature measures in the Riemannian manifold $\CPn_\lambda$ (Theorem \ref{thm_angularity_thm2}).

Section \ref{sect:questions} is devoted to open problems. In addition there is an Appendix, in which we establish a
fundamental multiplication formula for smooth valuations in an isotropic space.

\subsection*{Acknowledgements} It is a pleasure to thank  J. Abardia, S. Alesker,  E. Gallego, F. Schuster and T.
Wannerer for their many helpful comments.
\section{Basics}\label{sect:basics}

\subsection{Notation}\label{notation}
\begin{align*}
\omega_n&:= \text{ the volume of the unit ball in } \Rn=\frac{\pi^\frac{n}{2}}{\Gamma\left(\frac{n}{2}+1\right)}\\
\alpha_n&:= (n+1) \omega_{n+1} = \text{ the volume of the unit sphere } S^n
\end{align*}

Given a subset $S$ of a vector space we denote by $\spann S$ its span.

The cosphere bundle of a smooth manifold $M$ is denoted by $S^*M$. The sphere bundle of a Riemannian
manifold
$M$ is denoted by $SM$. The projections $S^*M \to M, SM \to M$ are denoted by $\pi$, and we write $S_x^*M,
S_xM$ for the fibers. The canonical $1$-form on  {$S^*M$} is denoted by $\alpha$. The {\it Reeb vector field} is the
unique vector field $T$ with $i_T\alpha=1, \mathfrak{L}_T \alpha=0$, where $\mathfrak{L}$ denotes the Lie derivative.

The complex projective space of dimension $n$ and holomorphic sectional curvature $4\lambda>0$ is denoted by
$\CP^n_\lambda$. The K\"ahler form $\kappa$ and the volume then satisfy 
that 
\begin{displaymath}
 \int_{\CP^n_\lambda} \frac{\kappa^n}{n!}=\vol \CP^n_\lambda=\frac{\pi^n}{\lambda^n n!}.
\end{displaymath}
 
We will also use the notation $\CP^n_\lambda$ as a shorthand for $\C^n$ if $\lambda=0$ and for complex hyperbolic space $\CH^n$ (with
holomorphic sectional curvature $4\lambda$) if $\lambda<0$. 

For $\lambda\neq 0$ we let $G_\lambda$ be the isometry group of $\CP_\lambda^n$. Thus $G_\lambda \simeq$ the projective unitary group $PU(n+1)$
for
$\lambda >0$ and $G_\lambda \simeq PU(n,1)$ for $\lambda <0$. For $\lambda=0$ we set
$G_0=\overline{U(n)}=\C^n\ltimes U(n)$. The Haar measure on $G_\lambda$ is
normalized in such a way that the measure of the set of group elements mapping a fixed point to a compact
subset of $\CP^n_\lambda$
equals the volume of this subset. In particular, if $\lambda>0$, the total measure of the isometry group
equals the volume of $\CP^n_\lambda$. 

We will use the standard notation from differential geometry:
\begin{displaymath}
 \sn_\lambda(r):=\left\{\begin{array}{c c} \frac{1}{\sqrt{\lambda}} \sin(\sqrt{\lambda}r) & \lambda>0\\
r & \lambda=0\\ 
\frac{1}{\sqrt{|\lambda|}} \sinh(\sqrt{|\lambda|}r) & \lambda<0,
\end{array} \right.
\end{displaymath}
which is an analytic function in $(\lambda,r) $. Moreover, we will use 
\begin{displaymath}
 \cs_\lambda:= \sn_\lambda', 
\tn_\lambda:=\frac{\sn_\lambda}{\cs_\lambda}, \ct_\lambda:=\frac{\cs_\lambda}{\sn_\lambda}.
\end{displaymath}

We also use the standard notation
\[
 (2k+1)!!=(2k+1)\cdot(2k-1)\cdot(2k-3)\cdots 1
\]
and set formally $(-1)!!=1$.

\subsection{Curvature measures and valuations on manifolds}

Let $M$ be a smooth oriented manifold of dimension $m$.  Following \cite{ale05b}, we let $\mathcal{P}(M)$ denote the
space of  simple differentiable polyhedra of $M$ (which is the same as the space of compact smooth submanifolds with
corners). Given $A \in \mathcal{P}(M)$, the conormal cycle $N(A)$ is a Legendrian cycle in the cosphere bundle $S^*M$
(compare Def.~2.4.2 in \cite{ale05b} or \cite{fu94b}). 

\begin{definition}\label{def:curv & val}
Given differential forms $\omega \in \Omega^{m-1}(S^*M), \phi \in \Omega^m(M)$, the functional $\Phi=:\Int(\omega,\phi)$
that assigns to each differentiable polyhedron $A \subset M$ the signed {Borel} measure
\begin{displaymath}
{ M\supset\ } U\mapsto \Phi(A,U):=\int_{N(A) \cap \pi^{-1}U} \omega + \int_{A \cap U} \phi
\end{displaymath}
is called
 a {\bf smooth curvature measure} on $M$. The space of smooth curvature measures is denoted by $\curvinfty(M)$.
If $M$ is an affine space, we denote the subspace of translation-invariant curvature measures on $M$ by $\curv(M)$.

 Given differential forms $\omega \in \Omega^{m-1}(S^*M), \phi \in \Omega^m(M)$, the functional $\mu$ defined by 
\begin{displaymath}
 \mu(A):=\int_{N(A)} \omega + \int_A \phi, \quad A \in \mathcal{P}(M),
\end{displaymath}
is called a {\bf smooth valuation} on $M$. The space of smooth valuations is denoted by $\mathcal{V}(M)$.
If $M$ is an affine space, we denote the subspace of translation-invariant smooth valuations on $M$ by $\Val(M)$.
\end{definition}

{\bf Remarks.} 1. Thus a curvature measure {$\Phi$} is not actually a measure, but a function that assigns a signed
measure to
any sufficiently smooth subspace. {Note that we can evaluate {$\Phi(A,U)$ for}  non-compact subsets $A$
provided that the Borel set $U$ is relatively compact in $A$.}

2. A smooth curvature measure $\Phi\in \curvinfty(M)$ may be applied to any subset $X\subset M$ that is ``smooth enough"
to admit a normal cycle in the sense of \cite{fu94}. The class of such subsets is only very dimly understood, but
includes semiconvex sets ({also known as} sets with positive reach) and, if $M$ possesses a real analytic
structure, compact subanalytic sets, together with all generic intersections of such sets. We observe also that every
compact smooth polyhedron is semiconvex.

3. Our notation $\Val$ is a slight departure from the usual sense of $\Val$ as the space of continuous
translation-invariant convex valuations. However, the Irreducibility Theorem \cite{ale01} of Alesker implies that the
space we have defined is naturally identified with a dense subspace of this latter space.
For the general theory of smooth valuations we refer to \cite{ale05a,ale05b,ale05d,ale06,ale-be09,alefu05,bebr07}.

For $\Phi \in \curvinfty(M)$ we put
$$
[\Phi]:= \glob(\Phi):= [A \mapsto \Phi(A,A)] \in \V(M).
$$
Thus we have maps
$$
\begin{CD}
 \Omega^{m-1}(S^*M) \times\Omega^m(M) @>^{\Int}>> \curvinfty(M) @>^{\glob}>>\V(M).
\end{CD}
$$
{We put also
$$
 \Glob := \glob\circ \Int.
$$}

Recall that $S^*M$ is naturally a contact manifold, and admits a global contact form $\alpha \in \Omega^1(S^*M)$.

\begin{theorem}[\cite{bebr07}] \label{prop:kernel thm} Let $p \in M$, and put $D: \Omega^{n-1}(S^*M) \to \Omega^n(S^*M)$
for the Rumin differential (\cite{rumin94}). Then
\begin{align*}
\ker \Int &= \{(\omega,0): \omega = \alpha \wedge \psi + d\alpha \wedge \theta \text{ for some } \psi\in
\Omega^{n-2}(S^*M), \theta \in \Omega^{n-3}(S^*M)\}, \\
{\ker  \Glob } &= \{(\omega,\phi): D\omega + \pi^* \phi = 0, \ \int_{S_p^*M} \omega = 0\},
\end{align*}
where $\pi:S^*M\to M$ is the projection.
\end{theorem}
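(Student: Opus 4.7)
The plan is to treat the two kernel descriptions separately, both ultimately relying on the fact that the conormal cycle $N(A)$ of any smooth polyhedron is Legendrian in the contact manifold $S^*M$ of dimension $2n-1$.

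\textbf{Kernel of $\Int$.} For the inclusion $\supset$, I observe that $N(A)$ has dimension $n-1$ and is tangent to $\ker\alpha$ by the Legendrian property, so $\alpha|_{N(A)}=0$. Since $d\alpha$ restricts to a symplectic form of rank $2n-2$ on $\ker\alpha$, any $(n-1)$-plane in $\ker\alpha$ is automatically Lagrangian, whence $d\alpha|_{N(A)}=0$ as well. Therefore any form $\omega=\alpha\wedge\psi+d\alpha\wedge\theta$ restricts to zero on $N(A)\cap\pi^{-1}U$, showing $\Int(\omega,0)=0$. The reverse inclusion is the heart of the matter. I would argue pointwise: for each $(x_0,\xi_0)\in S^*M$ and each oriented Lagrangian subspace $L\subset\ker\alpha_{(x_0,\xi_0)}$, there exists a smooth polyhedron $A$ with $(x_0,\xi_0)\in N(A)$ and $T_{(x_0,\xi_0)}N(A)=L$ (a standard realization fact from the theory of normal cycles, cf.\ \cite{fu94}). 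Letting the test set $U$ shrink to $\{(x_0,\xi_0)\}$, the hypothesis $\Phi(A,U)=0$ forces $\omega_{(x_0,\xi_0)}|_L=0$. The problem then reduces to the purely algebraic statement that an $(n-1)$-form at a point of a contact manifold that annihilates every Lagrangian of the contact hyperplane must lie in the ideal generated by $\alpha$ and $d\alpha$; this can be verified by a Darboux-basis computation together with the transitivity of the symplectic group on Lagrangians, and the required smooth global choice of $\psi,\theta$ then follows by a pointwise projection.

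\textbf{Kernel of $\Glob$.} Evaluating the valuation on the singleton $\{p\}$ gives $\Glob([[\omega,\phi]])(\{p\})=\int_{S_p^*M}\omega$, so $\int_{S_p^*M}\omega=0$ is necessary. For the other condition (and simultaneously for sufficiency of both), I would use a Stokes-type identity. Given a polyhedron $A$ contained in a small coordinate ball around a base point $p$, deform $A$ to $\{p\}$ through a radial family $A_t$ to obtain an $n$-chain $C\subset S^*M$ with $\partial C=N(A)-S_p^*M$ and $\pi_*C=A$. Writing $D\omega=d(\omega+\alpha\wedge\xi)$ for the Rumin potential $\xi$ of $\omega$, and using that $\alpha$ vanishes on both Legendrian cycles $N(A)$ and $S_p^*M$, Stokes yields
\[
\int_{N(A)}\omega-\int_{S_p^*M}\omega=\int_{\partial C}(\omega+\alpha\wedge\xi)=\int_C D\omega.
\]
Combined with the pushforward identity $\int_C\pi^*\phi=\int_A\phi$ this produces the master identity
\[
\Glob([[\omega,\phi]])(A)=\int_{S_p^*M}\omega+\int_C(D\omega+\pi^*\phi).
\]
Sufficiency of the two conditions is immediate from this formula. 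For necessity, since the $n$-chains $C$ arising from such deformations—as $A$ ranges over polyhedra in varying neighborhoods of varying points $p$—form a sufficiently rich family of $n$-chains in $S^*M$, vanishing of $\Glob$ combined with $\int_{S_p^*M}\omega=0$ forces $D\omega+\pi^*\phi=0$ pointwise.

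\textbf{Main obstacles.} The two delicate steps are the algebraic lemma characterizing forms that vanish on all Lagrangians of a symplectic hyperplane, and the careful construction of the deformation chain $C$ with the correct pushforward behavior $\pi_*C=A$ across the corners of $A$; the latter I would handle either by approximating $A$ by polyhedra with smoothed corners and passing to the limit, or by a direct current-theoretic analysis using the fact that corners contribute only lower-dimensional boundary strata that integrate against $\pi^*\phi$ and $D\omega$ to zero.
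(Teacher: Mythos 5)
The paper does not prove this statement: it is quoted as a theorem of Bernig--Br\"ocker and attributed to \cite{bebr07}, with no proof supplied in the present text. So there is no ``paper proof'' to compare your proposal against; what follows is a review of your sketch on its own merits.

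Your $\supset$ direction for $\ker\Int$ contains a genuine error of reasoning. You write that since $d\alpha$ restricts to a symplectic form of rank $2n-2$ on $\ker\alpha$, ``any $(n-1)$-plane in $\ker\alpha$ is automatically Lagrangian.'' This is false: in a $(2n-2)$-dimensional symplectic vector space a generic $(n-1)$-dimensional subspace is not isotropic (e.g.\ $\operatorname{span}(\partial_{x_1},\partial_{y_1})$ in $\R^4$). The conclusion $d\alpha|_{N(A)}=0$ is nevertheless correct, but for a different reason: it follows immediately from $\alpha|_{N(A)}=0$ because pullback commutes with $d$, or equivalently because the Legendrian condition on $N(A)$ (maximal integral submanifold of the contact distribution) \emph{forces} its tangent planes to be Lagrangian --- it is not an automatic property of arbitrary half-dimensional subspaces. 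You should repair the phrasing even though the endpoint is recoverable.

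The rest of your outline is consistent with the structure of the argument in \cite{bebr07}, which indeed organizes both kernel computations around Rumin's complex. The two technically delicate steps you flag --- the algebraic characterization of $(n-1)$-forms annihilating all Lagrangians of the contact hyperplane (Lefschetz decomposition plus injectivity of primitive forms on the Lagrangian Grassmannian), and the construction of the deformation chain $C$ with $\partial C = N(A) - S^*_pM$, $\pi_*C = A$ across corners --- are precisely where the real work lies. For the necessity of $D\omega+\pi^*\phi=0$, ``sufficiently rich family of chains'' needs to be made concrete: e.g.\ by observing that shrinking deformation chains based at an arbitrary $(x,\xi)\in S^*M$ with arbitrary prescribed tangent Lagrangian (again via the polyhedral realization) suffice to detect the form pointwise.
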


In the affine case  $M=V$, the space of translation invariant curvature measures $\curv(V)$ admits a natural
decomposition (analogous to McMullen's decomposition of $\val(V)$)
\begin{equation}\label{eq:curv decomp}
  \curv(V)=\bigoplus_{k=0}^n \curv_k(V)
\end{equation}
where $\curv_k(V)$ contains the $k$-homogeneous curvature measures $\Phi$; i.e. those $\Phi$ for which $\Phi(t A,t
U)=t^k\Phi(A,U)$ for $t>0$. This follows from Proposition \ref{prop:invariant curv} below: any translation-invariant
curvature measure may be expressed as $\Int(\omega,\phi)$ for some translation-invariant forms $\omega \in
\Omega^{n-1}(S^*V)^V$ and $\phi \in \Omega^n(V)^V$. The latter clearly induce the invariant curvature measures of degree
$n$.  The subspace of  translation-invariant  elements of the former space may be decomposed as 
\begin{equation}\label{eq:forms decomp}
\Omega^{n-1}(S^*V)^V=\bigoplus_{k=0}^{n-1} \Omega^{k}(V)^V \otimes \Omega^{n-k-1}(S(V))
\end{equation}
which clearly induces the remaining parts of the grading \eqref{eq:curv decomp}.

\subsubsection{Module structure} One of the main features of $\mathcal{V}(M)$ is the existence of
a canonical product structure, see \cite{alefu05} for its construction and \cite{ale-be09} for an alternative 
approach. Another salient feature is the existence of a natural decreasing filtration $\V(M) =W_0\supset
\dots\supset W_n \supset W_{n+1}=\{0\}$, where $n= \dim M$. The product respects the filtration, i.e. $W_i
\cdot W_j \subset W_{i+j}$. 

If $\Phi=\Int(\omega,\phi)$ is a smooth curvature measure on $M$ and $f \in C^\infty(M)$, then $\Phi_f(A):=\int_M f
d\Phi(A,-)$ is a smooth valuation, which is represented by the pair $(\pi^*f \wedge \omega,f\phi)$. Thus if $f \equiv 1$
then $\Phi_f = [\Phi]$.

\begin{proposition} \label{prop_def_module}
 Let $M$ be an oriented manifold, $\mu \in \mathcal{V}(M), \Phi \in \curvinfty(M)$. Then there is a unique smooth
curvature measure $\mu \cdot \Phi \in \curvinfty(M)$ such that for all $f \in C^\infty(M)$ we have 
\begin{equation} \label{eq_def_module}
 (\mu \cdot \Phi)_f=\mu \cdot \Phi_f.
\end{equation}
(The product on the right hand side is the product of smooth valuations on $M$). With respect to this multiplication,
$\curvinfty(M)$ is a module over ${\V}(M)$.
\end{proposition}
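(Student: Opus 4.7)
My plan is threefold: establish uniqueness, construct $\mu \cdot \Phi$ by manipulating representing forms, and deduce the module axioms from uniqueness combined with the algebra structure of $\V(M)$.

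\emph{Uniqueness.} Suppose $\Psi \in \curvinfty(M)$ satisfies $\Psi_f = 0$ for every $f \in C^\infty(M)$. For each $A \in \mathcal{P}(M)$, the signed Borel measure $\Psi(A,\cdot)$ is compactly supported (its support lies in $A \cup \pi(N(A))$), and
\[
\Psi_f(A) = \int_{N(A)} \pi^*f \wedge \omega_\Psi + \int_A f\,\phi_\Psi = \int_M f\, d\Psi(A,\cdot)
\]
vanishes for all $f \in C^\infty(M)$. Density of $C^\infty$ in $C^0$ forces $\Psi(A,\cdot) = 0$, so $\Psi = 0$. Applied to the difference of two candidate products this gives uniqueness of any $\mu\cdot\Phi$ satisfying \eqref{eq_def_module}.

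\emph{Existence.} Choose representatives $\Phi = \Int(\omega,\phi)$ and $\mu = \Glob(\omega_\mu, \phi_\mu)$, so that $\Phi_f = \Glob(\pi^*f \wedge \omega,\, f\phi)$. Using the form-level description of the Alesker product from \cite{alefu05, ale-be09}, the product $\mu \cdot \Phi_f$ is the globalization of forms built bilinearly from $(\omega_\mu,\phi_\mu)$ and the representing pair of $\Phi_f$ via pointwise wedge and pullback operations together with an application of the Rumin differential. The aim is to extract the scalar factor $f$ (respectively $\pi^*f$) to produce forms $\omega', \phi'$ depending only on $\mu$ and $\Phi$ such that
\[
\mu \cdot \Phi_f = \Glob(\pi^* f \wedge \omega',\, f\phi') \qquad \text{for every } f \in C^\infty(M);
\]
then $\mu\cdot\Phi := \Int(\omega', \phi')$ satisfies $(\mu\cdot\Phi)_f = \mu\cdot\Phi_f$, as required.

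\emph{Module axioms and main obstacle.} Associativity, unitality, and bilinearity then follow from uniqueness via the algebra axioms of $\V(M)$; for instance
\[
((\mu_1\mu_2)\cdot\Phi)_f = (\mu_1\mu_2)\cdot\Phi_f = \mu_1\cdot(\mu_2\cdot\Phi_f) = \mu_1\cdot(\mu_2\cdot\Phi)_f = (\mu_1\cdot(\mu_2\cdot\Phi))_f,
\]
so uniqueness forces $(\mu_1\mu_2)\cdot\Phi = \mu_1\cdot(\mu_2\cdot\Phi)$. The main obstacle is the $f$-extraction in the existence step. A priori, the Alesker product formula will produce derivative-of-$f$ terms; for example, the Rumin differential applied to $\pi^*f \wedge \omega$ generates a $df \wedge (\cdots)$ piece. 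One must verify that all such $df$-contributions either cancel among themselves or lie in $\ker \Glob$ as described in Theorem \ref{prop:kernel thm}, so that modulo the kernel $f$ appears only as a pointwise multiplier. In effect, this is the statement that the Alesker product on $\V(M)$ is $C^\infty(M)$-linear in the second factor modulo kernel relations, and is the technical heart of the argument.
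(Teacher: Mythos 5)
Your outline matches the paper's proof step for step: uniqueness from density of $C^\infty$ test functions, existence via a form-level expression for the Alesker product, and the module axioms from uniqueness together with the algebra structure of $\V(M)$. The gap is exactly the step you flag but do not carry out: you correctly identify that the whole matter hinges on $C^\infty(M)$-linearity of the product formula in the factor representing $\Phi$, but you leave it as a concern about possible $df$-terms rather than resolving it, so the existence of $\mu\cdot\Phi$ is not actually established.

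The paper closes this in one stroke by invoking the explicit formula of \cite[Theorem~2]{ale-be09}: writing $\mu_i=\Glob(\omega_i,\phi_i)$, one has $\mu_1\cdot\mu_2=\Glob\bigl(Q_1(\omega_1,\omega_2),\,Q_2(\omega_1,\phi_1,\omega_2,\phi_2)\bigr)$ for two concrete operations $Q_1,Q_2$ on forms, and then one verifies directly that $Q_1(\pi^*f\wedge\omega_1,\omega_2)=\pi^*f\wedge Q_1(\omega_1,\omega_2)$ and $Q_2(\pi^*f\wedge\omega_1,f\phi_1,\omega_2,\phi_2)=fQ_2(\omega_1,\phi_1,\omega_2,\phi_2)$. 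These identities say precisely that $(\omega_1,\phi_1)$ enters $Q_1,Q_2$ only algebraically---no derivative falls on that pair---so the $df$-contributions you anticipate never arise, and $\mu\cdot\Phi:=\Int\bigl(Q_1(\omega_1,\omega_2),Q_2(\omega_1,\phi_1,\omega_2,\phi_2)\bigr)$ satisfies \eqref{eq_def_module} on the nose rather than merely modulo $\ker\Glob$. Your uniqueness step then ensures this construction is independent of the chosen representatives of $\Phi$ and $\mu$, and the module axioms follow as you describe.
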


\proof
The proof is based on an expression for the product of two smooth valuations which was obtained in \cite[Theorem 2]{ale-be09}.

Let $\mu_i = {\Glob}(\omega_i,\phi_i), i = 1,2$. Then $\mu_1 \cdot \mu_2 = { \Glob}(\omega,\phi)$, with
\begin{equation} \label{eq_albe_formula}
 \omega = Q_1(\omega_1,\omega_2),\quad \phi = Q_2(\omega_1,\phi_1,\omega_2,\phi_2),
\end{equation}
where $Q_1$ and $Q_2$ are certain (explicitly given) operations on differential forms. It is easy to check that they
satisfy 
\begin{align} \label{eq_linearity_albe_formula1}
 Q_1(\pi^*f \wedge \omega_1,\omega_2) & =\pi^*f \wedge Q_1(\omega_1,\omega_2)\\
Q_2(\pi^*f \wedge \omega_1,f \phi_1,\omega_2,\phi_2) & = fQ_2(\omega_1,\phi_1,\omega_2,\phi_2), \quad f \in C^\infty(M).
\label{eq_linearity_albe_formula2}
\end{align}

Now if $\Phi = \Int(\omega_1,\phi_1)\in \curvinfty(M)$ then $\mu_2\cdot \Phi := \Int(\omega,\phi)$.  Thus
\eqref{eq_linearity_albe_formula1} and \eqref{eq_linearity_albe_formula2} imply \eqref{eq_def_module}. 

\endproof

If $\iota:N \to M$ is an immersion of smooth submanifolds, there exists a canonical restriction map
$\iota^*:\mathcal{V}(M) \to \mathcal{V}(N)$. The restriction is compatible with the product and the filtration. We refer
to \cite{ale10} for more details on this construction. 

\begin{proposition}
 Let $M$ be a smooth manifold and $\iota:N \to M$ a properly embedded smooth submanifold. Given $\Phi \in
\curvinfty(M)$, there is a unique curvature measure  $\iota^* \Phi \in \curvinfty(N)$ such that 
\begin{equation} \label{eq_restriction}
 (\iota^*\Phi)_{\iota^*f}=\iota^*(\Phi_f), \quad f \in C^\infty(M).
\end{equation}
$\iota^* \Phi$ is called the {\bf restriction} of $\Phi$, and is also denoted by  $\restrict\Phi N$.
\end{proposition}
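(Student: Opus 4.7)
The plan is to parallel the proof of Proposition \ref{prop_def_module}: establish uniqueness by a test-function argument, then construct $\iota^*\Phi$ through explicit form-level operations extracted from Alesker's restriction of valuations \cite{ale10}.

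For uniqueness, suppose $\Psi_1,\Psi_2 \in \curvinfty(N)$ both satisfy \eqref{eq_restriction}. Since $\iota : N \to M$ is properly embedded, $N$ is closed in $M$ and every $g \in C^\infty(N)$ has the form $\iota^*f$ for some $f \in C^\infty(M)$. Hence $(\Psi_1-\Psi_2)_g = 0$ for all $g \in C^\infty(N)$. Unfolding the definition,
$$ 0 = (\Psi_1-\Psi_2)_g(A) = \int_N g\, d(\Psi_1-\Psi_2)(A,\cdot) $$
for every $A \in \mathcal{P}(N)$ and every smooth $g$, which forces the signed Borel measure $(\Psi_1-\Psi_2)(A,\cdot)$ to vanish identically, giving $\Psi_1 = \Psi_2$.

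For existence, choose a representation $\Phi = \Int(\omega,\phi)$, so that $\Phi_f = \Glob(\pi^*f \wedge \omega,\, f\phi)$ for $f \in C^\infty(M)$. Alesker's restriction $\iota^* : \V(M) \to \V(N)$ is realized in \cite{ale10} through explicit differential-form operators $\omega \mapsto R_1(\omega) \in \Omega^{n-1}(S^*N)$ and $(\omega,\phi)\mapsto R_2(\omega,\phi) \in \Omega^n(N)$ (with $n=\dim N$), built by pullback to $S^*M|_N$, fiber integration along the cosphere of the normal bundle of $N$ in $M$, and appropriate Rumin-type corrections, with
$$ \iota^*\Glob(\omega,\phi) = \Glob\bigl(R_1(\omega),\, R_2(\omega,\phi)\bigr). $$
Since $\pi^*f$ is constant along each fiber of the conormal sphere bundle and commutes with the relevant corrections, one obtains the naturality identities
\begin{align*}
R_1(\pi^*f\wedge\omega) &= \pi_N^*(\iota^*f)\wedge R_1(\omega),\\
R_2(\pi^*f\wedge\omega,\,f\phi) &= (\iota^*f)\,R_2(\omega,\phi),
\end{align*}
playing the role of \eqref{eq_linearity_albe_formula1}--\eqref{eq_linearity_albe_formula2} in the present setting. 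Defining $\iota^*\Phi := \Int\bigl(R_1(\omega),\, R_2(\omega,\phi)\bigr) \in \curvinfty(N)$ then gives
$$ (\iota^*\Phi)_{\iota^*f} = \Glob\bigl(\pi_N^*(\iota^*f)\wedge R_1(\omega),\, (\iota^*f)\,R_2(\omega,\phi)\bigr) = \iota^*(\Phi_f), $$
which is \eqref{eq_restriction}. Independence of $\iota^*\Phi$ from the choice of representing forms for $\Phi$ is immediate from the uniqueness part applied to any two such candidates.

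The principal obstacle is the verification of the naturality identities for $R_1$ and $R_2$. This requires a careful accounting of Alesker's construction—in particular ensuring that multiplication by $\pi^*f$ survives the integration over the conormal sphere fibers of $N$ in $M$ (where it is constant) and the Rumin-type corrections that arise from matching the contact structures on $S^*M$ and $S^*N$. Apart from this bookkeeping the argument is mechanical, closely modeled on the proof of Proposition \ref{prop_def_module}.
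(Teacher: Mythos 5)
Your proposal is correct and follows essentially the same route as the paper: uniqueness via surjectivity of $\iota^*$ on smooth functions (using that $N$ is properly embedded, hence closed), existence via Alesker's form-level restriction operators from \cite{ale10} together with their naturality under multiplication by $\pi^*_M f$. The identities you flag as the ``principal obstacle'' requiring careful verification are precisely the content of \cite[Proposition 3.1.2]{ale10}, which the paper cites directly rather than re-deriving; otherwise the arguments coincide (the paper's $Q_3,Q_4$ play the role of your $R_1,R_2$, with $Q_3$ taking both $\omega$ and $\phi$ as inputs, a minor and inessential discrepancy from your $R_1(\omega)$).
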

\proof
Since every smooth function on $N$ can be extended to a smooth function on $M$, the uniqueness is clear. 

Let us prove  existence. If a smooth valuation $\mu$ on $M$ is given by a pair of forms $(\omega,\phi)$, then the
pull-back $\iota^*\mu$ is given by a pair of forms $\omega'=Q_3(\omega,\phi), \phi'=Q_4(\omega,\phi)$, where $Q_3,Q_4$
are certain Gelfand transforms with the property that   
\begin{align*}
 Q_3( \pi_M^* f\ \omega,f\phi) & =\pi_N^* \iota^*f Q_3(\omega,\phi),\\
 Q_4( \pi_M^* f\ \omega,f\phi) & =\iota^*f Q_4(\omega,\phi),  
\end{align*}
see \cite[Proposition 3.1.2]{ale10}. If the smooth curvature measure is represented by the pair $(\omega,\phi)$, then we define $\Psi$ as
the measure represented by $(Q_3(\omega,\phi),Q_4(\omega,\phi))$. Thus 
\eqref{eq_restriction} is satisfied. 
\endproof

\subsubsection{Curvature measures in a Riemannian manifold}
Let $M$ be a Riemannian manifold. We show that the space $\curvinfty(M)$ of smooth curvature measures on $M$ is
naturally identified with the space of smooth sections of the bundle $\curv(TM)$ of translation-invariant curvature
measures on the tangent spaces of $M$. 

Let $\xi  \in S M$, with $\pi \xi = x\in M$, where $\pi:SM\to M$ is the projection. 
The Riemannian connection induces a decomposition 
\begin{equation}\label{eq:decomp 1}
T_{\xi} SM = H \oplus V \simeq T_xM \oplus  \xi^\perp 
\end{equation}
into horizontal and vertical subspaces, where the derivative $D\pi$ induces an isomorphism $H \to T_xM$ and $V$ is naturally
isomorphic to $\xi^\perp \subset T_xM$. Identifying $H$ with $T_xM$, this decomposition may be refined further as 
\begin{equation}\label{eq:split line}
T_{\xi} SM \simeq \spann(\xi) \oplus \xi^\perp \oplus \xi^\perp.
\end{equation}

On the other hand, we may think of $\xi$ also as a point of  the sphere bundle $ST_xM$ of the tangent space $T_xM$,
projecting to $0 \in T_xM$. Since $ST_xM=T_xM \times S(T_xM)$ (where
$S(T_xM)$ is the unit sphere in $T_xM$), the tangent space at $\xi$ again splits as
\begin{equation}\label{eq:decomp 2}
T_{\xi} ST_xM = T_xM \oplus  \xi^\perp  \simeq \spann(\xi) \oplus \xi^\perp \oplus \xi^\perp.
\end{equation}
Thus we have a canonical identification $ T_{\xi} SM \simeq T_{\xi} ST_xM$, inducing
an isomorphism 
\begin{equation}\label{eq:lambda lambda}
\Lambda^{*}{ T^*_{\xi}} SM \simeq \Lambda^{*}{ T_\xi^*} S(T_xM).
\end{equation}

Fixing  $x\in M$ and letting $\xi \in S_xM$ vary, these identifications now yield a map 
$$
\restrict{\Omega^*(SM)}{S_xM} \to \Gamma(\restrict{\Lambda^*{ T^*}ST_xM}{S_xM})\simeq \Omega^*(ST_xM)^{T_xM}
$$
to the space of translation-invariant differential forms on the sphere bundle of $T_xM$; here the last isomorphism is
obtained by pulling back via the translation map to the origin of $T_xM$. Let us denote the vector bundle over $M$ with
fibers $\Omega^*(ST_xM)^{T_xM}$ by $\Omega^*(STM)$. Now if we let $x$ vary as well we obtain an isomorphism
\begin{equation}\label{eq:identification_forms}
   \Omega^*(SM) \simeq \Gamma(\Omega^*(STM)),
\end{equation}
where $\Gamma$ denotes the space of smooth sections.

\begin{proposition} \label{prop_isom_curv_general}
The isomorphism \eqref{eq:identification_forms} induces an isomorphism
\begin{displaymath}
\tau:\curvinfty(M) \simeq \Gamma(\curv(TM)).
\end{displaymath}
\end{proposition}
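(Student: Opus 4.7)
The plan is to define $\tau$ at the level of the forms that represent curvature measures and verify that it descends to the desired bijection on the appropriate quotients. Using the Riemannian metric to identify $S^*M\simeq SM$, I view $\Int$ as a surjection from $\Omega^{m-1}(SM)\oplus\Omega^m(M)$ onto $\curvinfty(M)$. The same definition applied to translation-invariant forms on each tangent space $T_xM$ yields a surjection
\begin{displaymath}
\Int_x:\Omega^{m-1}(ST_xM)^{T_xM}\oplus\Omega^m(T_xM)^{T_xM}\longrightarrow \curv(T_xM),
\end{displaymath}
and assembling these fiberwise maps as $x$ varies defines the smooth structure on $\curv(TM)$, so that $\Gamma(\curv(TM))$ is by construction the quotient of a space of smooth sections of form bundles.

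I would then combine the isomorphism \eqref{eq:identification_forms} with the tautological identification $\Omega^m(M)\simeq \Gamma(\Omega^m(TM)^{TM})$ to match the two sources of representing forms, and declare $\tau$ at the level of representatives by this identification: if $\Phi=\Int(\omega,\phi)$, then $\tau(\Phi)$ is the section whose value at $x$ is $\Int_x$ applied to the restriction of $\omega$ to $S_xM$ (transported via the splittings \eqref{eq:decomp 1}--\eqref{eq:decomp 2}) together with the translation-invariant form determined by $\phi_x$.

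The real content of the proposition is that this correspondence descends to the quotient; equivalently, that the global kernel of $\Int$ corresponds pointwise to the fiberwise kernel of $\Int_x$. By Theorem \ref{prop:kernel thm} both kernels are described in the same way in terms of the canonical contact $1$-form, namely as pairs $(\alpha\wedge\psi+d\alpha\wedge\theta,0)$ on $SM$ and on each $ST_xM$. Hence the verification reduces to the geometric fact that the canonical contact $1$-form on $SM$, restricted at $\xi\in S_xM$ and transported via $T_\xi SM\simeq T_\xi ST_xM$, coincides with the canonical contact $1$-form on $ST_xM$. Granting this, $d\alpha$ corresponds as well, forms of the shape $\alpha\wedge\psi+d\alpha\wedge\theta$ restrict pointwise to expressions of the same shape, and the two kernels match fiberwise. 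Surjectivity of $\tau$ is built into the setup; injectivity follows once one observes that a global form whose pointwise restrictions all have the kernel shape can be written in that form globally, by a partition-of-unity argument on the local splitting induced by the contact structure.

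The main obstacle is the contact-form compatibility just invoked. I would verify it in normal coordinates centered at $x$: horizontal lifts at $\xi\in S_xM$ project under $d\pi$ to tangent vectors in $T_xM$, and the canonical $1$-form on $S^*M$ is defined by pairing with this projection. Under the splitting \eqref{eq:decomp 2}, the horizontal direction in $T_\xi ST_xM$ is precisely the pure translation direction in $T_xM$, and the canonical $1$-form on $ST_xM$ is built in exactly the same way, so the two agree at $\xi$. Once this compatibility is in place, the remainder of the argument is essentially bookkeeping modulo the kernel descriptions of Theorem \ref{prop:kernel thm}.
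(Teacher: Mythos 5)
Your overall strategy matches the paper's: identify both $\curvinfty(M)$ and $\Gamma(\curv(TM))$ as quotients of spaces of representing forms, and reduce the proposition to checking that the kernel description of Theorem~\ref{prop:kernel thm} is compatible with the identification~\eqref{eq:lambda lambda}. Your verification that $\alpha$ and $\bar\alpha$ correspond is essentially the same as the paper's (the paper calls it trivial, since both forms are just projection to the $\spann(\xi)$ line in the respective splittings).

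However, there is a genuine gap at the step ``Granting this, $d\alpha$ corresponds as well.'' This does \emph{not} follow. The identification~\eqref{eq:lambda lambda} is a pointwise linear isomorphism of exterior algebras; it is not induced by a diffeomorphism of a neighborhood of $\xi$ in $SM$ with a neighborhood of $\xi$ in $ST_xM$, so it has no reason to intertwine the exterior derivative. Knowing that the $1$-forms $\alpha_\xi$ and $\bar\alpha_\xi$ agree under the identification at every $\xi\in S_xM$ tells you nothing a priori about whether $(d\alpha)_\xi$ and $(d\bar\alpha)_\xi$ agree: the $2$-form $d\alpha$ involves derivatives of $\alpha$ in all directions of $SM$, including those not captured by the pointwise correspondence. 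The paper flags precisely this point explicitly (``the correspondence $d\alpha\sim d\bar\alpha$ is \emph{not} a formal consequence, since the isomorphism~\eqref{eq:lambda lambda} only respects the linear structure but does not intertwine the exterior derivative'') and then devotes the remainder of the proof to a concrete computation: it writes out $d\bar\alpha$ symplectically via~\eqref{eq:symplect}, and computes $d\alpha$ by extending tangent vectors at $\xi$ to vector fields via the graph of a unit vector field $Z$ and using the Riemannian connection, arriving at $(d\alpha)_Z(\bar X,\bar Y)=\langle\nabla_X Z,Y\rangle-\langle\nabla_Y Z,X\rangle$, which matches~\eqref{eq:symplect}. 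That computation, or something equivalent, is the real content of the proof and is missing from your argument. Without it, the claim that forms of shape $\alpha\wedge\psi+d\alpha\wedge\theta$ correspond fiberwise to forms of the same shape is unsupported.
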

\begin{proof}
By Proposition \ref{prop:kernel thm}, if we put $\bar \alpha$ for the canonical 1-form of $S(T_xM)$ then it is enough to
show that $\alpha, \bar \alpha$ and $d\alpha, d\bar \alpha$ correspond to each other under the identification
\eqref{eq:lambda lambda}.

That this is true of $\alpha$ and $\bar \alpha$ is trivial: in both cases the differential form in question is
given by projection to the line $\spann(\xi) \simeq \R$ in the decompositions \eqref{eq:split
line}, \eqref{eq:decomp 2}. Note that the correspondence $d\alpha \sim d\bar \alpha$ is {\it not} a formal
consequence, since the isomorphism \eqref{eq:lambda lambda} only respects the linear structure but does not
intertwine the exterior derivative.

Representing elements of $T_{\xi}S(T_xM)$ as indicated by the decomposition \eqref{eq:decomp 2}, we have
\begin{equation}\label{eq:symplect}
d\bar\alpha ((v_1,w_1),(v_2,w_2)) =  \langle w_1,v_2\rangle - \langle v_1,w_2\rangle.
\end{equation}
In order to compare this with $d\alpha$, let $\bar X, \bar Y $ be smooth vector fields on $SM$ defined in a neighborhood
of ${\xi}$. Assume that $\pi_*\bar X, \pi_*\bar Y$ are everywhere linearly independent. Then there exists a smooth unit
vector field $Z$ on $M$, defined in a neighborhood of $x$, such that
$Z(x) = \xi$ and $\bar X({\xi}) , \bar Y({\xi})$ are tangent to the graph of $Z$ at $\xi$.
Define vector fields $X,Y$ on a neighborhood of  $x \in M$ by  $X(p):= \pi_*\bar X(Z(p)), Y(p):= \pi_*\bar Y(Z(p))$.
With respect to the decomposition \eqref{eq:decomp 1}, the values of $\bar X,\bar Y$ along the graph of $Z$ are given by
$$
\bar X(Z) = ( X, \nabla_X Z), \quad \bar Y(Z) = ( Y, \nabla_Y Z).
$$
Since, for $W \in T_ZSM$,
$$
\alpha_Z(W) =  \langle Z,\pi_* W\rangle,
$$
we may compute

\begin{align*}
(d\alpha)_Z(\bar X,\bar Y) &=  D_{\bar X} \langle Z,\pi_*\bar Y\rangle -D_{\bar Y}\langle Z,\pi_*\bar X\rangle -
\langle Z,\pi_*[\bar X,\bar Y]\rangle \\
&= D_X \langle Z,Y\rangle -D_Y \langle Z,X\rangle - \langle Z,[X,Y]\rangle \\
&= \langle \nabla_X Z,Y\rangle - \langle \nabla_Y Z,X\rangle.
\end{align*}
Comparing with \eqref{eq:symplect}, this concludes the proof under the linear independence assumption above. The
dependent case follows by continuity.
\end{proof}

Fixing $x \in M$, another way to describe the map $\tau_x:\curvinfty(M) \to \curv(T_xM)$ is as follows. Observe that (by
continuity) an element $\Phi \in \curv(T_xM)$ is determined by its values $\Phi(P,\cdot)$ on convex polytopes $P \subset
T_xM$. These values have the form
\begin{equation}\label{eq:curv meas polytope} 
\Phi(P,\cdot) = \sum_{k=0}^n\sum_{F\in \mathfrak{F}_k(P)}  c(\Phi, \Tan(P,F))\restrict{\vol_k} F
\end{equation}
where $\mathfrak{F}_k(P)$ denotes the $k$-skeleton of $P$ and $\Tan(P,F)$ is the tangent cone of $P$ at a
generic point of $F$. This cone is the Minkowski sum of the $k$-plane $\vec F\in \Gr_k(T_xM)$ generated by $F$
and a proper convex cone lying in the orthogonal complement of $\vec F$. Now if $\Psi \in \curvinfty(M)$ then
for a cone $C$ of this type we have for small $\eps >0$
$$
c(\tau_x(\Psi), C) = \Theta_k(\restrict{\Psi(\exp_x(C \cap B(0,\eps)))} F, x),
$$
where $\Theta_k$ denotes the density with respect to $k$-dimensional volume.

\subsection{Kinematic formulas} 

Now suppose in addition that the Riemannian manifold $M^n$ admits  a Lie group $G$ of isometries such that the
induced action on the tangent sphere bundle $SM$ is transitive. We then say that $(M,G)$ is an {\bf isotropic
space}.

Given such $(M,G)$, we denote by $\curvinfty^G= \curvinfty^G(M)\subset \curvinfty(M)$ and $\V^G= \V^G(M)\subset \V(M)$
the respective spaces of all smooth curvature measures $\Phi\in \curvinfty(M)$ and all smooth valuations $\phi \in
\V(M)$ that are invariant under the action of $G$, i.e. 
\begin{equation}\label{eq:invt cm}
\Phi (gX,g U) = \Phi(X,U), \quad \phi(gX) = \phi(X)
\end{equation}
for all $g \in G$, every sufficiently nice $X \subset M$, and every open set $U \subset M$. 

 \begin{proposition}\label{prop:invariant curv} Let $M$ be a Riemannian manifold, and $G$ a group of isometries $M\to
M$. Then
 $\Phi \in \curvinfty^G(M)$ if and only if $\Phi = \Int(\omega,\phi)$ for some $G$-invariant forms $\omega ,\phi$.
In particular, if $G$ acts isotropically on $M$ then $\dim \curvinfty^G(M), \dim \V^G(M) <\infty$.
\end{proposition}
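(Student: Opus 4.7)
I would tackle the equivalence direction by direction, then deduce finite-dimensionality. The ``if'' direction follows immediately from change-of-variables: if $\omega,\phi$ are $G$-invariant, the equivariance $N(gA)=g_*N(A)$ of the conormal cycle combined with $g^*\omega=\omega$ and $g^*\phi=\phi$ yields
$$\Phi(gA,gU)=\int_{N(A)\cap\pi^{-1}U}g^*\omega+\int_{A\cap U}g^*\phi=\Phi(A,U),$$
so $\Phi\in\curvinfty^G(M)$.

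For the ``only if'' direction I would start with any representation $\Phi=\Int(\omega_0,\phi_0)$. The key observation is that $\Int$ is itself $G$-equivariant, so every pullback $(g^*\omega_0,g^*\phi_0)$ represents $\Phi$, and the task becomes producing a $G$-invariant representative in the same $\Int$-equivalence class. The plan is averaging at a basepoint: fix $x_0\in M$ and let $H\subset G$ be its isotropy subgroup, which is compact since $G$ acts by isometries and therefore $H\hookrightarrow O(T_{x_0}M)$. Average the fiber values of $\omega_0$ over $\pi^{-1}(x_0)\subset SM$ and of $\phi_0$ at $x_0$ with respect to Haar measure on $H$, and then extend the resulting $H$-invariant fiber data to globally defined $G$-invariant forms $(\omega,\phi)$ via the standard identification of $G$-invariant sections of a homogeneous bundle with $H$-invariant elements of the fiber at the origin (applied orbit by orbit if $G$ is not transitive).

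The main obstacle is verifying $\Int(\omega,\phi)=\Phi$ rather than only the equality of two $G$-invariant curvature measures. Here I would invoke the polytope formula \eqref{eq:curv meas polytope}, which exhibits a translation-invariant curvature measure through its coefficients $c(\Phi,\Tan(P,F))$ on tangent cones, so the germ at $x_0$ of a smooth curvature measure is determined by infinitesimal data there. Combined with the kernel description of Theorem \ref{prop:kernel thm}, the $G$-invariant difference $\Int(\omega,\phi)-\Phi$ vanishes infinitesimally at $x_0$ by the $H$-averaging construction (since $\Phi$ is already $H$-invariant, it equals its own $H$-average as a germ), and $G$-equivariance then propagates this vanishing along the orbit through $x_0$.

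For the finiteness statement, assume $(M,G)$ is isotropic, so that $G$ acts transitively on $SM$. A $G$-invariant form on $SM$ is then determined by its value at any fixed $\xi_0\in SM$, and that value must be invariant under the compact stabilizer $H'\subset G$; similarly for forms on $M$. Thus $\Omega^{n-1}(SM)^G\hookrightarrow(\Lambda^{n-1}T^*_{\xi_0}SM)^{H'}$ and $\Omega^n(M)^G\hookrightarrow(\Lambda^n T^*_{x_0}M)^H$ are both finite-dimensional, and the equivalence just established bounds $\dim\curvinfty^G(M)$ and $\dim\V^G(M)=\dim\glob(\curvinfty^G(M))$ accordingly.
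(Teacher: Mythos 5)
Your proof takes a genuinely different route from the paper's. The paper does not average at all: it exhibits a canonical $G$-invariant representative directly. Since $\Phi(M,U)=\int_U\phi$, the form $\phi$ is determined by $\Phi$ and hence automatically $G$-invariant. For $\omega$, the paper applies the Lefschetz decomposition to the horizontal part of $\omega$ restricted to the contact distribution $Q\subset TSM$: this yields a primitive component $\omega_p$ which, by the kernel description in Theorem~\ref{prop:kernel thm}, is \emph{uniquely} determined by $\Phi$, and therefore inherits $G$-invariance because $\Phi$ is $G$-invariant and isometries act as contact transformations preserving $d\alpha$. Lifting $\omega_p$ to the unique horizontal form with that restriction gives the desired $G$-invariant pair. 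No compactness, transitivity, or choice of basepoint enters.

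Your averaging construction has a gap outside the transitive setting, which the proposition is stated to cover. Averaging the fiber data of $(\omega_0,\phi_0)$ over the compact stabilizer $H_{x_0}$ and then spreading out by $G$-equivariance produces a form only along the single orbit $G\cdot x_0$. The parenthetical ``applied orbit by orbit if $G$ is not transitive'' is not a well-defined construction: the stabilizers $H_x$ typically vary discontinuously (they jump up at points lying in lower-dimensional orbits), so the orbit-wise $H_x$-averages have no reason to assemble into a \emph{smooth} form on $M$, and you have not indicated how to make a smooth, $G$-equivariant choice of basepoints across the possibly singular orbit space $M/G$. The verification that $\Int(\omega,\phi)=\Phi$ suffers the same limitation: you can transport the vanishing of $\tau_{x_0}\bigl(\Int(\omega,\phi)-\Phi\bigr)$ only along the orbit through $x_0$, which in general is a proper submanifold of $M$, so Proposition~\ref{prop_isom_curv_general} does not let you conclude the difference is zero globally. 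When $G$ acts transitively your argument can be made rigorous, and the finite-dimensionality part is fine, but you have then proved less than the proposition asserts; the paper's canonical-representative argument is needed for the general statement.
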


\begin{proof} 
Let $\Phi=\Int(\omega,\phi)$ be $G$-invariant. Since $\Phi(M,U)=\int_U \phi$, the form
$\phi$ must be $G$-invariant. 
 
By the Lefschetz decomposition (compare \cite{huybrechts05}, Prop. 1.2.30), the restriction of the horizontal part of
$\omega$ to the contact distribution $Q$
can be uniquely written as
the sum of a primitive form $\omega_p$ and a multiple of the symplectic form $d\alpha$. By Proposition
\ref{prop:kernel thm}, any two forms $\omega, \omega' $ define the same curvature measure iff
$\omega_p=\omega'_p$.

Since $\Phi$ is $G$-invariant, it follows that $\omega_p$ is a $G$-invariant form on the contact
distribution. Let $\tilde \omega$ be the unique horizontal form whose restriction to $Q$ equals $\omega_p$.
Then $\Phi=\Int(\tilde \omega,\phi)$ is a representation of $\Phi$ by invariant forms.  

The last conclusion now follows from the  fact that the spaces of invariant differential forms on $SM$ and on $M$ are
finite dimensional.
\end{proof}

\subsubsection{Analytic continuation}\label{sect:an cont 1}

 {In our discussion of the complex space forms, many formulas will depend on the curvature parameter $\lambda$.
Typically these formulas are much easier to derive in the compact cases (i.e. where $\lambda >0$). Fortunately, they
turn out to depend analytically on $\lambda$, and therefore it makes sense to argue that their geometric content remains
valid also for $\lambda \le 0$. 
 
We justify this contention in this subsection. We consider products of invariant valuations in general families
of isotropic spaces $M_\lambda, \lambda \in \R$. Such valuations may be expressed in terms of invariant differential
forms on the sphere bundle $SM_\lambda$, and (as in  the transfer principle Theorem \ref{thm_transfer} below) the
algebras of such forms for different values of $\lambda$ may be canonically identified. 

We will need to express the Alesker product of valuations in these terms, using the formulation of \cite{ale-be09}, the
key ingredient of which is the action of the {\it Rumin differential} $D_\lambda$ on differential forms of degree $n-1$
on the sphere bundle $SM_\lambda$. Although the various $D_\lambda$ do {\it not} intertwine the identifications above,
our main point here is that they do vary analytically with $\lambda$, and therefore the same is true of the Alesker
product for invariant valuations when expressed in terms of a common model for the various spaces of invariant
differential forms.
  }

Let $M_+,M_-$ be $n$-dimensional rank one symmetric spaces with $M_\pm=G_\pm/H$, and canonical decompositions of the
Lie algebras $\mathfrak g_\pm= \mathfrak p_\pm\oplus \mathfrak h$. One says that $M_+,M_-$ form a dual pair if
$\mathfrak p_+$ can be identified with $\mathfrak p_-$ in such a way that the Lie brackets become
\[
 [X ,Y ]_-=-[X , Y]_+,\qquad [X ,U ]_-=[X , U]_+,\qquad [U ,V ]_-=[U , V]_+
\]
for $X,Y\in \mathfrak p:=\mathfrak p_{\pm}$, $U,V\in \mathfrak h$. It is known that every rank one symmetric space
has a dual in
this sense, and that each dual pair contains exactly one compact space. 

Let us fix invariant  metrics $g_{\pm}$ on $M_\pm,$  such that the identification $\mathfrak
p_+\equiv\mathfrak p_-$ becomes an isometry. For positive $\lambda$, let us denote $M_{\pm \lambda}$ the
symmetric space
$M_{\pm}$ endowed with the metric $g_{\pm \lambda}=\lambda^{-1}g_\pm$. We also put $M_0=\mathfrak p$ and
$G_0=H\ltimes \mathfrak p$.
This yields a family of Riemannian symmetric spaces $M_\lambda=G_\lambda/H, \lambda \in \R$ with a common
canonical
decomposition $\mathfrak g=\mathfrak p \oplus \mathfrak h$. Without loss of generality we assume that
$M_\lambda$ is compact for $\lambda>0$. 

The Lie bracket satisfies
\begin{equation}
 \label{eq_bracket_decomposition}
[\mathfrak p,\mathfrak p]_\lambda \subset\mathfrak h,\qquad [\mathfrak p,\mathfrak h]_\lambda\subset\mathfrak
p,\qquad [\mathfrak h,\mathfrak h]_\lambda\subset\mathfrak h.
\end{equation}

If we prescribe that $\mathfrak p$ is
isometrically identified with each $T_oM_\lambda$, the Lie brackets are given by
\begin{equation} \label{eq_lie_brackets}
 [X ,Y ]_\lambda=\lambda[X , Y]_1,\qquad [X ,U ]_\lambda=[X , U]_1,\qquad [U ,V ]_\lambda=[U , V]_1
\end{equation}
for $X,Y\in \mathfrak p$, $U,V\in \mathfrak h$, $\lambda\in\R$. 

It follows that the family of differential maps
 \[
  d_\lambda:\bigwedge{}^*\mathfrak g^*\rightarrow \bigwedge{}^{*+1}\mathfrak g^*
 \]
is polynomial in $\lambda$. Indeed, let $\theta_1,\ldots, \theta_n$ be a basis of $\mathfrak p^*$, and let
$\varphi_{n+1},\ldots, \varphi_N$ be a basis of $\mathfrak h^*$. By \eqref{eq_bracket_decomposition} and the
Maurer-Cartan equation (cf. e.g. \cite{ko-no}, Chapter I, page 41), there are constants $C_i^{jk}$ independent of $\lambda$ such that 
\begin{align}
 d_\lambda\theta_i&=\theta_i\circ [\ ,\ ]_\lambda=\sum_{j,a} C_i^{ja} \theta_j\wedge \varphi_a
\label{eq_dtheta}\\
d_\lambda\varphi_a&=\varphi_i\circ [\ ,\ ]_\lambda=\lambda\sum_{j,k} C_a^{jk} \theta_j\wedge \theta_k +
\sum_{b,c} C_a^{bc} \varphi_b\wedge \varphi_c \label{eq_dvarphi}
\end{align}
where $1\leq i,j,k\leq n$ and  $n+1\leq a,b,c\leq N$.

\bigskip  Now we consider $\Omega^{k}(SM_\lambda)^{G_\lambda}$. Let $\mathfrak k$ be the Lie algebra of the subgroup
$K\subset H$ fixing an element of $SM_\lambda$. Then $\Omega^{k}(SM_\lambda)^{G_\lambda}$ is identified with 
\[
 E_\lambda^k=\left\{ \omega\in {\bigwedge}^{k}\mathfrak g^*\colon \omega|_{\mathfrak k}=d_\lambda\omega|_{\mathfrak
k}=0\right\}.
\]
It follows from \eqref{eq_dtheta} and \eqref{eq_dvarphi} that $E^k:=E^k_\lambda$ is independent of
$\lambda$, and that $d_\lambda(E^k)\subset E^{k+1}$. In particular $\Omega^*(SM_\lambda)^{G_\lambda}\simeq
\Omega^*(SM_0)^{G_0}$. This identification coincides with the one induced by \eqref{eq:identification_forms}.

\begin{lemma}\label{lemma_rumin_analytic}
 The one-parameter family of Rumin differentials $D_\lambda: E^{n-1}\rightarrow E^n$ is analytic in $\lambda$.
\end{lemma}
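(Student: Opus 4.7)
The plan is to exploit the fact that the Rumin differential is defined by a pointwise linear-algebraic construction which, in the invariant setting, reduces to an operation on the fixed finite-dimensional space $E^*$. The task is to track where $\lambda$ enters and show that the dependence is in fact analytic (indeed rational with nonvanishing denominator).

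First I would locate the ingredients of Rumin's construction inside the common model $E^*$. Because the identification $\mathfrak{p} \simeq T_o M_\lambda$ is isometric and $\alpha = g_\lambda(\xi,\pi_*\cdot)$, the contact $1$-form is represented by a fixed element of $\mathfrak{p}^* \subset E^1$, independent of $\lambda$. Equation \eqref{eq_dtheta} then shows that $d_\lambda \alpha$ is also independent of $\lambda$, and hence so is the Reeb vector field $T$ (characterized intrinsically by $i_T \alpha=1$, $i_T d\alpha=0$) and the contraction operator $i_T: E^{*}\to E^{*-1}$. Thus the only $\lambda$-dependent piece of the whole setup is the exterior differential $d_\lambda$ itself, which by \eqref{eq_dtheta}-\eqref{eq_dvarphi} is polynomial (in fact linear) in $\lambda$.

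Next I would write out Rumin's defining condition in this algebraic form: for $\omega \in E^{n-1}$ the correction $\beta_\lambda(\omega) \in E^{n-2}/(\alpha\wedge E^{n-3})$ is uniquely determined by
\[
i_T d_\lambda(\omega + \alpha\wedge \beta_\lambda)=0,
\]
and then $D_\lambda\omega := d_\lambda(\omega+\alpha\wedge \beta_\lambda)$. Expanding the left-hand side with the Leibniz rule and the relations $i_T\alpha=1$, $i_T d\alpha=0$ converts this into a linear equation
\[
L_\lambda(\beta) = r_\lambda(\omega),
\]
on the quotient $E^{n-2}/(\alpha\wedge E^{n-3})$, in which both $L_\lambda$ and $r_\lambda$ depend polynomially on $\lambda$ (since only $d_\lambda$ carries the parameter). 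Existence and uniqueness of Rumin's decomposition on each individual $SM_\lambda$ ensures that $L_\lambda$ is invertible for every $\lambda\in\R$.

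Finally I would conclude by a Cramer's-rule argument: a polynomial family of invertible linear maps between fixed finite-dimensional vector spaces has an inverse that is rational with nowhere-vanishing denominator, hence analytic in the parameter. Consequently $\beta_\lambda$ depends analytically on $\lambda$, and so does $D_\lambda\omega = d_\lambda(\omega+\alpha\wedge\beta_\lambda)$. The main obstacle, and the only nontrivial step, is the bookkeeping in the first paragraph: one must verify that the isometric identifications really do make $\alpha$ (and hence $T$ and $i_T$) $\lambda$-independent elements of the common model, so that the entire $\lambda$-dependence is funneled through the already-polynomial operator $d_\lambda$. Once this is done, the remaining argument is a formal application of linear algebra.
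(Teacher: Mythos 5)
Your overall strategy is exactly the paper's: funnel the $\lambda$-dependence into the polynomial family $d_\lambda$, observe that finding the Rumin correction amounts to solving a finite-dimensional linear system with polynomial-in-$\lambda$ coefficients, invoke Rumin's existence-and-uniqueness to get invertibility, and apply Cramer's rule. Your opening paragraph, checking explicitly that $\alpha$, $d\alpha$, $T$, $i_T$ are $\lambda$-independent in the common model, is a genuinely useful clarification that the paper leaves implicit.

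However, the defining equation you propose for the Rumin correction is wrong, and this matters. You write that $\beta_\lambda$ is determined by $i_T d_\lambda(\omega + \alpha\wedge\beta_\lambda)=0$. But $i_T\eta=0$ says that $\eta$ has \emph{no} $\alpha$-component in the decomposition $\eta = \alpha\wedge i_T\eta + (\eta - \alpha\wedge i_T\eta)$, whereas Rumin requires $D_\lambda\omega$ to be entirely a multiple of $\alpha$, i.e.\ $d_\lambda(\omega+\alpha\wedge\beta_\lambda) = \alpha\wedge i_T d_\lambda(\omega+\alpha\wedge\beta_\lambda)$, equivalently $\alpha\wedge d_\lambda(\omega+\alpha\wedge\beta_\lambda)=0$, equivalently the restriction to the contact distribution $\ker\alpha$ vanishes. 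This is not a notational quibble: one can see it directly from the kernel description in Theorem \ref{prop:kernel thm}, where $D\omega = -\pi^*\phi$ for $(\omega,\phi)\in\ker\Glob$; since $\pi^*\phi = \alpha\wedge(\text{horizontal }(n-1)\text{-form})$ once one splits off the Reeb direction, $D\omega$ does have a nontrivial $\alpha$-component, so the condition $i_T D\omega=0$ fails. As written, the operator you would be constructing is not $D_\lambda$, and the uniqueness you invoke from Rumin's theorem does not apply to your system. Replacing your condition with the correct one — kill the component of $d_\lambda(\omega+\alpha\wedge\beta_\lambda)$ transverse to the ideal of $\alpha$ — leaves all the rest of your argument intact, and then it coincides with the paper's proof.
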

\proof
Let $\omega_1,\ldots,\omega_k$ be a basis of the space of vertical $(n-1)$-forms. Let $\tau_1,\ldots,\tau_k$ be
a basis of the space of horizontal $n$-forms. Let
$\rho_1,\ldots,\rho_l$ be a basis for the space of vertical $n$-forms.

Then we obtain
\begin{displaymath}
d_\lambda \omega_i = \sum_{j=1}^k c_{i,j}(\lambda) \tau_j + \sum_{j=1}^l c'_{i,j}(\lambda) \rho_j
\end{displaymath}
where the coefficients $c,c'$ are polynomials in $\lambda$.

To find $D_\lambda \omega$, we must find $c_1(\lambda),\ldots,c_k(\lambda)$ such that
$D_\lambda \omega= d_\lambda(\omega+\sum_s c_s(\lambda) \omega_s)$ is vertical. Expanding this yields a system
of linear equations for the $c_s(\lambda)$'s where the coefficients are polynomials in $\lambda$. By Rumin's
theorem \cite{rumin94}, there exists a unique solution. By Cramer's rule this solution will be
rational in $\lambda$, hence analytic. From this it follows that $D_\lambda \omega=d_\lambda \omega+\sum_s
c_s(\lambda) d_\lambda \omega_s$ is analytic in $\lambda$.
\endproof

\begin{proposition}\label{prop:module_analytic}
 There exists a family of linear maps $\psi_\lambda$ depending analytically on $\lambda\in \R$
\[
 \psi_\lambda: (E^{n-1}\oplus {\bigwedge}^{n}\mathfrak p^*) \oplus(E^{n-1}\oplus {\bigwedge}^{n}\mathfrak
p^*)\longrightarrow (E^{n-1}\oplus {\bigwedge}^{n}\mathfrak p^*)
\]
such that, for $\omega_1,\omega_2\in E^{n-1}\simeq\Omega^{n-1}(SM_\lambda)^{G_\lambda}$ and
$\phi_1,\phi_2\in\bigwedge^{n}\mathfrak p^*\simeq {\Omega^n(M_\lambda)^{G_\lambda}}$,
\[
 \Int \psi_\lambda(\omega_1,\phi_1,\omega_2,\phi_2)={\Glob}(\omega_1,\phi_1)\cdot \Int(\omega_2,\phi_2),
\]
where the product in the last expression is the module product of Proposition \ref{prop_def_module}.
\end{proposition}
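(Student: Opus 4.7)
The plan is to recall the Alesker--Bernig formula for the Alesker product, observe that the construction in the proof of Proposition \ref{prop_def_module} lifts this formula to an operation on forms producing the module product, and then verify that every ingredient depends analytically on $\lambda$ after the canonical identification $\Omega^*(SM_\lambda)^{G_\lambda} \simeq E^*$ and $\Omega^n(M_\lambda)^{G_\lambda}\simeq \bigwedge^n\mathfrak p^*$.

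First I would specialize the construction from the proof of Proposition \ref{prop_def_module} to the present setting. Given $\mu = \Glob(\omega_1,\phi_1)$ and $\Phi = \Int(\omega_2,\phi_2)$ built from $G_\lambda$-invariant forms, that proof produces the module product as
\[
  \mu\cdot\Phi \;=\; \Int\bigl(Q_1^\lambda(\omega_1,\omega_2),\; Q_2^\lambda(\omega_1,\phi_1,\omega_2,\phi_2)\bigr),
\]
where $(Q_1^\lambda,Q_2^\lambda)$ are the operations on invariant forms from \eqref{eq_albe_formula}, i.e.\ from the product formula of \cite{ale-be09}. Defining
\[
 \psi_\lambda(\omega_1,\phi_1,\omega_2,\phi_2) \;:=\; \bigl(Q_1^\lambda(\omega_1,\omega_2),\; Q_2^\lambda(\omega_1,\phi_1,\omega_2,\phi_2)\bigr)
\]
then automatically delivers the required identity; linearity follows at once from the linearity properties \eqref{eq_linearity_albe_formula1}--\eqref{eq_linearity_albe_formula2} enjoyed by $Q_1,Q_2$.

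Second I need to check that $\psi_\lambda$, viewed as a map between the $\lambda$-independent spaces $(E^{n-1}\oplus \bigwedge^n\mathfrak p^*)^{\oplus 2}$ and $E^{n-1}\oplus \bigwedge^n\mathfrak p^*$, depends analytically on $\lambda$. The Alesker--Bernig formula is assembled from: (i) wedge products on the algebra $\bigwedge^*\mathfrak g^*$; (ii) pullbacks under the diagonal embedding $SM_\lambda\hookrightarrow SM_\lambda\times SM_\lambda$ and fiber integration over an auxiliary fiber product of sphere bundles; and (iii) an application of the Rumin differential $D_\lambda$ to degree $n-1$ forms on $SM_\lambda$. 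The operations in (i) are purely combinatorial on $\bigwedge^*\mathfrak g^*$ and hence $\lambda$-independent; the exterior differential $d_\lambda$ which governs which forms lie in $E^*$ is polynomial in $\lambda$ by \eqref{eq_dtheta}--\eqref{eq_dvarphi}; the fiber-integration and diagonal-pullback maps in (ii) are determined by the $\lambda$-independent splitting $\mathfrak g=\mathfrak p\oplus\mathfrak h$ and so are constant in $\lambda$; and the Rumin differential in (iii) is analytic in $\lambda$ by Lemma \ref{lemma_rumin_analytic}. Composing polynomial and analytic maps yields the analyticity of $\psi_\lambda$.

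The main obstacle is simply bookkeeping: one must unpack the Alesker--Bernig formula carefully enough to confirm that $d_\lambda$ and $D_\lambda$ are the only non-combinatorial ingredients, and that all fiber integrations and diagonal pullbacks used in \cite{ale-be09} really do descend to $\lambda$-independent operations on the common model $E^*$. Since these geometric operations at the level of $G_\lambda/K$ are computed using only the ambient $\mathfrak g=\mathfrak p\oplus\mathfrak h$ structure shared by all $\lambda$, no genuine analytic input beyond Lemma \ref{lemma_rumin_analytic} is needed, but some care is required to make the identifications explicit.
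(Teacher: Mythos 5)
Your proof is correct and follows the same route as the paper: both define $\psi_\lambda$ by specializing the operations $Q_1,Q_2$ from the proof of Proposition \ref{prop_def_module} and then observe that their only $\lambda$-dependence is through the Rumin differential $D_\lambda$, whose analyticity is Lemma \ref{lemma_rumin_analytic}. The extra bookkeeping you describe in your final paragraph is exactly the content the paper compresses into "and are independent of $\lambda$ besides that."
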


\proof
From the proof of Proposition \ref{prop_def_module}, we recall that 
\[
{\Glob}(\omega_1,\phi_1)\cdot \Int(\omega_2,\phi_2)=
\Int(Q_1(\omega_1,\omega_2),Q_2(\omega_1,\phi_1,\omega_2,\phi_2))
\]
where $Q_1,Q_2$ are certain operations on differential forms explicitly given in \cite{ale-be09}. These operations
involve the Rumin differential $D_\lambda$, and are independent of $\lambda$ besides that. Therefore the result follows
from the lemma above. 
\endproof

For $\lambda \in \R,\lambda'\in\mathbb R\setminus\{0\}$ let us consider
$f_\lambda^{\lambda'}:E_\lambda\rightarrow E_{\lambda'}$ given as follows. If 
\begin{displaymath}
\omega=\omega_1\otimes\omega_2\in \bigwedge{}^*\mathfrak p^*\otimes\bigwedge{}^*\mathfrak h^*=\bigwedge{}^*\mathfrak
g^* 
\end{displaymath}
then 
\begin{displaymath}
 f_\lambda^{\lambda'}(\omega):=\left(\frac \lambda{\lambda'}\right)^{\lfloor\frac{\deg\omega_1}2\rfloor}\omega,
\end{displaymath}
and we extend linearly. We say that $\omega\in E$ is \textbf{even} (resp. \textbf{odd}) if only even (resp.
odd) degrees of $\omega_1$ occurred in the previous definition.  Note that $d_\lambda$ preserves this parity
notion  by \eqref{eq_dtheta} and \eqref{eq_dvarphi}.

If $\omega$ is even, then $\omega\in E_\lambda$ and $f_\lambda^{\lambda'}(\omega)\in  E_{\lambda'}$ define the same form
in $SM_{\lambda}=SM_{\lambda'}$, whenever $\lambda\lambda'>0$.  If $\omega$ is odd, then $\omega$ and
$(\lambda/\lambda')^{1/2}f_{\lambda,\lambda'}(\omega)$ correspond to the same form in $SM_{\lambda}=SM_{\lambda'}$,
provided $\lambda\lambda'>0$.   

Since the Rumin operator $D$ is independent of the Riemannian metric and since $D_\lambda$ respects the
parity, we have for all $\lambda$ with $\lambda \lambda'>0$ 
\begin{equation}
 \label{eq:DffD}D_{\lambda'} f_{\lambda}^{\lambda'} \omega= f_{\lambda}^{\lambda'} D_{\lambda}\omega, \qquad
\forall\omega\in E_{\lambda}.
\end{equation}
Since both sides are analytic in $\lambda$, this equation is true for all $\lambda$. 

\begin{proposition}\label{prop:no vol}
The volume valuation cannot be represented by an invariant form $\omega\in
\Omega^{n-1}(SM_\lambda)^G$.
\end{proposition}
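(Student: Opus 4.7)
The approach is proof by contradiction. Assume $\Glob(\omega, 0) = \vol_\lambda$ for some invariant $\omega \in \Omega^{n-1}(SM_\lambda)^{G_\lambda}$. Since we also have $\vol_\lambda = \Glob(0, dV_\lambda)$, the pair $(\omega, -dV_\lambda)$ lies in $\ker \Glob$, which by Theorem~\ref{prop:kernel thm} translates to
\[
D_\lambda \omega = \pi^* dV_\lambda, \qquad \int_{S_p^* M_\lambda} \omega = 0.
\]

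The compact case $\lambda > 0$ yields the contradiction in one line. Evaluating on $A := M_\lambda$ itself: since $M_\lambda$ is a closed manifold, $N(M_\lambda) = 0$, so $\Glob(\omega, 0)(M_\lambda) = \int_{N(M_\lambda)} \omega = 0$, while $\vol_\lambda(M_\lambda) > 0$. In the translation-invariant case $\lambda = 0$, a separate direct argument is available: $M_0 \simeq \mathfrak{p}$ is a Euclidean vector space, $G_0$-invariance forces $\omega$ to be translation-invariant, and by the decompositions \eqref{eq:forms decomp} and \eqref{eq:curv decomp} the valuation $\Glob(\omega, 0)$ has homogeneity components only in degrees $0,\ldots,n-1$, whereas $\vol_0 \in \Val_n$ is of pure degree $n$.

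For the remaining case $\lambda < 0$, the contradiction should propagate from $\lambda > 0$ by analytic continuation, using the framework of Section~\ref{sect:an cont 1}. View $\omega \in E^{n-1}$ in the common space, and $\pi^* dV_\lambda$ as a $\lambda$-independent element of $E^n$ via the isometric identification $\mathfrak{p} \simeq T_o M_\lambda$; by Lemma~\ref{lemma_rumin_analytic}, $D_\lambda$ depends analytically on $\lambda$. The existence of $\omega$ satisfying the displayed system then reduces to a rank condition in an analytic family of linear maps, and its failure on the open set $\{\lambda > 0\}$ should force failure for all $\lambda$ via analyticity of the relevant augmented-rank determinants.

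The delicate point, and the main obstacle, is this analytic continuation step: the condition ``$\pi^* dV$ lies in the image of $D_\lambda$'' is an upper-semicontinuous rank condition that could in principle hold on thin analytic subvarieties even when it fails on an open set. A clean implementation either argues from the explicit vanishing of the augmented-rank determinants, or else transfers a hypothetical solution $\omega_{\lambda_0}$ for $\lambda_0 < 0$ to the positively-curved regime via the maps $f_\lambda^{\lambda'}$ of Section~\ref{sect:an cont 1}, where the compact-case argument above yields the same contradiction.
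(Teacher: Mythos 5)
Your proof follows the same strategy as the paper for the cases $\lambda > 0$ (evaluate on the whole compact manifold, where the normal cycle vanishes) and $\lambda = 0$ (degree count using the decomposition \eqref{eq:forms decomp}), and you correctly identify both the danger in a naive analytic-continuation argument and the fix. The naive rank argument is indeed unsound for exactly the reason you state: solvability of $D_\lambda\omega = \pi^*dV_\lambda$ is an upper-semicontinuous condition in $\lambda$, so its failure on an open set does not propagate. The paper sidesteps this precisely via your second "clean implementation": the intertwining relation \eqref{eq:DffD}, $D_{\lambda'}f_\lambda^{\lambda'} = f_\lambda^{\lambda'}D_\lambda$, which is first checked geometrically for $\lambda\lambda' > 0$ and then extended to all $\lambda$ by analyticity of both sides as polynomial families in $\lambda$. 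What remains to make your sketch complete is the one computation you omit: $\pi^*dV_\lambda$ is a purely horizontal $n$-form, so $f_\lambda^1(\pi^*dV_\lambda) = \lambda^{\lfloor n/2 \rfloor}\pi^*dV_1$, and applying $f_\lambda^1$ to the hypothetical identity $D_\lambda\omega = \pi^*dV_\lambda$ at some $\lambda_0 < 0$ yields $D_1 f_{\lambda_0}^1(\omega) = \lambda_0^{\lfloor n/2\rfloor}\pi^*dV_1$, reducing to the already-settled compact case. With that line filled in, your argument is the paper's argument.
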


\proof
Suppose that $\omega\in
\Omega^{n-1}(SM_\lambda)^G$ represents the volume. By \cite{bebr07} this is equivalent to $D_{\lambda}
\omega=\pi^*\vol$. 

For $\lambda>0$,  evaluation on the whole space $M_\lambda$ yields $0=\vol(M_\lambda)$,  a contradiction.

Next we consider the case $\lambda=0$. From \eqref{eq:forms decomp} it follows that the degree of each homogeneous
component of the valuation represented by
$(\omega,0)$ is less than $n$. Since the degree of $\vol$ is $n$, this shows the statement in this case. 

Now suppose that $D_{\lambda} \omega=\pi^*\vol$ for some $\lambda<0$.  By \eqref{eq:DffD}
\begin{displaymath}
 D_{1}f_{\lambda}^{1}(\omega)={\lambda}^{\lfloor\frac n2\rfloor}\pi^*d\vol,
\end{displaymath}
in contradiction to what we have shown above. 
\endproof

\begin{proposition}
A $G$-invariant valuation can be represented by a pair $(\omega,\phi)$ of $G$-invariant forms. 
\end{proposition}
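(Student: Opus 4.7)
The plan is to reduce the statement to its curvature measure analogue, Proposition \ref{prop:invariant curv}. It suffices to show that the globalization map $\glob:\curvinfty^G(M)\to\V^G(M)$ is surjective: for then any $\mu\in\V^G(M)$ equals $\glob(\Phi)$ for some $\Phi\in\curvinfty^G(M)$, and Proposition \ref{prop:invariant curv} furnishes $G$-invariant forms $\omega,\phi$ with $\Phi=\Int(\omega,\phi)$, so that $\mu=\Glob(\omega,\phi)$.

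For the surjectivity I would first exhibit a distinguished $G$-invariant curvature measure $\chi_{\curv}\in\curvinfty^G(M)$ globalizing to the Euler characteristic $\chi$, which is the multiplicative unit of the Alesker algebra $\V(M)$. Since $M$ is Riemannian and $G$ acts by isometries, the Chern--Gauss--Bonnet construction yields exactly such a $\chi_{\curv}$: the relevant Pfaffian form on $M$ and transgression form on $S^*M$ are manufactured from the $G$-invariant metric and are therefore themselves $G$-invariant.

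Given an arbitrary $\mu\in\V^G(M)$, the next step is to form
$$
\Phi := \mu\cdot\chi_{\curv}\in\curvinfty(M),
$$
using the module action of Proposition \ref{prop_def_module}. This action is natural under diffeomorphisms (it is characterized by the relation $(\mu\cdot\Phi)_f=\mu\cdot\Phi_f$, whose two sides are intertwined by pullback along any $g\in G$), so the invariance of $\mu$ and of $\chi_{\curv}$ forces $\Phi\in\curvinfty^G(M)$. Setting $f\equiv 1$ in \eqref{eq_def_module} shows that $\glob$ is a $\V(M)$-module map, so
$$
\glob(\Phi)=\mu\cdot\glob(\chi_{\curv})=\mu\cdot\chi=\mu,
$$
which is the desired surjectivity statement.

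The one substantive point is the existence of the invariant lift $\chi_{\curv}$ of the Euler characteristic; this is the only place where the Riemannian structure genuinely enters the argument, and it is immediate from the classical Chern--Gauss--Bonnet formula (and also follows from the Lipschitz--Killing theory expounded in the later sections). Everything else is formal manipulation with the module structure, and in particular the argument does not require any averaging over $G$, which is essential since $G$ is noncompact in the hyperbolic case.
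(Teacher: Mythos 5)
Your proof is correct and takes a genuinely different route from the paper's. The paper proves this proposition inside the analytic-continuation section for the family $\{M_\lambda\}$, arguing by cases on the sign of the curvature: averaging over the compact group $G_\lambda$ when $\lambda>0$; citing Theorem 3.3 of \cite{bebr07} when $\lambda=0$; and, when $\lambda<0$, invoking the Gysin sequence for the sphere bundle $SM_1$ of the compact dual, transferring via the maps $f_\lambda^1$ and $f_1^\lambda$ and the commutation relation \eqref{eq:DffD}, averaging in the compact picture, and transporting back to $M_\lambda$ (up to a multiple of $\chi$, handled separately). Your argument instead shows directly that $\glob:\curvinfty^G(M)\to\V^G(M)$ is onto, by exhibiting an invariant section $\Lambda_0$ of $\glob$ over $\chi$ (the Lipschitz--Killing/Gauss--Bonnet curvature measure, which is intrinsic to the metric and hence invariant under isometries, cf.\ Lemma \ref{lem:lk invariance} with $j=g\in G$), and then writing $\mu=\glob(\mu\cdot\Lambda_0)$ using that $\glob$ is a $\V(M)$-module map and that the module action of Proposition \ref{prop_def_module} is natural under diffeomorphisms. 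This avoids all the casework and the analytic-continuation apparatus, and in fact works verbatim for an arbitrary isotropic Riemannian $(M,G)$, not only for the symmetric-space family $M_\lambda$; what the paper's argument buys is that it stays within the pure differential-form/Rumin-operator framework and does not presuppose the module structure, the intrinsic Lipschitz--Killing theory, or the naturality of the Alesker product (all of which are, however, established or used elsewhere in the paper, so your dependencies are legitimate). One small polish: you do not actually need classical Chern--Gauss--Bonnet in its full strength, only the existence of an invariant curvature measure globalizing to $\chi$ --- and $\Lambda_0$ as defined by \eqref{eq:lamb=delt} together with Weyl's tube formula is exactly that, so the appeal to intrinsic Riemannian data is essentially the same observation the paper packages as Lemma \ref{lem:lk invariance}.
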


\proof  Let $\mu\in\V^{G_\lambda}(M_\lambda)$ be represented by $\omega\in \Omega^{n-1}(SM_\lambda)$ and
$\phi\in \Omega^n(M_\lambda)$. If $\lambda>0$, then $G_\lambda$ is compact, and we may take $\omega,\phi$
invariant by averaging over $G_\lambda$.

The statement for $\lambda=0$ follows from Theorem 3.3. in \cite{bebr07}. 

Suppose now $\lambda<0$. By Proposition 1.7 in \cite{bebr07}, the form $D_\lambda\omega+\pi^*\phi=:\eta$ is
invariant. Let us consider $f_ \lambda^1\eta$ which defines an invariant vertical closed form in
$\Omega^n(SM_1)$. The Gysin long exact sequence \cite{gysin} of the sphere bundle $\pi\colon SM_1\rightarrow M_1$ yields
$H^n(SM_1)=0$ or $H^n(SM_1)\cong H^n(M_1)$. Hence, \[
f_\lambda^1\eta= D_{1}\rho+\pi^*\varphi
\]
for some differential forms $\rho\in \Omega^{n-1}(SM_{1})$, $\varphi\in\Omega^n(M_1)$. By averaging over ${G_1}$ we can
make $\rho,\varphi$ invariant. Put $\rho_{\lambda}=f_ 1^{\lambda}\rho$ and $\pi^*\varphi_{\lambda}=f_
1^{\lambda}\pi^*\varphi$. By \eqref{eq:DffD},
\[
 D_\lambda \rho_\lambda+  \pi^*\varphi_\lambda=\eta.
\]
It follows from Theorem \ref{prop:kernel thm} that $\mu\equiv {\Glob}(\rho_\lambda,\varphi_\lambda)$ up to
a multiple of the Euler characteristic $\chi$. Since $\chi$ can be represented by an invariant pair, the statement
follows.
\endproof

\subsubsection{Local kinematic formulas} Any isotropic space admits an array of kinematic formulas. These come in three
types: 
{\bf local}, 
{\bf global}, and
{\bf semi-local}. The local formulas apply to invariant curvature measures, while the global formulas apply to invariant
valuations. The latter are obtained from the former by intertwining with the globalization map, and the semi-local
formulas are obtained by applying globalization only partially. Thus the local formulas are in a certain sense the most
fundamental, especially in view of the transfer principle Theorem \ref{thm_transfer} below. On the other hand the
global formulas may be viewed as adjoint to the multiplicative structure on $\V^G$ (Theorem \ref{thm_ftaig} below), and
thus enjoy special formal properties. The semi-local formulas are similarly related to the structure of $\curvinfty^G$
as a module over $\V^G$.

The construction of the kinematic formulas for curvature measures was given in a slightly different formulation in
\cite{fu90,fu94}. We sketch the proof, emphasizing certain points that will be important in the present paper.
Referring to the second part of the Remark following Definition \ref{def:curv & val}, the precise class of subspaces
$X,Y \subset M$ for which it is valid remains poorly understood, although by \cite{fu94} this class includes
the  subanalytic and semiconvex cases, and therefore also the case of compact differentiable polyhedra.

Let $dg$ denote the bi-invariant measure on $G$, normalized so that 
\begin{equation}\label{eq:normal haar}
dg(\{g: go \in E\}) = \vol(E)
\end{equation}
whenever $o \in M$ and $E \subset M$ is measurable.

Recall that a {\it coproduct} on a vector space $V$ is a linear map $c:V \to V\otimes V$. The coproduct $c$ is {\it
cocommutative} if $c(v)$ is symmetric for all $v \in V$, and {\it coassociative} if
\begin{equation}\label{eq:def coassoc}
(\id\otimes c) \circ c = (c \otimes \id ) \circ c.
\end{equation}

\begin{theorem}[\cite{fu90}] \label{thm:basic K} There exists a cocommutative, coassociative coproduct 
$$
K= K_G: \curvinfty^G \to  \curvinfty^G \otimes  \curvinfty^G 
$$
such that for all sufficiently nice compact subsets $P,Q \subset M$ and open sets $U,V \subset M$
\begin{equation}\label{eq:curv K}
K(\Phi) (P,U;Q,V) = \int_{G} \Phi(gP \cap Q,g U \cap V) \, dg.
\end{equation}
\end{theorem}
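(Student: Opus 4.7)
The plan is to define the right-hand side of \eqref{eq:curv K} directly, verify that it gives a smooth $G$-invariant curvature measure separately in each pair of variables, and then use the finite-dimensionality result from Proposition \ref{prop:invariant curv} to re-express it as a tensor.

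Fix $\Phi \in \curvinfty^G$ and write $\Phi = \Int(\omega,\phi)$. For a fixed sufficiently nice compact $Q \subset M$ and fixed open $V \subset M$, I would first show that the functional
\[
(P,U)\longmapsto I_{Q,V}(P,U) := \int_G \Phi(gP\cap Q, gU\cap V)\, dg
\]
defines a smooth curvature measure on $M$. For almost every $g \in G$, the isotropy assumption forces $gP$ and $Q$ to meet transversally, so the intersection $gP \cap Q$ admits a normal cycle given by the slice $N(gP)\cap N(Q)$ in the sense of \cite{fu94}. Using the integral representation of $\Phi$, the inner integrand becomes an integral of $\omega,\phi$ against this sliced cycle. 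By an application of Fubini on $G \times (S^*M \oplus M)$ together with the coarea formula relating integration along $G$-orbits to integration over $M$ via \eqref{eq:normal haar}, one obtains forms $\omega_{Q,V}, \phi_{Q,V}$ on $S^*M$ and $M$, supported in a neighborhood of $Q$ and $V$ respectively, such that $I_{Q,V}(P,U) = \Int(\omega_{Q,V},\phi_{Q,V})(P,U)$. Symmetric reasoning applies in the $(Q,V)$ slot with $(P,U)$ fixed.

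Next I would show that the bilinear functional $I$ is $G$-invariant in each pair of variables. For $h \in G$, substituting $g \mapsto hg$ and using left-invariance of $dg$ and $G$-invariance of $\Phi$ yields $I_{Q,V}(hP,hU) = I_{Q,V}(P,U)$; using $g \mapsto gh^{-1}$ and right-invariance gives the analogous statement in the second pair. By Proposition \ref{prop:invariant curv} the space $\curvinfty^G$ is finite dimensional; pick a basis $\{\Phi_i\}$ and dual curvature-theoretic functionals. The bilinearity in the two pairs, combined with the fact that each slice lies in $\curvinfty^G$, lets me expand
\[
I(P,U;Q,V) = \sum_{i,j} c_{ij}\, \Phi_i(P,U)\, \Phi_j(Q,V)
\]
for uniquely determined constants $c_{ij}$. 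Setting $K(\Phi) := \sum_{i,j} c_{ij}\, \Phi_i \otimes \Phi_j \in \curvinfty^G\otimes \curvinfty^G$ then satisfies \eqref{eq:curv K} and depends linearly on $\Phi$.

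For cocommutativity, I would apply the substitution $g\mapsto g^{-1}$ (legitimate since $G$ is unimodular in all the isotropic cases, as the stabilizer is compact) together with the identity $gP\cap Q = g(P\cap g^{-1}Q)$ and $G$-invariance of $\Phi$ to rewrite the right-hand side of \eqref{eq:curv K} with the roles of $(P,U)$ and $(Q,V)$ swapped, which translates to symmetry of $K(\Phi)$ under the tensor flip. For coassociativity, the natural object is the iterated kinematic integral
\[
J(\Phi)(P,U;Q,V;R,W) := \int_{G\times G} \Phi(g_1 P\cap g_2 Q\cap R,\; g_1 U\cap g_2 V\cap W)\, dg_1\, dg_2,
\]
which by Fubini can be computed by first integrating in $g_1$ (and then in $g_2$) or vice versa. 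Expanding the inner integral in each order via the already-established formula \eqref{eq:curv K} produces, respectively, $(K\otimes \id)(K(\Phi))$ and $(\id\otimes K)(K(\Phi))$ applied to the triple $(P,U;Q,V;R,W)$; equality of these two expansions is exactly \eqref{eq:def coassoc}.

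The main obstacle is the first step: showing that the integral $I_{Q,V}(P,U)$ is actually representable by smooth differential forms on $S^*M$ and $M$, rather than being merely a bounded additive functional on polyhedra. This requires a careful treatment of the normal-cycle slicing, of transversality for almost every $g$, and of the exchange of integration between $G$ and the fibers of $\pi: S^*M \to M$; the idea is that the $G$-action allows one to replace the Haar integral by a smoothing of the forms $\omega,\phi$ via the orbit geometry, and the isotropy hypothesis guarantees that this smoothing stays within the class of smooth $G$-invariant forms. The remaining algebraic structure (cocommutativity, coassociativity, linearity) then follows by formal manipulations with Haar measure.
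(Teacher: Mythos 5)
Your high-level strategy is valid in outline and the algebraic parts (cocommutativity via $g\mapsto g^{-1}$, coassociativity via Fubini) coincide with the paper's treatment, but you have left the actual content of the theorem unproven. The step you flag as "the main obstacle" --- that $I_{Q,V}(P,U):=\int_G\Phi(gP\cap Q,gU\cap V)\,dg$ is a \emph{smooth} curvature measure, representable by differential forms on $S^*M$ and $M$ --- is exactly what the proof must supply, and neither Fubini nor the coarea formula produces it by itself; one needs a concrete mechanism converting the $G$-integral into a form. The paper's argument achieves this by a geometric-measure-theoretic construction: it builds from $N(P),N(Q)$ an integral current $T(P,Q)=p_*T_0+p_{1*}T_1+p_{2*}T_2$ in $G\times SM$ whose slices recover $N(gP\cap Q)$ for a.e.\ $g$, establishes the a priori mass bound \eqref{eq:mass T} needed for integrability, and defines $H\phi:=\pi_{\CC*}(\iota\circ p)^*(\phi\wedge dg)$ by fiber integration. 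This $H$ lands directly in $\Omega^*(SM\times SM)^{G\times G}\simeq\Omega^*(SM)^G\otimes\Omega^*(SM)^G$, so the tensor decomposition \eqref{eq:KG integ} drops out immediately, with no need to fix one slot, appeal to finite-dimensionality of $\curvinfty^G$, or choose dual functionals. Your symmetric "fix one slot and expand in a basis" route would in principle also work, but it forces you to carry out the hard smoothness argument twice (once in each slot), and that argument is precisely the current construction --- so it is not really a shortcut.

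There is also a local inconsistency in your write-up: you claim the forms $\omega_{Q,V},\phi_{Q,V}$ representing $I_{Q,V}$ may be taken "supported in a neighborhood of $Q$ and $V$," and then in the very next step establish that $I_{Q,V}$ is $G$-invariant. These cannot both hold in the stated form: since $G$ acts transitively on $M$, a $G$-invariant curvature measure is representable by $G$-invariant forms (Proposition \ref{prop:invariant curv}), which are either identically zero or have full support. The resolution is that the forms arising from a pointwise integral-geometric argument are not a priori invariant --- one must average them, or, as the paper does, build the construction so that invariance is automatic from the equivariance of diagram \eqref{basic diagram}. As written, this part of your argument conflates two incompatible descriptions of the same object.
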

\begin{proof}
We use some notions from Geometric Measure Theory and refer to \cite{gmt}
for details. The main point is that the normal cycles of generic intersections $P\cap gQ$ arise as slices of a fixed
current that is constructed in a canonical way from $N(P),N(Q)$.

Let $\pi:SM\to M$ be the sphere bundle over $M$. Pick reference points $o \in M, \bar o \in S_oM$. Let
\begin{equation}\label{eq:C}
\CC:= \{(g, v )\in G_o \times S_oM: g\bar o \ne -\bar o, \ v\in \overline {\bar o, g\bar o}\},
\end{equation}
where for non-antipodal points $v,w \in S_oM$ we put $\overline {v,w}$ for the open minimizing geodesic arc joining $v$
to $w$. Then $\CC$ has finite volume and admits a natural orientation, and the action
$$
(\ell_1,\ell_2)\cdot (g,v):= (\ell_2g \ell_1^{-1}, \ell_2v)
$$
of $G_{\bar o}\times G_{\bar o}$ on $G_o \times S_oM$ induces an orientation- and volume-preserving action on $\CC$. Let
$E$ be the fiber bundle over $SM \times SM$ obtained by pulling back the bundle
$$
G\times SM \to M \times M, \quad (g, \xi) \mapsto (\pi  g\xi, \pi  \xi)
$$
to $SM \times SM$ via the projection $SM \times SM \to M\times M$. Thus we have the commutative diagram (cartesian
square)
\begin{equation}\label{basic diagram}
\xymatrix{E \ar[d] \ar[r]^p & G\times SM \ar[d]\\
SM \times SM \ar[r] & M \times M
}
\end{equation}
where the vertical maps have fibers isomorphic to $G_o \times S_oM$. The natural actions of the group $G\times G$ are
intertwined by the maps of \eqref{basic diagram}. 

 While the group of transition functions for the bundle on the right is $G_o \times G_o$, on the left the group reduces
to $G_{\bar o}\times G_{\bar o}$. Therefore there is a natural copy of $\CC$ inside the fiber over each $(\xi,\eta) \in
SM \times SM$, namely
$$
\CC_{\xi,\eta}= \{(g,\zeta) \in G \times SM: \pi  g \xi = \pi  \eta = \pi  \zeta=: x, \ g\xi \ne -\eta, \ \zeta \in
\overline{g\xi,\eta} \subset S_{x}M\}.
$$
 Let $\pi_{\CC}:F \to SM \times SM$ the corresponding subbundle of $E$ and $\iota:F \to E$ the inclusion.

For $P,Q $ as above we may construct the fiber product 
$$
 T_0(P,Q) :=\iota_*(N(P) \times N(Q) \times_E \CC)
$$
 as a well-defined integral current living in $E$.
 Put 
 \begin{align}
\label{eq:def gamma bundles} \Gamma_1&:=\{(g,x,\eta)\in G \times M\times SM:  gx = \pi_M\eta\}, \\
 \Gamma_2 &:=\{(g,\xi,y) \in G \times SM \times M:  g\pi \xi = y\}.
 \end{align}
 We may regard $\Gamma_1, \Gamma_2$ as smooth fiber bundles over $M\times SM, SM \times M$ respectively, with fibers diffeomorphic to $G_o$. Thus we construct also two currents living in $G \times M \times SM$ and $G \times SM \times M$, respectively, by
\begin{align*}
T_1(P,Q)&:= \{(g,x,\eta)\in P\times N(Q):  gx = \pi_M\eta\},\\
 T_2(P,Q)&:= \{(g,\xi,y) \in N(P) \times Q:  g\pi \xi = y\}.
\end{align*}

Consider the natural projections
$$
 p_1:G \times M \times SM\to G\times SM,\  p_2:G \times SM \times M \to G\times SM
$$
and put
\begin{equation}\label{eq:decomp T}
T(P,Q):= p_* T_0(P,Q) + p_{1*}T_1(P,Q) + p_{2*}T_2(P,Q)  \subset G \times SM.
\end{equation}
Clearly 
\begin{equation}\label{eq:mass T}
\mass T(P,Q) \le c ( \mass N(P)  \mass N(Q) + \vol P \mass N(Q) + \mass N(P) \vol Q).
\end{equation}

For a.e. $g\in G$ the normal cycle of $gP\cap Q$ is given in terms of  slices of $T(P,Q)$ by
\begin{equation}\label{eq:slicing procedure}
N(gP\cap Q) = \pi_{SM*}\langle T(P,Q), \pi_G, g\rangle
\end{equation} 
where $\pi_{SM}, \pi_G$ are the natural projections of $G\times SM$ to the two factors (on the right hand side  we use the notation $\langle C, f, t\rangle$ of \cite{gmt}, section 4.3, to denote the slice of the current $C$ by the map $f$ at the value $t$). Moreover, if $U,V\subset M$ are
open then 
$$
W=W(U,V):= \{(g,\zeta) \in G\times SM: \pi\zeta \in gU \cap V\}
$$
is open, and for a.e. $g\in G$
\begin{equation}\label{eq:slice}
N(gP\cap Q) \lefthalfcup \pi^{-1}(gU \cap V) = \pi_{SM*}\langle T(P,Q) \lefthalfcup W, p,g \rangle.
\end{equation}

Now consider the operator  
$$
H:\Omega^*(SM)^G \to \Omega^*(SM\times SM)^{G\times G} \simeq \Omega^*(SM)^G\otimes  \Omega^*(SM)^G 
$$ 
given by
\begin{equation}\label{eq:def H}
 H\phi:= \pi_{\CC*} (\iota \circ p)^*(\phi \wedge dg).
\end{equation}
Thus, for $\phi \in \Omega^*(SM)^G $,
$$
\int_{p_*T_0(P,Q)} \phi \wedge dg = \int_{N(P) \times N(Q) } H\phi.
$$

Given $P,Q,U,V$ as above and $\omega \in \Omega^{n-1}(SM)^G$ we may now compute, using  Theorem 4.3.2 of \cite{gmt}, 
\begin{align*}
\int_G \Int(\omega,0)(gP\cap Q,gU\cap V) \,dg &= \int_G\left( \int_{N(gP\cap Q) \cap \pi^{-1}(gU \cap
V)}\omega\right)\,dg\\
&= \int_G\left(\int_{ \pi_{SM*}\langle T(P,Q) \lefthalfcup W, \pi_G,g \rangle}\omega\right) \, dg\\
&= \int_G\left(\int_{ \langle T(P,Q) \lefthalfcup W, \pi_G,g \rangle}\pi_{SM}^*\omega\right) \, dg\\
&= \int_{  T(P,Q) \lefthalfcup W}\pi_{SM}^*\omega\wedge \pi_G^* dg\\
&= \int_{(N(P)\lefthalfcup U)\times (N(Q)\lefthalfcup V)} H\omega\\
&+ \int_G \left(\int_{N(Q)\lefthalfcup\pi^{-1}(  gP \cap gU\cap V))} \omega \right) \, dg\\
&+ \int_G \left(\int_{N(gP) \lefthalfcup \pi^{-1}(gU \cap Q\cap V)} \omega\right) \, dg
\end{align*}
where in the last equality we decompose the integral according to \eqref{eq:decomp T} and use the definition of the
fiber product of currents in \cite{fu90}. In view of the normalization \eqref{eq:normal haar}, the last two terms yield
$\vol(P\cap U) \int_{N(Q) \lefthalfcup\pi^{-1} V} \omega + \vol(Q\cap V) \int_{N(P) \lefthalfcup\pi^{-1} U} \omega$.
Therefore the kinematic operator $K_G$ is  given by
\begin{align}\label{eq:KG integ}
K_G(\Int(\omega, 0)) &=( \Int \otimes \Int) (H\omega,0) + \vol \otimes \Int(\omega,0) + \Int (\omega ,0)\otimes \vol  \\
\notag K_G(\Int(0, d\vol_M))&= K_G(\vol) = \vol \otimes \vol.
\end{align}

That the coproduct $K_G$ is cocommutative follows from the fact that the map $g \mapsto g^{-1}$ preserves the Haar
measure $dg$. Coassociativity follows from Fubini's theorem.
\end{proof}

\subsubsection{Global and semi-local formulas, and the {fundamental theorem}} \label{sect:global and semilocal}

We define
\begin{align*}
k &= k_G: \V^G\to \V^G\otimes \V^G ,\\
\bar k & = \bar k_G: \curvinfty^G \to \curvinfty^G\otimes \V^G
\end{align*}
by
\begin{align}
\label{eq:def k}k_G \circ \glob&:= (\glob\otimes \glob) \circ K_G ,\\
\label{eq:def bark}\bar k_G&:= (\id \otimes \glob)\circ K_G.
\end{align}
Alternatively, we may regard $k$ as taking values in $\Hom(\V^G(M)^*,\V^G(M))$. 

Although
$\glob$ is not in general injective, it is trivial to check that the first of these relations defines $k_G$
uniquely. Thus if $A,B \subset M$ are nice sets and $U\subset M$ is open then
\begin{align*}
k_G(\phi)(A,B) &= \int_G \phi(A\cap gB) \, dg\\
\bar k_G(\Phi)(A,U;B) &= \int_G \Phi(A\cap gB, U) \, dg.
\end{align*}

By Proposition \ref{prop:no vol}, $\vol$ does not belong to the space $\{{\Glob} (\omega,0) :\omega
\in
\Omega^{n-1}(SM)^G\}$. Hence the following definition makes sense. 

\begin{definition} $\vol^* $ is the  element of  $\V^{G*}$ characterized uniquely by
$$
\langle \vol^*, \vol\rangle = 1, \quad \langle\vol^*, {\Glob} (\omega,0) \rangle = 0 \text{ for all
} \omega \in \Omega^{n-1}(SM)^G.
$$
\end{definition}

\begin{lemma}\label{lem:kg phi vol}
If $\phi \in \V^G $ then
$$
k(\phi) (\vol^*, \cdot) = \phi.
$$
\end{lemma}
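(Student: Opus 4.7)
The plan is to exploit the explicit decomposition of the kinematic operator provided in the proof of Theorem \ref{thm:basic K}, together with the splitting of $\V^G$ induced by Proposition \ref{prop:no vol}.

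\medskip

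\noindent\textbf{Step 1 (Decomposition of $\V^G$).} First I would observe that every $G$-invariant valuation $\phi$ can be written in the form $\phi = \Glob(\omega,0) + c\cdot \vol$ for some $\omega \in \Omega^{n-1}(SM)^G$ and some constant $c \in \R$. Indeed, by the preceding proposition any $\phi$ is represented by a pair $(\omega,\varphi)$ of $G$-invariant forms; since $G$ acts transitively on $M$, the invariant $n$-form $\varphi$ must be a constant multiple of $d\vol_M$, so $\Glob(0,\varphi) = c\cdot\vol$. By Proposition \ref{prop:no vol} this expression is actually a direct sum decomposition, which will be used when testing against $\vol^*$.

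\medskip

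\noindent\textbf{Step 2 (Apply the formula for $K_G$).} Next I would apply the explicit formulas \eqref{eq:KG integ} to both summands. For the first summand, using the definitions \eqref{eq:def k}--\eqref{eq:def bark},
\begin{equation*}
k(\Glob(\omega,0)) = (\Glob\otimes\Glob)(H\omega,0) + \vol\otimes\Glob(\omega,0) + \Glob(\omega,0)\otimes\vol,
\end{equation*}
and for the second, $k(\vol) = \vol\otimes\vol$. Every tensor factor appearing in $(\Glob\otimes\Glob)(H\omega,0)$ is itself of the form $\Glob(\eta,0)$ for some invariant form $\eta$, since the form part on $M$ is zero in both slots of $H\omega$.

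\medskip

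\noindent\textbf{Step 3 (Pair with $\vol^*$).} Finally I would evaluate at $\vol^*$ in one slot, using cocommutativity of $K_G$ to justify that it does not matter which slot. By the defining properties of $\vol^*$, the pairing vanishes on every term of the form $\Glob(\eta,0)$ and equals $1$ on $\vol$. Therefore:
\begin{itemize}
\item $(\Glob\otimes\Glob)(H\omega,0)$ contributes $0$, since $\vol^*$ kills the first tensor factor of each summand.
\item $\vol\otimes\Glob(\omega,0)$ contributes $\Glob(\omega,0)$.
\item $\Glob(\omega,0)\otimes\vol$ contributes $0$.
\item $c\cdot\vol\otimes\vol$ contributes $c\cdot\vol$.
\end{itemize}
Summing, $k(\phi)(\vol^*,\cdot) = \Glob(\omega,0) + c\cdot\vol = \phi$.

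\medskip

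\noindent\textbf{Main obstacle.} The whole argument is essentially bookkeeping once \eqref{eq:KG integ} is in hand; the one conceptual point that needs care is that the first summand $(\Glob\otimes\Glob)(H\omega,0)$ really has no $\vol$ component in either slot. This is because the pairing $(H\omega,0)$ carries a zero form-on-$M$ component by construction of $H$ in \eqref{eq:def H}, so each globalization lands in the subspace $\{\Glob(\eta,0)\}$ on which $\vol^*$ vanishes. No further analytic input is required.
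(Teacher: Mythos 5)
Your proposal is correct and takes essentially the same route as the paper: the paper's proof of this lemma is the one-line remark that it ``follows at once from \eqref{eq:KG integ} and the definition of $k$,'' and your three steps are precisely the bookkeeping that that remark leaves implicit. One small sharpening: in Step 3 the vanishing of $\vol^*$ on the first factor of $(\Glob\otimes\Glob)(H\omega,0)$ relies not just on the $M$-form component being zero, but on the fact that $H\omega\in\Omega^{2n-2}(SM\times SM)^{G\times G}\simeq\Omega^{n-1}(SM)^G\otimes\Omega^{n-1}(SM)^G$, so each summand of its tensor decomposition has a $G$-invariant $(n-1)$-form on $SM$ in each slot, which is what the defining property of $\vol^*$ requires; you state this only for ``some invariant form $\eta$'' and it is worth noting that the individual tensor factors can indeed be chosen $G$-invariant.
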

\begin{proof} 
This follows at once from \eqref{eq:KG integ} and the definition of
$k$.
\end{proof}

\begin{definition}
 The map $\pd:\V^G(M)\rightarrow (\V^G(M))^*$ given by
\[
 \langle \pd(\phi),\mu\rangle =\langle \vol^*, \phi\cdot\mu\rangle,
\]
is called (normalized) Poincar\'e duality.
\end{definition}
If $M$ is euclidean then this coincides with the Poincar\'e duality defined in \cite{ale04}. If $M$ is
compact, this is $\vol(M)^{-1}$ times the Poincar\'e duality defined in \cite{ale05d}. Clearly $\pd$ is self-adjoint.

Given $A \in \mathcal{P}(M)$, we put 
\begin{displaymath}
 \mu_A^G:=\int_G \chi(\cdot \cap gA)dg = k(\chi) (A,\cdot) \in \V^G.
\end{displaymath}
\begin{proposition} \label{general_Hadwiger_thm}
 Given any smooth valuation $\phi$ on an isotropic space $M$, we have 
\begin{equation} \label{eq_prod_special}
\mu_A^G \cdot \phi=\int_G \phi(\cdot \cap gA)dg.
\end{equation}
\end{proposition}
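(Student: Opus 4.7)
The strategy is to interpret each integrand $\phi(\,\cdot\cap gA)$, for fixed $g\in G$, as the Alesker product of $\phi$ with the generalized valuation $\chi_{gA}:=\chi(\,\cdot\cap gA)$, and then to commute the integration over $G$ with this multiplication. Morally,
\[
 \phi\cdot \chi_{gA} = \phi(\,\cdot\cap gA),
\]
which is a manifestation of Alesker's principle that multiplying a smooth valuation by the Euler characteristic of a set restricts it to that set. Integrating over $G$ with respect to the normalized Haar measure and pulling the fixed smooth factor $\phi$ outside then gives
\[
 \int_G \phi(\,\cdot\cap gA)\,dg \;=\; \int_G (\phi\cdot\chi_{gA})\,dg \;=\; \phi\cdot\int_G \chi_{gA}\,dg \;=\; \phi\cdot \mu_A^G,
\]
which is the claimed identity.

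The point that makes this rigorous is that, while each individual $\chi_{gA}$ is only a generalized valuation, the $G$-averaged object $\mu_A^G=k_G(\chi)(A,\,\cdot\,)$ is a smooth valuation by Theorem~\ref{thm:basic K}, so both sides ultimately live in $\V(M)$. To avoid dealing directly with generalized valuations I would work at the level of representing differential forms: starting from the explicit description of $\mu_A^G$ that emerges from the proof of Theorem~\ref{thm:basic K} (the current $T(P,Q)$ and its slicing \eqref{eq:slicing procedure}), feed the corresponding forms into the Alesker product formula \eqref{eq_albe_formula} to obtain a representation of $\mu_A^G\cdot\phi$. A parallel computation of the right-hand side, using the slicing procedure \eqref{eq:slicing procedure} and Fubini, expresses $\int_G\phi(\,\cdot\cap gA)\,dg$ as an integral over $N(\,\cdot\,)\times N(A)$ of forms built from the same ingredients. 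The identity then reduces to matching two differential forms on the fiber bundle $E$ of the diagram \eqref{basic diagram}.

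The main obstacle is precisely this form comparison: the Alesker product formula involves the Rumin operator $D$ together with vertical/horizontal decompositions of forms on $SM$, whereas the right-hand side emerges from the geometric slicing construction applied to the Haar integral. Isotropy of $(M,G)$ is essential, since it reduces the whole comparison to a computation on a single fiber---the space $\CC$ of \eqref{eq:C}---where the Maurer-Cartan formalism used in Lemma~\ref{lemma_rumin_analytic} applies and one can check the matching explicitly. I expect the ``fundamental multiplication formula'' that the paper defers to its Appendix to carry out exactly this matching, thereby supplying \eqref{eq_prod_special} as an immediate consequence.
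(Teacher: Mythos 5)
Your heuristic identity $\phi\cdot\chi_{gA}=\phi(\,\cdot\cap gA)$ is precisely the content that needs proof, not a fact you can invoke, and the subsequent interchange $\int_G(\phi\cdot\chi_{gA})\,dg=\phi\cdot\int_G\chi_{gA}\,dg$ also requires justification because each $\chi_{gA}$ is not a smooth valuation and the Alesker product is not a priori compatible with integration over the group. So the proposal does not yet contain an argument; it records the conclusion one hopes for and defers the real work to the Appendix, conjecturing that the Appendix supplies a direct differential-form comparison.

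That conjecture is incorrect, and this is where the genuine gap lies. The Appendix does not match the Alesker--Bernig operators $Q_1,Q_2$ against the slicing current $T(P,Q)$ on the fiber $\CC$; such a matching would be highly nontrivial because $Q_1,Q_2$ involve the Rumin differential $D$, which is a second-order differential operator on $SM$ and therefore not a fiberwise (pointwise) operation --- isotropy does not reduce it to a computation on a single fiber, in contrast to the kinematic operator $K_G$ treated in Theorem \ref{thm_transfer}. What the paper actually does is: (i) show via Lemma \ref{lem:mathcal J} that the right-hand side of \eqref{eq_prod_special} is a smooth valuation depending continuously on $\phi$ (Proposition \ref{prop:cts prod}); (ii) localize to a coordinate chart and transfer to Alesker's algebra of smooth convex valuations on $\Rn$; (iii) invoke the density of valuations of the special form \eqref{eq_element_sv} (derivatives of mixed volumes against smooth measures); (iv) verify \eqref{eq:G asymmetric product} for those special valuations using Lemma \ref{lem:sv product}, a uniform-mass estimate (Lemma \ref{lem:small intersections}), and a Morse-theoretic pushforward computation (Lemma \ref{lem:pullback chi}). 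This density-and-template route sidesteps entirely the form-level comparison you propose. If you wanted to pursue your route you would first have to establish the exchange formula $\phi\cdot\chi_B=\phi(\,\cdot\cap B)$ for generalized valuations and then justify integration under the product, which is a comparable amount of work to the Appendix's actual argument and, as written, is absent from your proposal.
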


We defer the proof of this proposition to the Appendix. 

\begin{corollary} \label{cor_product_special}
 Given $A \in \mathcal{P}(M)$, $\Phi \in \curvinfty(M)$ and $U \subset M$ open we have 
\begin{displaymath}
 \mu_A^G \cdot \Phi(B,U)=\int_G \Phi(B \cap gA,U)dg.
\end{displaymath}
\end{corollary}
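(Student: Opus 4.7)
The plan is to reduce the corollary to Proposition \ref{general_Hadwiger_thm} via the defining property \eqref{eq_def_module} of the module action $\V(M)\times\curvinfty(M)\to\curvinfty(M)$. The key point is that the module product was characterized precisely so that its globalization via a smooth test function against the associated valuation-valued integral recovers the usual Alesker product of valuations; this is tailor-made for propagating the valuation-level identity in Proposition \ref{general_Hadwiger_thm} down to the curvature-measure level.

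More precisely, I would fix a test function $f\in C^\infty(M)$ with compact support and consider the smooth valuation $\Phi_f$ defined by $\Phi_f(B)=\int_M f\,d\Phi(B,-)$, introduced just before Proposition \ref{prop_def_module}. Applying Proposition \ref{general_Hadwiger_thm} with $\phi=\Phi_f$ and evaluating at $B$ gives
\[
(\mu_A^G\cdot \Phi_f)(B)=\int_G \Phi_f(B\cap gA)\,dg=\int_G\int_M f\,d\Phi(B\cap gA,-)\,dg.
\]
On the other hand the defining equation \eqref{eq_def_module} of the module structure reads $(\mu_A^G\cdot \Phi)_f=\mu_A^G\cdot \Phi_f$, so the left-hand side above also equals $\int_M f\,d(\mu_A^G\cdot\Phi)(B,-)$. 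Swapping the order of integration on the right-hand side by Fubini, which is legitimate thanks to the mass estimate \eqref{eq:mass T} in the proof of Theorem \ref{thm:basic K} (so that $g\mapsto\Phi(B\cap gA,-)$ is integrable in total variation), yields
\[
\int_M f\,d(\mu_A^G\cdot\Phi)(B,-)=\int_M f\,d\!\left(\int_G \Phi(B\cap gA,-)\,dg\right).
\]

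Since this equality holds for every $f\in C_c^\infty(M)$, the two signed Borel measures $(\mu_A^G\cdot\Phi)(B,-)$ and $\int_G \Phi(B\cap gA,-)\,dg$ on $M$ agree, and evaluating on $U$ finishes the proof. There is no serious obstacle: the only technical point is the application of Fubini, which has already been dealt with in the construction of the kinematic operators in Theorem \ref{thm:basic K}; everything else is bookkeeping between the module action on $\curvinfty(M)$ and the Alesker product on $\V(M)$.
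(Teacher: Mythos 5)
Your proof is correct and is exactly the intended derivation: the paper states the corollary without proof, and the only reasonable route is the one you give, reducing to Proposition \ref{general_Hadwiger_thm} via the defining identity \eqref{eq_def_module} of the module action and then identifying the two Borel measures by testing against smooth functions. The Fubini step is indeed justified by the mass estimate \eqref{eq:mass T}, so there is no gap.
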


\begin{proposition}\label{prop:pd phi} If $\phi \in \V^G$ then
 \begin{equation}\label{eq:pd_vs_mu}
  \langle \pd(\mu_A^G),\phi\rangle=\phi(A).
 \end{equation}
\end{proposition}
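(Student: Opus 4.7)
The plan is to unfold the definition of $\pd$ and then identify the resulting pairing with a reading of the global kinematic operator $k_G$, using Lemma~\ref{lem:kg phi vol} to evaluate it.

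First I would write out the definition:
\[
 \langle \pd(\mu_A^G),\phi\rangle = \langle \vol^*, \mu_A^G\cdot\phi\rangle.
\]
The main computational input is Proposition~\ref{general_Hadwiger_thm}, which converts the Alesker product into a kinematic integral:
\[
 (\mu_A^G\cdot\phi)(B) = \int_G \phi(B\cap gA)\,dg.
\]

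Second, I would rewrite this integral in terms of $k_G(\phi)$. Expanding $k_G(\phi) = \sum_i \psi_i\otimes\psi_i'\in \V^G\otimes\V^G$, the defining property \eqref{eq:def k} gives $\int_G \phi(B\cap gA)\,dg = \sum_i \psi_i(B)\,\psi_i'(A)$ (using cocommutativity to swap the roles of $A$ and $B$ if needed). As a valuation in $B$, this says
\[
 \mu_A^G\cdot\phi = \sum_i \psi_i'(A)\,\psi_i.
\]

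Third, I would apply $\vol^*$ to both sides and match the result to $\phi(A)$:
\[
 \langle \vol^*, \mu_A^G\cdot\phi\rangle = \sum_i \psi_i'(A)\,\langle\vol^*,\psi_i\rangle = \Bigl(\sum_i\langle\vol^*,\psi_i\rangle\,\psi_i'\Bigr)(A).
\]
By Lemma~\ref{lem:kg phi vol}, the valuation in parentheses is precisely $k_G(\phi)(\vol^*,\cdot) = \phi$, and evaluating at $A$ yields $\phi(A)$, as required.

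The steps are all purely formal manipulations once Proposition~\ref{general_Hadwiger_thm} is in hand; the only point that requires care is the bookkeeping of which tensor factor is paired with $\vol^*$ (made legitimate by the cocommutativity of $K_G$ established in Theorem~\ref{thm:basic K}). Consequently I do not foresee a genuine obstacle, but if there were one it would be in making sure that the identification of $\mu_A^G\cdot\phi$ with the kinematic integral works for arbitrary $\phi\in\V^G$ rather than only for those represented by sufficiently regular pairs $(\omega,\varphi)$ — a point which is precisely what Proposition~\ref{general_Hadwiger_thm} (proved in the Appendix) guarantees.
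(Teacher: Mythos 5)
Your proof is correct and follows essentially the same route as the paper's: unfold $\pd$, apply Proposition~\ref{general_Hadwiger_thm} to identify $\mu_A^G\cdot\phi$ with the kinematic integral $k_G(\phi)(\cdot,A)$, pair with $\vol^*$, and conclude via Lemma~\ref{lem:kg phi vol}. The only difference is that you write out the tensor decomposition of $k_G(\phi)$ explicitly (and flag a cocommutativity check that in fact is not needed, since the slot pairings already match), whereas the paper carries out the same computation in the more compact $k_G(\phi)(\vol^*,A)$ notation.
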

\proof
By Proposition \ref{general_Hadwiger_thm} and Lemma \ref{lem:kg phi vol},
\begin{displaymath}
  \langle \pd(\mu_A^G),\phi\rangle =\langle \vol^*, \phi \cdot \mu_A^G\rangle
= \langle \vol^*, k_G(\phi)( \cdot,A)\rangle = k_G(\phi)( \vol^*,A)= \phi(A).
\end{displaymath}
\endproof

Let $\mathrm{ev}:\mathcal{P}(M) \to \V^G(M)^*$ be
the evaluation map. Then $\mu_A^G=k(\chi)(\mathrm{ev}_A)$. The proposition may be written as
\begin{equation} \label{eq_pd_ev}
 \pd \circ k(\chi) \circ \mathrm{ev}=\mathrm{ev} \quad \text{or} \quad \pd (\mu^G_A)=\mathrm{ev}_A.
\end{equation}

\begin{corollary}\label{coro:span} \mbox{}
 \begin{enumerate}
     \item[(i)] $\pd:\V^G(M) \to \V^G(M)^*$ and $k(\chi):\V^G(M)^* \to \V^G(M)$ are mutually inverse isomorphisms.
   \item[(ii)] $\V^G= \spann\{\mu_A^G:A \in \mathcal P(M)\}$.
 \end{enumerate}
\end{corollary}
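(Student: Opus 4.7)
The plan is to deduce both parts from the identity $\pd \circ k(\chi) \circ \mathrm{ev} = \mathrm{ev}$ recorded in \eqref{eq_pd_ev}, together with the finite-dimensionality of $\V^G(M)$ guaranteed by Proposition \ref{prop:invariant curv}.

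First I would show that the image of $\mathrm{ev}:\mathcal P(M)\to \V^G(M)^*$ spans $\V^G(M)^*$. Since $\dim \V^G(M)<\infty$, this amounts to the claim that the only $\phi\in\V^G(M)$ annihilating every $\mathrm{ev}_A$ is $\phi=0$; but $\mathrm{ev}_A(\phi)=\phi(A)$, and a smooth valuation that vanishes on every $A\in \mathcal P(M)$ is identically zero by the very definition of $\V(M)$. Hence $\spann\,\mathrm{ev}(\mathcal P(M))=\V^G(M)^*$.

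Next, by \eqref{eq_pd_ev} the composition $\pd\circ k(\chi)$ agrees with the identity on $\mathrm{ev}(\mathcal P(M))$, and therefore on its span, which is all of $\V^G(M)^*$. Thus $\pd\circ k(\chi)=\id_{\V^G(M)^*}$. Because both spaces $\V^G(M)$ and $\V^G(M)^*$ have the same finite dimension, this already implies that $k(\chi)$ and $\pd$ are mutually inverse linear isomorphisms, proving (i).

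For (ii), apply $k(\chi)$ to the spanning set $\mathrm{ev}(\mathcal P(M))\subset \V^G(M)^*$: since $k(\chi)(\mathrm{ev}_A)=\mu_A^G$ and $k(\chi)$ is an isomorphism by (i), the family $\{\mu_A^G:A\in\mathcal P(M)\}$ spans $\V^G(M)$. The only nontrivial point in the whole argument is the density claim of the first paragraph, which I expect to be the main (and essentially only) obstacle, but it is immediate from the definition of a smooth valuation as a functional on $\mathcal P(M)$.
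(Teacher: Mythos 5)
Your argument is correct and follows essentially the same route as the paper: establishing that $\mathrm{ev}(\mathcal P(M))$ spans $\V^G(M)^*$ (the paper phrases this as injectivity of the adjoint $\mathrm{ev}^*$, but this is the same dual statement you use), then combining \eqref{eq_pd_ev} with finite-dimensionality to get (i), and finally applying $k(\chi)$ to the spanning set for (ii). No gaps; the justification that a smooth valuation vanishing on all of $\mathcal P(M)$ is zero is indeed immediate from the definition.
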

\proof Let $P$ be the vector space with basis $\mathcal P(M)$ and $\mathrm{ev}\colon P\rightarrow (\V^G)^*$ the linear
extension of the evaluation map. Its adjoint $\mathrm{ev}^*\colon \V^G \rightarrow P^*$ is clearly injective. Therefore,
$\mathrm{ev}\colon P\rightarrow (\V^G)^*$ is onto. From \eqref{eq_pd_ev} we deduce that $\pd$ is left inverse to
$k(\chi)$. Since $\V^G(M)$ and $\V^G(M)^*$ have the same finite dimension, $\pd$ is also right inverse to $k(\chi)$.
This shows (i).                  

To see (ii), it is enough to notice that $k(\chi)\circ \mathrm{ev}$ is onto, and that
$\mu_A^G=k(\chi)(\mathrm{ev}_A)$.
\endproof

\begin{theorem} \label{thm_ftaig_curv}  $K,k, \bar k$ are all compatible with the multiplication by elements of $\V^G$,
i.e. given $\phi ,\psi\in \V^G, \Phi \in \curvinfty^G$
\begin{align}
 \label{eq:K mult} K(\phi \cdot \Phi)&=(\phi \otimes \chi) \cdot K(\Phi)=(\chi \otimes \phi)\cdot K(\Phi),\\
 \label{eq:bark mult}  \bar k(\phi\cdot\Phi)&  =  (\chi\otimes \phi) \cdot \bar k(\Phi)=(\phi\otimes \chi) \cdot \bar
k(\Phi),\\
 \label{eq:k mult} k(\phi\cdot\psi)&= (\chi \otimes \phi) \cdot k(\psi) =  (\phi \otimes \chi) \cdot k(\psi),\\
\label{eq:bark mult bis} \bar k(\phi\cdot\Phi)&  = k(\phi)\cdot (\chi \otimes \Phi).
\end{align}
\end{theorem}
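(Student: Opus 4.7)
The plan is to prove \eqref{eq:K mult} directly from the defining integral formula \eqref{eq:curv K} for $K_G$, and then derive the remaining three identities, with one further calculation needed for \eqref{eq:bark mult bis}. Three ingredients will be used throughout: (i) by Corollary \ref{coro:span}, $\V^G$ is spanned by the valuations $\mu_A^G$, $A \in \mathcal P(M)$, so it suffices to verify each identity for $\phi$ of this form; (ii) Corollary \ref{cor_product_special} gives the explicit formula $(\mu_A^G \cdot \Phi)(B,U) = \int_G \Phi(B \cap gA, U)\, dg$; and (iii) globalization is $\V^G$-linear, $\glob(\mu \cdot \Phi) = \mu \cdot \glob(\Phi)$, obtained by setting $f = 1$ in \eqref{eq_def_module}.

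For \eqref{eq:K mult}, let $\phi = \mu_A^G$. Combining (ii) with the defining formula \eqref{eq:curv K}, the left-hand side evaluated on $(P,U;Q,V)$ becomes $\int_G \int_G \Phi(hP \cap Q \cap gA,\, hU \cap V)\, dg\, dh$. The right-hand side $(\mu_A^G \otimes \chi) \cdot K(\Phi)$, obtained by multiplying $\mu_A^G$ into the first tensor factor of $K(\Phi)$, equals $\int_G \int_G \Phi(hP \cap hgA \cap Q,\, hU \cap V)\, dh\, dg$. The substitution $g \mapsto h^{-1}g$, valid by the bi-invariance of the Haar measure, identifies the two integrands. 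The second equality in \eqref{eq:K mult} then follows at once from the cocommutativity of $K$ stated in Theorem \ref{thm:basic K}.

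Equations \eqref{eq:bark mult} and \eqref{eq:k mult} are direct consequences of \eqref{eq:K mult}. By (iii), the maps $\id \otimes \glob$ and $\glob \otimes \glob$ both commute with multiplication by elements of $\V^G$ on either tensor factor, so applying $\id \otimes \glob$ to both sides of \eqref{eq:K mult} yields \eqref{eq:bark mult}. For \eqref{eq:k mult}, choose any $\Psi \in \curvinfty^G$ with $\glob(\Psi) = \psi$; then $\phi \cdot \psi = \glob(\phi \cdot \Psi)$ by (iii), and applying $\glob \otimes \glob$ to \eqref{eq:K mult} with $\Phi = \Psi$ yields the desired identity.

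Identity \eqref{eq:bark mult bis} requires slightly more care, since the two sides a priori live in $\curvinfty^G \otimes \V^G$ and $\V^G \otimes \curvinfty^G$ respectively, and the tacit identification of these two spaces via the cocommutativity of $K$ will be the principal bookkeeping obstacle. Again assume $\phi = \mu_C^G$ by (i), and pair the valuation slot of each side against an arbitrary $X \in \mathcal P(M)$ via $\mathrm{ev}_X$. On the left, setting $B = X$ in the formula $\bar k(\mu_C^G \cdot \Phi)(A, U; B) = \int_G (\mu_C^G \cdot \Phi)(A \cap gB, U)\, dg$ and applying (ii) yields $\mu_X^G \cdot (\mu_C^G \cdot \Phi)$. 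On the right, a short computation using Proposition \ref{general_Hadwiger_thm} (together with $G$-invariance of $\mu_C^G$ and the substitution $g \mapsto g^{-1}$) gives $k(\mu_C^G)(X, \cdot) = \mu_X^G \cdot \mu_C^G$, so that after pairing the valuation slot with $X$ the right-hand side becomes $(\mu_X^G \cdot \mu_C^G) \cdot \Phi$. The two agree by associativity of the module action of $\V^G$ on $\curvinfty^G$.
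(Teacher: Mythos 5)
Your argument is correct and, for the first three identities, runs along exactly the lines of the paper's proof: reduce \eqref{eq:K mult} to $\phi=\mu_A^G$ via Corollary \ref{coro:span}(ii), compute the double integral using Corollary \ref{cor_product_special} and the defining formula \eqref{eq:curv K} for $K_G$, and note that the change of variable $g\mapsto h^{-1}g$ (the Fubini step) makes the two sides agree; then \eqref{eq:bark mult} and \eqref{eq:k mult} follow from \eqref{eq:K mult} and $\V^G$-linearity of $\glob$. The only place where you diverge is \eqref{eq:bark mult bis}: the paper first verifies the case $\phi=\chi$ from Corollary \ref{cor_product_special} and then bootstraps to general $\phi$ via \eqref{eq:bark mult} and \eqref{eq:k mult}, whereas you verify it directly for $\phi=\mu_C^G$ by pairing the $\V^G$ slot with $\mathrm{ev}_X$, using $k(\mu_C^G)(X,\cdot)=\mu_X^G\cdot\mu_C^G$ and associativity of the module action. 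Both routes are short and correct; yours is more uniform with the treatment of \eqref{eq:K mult} and makes the $\curvinfty^G\otimes\V^G$ vs.\ $\V^G\otimes\curvinfty^G$ bookkeeping fully explicit, while the paper's minimizes direct computation at the cost of an extra layer of abstraction.
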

\proof  Using Corollary \ref{coro:span} (ii), the relation \eqref{eq:K mult} follows easily from Corollary
\ref{cor_product_special} and Fubini's theorem. Since $\glob(\phi\cdot \Phi) = \phi\cdot \glob\Phi$ for all $\phi\in \V,
\Phi \in \curvinfty$, the relations \eqref{eq:bark mult}, \eqref{eq:k mult} follow at once from \eqref{eq:def k},
\eqref{eq:def bark}.  Equation \eqref{eq:bark mult bis} follows from Corollary \ref{cor_product_special} for
$\phi=\chi$, and from \eqref{eq:bark mult}, \eqref{eq:k mult} in the general case.
\endproof

\begin{corollary}\label{coro_module_versus_kin}
Let $\bar m \in \Hom(\V^G \otimes \curvinfty^G,\curvinfty^G)=\Hom(\curvinfty^G,\curvinfty^G \otimes \V^{G*})$ be the
module structure. Then  
\begin{equation} \label{eq_module_versus_kin}
 \bar m=(id \otimes \pd) \circ \bar k
\end{equation}
or equivalently 
\begin{displaymath}
 \bar k = (id \otimes \pd^{-1}) \circ \bar m.
\end{displaymath}
In particular, $\bar k$ is injective. 
\end{corollary}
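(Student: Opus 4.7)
\smallskip

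\noindent\emph{Proof proposal.} The plan is to establish the first identity $\bar m = (\id \otimes \pd) \circ \bar k$ directly; the equivalent formula $\bar k = (\id\otimes \pd^{-1}) \circ \bar m$ will then follow immediately from Corollary \ref{coro:span}(i), which guarantees that $\pd$ is an isomorphism, and injectivity of $\bar k$ will follow by observing that $\chi$ is the unit of $\V^G$, so that $\bar k(\Phi)=0$ forces $\chi \cdot \Phi = \Phi = 0$ via $\bar m$.

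To prove the identity, I would view both sides as elements of $\Hom(\curvinfty^G, \Hom(\V^G,\curvinfty^G))$ under the standard identification $\curvinfty^G \otimes \V^{G*} \simeq \Hom(\V^G,\curvinfty^G)$. By Corollary \ref{coro:span}(ii), it suffices to check that both maps agree when evaluated on the spanning family $\{\mu_A^G : A \in \mathcal{P}(M)\}$ in the second variable. On the $\bar m$ side, the evaluation at $\mu_A^G$ produces the curvature measure $\mu_A^G \cdot \Phi$, whose value on a pair $(B,V)$ is computed by Corollary \ref{cor_product_special} as
\[
(\mu_A^G \cdot \Phi)(B,V) = \int_G \Phi(B \cap gA, V)\,dg.
\]

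On the $(\id \otimes \pd)\circ \bar k$ side, write $\bar k(\Phi) = \sum_i \Phi_i \otimes \psi_i$. Then the evaluation at $\mu_A^G$ equals $\sum_i \Phi_i \langle \pd(\psi_i), \mu_A^G\rangle$. The key step is to swap the arguments of $\pd$ using its self-adjointness (noted just after the definition of $\pd$) and then apply Proposition \ref{prop:pd phi}: we obtain $\sum_i \Phi_i \langle \pd(\mu_A^G), \psi_i\rangle = \sum_i \Phi_i \cdot \psi_i(A)$. Evaluating this curvature measure at $(B,V)$ gives $\bar k(\Phi)(B,V;A)$, which by definition of $\bar k$ equals $\int_G \Phi(B\cap gA, V)\,dg$. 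This matches the expression above, completing the verification.

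The step I expect to be the main conceptual obstacle is recognizing that one should check the identity against the spanning family $\{\mu_A^G\}$ rather than against arbitrary $\phi\in \V^G$; the payoff is that the symmetry $\langle \pd(\psi), \mu_A^G\rangle = \langle \pd(\mu_A^G),\psi\rangle = \psi(A)$ converts the defining formulas of the module product and the semi-local kinematic operator into the same kinematic integral. Everything else is formal: the invertibility of $(\id\otimes \pd)$ immediately yields the second displayed formula, and injectivity of $\bar k$ reduces to the fact that $\chi$ acts as the identity on $\curvinfty^G$ under the $\V^G$-module structure from Proposition \ref{prop_def_module}.
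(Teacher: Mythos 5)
Your proof is correct and takes essentially the same route as the paper: both arguments test the identity against the spanning family $\{\mu_A^G\}$, use the relation $\langle \pd(\mu_A^G),\psi\rangle=\psi(A)$ (equation \eqref{eq_pd_ev}) together with self-adjointness of $\pd$, and invoke Corollary \ref{cor_product_special} to identify both sides with the kinematic integral $\int_G \Phi(B\cap gA,V)\,dg$. The injectivity argument via $\chi\cdot\Phi=\Phi$ is also the paper's.
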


\proof
Let $\Phi \in \curvinfty^G$. By \eqref{eq_pd_ev}, $\langle \pd(\psi),\mu_C^G\rangle=\psi(C)$ for every $\psi \in
{\V}^G$. 
By Corollary \ref{cor_product_special}, 
\[
 \mu_C^G \cdot \Phi =\bar k(\Phi)(\cdot,\cdot,C)=\langle (\id\otimes\pd)\bar k(\Phi),\mu_C^G\rangle,
\]
 and the stated equation follows. If $\bar k(\Phi)=0$, then it follows that $\bar m(\Phi)=0$, which means $\phi \cdot
\Phi=0$ for all valuations $\phi \in \mathcal{V}^G$. Taking $\phi:=\chi$ implies that $\Phi=0$. 
\endproof

The next theorem was shown in \cite{befu06} and \cite{ale-be09}, assuming $M$ to be flat or compact respectively. Now we
extend it to the general case. 

\begin{theorem}\label{thm_ftaig}
 Let $m:\mathcal{V}^G(M) \otimes \mathcal{V}^G(M) \to \mathcal{V}^G(M)$ be the restricted multiplication
map, $\pd:\mathcal{V}^G(M) \to \mathcal{V}^G(M)^*$ the normalized Poincar\'e duality and $k:\mathcal{V}^G(M)
\to \mathcal{V}^G(M) \otimes \mathcal{V}^G(M)$ the kinematic coproduct. Then the following diagram commutes:
\begin{displaymath}
\xymatrixcolsep{3pc}
\xymatrix{\mathcal{V}^G(M) \ar[d]^{\pd} \ar[r]^-{k}& \mathcal{V}^G(M) \otimes \mathcal{V}^G(M)   \ar[d]^{\pd
\otimes \pd}\\
\mathcal{V}^G(M)^* \ar[r]^-{m^*} & \mathcal{V}^G(M)^* \otimes \mathcal{V}^G(M)^*}
\end{displaymath}
\end{theorem}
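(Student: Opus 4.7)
The plan is to verify the commutativity of the diagram by evaluating both compositions on a spanning set of $\V^G(M)$ and pairing against a spanning set of $\V^G(M)\otimes \V^G(M)$. By Corollary \ref{coro:span}(ii) it suffices, for each $\phi \in \V^G(M)$, to test the identity $(\pd \otimes \pd)(k(\phi)) = m^*(\pd(\phi))$ by pairing against elements of the form $\mu_B^G \otimes \mu_C^G$ with $B,C\in\mathcal P(M)$, and to check that both sides yield the same number.

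For the right-hand side, unfolding definitions gives
\[
m^*(\pd(\phi))(\mu_B^G \otimes \mu_C^G) = \pd(\phi)(\mu_B^G \cdot \mu_C^G) = \vol^*\!\bigl(\phi \cdot \mu_B^G \cdot \mu_C^G\bigr) = \vol^*\!\bigl(\mu_B^G \cdot (\phi \cdot \mu_C^G)\bigr),
\]
where the last equality uses commutativity of the Alesker product. The key identity $\vol^*(\mu_A^G \cdot \psi) = \psi(A)$ is a direct reformulation of \eqref{eq:pd_vs_mu}, since by definition $\pd(\mu_A^G)(\psi) = \vol^*(\mu_A^G \cdot \psi)$. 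Applying it with $A = B$ and $\psi = \phi \cdot \mu_C^G$, and then invoking Proposition \ref{general_Hadwiger_thm} to expand $\phi \cdot \mu_C^G = \mu_C^G \cdot \phi = \int_G \phi(\,\cdot\, \cap gC)\,dg$, yields
\[
m^*(\pd(\phi))(\mu_B^G \otimes \mu_C^G) = (\phi \cdot \mu_C^G)(B) = \int_G \phi(B \cap gC)\,dg.
\]

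For the left-hand side, I would write $k(\phi) = \sum_i \alpha_i \otimes \beta_i$ and compute
\[
(\pd \otimes \pd)(k(\phi))(\mu_B^G \otimes \mu_C^G) = \sum_i \pd(\alpha_i)(\mu_B^G)\, \pd(\beta_i)(\mu_C^G) = \sum_i \alpha_i(B)\,\beta_i(C),
\]
where the last step uses the self-adjointness of $\pd$ together with \eqref{eq:pd_vs_mu}, applied factor by factor. But the right-hand side of this last display is, by the defining relation \eqref{eq:def k} of the kinematic coproduct $k$, exactly $\int_G \phi(B \cap gC)\,dg$, matching the previous display.

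The argument is essentially formal once two technical pillars are in place: Corollary \ref{coro:span}(ii), which delivers a tractable spanning set for $\V^G(M)$, and Proposition \ref{general_Hadwiger_thm}, which converts products $\mu_A^G \cdot \psi$ with an arbitrary smooth valuation $\psi$ into a kinematic integral. The main obstacle is really the latter; its proof, deferred to the Appendix, contains the substantive geometric content that makes the statement valid in the general (and in particular noncompact hyperbolic) setting. Once it is granted, the theorem reduces to bookkeeping with the definitions of $\pd$, $m^*$ and $k$, and the fact that $\vol^*$ is never evaluated directly on $M$ is precisely what allows the argument to dispense with compactness.
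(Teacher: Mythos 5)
Your proof is correct, and the second half takes a genuinely cleaner route than the paper's. For the left-hand pairing
$\langle (\pd\otimes\pd)k(\phi),\mu_B^G\otimes\mu_C^G\rangle = \int_G \phi(B\cap gC)\,dg$
you argue exactly as the paper does, via self-adjointness of $\pd$ and \eqref{eq:pd_vs_mu}. The difference is in the right-hand pairing. The paper expands $\langle\vol^*,\phi\cdot\mu_B^G\cdot\mu_C^G\rangle$ by applying Proposition \ref{general_Hadwiger_thm} twice, obtaining the double integral $\langle\vol^*,\int_{G\times G}\phi(\cdot\cap gB\cap hC)\,dg\,dh\rangle$, and then extracts the surviving term using the explicit structure of the local kinematic formula \eqref{eq:KG integ}; this requires the auxiliary observation that all cross terms pair trivially with $\vol^*$, and splits into the cases $\langle\vol^*,\phi\rangle=0$ and $\phi=\vol$. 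You instead regroup by commutativity, $\vol^*(\phi\cdot\mu_B^G\cdot\mu_C^G)=\vol^*(\mu_B^G\cdot(\phi\cdot\mu_C^G))$, and apply Proposition \ref{prop:pd phi} a second time with $\psi=\phi\cdot\mu_C^G\in\V^G$ to get $(\phi\cdot\mu_C^G)(B)$ directly, followed by a single application of Proposition \ref{general_Hadwiger_thm}. This collapses the double kinematic integral at the outset, dispenses with \eqref{eq:KG integ}, and removes the case split. The two proofs rest on the same pillars, Proposition \ref{general_Hadwiger_thm} and Proposition \ref{prop:pd phi}; your bookkeeping is simply more efficient.
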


\proof
Since $\pd$ is self-adjoint \eqref{eq:pd_vs_mu} yields
\[
 \langle (\pd\otimes\pd)\circ k(\phi),\mu_A^G\otimes\mu_B^G\rangle=\int_G \phi(A\cap gB)dg.
\]
By Proposition \ref{general_Hadwiger_thm}
\begin{align*}
 \langle m^*\circ \pd(\phi),\mu_A^G\otimes\mu_B^G\rangle&=\langle \pd\phi,\mu_A^G\cdot \mu_B^G\rangle\\
&=\langle \vol^*,\phi\cdot \mu_A^G\cdot \mu_B^G\rangle \\
&=\left\langle \vol^*,\int_{G\times G}\phi(\cdot\cap gA\cap hB)dgdh\right\rangle.
\end{align*}
By \eqref{eq:KG integ}, if $\langle\vol^*,\phi\rangle=0$, then
\begin{align*}
 \int_{G}\int_G\phi(\cdot\cap gA\cap hB)dg\,dh &=  \int_{G}\int_G\phi(\cdot\cap g(A\cap hB))dg\,dh \\
&=\int_{G}(\phi(A\cap hB)\vol +\vol(A\cap hB)\phi+\sum_{i,j}\varphi_i(A\cap hB)\varphi_j )dh
\end{align*}
for some $\varphi_i\in\V^G$ with $\langle \vol^*,\varphi_i\rangle=0$. Hence,
\[
\left\langle \vol^*,\int_{G}\int_G\phi(\cdot\cap gA\cap hB)\, dg\, dh\right\rangle=\int_{G}\phi(A\cap gB)dg
\]
as desired. The case $\phi=\vol$ follows similarly. 
\endproof

\subsection{Transfer principle} \label{subsec_transfer}
Picking points $o\in M, \bar o \in SM$ with $\pi_M \bar o = o$, let $G_{\bar o} \subset G_o \subset  G$ denote the
stabilizers of $o, \bar o $ respectively. This yields identifications
$$
M \simeq G/G_o, \quad SM \simeq G/G_{\bar o} 
$$ 
corresponding to the maps $g \mapsto go, \ g\mapsto g \bar o$ respectively, such that the natural projections $G/G_{\bar
o} \to G/G_o$ and $SM \to M$ are compatible.

The decompositions \eqref{eq:split line} and \eqref{eq:decomp 2} are invariant under the actions of $G_{\bar o}$.
Furthermore the action of $G$ induces canonical isomorphisms $\curv^{\bargo}(T_oM) \simeq \curv^{\overline{
G_x}}(T_xM)$ for every $x \in M$. If we identify these spaces in this way then the isomorphism of Proposition
\ref{prop_isom_curv_general} yields
\begin{proposition} \label{prop_isom_curv}
The map $\tau $ of Proposition \ref{prop_isom_curv_general} induces an isomorphism 
\begin{equation}\label{eq:curv sim 1}
\curvinfty^G(M) \simeq \curv^{\bargo}(T_oM).
\end{equation}
\end{proposition}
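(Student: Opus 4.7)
The plan is to show that the isomorphism $\tau:\curvinfty(M)\simeq \Gamma(\curv(TM))$ of Proposition~\ref{prop_isom_curv_general} is equivariant with respect to the natural actions of $G$ on both sides, so that it restricts to an isomorphism on $G$-invariants, and then to identify the invariant sections with the fiber at $o$ using homogeneity.

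First I would verify that $\tau$ is $G$-equivariant. Since $G$ acts by isometries, each $g\in G$ induces a bundle automorphism of $SM$ lifting the action on $M$, and the decomposition \eqref{eq:decomp 1} into horizontal and vertical subspaces (which uses only the Riemannian connection) is preserved by $Dg$. Consequently the identification $T_\xi SM\simeq T_\xi ST_xM$ in \eqref{eq:lambda lambda} intertwines the action of $g$ with the action of its derivative $Dg_x:T_xM\to T_{gx}M$ on the tangent bundle picture. Pulling back differential forms and invoking Proposition~\ref{prop:kernel thm} (so that the isomorphism passes from forms to curvature measures), one obtains the equivariance $\tau_{gx}(g^*\Psi)=(Dg_x)^*\tau_x(\Psi)$ for any $\Psi\in \curvinfty(M)$.

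Second, I would pass to invariants. Under an equivariant isomorphism of vector spaces (here of Fr\'echet sections), invariants correspond to invariants, so $\tau$ restricts to an isomorphism
\[
\curvinfty^G(M)\;\simeq\;\Gamma(\curv(TM))^G.
\]
A $G$-invariant section $\sigma$ of the equivariant vector bundle $\curv(TM)\to M$ on the homogeneous space $M=G/G_o$ is determined by its value $\sigma(o)\in \curv(T_oM)$, which must be fixed by the isotropy representation of $G_o$ on $T_oM$; conversely any $G_o$-invariant element of $\curv(T_oM)$ extends uniquely to a $G$-invariant section by translating along $G$. Thus $\Gamma(\curv(TM))^G\simeq \curv(T_oM)^{G_o}$.

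Finally, I would observe that elements of $\curv(T_oM)$ are by definition translation-invariant on $T_oM$, so the full invariance group acting on this space is the affine group $\bargo = G_o\ltimes T_oM$ generated by $G_o$ and translations. Hence $\curv(T_oM)^{G_o}=\curv^{\bargo}(T_oM)$, and composing the two displayed isomorphisms yields \eqref{eq:curv sim 1}. The only nontrivial point is the equivariance of $\tau$ in the first step; the homogeneous-space argument and the translation remark are then standard. Since the construction of $\tau$ from \eqref{eq:identification_forms} was built entirely from the Levi-Civita connection, which is $G$-equivariant because $G$ consists of isometries, this equivariance is essentially automatic.
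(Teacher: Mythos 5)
Your proposal is correct and follows essentially the same route the paper takes, only with the steps spelled out more fully: the paper's brief remarks before the proposition — that the decompositions \eqref{eq:split line}, \eqref{eq:decomp 2} are invariant under $G_{\bar o}$ and that the $G$-action gives canonical isomorphisms $\curv^{\bargo}(T_oM)\simeq\curv^{\overline{G_x}}(T_xM)$ for all $x$ — are exactly your equivariance-of-$\tau$ and homogeneous-bundle arguments, and your final observation that $\curv(T_oM)^{G_o}=\curv^{\bargo}(T_oM)$ because translation invariance is already built into $\curv(T_oM)$ is the implicit unwinding of the notation $\bargo=\overline{G_o}$.
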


As noted in the previous section, each of the 
spaces $\curvinfty^G(M),\curv^{\bargo}(T_oM) $ is a coalgebra, with the kinematic operator as coproduct. 

\begin{theorem}\label{thm_transfer} 
The linear isomorphism \eqref{eq:curv sim 1} is an isomorphism of coalgebras.
\end{theorem}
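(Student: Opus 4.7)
The plan is to show that the isomorphism $\tau$ intertwines the kinematic coproducts $K_G$ on $\curvinfty^G(M)$ and $K_{\bargo}$ on $\curv^{\bargo}(T_oM)$ by writing both coproducts in terms of the operator $H$ of \eqref{eq:def H} and observing that $H$ depends only on fiber data at the basepoint $o$. Once combined with the canonical identification $\Omega^*(SM)^G \simeq \Omega^*(ST_oM)^{\bargo}$ underlying \eqref{eq:identification_forms}, this will give the desired intertwining.

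The key step is to localize the construction in the proof of Theorem \ref{thm:basic K}. The formula \eqref{eq:KG integ} expresses $K_G(\Int(\omega,0))$ in terms of the operator $H\omega = \pi_{\CC*}(\iota\circ p)^*(\omega\wedge dg)$, where $F$ is a bundle over $SM\times SM$ whose fiber is the current $\CC\subset G_o\times S_oM$ from \eqref{eq:C}. Because $\omega$ is $G$-invariant, $H\omega$ is determined by its value at the reference pair $(\bar o,\bar o)\in SM\times SM$, and that value only involves the linear action of $G_o$ on $S_oM$, the restriction $\restrict{\omega}{S_oM}$, and the family of minimizing geodesic arcs in $S_oM$ between non-antipodal pairs. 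All of these are intrinsic to the isotropy representation of $G_o$ at $o$.

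Running exactly the same construction for the isotropic pair $(T_oM, \bargo)$ — where the stabilizer of $0\in T_oM$ is $G_o$, the tangent sphere at $0$ is canonically $S_oM$, and the geodesic arcs in $S_0 T_oM$ coincide with those in $S_oM$ since the ambient metric is flat on the sphere — produces literally the same fiber $\CC^o = \CC$ and hence the same operator $H$, now acting on $\Omega^*(ST_oM)^{\bargo}$. Since $\tau$ also matches the Riemannian volume form on $M$ with the euclidean volume form on $T_oM$, and the Haar normalization \eqref{eq:normal haar} agrees in both settings, the two volume correction terms in \eqref{eq:KG integ} correspond as well. Putting these together yields $(\tau\otimes\tau)\circ K_G = K_{\bargo}\circ \tau$.

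The only real subtlety is verifying that the identification \eqref{eq:identification_forms}, constructed fiberwise via the Riemannian connection, intertwines the restriction-to-$S_oM$ map used in computing $H$ with its flat counterpart. This is immediate from the pointwise nature of $\tau$: the definition in Proposition \ref{prop_isom_curv_general} identifies $T_\xi SM$ with $T_\xi ST_oM$ at each $\xi\in S_oM$ in a way that respects the splitting \eqref{eq:split line}, so the restriction of invariant forms to the fiber $S_oM$ is the same operation on both sides. No derivatives of $\omega$ enter into $H$, so the fact that $\tau$ does not intertwine the exterior derivative is harmless here.
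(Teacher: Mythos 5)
Your high-level plan — intertwine the two kinematic coproducts by comparing the operator $H$ of \eqref{eq:def H} with its flat-space analogue $H'$ at the level of $G_{\bar o}$-invariant algebra at $\bar o$ — is the right one and is essentially what the paper does. However, the central step is asserted rather than proved, and this is exactly where the substance of the argument lies.

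The gap is this: you argue that because the fiber $\CC_{\bar o,\bar o}\subset G_o\times S_oM$ is literally the same for $(M,G)$ and for $(T_oM,\bargo)$, the operators $H$ and $H'$ must coincide. But $H\omega$ evaluated at $(\bar o,\bar o)$ is a form on $T_{\bar o}SM\oplus T_{\bar o}SM$, and computing it requires pulling back via the map $p$ restricted to the \emph{full} tangent bundle of the fiber product $E$ along $F=G_o\times S_oM$ — that is, one must also track the horizontal directions coming from moving the base point $(\xi,\eta)\in SM\times SM$, not just the fiber directions. Concretely, one must show that the derived diagrams \eqref{derived diagram} and \eqref{derived diagram 2} can be identified. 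This requires, in particular, a common identification of $\restrict{TG}{G_o}$ and $\restrict{T\bargo}{G_o}$ (the groups $G$ and $\bargo=G_o\ltimes T_oM$ are different Lie groups, only abstractly of the same dimension), together with a verification that under such an identification the maps $p$ and $\tilde p$ act in the same way — cf.\ \eqref{eq:p action} and the choice of complement $\mathfrak m\subset\mathfrak g$ in the paper, followed by Lemma \ref{lem:fiber product lemma}. The claim that $H\omega$ at $(\bar o,\bar o)$ ``only involves the linear action of $G_o$ on $S_oM$, the restriction $\restrict\omega{S_oM}$, and the geodesic arcs'' is precisely what is in question, since it assumes away the horizontal directions. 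Similarly, the assertion that the normalization \eqref{eq:normal haar} makes the volume forms $dg$ ``agree in both settings'' needs to be made concrete against the same identification. Until these are supplied, the argument only verifies the intertwining on the vertical (fiber) part and leaves the rest unaddressed.
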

\begin{proof} 

We actually prove the following stronger statement, at the level of invariant differential forms rather than curvature
measures. The apparatus of the proof of Theorem \ref{thm:basic K} has an analogue for the pair $(T_oM, \bargo)$, leading
to an operator
$$
 H':\Omega^*(ST_oM)^{\bargo} \to  \Omega^*(ST_oM)^{\bargo}\otimes  \Omega^*(ST_oM)^{\bargo}.
$$ 
Since the isomorphism \eqref{eq:curv sim 1} is induced by the natural isomorphism
\begin{equation}\label{eq:sim sim}
\Omega^*(SM)^G\simeq \Lambda^*({ T^*_{\bar o}}SM)^{G_{\bar o}} \simeq \Lambda^*({ T^*_{\bar o}}ST_oM)^{G_{\bar o}}
\simeq \Omega^*(ST_oM)^{\bargo},
\end{equation}
the maps $H,H'$ determine and are determined by maps
\begin{align}
\label{eq:ptwise kf 1}\Lambda^*({ T^*_{\bar o}}SM)^{G_{\bar o}}  &\to \Lambda^*({ T^*_{\bar o}}SM)^{G_{\bar o}} \otimes
\Lambda^*({ T^*_{\bar o}}SM)^{G_{\bar o}},\\
\label{eq:ptwise kf 2}\Lambda^*({ T^*_{\bar o}}ST_oM)^{G_{\bar o}}  &\to \Lambda^*({ T^*_{\bar o}}ST_oM)^{G_{\bar o}}
\otimes \Lambda^*({ T^*_{\bar o}}ST_oM)^{G_{\bar o}}.
\end{align}
We must show that these maps are intertwined by the middle isomorphism of \eqref{eq:sim sim}.

To this end we consider the derived diagram {corresponding to \eqref{basic diagram}}
\begin{equation}\label{derived diagram}
\xymatrix{
\left.TE\right|_{ F} \ar[r]\ar[d] & \left.T\left(G\times SM \right)\right|_{F}=\left. TG \right|_{G_o} \times
\left.TSM\right|_{S_oM} \ar[d]_p\\
T_{\bar o}SM \oplus T_{\bar o}SM \ar[r]^\pi & T_oM \oplus T_oM 
}
\end{equation}
where $  F = G_o\times S_oM $ is the fiber over $(\bar o,\bar o)$, which includes the submanifold $\CC$ defined in \eqref{eq:C}.
The map \eqref{eq:ptwise kf 1} is then constructed as follows. An element of $\Lambda^*(T_{\bar o}SM)^{G_{\bar o}}$
induces a $G_o$-invariant section of $\Lambda^*\restrict{TSM}{S_oM}$. We take the wedge product with the invariant
volume form of $G$, pull back via the top map, and finally push down using the fiber integral over $\CC$.
Likewise, \eqref{eq:ptwise kf 2} arises in the same way from the diagram
\begin{equation}\label{derived diagram 2}
\xymatrix{
\left.T\tilde E\right|_{ F} \ar[r] \ar[d] & \left.T\left(\bargo\times SM \right)\right|_{F}=\left. T\bargo \right|_{G_o}
\times \left.TST_oM\right|_{S_oM} \ar[d]_{\tilde p}\\
T_{\bar o}ST_oM \oplus T_{\bar o}ST_oM \ar[r]^{\tilde \pi} & T_oM \oplus T_oM 
}
\end{equation}

The identification $T_{\bar o} SM \simeq T_{\bar o} ST_oM$ that arises from \eqref{eq:decomp 1} and \eqref{eq:decomp 2}
induces an isomorphism between the lower left spaces, intertwining the projections $\pi, \tilde \pi$.
In the same way we may identify the second factors of the top right spaces, intertwining the restrictions of  $p,\tilde
p$. 

Finally we wish to identify the first factors of the top right spaces. The identification we give will not be
canonical, but  it is enough for our purposes. To this end we realize $\bargo$ as a manifold as the cartesian product
$G_o \times T_oM$, acting on $T_oM$ by
$$
(g,v)\cdot w:= g_*(w+v).
$$
Thus $\restrict{T\bargo}{G_o} \simeq TG_o { \times} T_oM$. Recalling that $\tilde p$ is the derivative of the map
$(g,\zeta)\mapsto  { (g\pi\zeta,\pi\zeta)}$ on the right of the euclidean analogue of \eqref{basic diagram}, we see that
for $(g, \zeta) \in G_o\times S_oM$, and $(\gamma,v) \in T_{ g}G_o \times T_oM, w \in T_\zeta ST_oM$,
\begin{equation}\label{eq:p action}
\tilde p (\gamma,v;w)= (g_* \pi_*w + g_* v, \pi_*w)
\end{equation}
where $\pi:ST_oM \to T_oM$ is the projection.

We may represent $\restrict {TG} {\bargo}$ in a similar fashion. Let $\mathfrak{m} \subset \mathfrak{g}$ be a
linear complement to $\mathfrak{g}_o$ in the Lie algebra $\mathfrak{g} = T_eG$. Then $\pi_*$ induces an
isomorphism $\mathfrak{m} \to T_oM$ where $\pi:G\to M= G/G_o$ is the quotient map. Translating $\mathfrak{m}$ 
by the left multiplication map $L_g, g \in  \bargo$, we obtain an injection 
$$
\begin{CD}T_oM @>{\pi_*^{-1}}>> \mathfrak{m} @>L_{g*}>> g_*\mathfrak{m} \subset T_gG,
\end{CD}
$$
which gives an identification $\restrict {TG}{\bargo} \simeq TG_o { \times} T_oM$. With respect to this
decomposition, the map $p$ acts formally exactly as in \eqref{eq:p action}.

Thus the diagrams \eqref{derived diagram}, \eqref{derived diagram 2}, except for the top left corners, may be
identified, intertwining all of the maps that appear. 
It follows that the top left corners may also be identified:
\begin{lemma}\label{lem:fiber product lemma} Let $M, E'$ be smooth fiber bundles over $N$, and $E$ their fiber product:
\begin{equation}
\xymatrix{
E \ar[r] \ar[d] & E' \ar[d]_p\\
M \ar[r]^f & N 
}
\end{equation}
Let $m \in M$ and $F$ the fiber of $E'$ over $f(m)$, which is the same as the fiber of $E$ over $m$. Then the restricted
tangent bundle $\restrict{TE} F$ is the fiber product over $T_{f(m)} N$ of $T_mM$ and $\restrict{TE'} F$:
\begin{equation}
\xymatrix{
\restrict{TE} F \ar[r] \ar[d] & \restrict{TE'} F \ar[d]\\
T_mM \ar[r] & T_{f(m)} N 
}
\end{equation}
\end{lemma}
\begin{proof} Since $f,p$ are submersions, the implicit function theorem implies that the original fiber product 
$$
E = \{(m, e) \in M\times E': f(m )=p(e) \}
$$
is a smooth submanifold, with
$$
T_{(m,e)} E= \{(v,w) \in T_m M \oplus T_eE':  f_*v= p_*w\}.
$$
Thus 
$$
\restrict {TE} F = \{(m,v;e,w) \in T_mM \oplus \restrict{TE'}F:  f_*v =p_*w \}
$$
as claimed.
\end{proof}

To conclude the proof of Theorem \ref{thm_transfer}, the lemma implies that the maps \eqref{eq:ptwise kf 1},
\eqref{eq:ptwise kf 2} correspond, provided the selected volume forms $dg$ correspond when restricted to $\bargo$. But
this is ensured by the convention \eqref{eq:normal haar}: each corresponding volume form is then the product of the
probability measure on $\bargo$ with the pullback of the volume form of $M$ under the projection $G\to M$.
\end{proof}

{\it Remark.} Theorem \ref{thm_transfer} is a special case of the general transfer principle of Howard \cite{howard}.

By Proposition \ref{prop_def_module}, $\curvinfty^G(M)$ is a module over $\V^G(M)$, while $\curv^{G_o}(T_oM)$ is a
module over $\Val^{ G_o}(T_oM)$.  On the other hand, in view of Proposition \ref{prop_isom_curv}, we may identify the
coalgebras $\curvinfty^G(M) \simeq \curv^{\bargo}(T_oM)$. From this perspective the coalgebra of invariant curvature
measures is simultaneously a module over $\V^G(M)$ and over $\Val^{G_o}(T_oM)$.

\begin{proposition}\label{prop:module commute}
The two module structures on $\curvinfty^G(M) \simeq \curv^{G_o}(T_oM)$ commute, i.e. if 
$\rho \in \V^G(M), \psi \in\Val^{ G_o}(T_oM)$, and $\Phi \in  \curvinfty^G(M)$, then 
\begin{displaymath}
 \psi \cdot \tau(\rho \cdot \Phi)=\tau(\rho \cdot \tau^{-1}(\psi \cdot \tau(\Phi))).
\end{displaymath}
\end{proposition}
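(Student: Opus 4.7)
The plan is to verify the identity on spanning sets of both algebras and reduce to a pointwise commutation statement. By Corollary \ref{coro:span}(ii), $\V^G(M)$ is spanned by the valuations $\mu_A^G$ for $A \in \mathcal{P}(M)$, and its $(T_oM, \bargo)$-analogue gives that $\Val^{G_o}(T_oM) = \Val^{\bargo}(T_oM)$ is spanned by $\mu_C^{\bargo}$ for $C \in \mathcal{P}(T_oM)$. By bilinearity in $(\rho, \psi)$, it suffices to prove the identity when $\rho = \mu_A^G$ and $\psi = \mu_C^{\bargo}$.

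For such generators, Corollary \ref{cor_product_special} and its $(T_oM, \bargo)$-analogue yield the integral formulas
\[
 (\mu_A^G \cdot \Phi)(B,U) = \int_G \Phi(B \cap gA, U)\, dg, \qquad (\mu_C^{\bargo} \cdot \Psi)(D,V) = \int_{\bargo} \Psi(D \cap hC, V)\, dh.
\]
Interpreting $\tau$ as the pointwise map $\tau_o$ provided by Proposition \ref{prop_isom_curv_general} (of which the invariant $\tau$ in the proposition is a restriction), both sides of the desired identity become $G$-integrals by the linearity of $\tau_o$ and of the $\psi$-action on $\curv(T_oM)$. Writing $\Theta := \tau^{-1}(\psi \cdot \tau(\Phi))$ and $\Phi_X$ for the restricted curvature measure $(B,U) \mapsto \Phi(B \cap X, U)$, the claim reduces via Fubini to the pointwise identity
\[
 \psi \cdot \tau_o(\Phi_{gA}) = \tau_o(\Theta_{gA})
\]
for almost every $g \in G$.

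The central ingredient is the \emph{tangent cone identity}
\[
 \tau_o(\Phi_X) = \operatorname{Res}_{\Tan(X,o)} \tau_o(\Phi), \qquad (\operatorname{Res}_W \Psi)(B,U) := \Psi(B \cap W, U),
\]
valid for sufficiently nice $X \ni o$. This follows from the characterization of $\tau_o$ as a jet at $o$, using that $\exp_o^{-1}(X) \cap B(0,\eps) \approx \Tan(X,o) \cap B(0,\eps)$ to leading order in $\eps$ and that the density computation depends only on this leading-order information. Granting this identity, both sides of the required pointwise identity equal $\operatorname{Res}_{\Tan(gA,o)}(\psi \cdot \tau_o(\Phi))$, because $\operatorname{Res}_W$ commutes with the $\psi$-action on $\curv(T_oM)$: directly from the integral formula for $\psi$ and associativity of intersection, $(B \cap W) \cap hC = (B \cap hC) \cap W$.

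The main obstacle I anticipate is establishing the tangent cone identity rigorously. Although the geometric picture is transparent, $\tau_o$ is defined via $k$-dimensional densities at $o$, and one must verify that only the first-order germ of $X$ at $o$ is relevant for the density computation. One must also invoke transversality: for a.e.\ $g \in G$, $gA$ meets the relevant flags at $o$ transversely so that the tangent cone is well-defined and the leading-order approximation is effective; for $g$ with $o \notin gA$, both sides vanish trivially. Once this point is resolved, the integration over $G$ completes the argument.
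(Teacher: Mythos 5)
Your proposal has a genuine gap, and it lies deeper than the tangent-cone identity you flag at the end. The map $\tau_o$ of Proposition~\ref{prop_isom_curv_general} is defined only on \emph{smooth} curvature measures, i.e. those represented by smooth differential forms on $SM$, and the alternative density description given after that proposition is merely an equivalent characterization on that same domain. The objects $\Phi_{gA}$ (defined by $\Phi_{gA}(B,U):=\Phi(B\cap gA,U)$ for a fixed $g$) and $\operatorname{Res}_W\Psi$ are not smooth curvature measures in this sense: only after averaging over $G$ does one land back in $\curvinfty(M)$ (that is precisely the content of Proposition~\ref{prop_def_module}). Consequently, $\tau_o(\Phi_{gA})$ is not a priori defined, the identity $\tau(\mu_A^G\cdot\Phi)=\int_G\tau_o(\Phi_{gA})\,dg$ requires an interchange of the density limit at $o$ with the $G$-integral that is not established, and the application of the Corollary~\ref{cor_product_special}-type integral formula to $\operatorname{Res}_W\Psi$ also goes outside the theory as developed. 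Each of these would need its own argument, and you would in effect be redeveloping a chunk of the kinematic machinery by hand.

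The paper's own proof avoids all of this by staying entirely inside the algebraic formalism: it applies the local kinematic operator $K_{\overline{G_o}}$ to both sides, pushes it through the transfer isomorphism $K_{\overline{G_o}}\circ\tau=(\tau\otimes\tau)\circ K_G$ (Theorem~\ref{thm_transfer}) and the $\V^G$-multiplicativity of $K$ (Theorem~\ref{thm_ftaig_curv}), and observes that both sides have the same image $((\psi\circ\tau)\otimes(\tau\circ\rho))K_G(\Phi)$; injectivity of $K_{\overline{G_o}}$ (via Corollary~\ref{coro_module_versus_kin}) then finishes it. If you want to salvage a geometric argument in the spirit of yours, the honest route would be to first prove rigorously that the density $c(\cdot,C)$ of $\mu_A^G\cdot\Phi$ at $o$ equals the $G$-integral of the corresponding densities of $\Phi(\,\cdot\cap gA)$, including the transversality and dominated-convergence estimates, and separately establish the tangent-cone identity; but this is substantially harder than the algebraic route, which uses nothing beyond the already-proved structural facts about $K$ and $\tau$.
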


\proof
By Theorem \ref{thm_transfer} we have 
\begin{displaymath}
 K_{\overline{ G_o}} \circ \tau=\tau \otimes \tau \circ K_G.
\end{displaymath}
Using Theorem \ref{thm_ftaig_curv} we obtain that 
\begin{align*}
 K_{\overline{ G_o}}(\psi \cdot \tau(\rho \cdot \Phi)) & =(\psi \otimes id) \cdot K_{\overline{ G_o}}(\tau(\rho \cdot
\Phi))\\
 & =((\psi \circ \tau)  \otimes \tau) K_G(\rho \cdot \Phi)\\
& =((\psi \circ \tau) \otimes (\tau \circ \rho)) K_G(\Phi).
\end{align*}

On the other hand,
\begin{align*}
 K_{\overline{ G_o}}(\tau(\rho \cdot \tau^{-1}(\psi \cdot \tau(\Phi)))) & = (\tau \otimes \tau) \circ K_G(\rho \cdot
\tau^{-1}(\psi \cdot \tau(\Phi)))\\
& =  (\tau \otimes (\tau \circ \rho)) K_G(\tau^{-1}(\psi \cdot \tau(\Phi)))\\
& =  (id \otimes (\tau \circ \rho \circ \tau^{-1})) K_{\overline{ G_o}}(\psi \cdot \tau(\Phi))\\
& =   (\psi \otimes (\tau \circ \rho \circ \tau^{-1})) K_{\overline{ G_o}}(\tau(\Phi))\\
& = ((\psi \circ \tau) \otimes (\tau \circ \rho)) K_G(\Phi).
\end{align*}  
By Corollary \ref{coro_module_versus_kin}, $\bar k_{\overline{ G_o}}$ and hence $K_{\overline{ G_o}}$ are injective,
which finishes the proof.
\endproof

\subsection{Angular curvature measures}\label{sect:angular cm}

\begin{definition}\label{def:angular cm} 
Let $V$ be a euclidean space of dimension $m$. A translation-invariant curvature measure $\Xi \in \curv(V)$ is called {\bf
angular} if, for any compact convex polytope $P \subset V$,
\begin{equation}\label{eq:def angular}
\Xi(P, \cdot ) = \sum_{k=0}^m \sum_{F \in \mathfrak{F}_k(P)} c_\Xi({\vec F}) \angle (F,P) \restrict{\vol_k} F
\end{equation}
where $c_\Xi(\vec F)$ depends only on the $k$-plane $\vec F \in \Gr_k(V)$ parallel to $F$ and where $\angle(F,P)$
denotes the outer angle of $F$ in $P$ (in other words, for fixed $\vec F$ the quantities $c(\Phi, \Tan(P,F))$ in
\eqref{eq:curv meas polytope} are proportional to $\angle(F,P)$).  The space of such translation-invariant curvature
measures will be denoted by $\ang(V)$.

Given a Riemannian manifold $M$, a smooth curvature measure $\Phi \in \curvinfty(M)$ is called {\bf angular} if
$\tau_x(\Phi)\in \ang(T_xM)$ for every $x \in M$.
The space of angular curvature measures will be denoted by $\mathcal{A}(M)$.
\end{definition}

If $\Xi \in \ang(V)$ then it is clear that the associated valuation $[\Xi]$ is even, and the constant  of \eqref{eq:def
angular} is 
$$c_\Xi(\vec F)=\kl_{[\Xi]}(\vec F),
$$
the value at $\vec F$ of the Klain function (cf. \cite{befu11}) of the associated valuation $[\Xi] \in \Val(V)$.

The elementary symmetric functions of principal curvatures determine a sequence of canonical angular curvature measures
in any Riemannian manifold $M^m$, expressed in terms of differential forms on the sphere bundle $SM$ as follows. Let
$e_1(\xi),\dots,e_m(\xi) = \xi$ be a local frame for $M$ defined for $\xi \in SM$, with associated coframe $\theta_i$
and connection forms $\omega_{ij}$. Put
\begin{equation}\label{eq:def kappa}
\kappa_i:= \frac1{\alpha_{m-i-1}i!(m-i-1)!}\left[\sum_\epsilon \sgn(\epsilon)
\theta_{\epsilon_1}\dots\theta_{\epsilon_{i}}\omega_{\epsilon_{i+1},m}\dots \omega_{\epsilon_{m-1},m}\right]
\end{equation}
where the sum is over permutations $\epsilon$ of $1,\dots,m-1$. This agrees with the definition of \cite{fu90} if $M$
is
euclidean.  

\begin{definition}
$\Delta_i:= \Int(\kappa_i,0)$.
\end{definition}

Thus if $A\subset M$ is a smooth domain then $\Delta_i(A,\cdot)$ is the measure on $\partial A$ whose density with
respect to $\vol_{m-1}$ is $\alpha_{m-i-1}^{-1}$ times the $(m-i-1)$st elementary symmetric function of the principal
curvatures of $\partial A$.

Compare also \cite{bebr03} for this definition. Observe that if $M=\R^n$ with the standard metric then the $\Delta_i$
coincide with the curvature measures $\Phi_i$ of \cite{cm}. In this case 
\begin{equation}\label{eq:glob delta}
\glob \Delta_i =   \mu_i 
\end{equation}
where $\mu_i$ is the $i$th {\bf intrinsic volume} \cite{klro97}. 

In euclidean spaces the space of translation-invariant angular curvature measures is stabilized by multiplication by
$\mu_1$:

\begin{theorem}[Angularity theorem, first version] \label{thm_angular_flat}
$$\mu_1\cdot \ang(\R^n)\subset \ang(\R^n).$$
\end{theorem}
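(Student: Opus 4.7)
The plan is to use the Crofton representation of $\mu_1$ to express $\mu_1 \cdot \Xi$ as a Crofton integral of $\Xi$ over hyperplane sections of the test polytope, and then to verify angularity face-by-face using a spherical integral identity for cones.

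I begin from the classical Crofton formula $\mu_1 = c_n \int_{A(n, n-1)} \chi(\cdot \cap H)\, dH$, where $A(n, n-1)$ denotes the affine Grassmannian of hyperplanes equipped with the $\baron$-invariant measure. Combined with Corollary \ref{cor_product_special} --- extended from compact $A$ to the unbounded Crofton family by truncating each $H$ to $H \cap B(0, R)$ and letting $R \to \infty$, absorbing the spurious $\chi$- and $\vol$-terms --- this yields
\[
(\mu_1 \cdot \Xi)(P, U) \;=\; c_n \int_{A(n, n-1)} \Xi(P \cap H, U) \, dH
\]
for every convex polytope $P$ and open set $U$.

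For generic $H$ the cross-section $P \cap H$ is an $(n-1)$-polytope whose $(d-1)$-dimensional faces are $F \cap H$ as $F$ ranges over the $d$-faces of $P$, $d \ge 1$, with outer normal cones obeying the standard identity $N_{F \cap H}(P \cap H) = \pi_{\vec H}(N_F P)$, so that $\angle(F \cap H, P \cap H)$ depends on $P$ only through $N_F P$ and $\vec H$. Applying the angular decomposition of $\Xi$ and performing the change of variable $H \leftrightarrow (\vec H, q)$ with $q \in F \cap H$ (whose Jacobian $J(\vec F, \vec H)$ is a function of $\vec F, \vec H$ alone), each $d$-face $F$ of $P$ contributes a term proportional to $\vol_d(F \cap U)$ with coefficient
\[
c_n \int_{\Gr_{n-1}(\Rn)} c_\Xi(\vec F \cap \vec H) \, J(\vec F, \vec H) \, \angle(F \cap H, P \cap H) \, d\vec H,
\]
while the top face of $P \cap H$ contributes a multiple of $\vol_n(P \cap U)$. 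The remaining step is a spherical integral identity asserting that, for any weight $g(\vec F, \vec H)$ and any cone $C \subset \vec F^\perp$,
\[
\int_{\Gr_{n-1}(\Rn)} g(\vec F, \vec H)\, \vol\bigl(\pi_{\vec H}(C) \cap S(\vec H)\bigr)\, d\vec H \;=\; c_g(\vec F) \cdot \vol(C \cap S^{n-1}),
\]
with $c_g(\vec F)$ a function of $\vec F$ alone; applied with $C = N_F P$, this puts each face contribution into the angular form $c'(\vec F)\, \angle(F, P)\, \vol_d(F \cap U)$, so $\mu_1 \cdot \Xi \in \ang(\Rn)$.

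The principal obstacle is proving this spherical identity. I would handle it by invariance: the stabilizer $O(d) \times O(n-d) \subset O(n)$ of $\vec F$ acts on cones in $\vec F^\perp$, and averaging both sides of the identity over this stabilizer reduces the claim to a Funk--Hecke-type computation for $O(n-d)$-invariant cones, where both sides are rotationally symmetric and can be evaluated explicitly. A secondary point, used throughout the face-by-face analysis, is the standard polytope-theoretic identity $N_{F \cap H}(P \cap H) = \pi_{\vec H}(N_F P)$ under transverse slicing.
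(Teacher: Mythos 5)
Your overall strategy is the same as the paper's: represent $\mu_1$ by a Crofton integral over affine hyperplanes, pass through Corollary \ref{cor_product_special} (with a truncation argument for the non-compact $H$'s), slice the test object face-by-face, factor out $\vol_d(F\cap U)$ by a change of variable, and then reduce everything to a rotational-invariance statement about the remaining integral over $\Gr_{n-1}$. The decomposition of the normal to $\vec H$ into components along $\vec F$ and $\vec F^\perp$, and the appeal to $O(n-d)$-invariance at the end, are exactly what drive the paper's argument as well.

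The gap is in how the spherical integral identity is stated and how you propose to prove it. As written, the identity cannot hold ``for any weight $g(\vec F,\vec H)$'': if $g$ is concentrated near a single hyperplane direction $\vec H_0$, the left side is essentially $\vol\bigl(\pi_{\vec H_0}(C)\cap S(\vec H_0)\bigr)$, which is not proportional to $\vol(C\cap S^{n-1})$. What is actually required is that $g(\vec F,\cdot)$ be invariant under the subgroup $O(n-d)\subset O(n)$ fixing $\vec F$; the weight $c_\Xi(\vec F\cap\vec H)\,J(\vec F,\vec H)$ arising here does have this invariance (both factors depend on the normal to $\vec H$ only through its $\vec F$-component and the angle $\theta$, not through the $\vec F^\perp$-component $w$), but you never check this, and it is the decisive point. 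Moreover, ``averaging both sides over the stabilizer'' does not reduce the claim to $O(n-d)$-invariant cones: two additive, $O(n-d)$-invariant functionals on cones can agree on all rotationally invariant cones without agreeing on general cones, since this only forces the two defining densities to have the same mean. The correct way to close, which is what the paper does, is to use the area formula to rewrite the left side as $\int_{C\cap S^{n-1}} h(x)\,dx$, where $h(x)$ is obtained by integrating the Jacobian of the polar projection against $g$ over $\vec H$, and then observe that $h$ is $O(n-d)$-invariant on the sphere $S^{n-d-1}\subset\vec F^\perp$ because $g$ and the Jacobian depend on $(x,\vec H)$ only through $O(n-d)$-covariant quantities. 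Since $O(n-d)$ acts transitively on $S^{n-d-1}$, $h$ is constant, and the proportionality follows at once; your phrasing leaves this density-level constancy unestablished.
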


\begin{proof}
It is enough to show that  $\mu_1\cdot\Phi\in \ang(\R^n)$ assuming that $\Phi\in\ang(\R^n)$ is homogeneous, say of
degree $k-1$. Hence, we must find a function $g$ on $\Gr_{k}$  such that, given a convex cone  $C$ containing a maximal
subspace  $F\in\Gr_{k}$
\[
 (\mu_1\cdot\Phi)(C,U)=g(F)\vol_{k}(U)\angle_{n-k-1}(C,F)
\]
where $U\subset F$ is any Borel set. By Corollary \ref{cor_product_special}  and a limit argument 
\begin{align*}
 (\mu_1\cdot\Phi)(C,U)&=\int_{\AGr_{n-1}}\Phi(H\cap C,U)d H=\int_{\AGr_{n-1}}\vol(U\cap H) f(F\cap \vec H)\angle(C\cap
\vec H,F\cap \vec H) dH \\
&=\frac 12\int_{S^{n-1}}\int_\R \vol(U\cap (x v+v^\bot)) f(F\cap v^\bot)\angle(C\cap v^\bot,F\cap v^\bot) dx\
d_v\vol_{S^{n-1}}\\
&=\frac 12\vol(U)\int_{S^{n-1}}\cos\theta f(F\cap v^\bot)\angle(C\cap v^\bot,F\cap v^\bot) \ d_v\vol_{S^{n-1}}
\end{align*}
where $\theta\in [0,\pi/2]$ is the angle between $v$ and $F$, and $\vec H$ denotes the linear space parallel to $H$ . The exterior angle of intersections is 
\begin{align*}
 \angle(C\cap v^\bot,F\cap v^\bot)&=\frac{\alpha_{n-k-1}}{\alpha_{n-k}}\vol_{n-k}(S^{n-1}\cap\{a w+b v\colon
a,b>0,w\in\nor(C,F)\})\\
&=\vol_{n-k-1}(\pi_v \nor(C,F))
\end{align*}
where $\pi_v$ denotes the polar projection of $S^{n-1}\setminus\{\pm v\}$ onto $S^{n-1}\cap v^\bot$.  

A generic $v\in S^{n-1}$ is uniquely described as            
\[
 v=\cos\theta \, u+\sin\theta\, w,\qquad u\in F\cap S^{n-1}=:S^{k-1},\quad w\in F^\bot\cap S^{n-1}=:S^{n-k-1}.
\]
In these terms, 
\begin{equation}\label{tPhiuw}
{(\mu_1 \cdot \Phi)(C,U)} = {\vol(U)}\int_{S^{k-1}}f(F\cap u^\bot)\int_0^{\pi/2} h(\theta)\int_{S^{n-k-1}}\vol(\pi_v
\nor(C,F))d_w\vol_{S^{n-k-1}} d\theta d_u\vol_{S^{k-1}}
\end{equation}
for some function $h(\theta)$. By the coarea formula, denoting by $\mathrm{jac}(x,w)$ the jacobian at $x$ of the
projection $\pi_v$ restricted to $\nor(C,F)\subset S^{n-k-1}$,
 \begin{align*}
 \int_{S^{n-k-1}} \vol(\pi_v(\nor(C,F)) 
d_w\vol_{S^{n-k-1}}&=\int_{\nor(C,F)}\left(\int_{S^{n-k-1}}\mathrm{jac}(x,w)d_w\vol_{S^{n-k-1}}\right)
d_x\vol_{\nor(C,F)}\\
&=j(\theta)\vol(\nor(C,F))
\end{align*}
for some function $j(\theta)$. Indeed, by $SO(n-k)-$invariance the integral between brackets is independent of $x$.
Together with \eqref{tPhiuw}, this gives the result.
 \end{proof}

 \begin{definition}
Let $V$ be a $d$-dimensional euclidean vector space. A curvature measure $\Phi\in \curv(V)$ is  a {\bf constant
coefficient curvature measure} if there exists a constant coefficient form $\omega \in \Lambda^d(V^* \oplus V^*) \subset
\Omega^d(V \oplus V)$ such that for all compact convex bodies $K \subset V$ and all Borel sets $U \subset V$ we have 
\begin{displaymath}
 \Phi(K,U)=\int_{N_1(K) \cap \pi^{-1}U} \omega.
\end{displaymath}
where $N_1(K)$ is the disk bundle defined in \emph{(41)} of \cite{befu06}.
\end{definition}

\begin{lemma} \label{lemma_cc_implies_angular}
Every constant coefficient curvature measure is angular. 
\end{lemma}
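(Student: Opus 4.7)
The plan is to compute $\Phi(P,U)$ explicitly for a convex polytope $P$ using the natural facewise decomposition of its normal disk bundle, and to read off the angular structure directly. Up to a set of $d$-dimensional measure zero,
$$
N_1(P) = \bigsqcup_{k=0}^{d} \bigsqcup_{F \in \mathfrak{F}_k(P)} F \times (\Nor(P,F) \cap B_1^{d-k}),
$$
where $\vec F \in \Gr_k(V)$ is the $k$-plane parallel to $F$, $\Nor(P,F) \subset \vec F^\perp$ is the outward normal cone, and $B_1^{d-k}$ is the closed unit ball in $\vec F^\perp$.

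Next I would exploit the bidegree decomposition $\Lambda^d(V^* \oplus V^*) = \bigoplus_{k=0}^{d} \Lambda^k V^* \otimes \Lambda^{d-k} V^*$ to write $\omega = \sum_k \eta_k$ with $\eta_k \in \Lambda^k V^* \otimes \Lambda^{d-k} V^*$. At an interior point of the piece indexed by a $k$-face $F$, the tangent space to $N_1(P)$ is precisely $\vec F \oplus \vec F^\perp$ inside $V \oplus V$. A degree count forces only $\eta_k$ to contribute there, and its restriction is necessarily a constant multiple of the product volume form:
$$
\restrict{\eta_k}{\vec F \oplus \vec F^\perp} = c(\vec F)\,\vol_{\vec F} \wedge \vol_{\vec F^\perp}.
$$
Crucially, the constant-coefficient hypothesis makes $c(\vec F)$ depend only on the subspace $\vec F$, not on the particular face $F$ or its location in $V$.

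Applying Fubini to each face then gives
$$
\int_{(F \cap U) \times (\Nor(P,F) \cap B_1^{d-k})} \omega = c(\vec F)\, \vol_k(F \cap U)\, \vol_{d-k}(\Nor(P,F) \cap B_1^{d-k}) = c(\vec F)\,\omega_{d-k}\,\angle(F,P)\,\vol_k(F \cap U),
$$
by the definition of the outer angle. Summing over all faces of $P$ produces exactly \eqref{eq:def angular} with coefficient $c_\Phi(\vec F) := \omega_{d-k}\,c(\vec F)$, which indeed depends only on $\vec F$, proving angularity.

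The one point requiring care is the orientation convention used to integrate $\omega$ over $N_1(P)$ and the induced orientations on the facewise pieces, so that the sum assembles consistently; any residual sign ambiguity is absorbed into $c_\Phi(\vec F)$ and does not obstruct the angular conclusion. Apart from this bookkeeping, the argument is essentially a one-line verification: all the content is packaged into the product decomposition of $N_1(P)$ and the translation invariance of $\omega$.
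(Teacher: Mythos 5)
Your proof is correct and follows essentially the same route as the paper's: decompose $N_1(P)$ facewise, use the constant-coefficient hypothesis to see that $\omega$ restricted to the tangent plane $\vec F\oplus\vec F^\perp$ of a $k$-face piece is a constant (depending only on $\vec F$) times the product volume form, apply Fubini, and recognize the outer angle in the normal-cone factor. The only cosmetic difference is that the paper first reduces by linearity to decomposable forms $\pi_1^*\omega_1\wedge\pi_2^*\omega_2$ before making the same facewise computation, whereas you argue directly from the one-dimensionality of $\Lambda^k\vec F^*\otimes\Lambda^{d-k}\vec F^{\perp*}$; both are equally valid.
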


\proof
By linearity, it suffices to show that any curvature measure arising from a differential form $\omega=\pi_1^*\omega_1
\wedge \pi_2^* \omega_2$ with $\omega_1 \in \Lambda^kV^*, \omega_2 \in \Lambda^{d-k}V^*$ is angular.

Let $P$ be a polytope and denote by $\mathfrak{F}_k(P)$ the set of $k$-faces of $P$. Let $N_1(P,\sigma) \subset \vec
\sigma^\perp$ be the normal cone of $P$ at a face $\sigma$. Then the degree $k$-part of the current $[N_1(P)]$ is given
by 
\begin{displaymath}
 N^k_1(P)=\sum_{\sigma \in \mathfrak{F}_k(P)} [\sigma] \times [N_1(P,\sigma)].
\end{displaymath}

Since $\omega_2$ is of degree $d-k$ and has constant coefficients, 
\begin{displaymath}
 \int_{N_1(P,\sigma)} \omega_2=\frac{\angle(P,\sigma)}{\alpha_{d-k-1}}\int_{\vec \sigma^\perp \cap B(0,1)} \omega_2,
\end{displaymath}
where $ \angle(P,\sigma)$ denotes the outer angle of $P$ at the face $\sigma$.  

It follows that 
\begin{align*}
 \Phi(P,U) & = \sum_{\sigma \in \mathfrak{F}_k(P)} \int_{\sigma \cap U} \omega_1 \int_{N_1(P,\sigma)} \omega_2\\
& =  \sum_{\sigma \in \mathfrak{F}_k(P)} f(\vec \sigma) \vol_k(\sigma \cap U)  \angle(P,\sigma).
\end{align*}
\endproof

\subsection{Lipschitz-Killing curvature measures}\label{sect:LK cms}
On any Riemannian manifold $M^n$ there is a canonical family of curvature measures defined as follows. Let $\iota\colon
M\rightarrow \R^N$ be an isometric embedding (it is not necessary to invoke the Nash embedding theorem here: the
existence of local isometric embeddings is enough to draw this conclusion). The $k$-th Lipschitz-Killing curvature
measure $\Lambda_k$ is defined by
\begin{equation}\label{eq:lamb=delt}
\Lambda_k=\iota^*\Delta_k.
\end{equation}
That this definition is independent of $\iota$ is the substance of  ``Weyl's tube formula" \cite {weyl}. More precisely,
it is a consequence of the following description of $\Lambda_k$ in terms of a pair of differential forms
$(\Psi_k,\Phi_k)\in \Omega^n(M)\times\Omega^{n-1}(SM)$. Let $e_1(\xi),\ldots, e_n(\xi)=\xi$ be a local moving frame on
$M$ defined for $\xi\in SM$. Let $\theta_i$ be the associated coframe, and $\omega_{i,j}$ the connection forms. Finally,
let $\Omega_{ij}$ denote the curvature forms. Then (cf. \cite{cms})
\begin{equation}
\label{LKinterior} \Psi_k=
\frac{1}{k!(\frac{n-k}2)!(4\pi)^{\frac{n-k}2}} \sum_\epsilon \sgn(\epsilon)
\Omega_{\epsilon_1\epsilon_2} \wedge \cdots \wedge \Omega_{\epsilon_{n-k-1} \epsilon_{n-k}} \wedge
\theta_{\epsilon_{n-k+1}} \wedge \cdots \wedge
\theta_{\epsilon_n} \in \Omega^n(SM)
\end{equation}
when $n-k$ is even, and $\Psi_k=0$ if $n-k$ is odd.  
Note that the sum above may be partitioned into sums of $k!(\frac{n-k}2)!2^{\frac{n-k}2}$ formally identical terms. 

Similarly
\begin{equation}                                                                                                       
\Phi_k=\sum_{2i\le n-k-1} d_{n,k,i} \Phi_{k,i}
\end{equation}
where
\begin{equation}
 \Phi_{k,i}:= \sum_\epsilon \sgn(\epsilon) \Omega_{\epsilon_1\epsilon_2} \wedge \cdots \wedge
\Omega_{\epsilon_{2i-1}\epsilon_{2i}} \wedge
\omega_{n,\epsilon_{2i+1}} \wedge \cdots \wedge \omega_{n,\epsilon_{n-k-1}} \wedge \theta_{\epsilon_{n-k}} \wedge
\cdots \wedge \theta_{\epsilon_{n-1}}\label{phik}
\end{equation}
for some constants $d_{n,k,i}$. 
Both $\Psi_k$ and $\Phi_k$, and in fact all of the $\Phi_{k,i}$, are independent of the moving frame, and hence globally
defined differential forms on $SM$. Moreover $\Psi_k$ is the pullback of a differential form defined on $M$, which we
denote again by $\Psi_k$.

It will be helpful to ascertain in a reliable way the values of the constants in \eqref{LKinterior}. 
To do so we take  $M$ to be a sphere, and use
 \[                          
 \mu_k(S^n) = 2 \mu_k(B^{n+1}) = 2 \binom{n+1} {k} \frac{\omega_{n+1}}{\omega_{n-k+1}}, \quad n-k \text{ even}\\
 \]
which is Theorem 9.2.4 of \cite{klro97}. We will not use the explicit values of the constants $d_{n,k,i}$.

A more general form of the 
restriction property \eqref{eq:lamb=delt} follows easily from the definition.

\begin{lemma}\label{lem:lk invariance} Let $M,N$ be smooth Riemannian manifolds, and denote the associated
Lipschitz-Killing curvature measures by $\Lambda_i(M,\cdot), \Lambda_i(N,\cdot)$ respectively. If $j:N \to M$ is an
isometric embedding, then 
$$
\Lambda_i(N,\cdot) = j^*\Lambda_i(M,\cdot).
$$
\end{lemma}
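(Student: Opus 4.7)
The plan is to combine two ingredients: the well-definedness of $\Lambda_k$ on a Riemannian manifold (i.e.\ independence of the choice of ambient isometric embedding), which is the content of Weyl's tube formula invoked just before the lemma, and the contravariant functoriality of the restriction operation on smooth curvature measures defined in the proposition on restrictions.

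First I would fix an isometric embedding $\iota_M\colon M\hookrightarrow \R^{N}$. Existence is guaranteed by Nash, or more elementarily by patching local isometric embeddings together with the locality of $\Lambda_k$. By definition,
\[
\Lambda_k(M,\cdot)=\iota_M^{*}\Delta_k,
\]
and, by the invariance principle underlying Weyl's formula, the right-hand side does not depend on the choice of $\iota_M$. Next I would observe that $\iota_M\circ j\colon N\hookrightarrow\R^{N}$ is itself an isometric embedding, so applying the same definition to $N$ gives
\[
\Lambda_k(N,\cdot)=(\iota_M\circ j)^{*}\Delta_k.
\]
It therefore suffices to verify the functoriality identity
\[
j^{*}\circ \iota_M^{*}=(\iota_M\circ j)^{*}
\]
on $\curvinfty(\R^{N})$, after which the chain $j^{*}\Lambda_k(M,\cdot)=j^{*}\iota_M^{*}\Delta_k=(\iota_M\circ j)^{*}\Delta_k=\Lambda_k(N,\cdot)$ closes the argument.

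The main, and really the only, nontrivial step is this functoriality. Since restriction was not defined by pulling back the defining forms but rather indirectly through the characterizing identity \eqref{eq_restriction}, I would argue by uniqueness: for every $f\in C^{\infty}(\R^{N})$, both $j^{*}\iota_M^{*}\Delta_k$ and $(\iota_M\circ j)^{*}\Delta_k$ satisfy
\[
\Psi_{(\iota_M\circ j)^{*}f}=(\iota_M\circ j)^{*}\bigl((\Delta_k)_f\bigr),
\]
the first by applying \eqref{eq_restriction} twice and the second by applying it once, so the uniqueness clause in the proposition defining restriction forces them to coincide. Once that small verification is recorded, the lemma follows by the formal manipulation above, and no additional computation with the forms $\kappa_i$ or $\Phi_{k,i}$ is needed.
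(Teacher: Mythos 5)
Your argument is the paper's own: fix an isometric embedding $\iota_M$ of $M$, observe that $\iota_M\circ j$ is an isometric embedding of $N$, and chain the definitions to get $\Lambda_k(N,\cdot)=(\iota_M\circ j)^*\Delta_k=j^*(\iota_M^*\Delta_k)=j^*\Lambda_k(M,\cdot)$. The extra justification you offer for $(\iota_M\circ j)^*=j^*\circ\iota_M^*$ on curvature measures via the uniqueness clause of the restriction proposition is correct, but note that in verifying the characterizing identity for $j^*\iota_M^*\Delta_k$ you actually arrive at $j^*\iota_M^*\bigl((\Delta_k)_f\bigr)$ on the right-hand side and still need the analogous functoriality $j^*\circ\iota_M^*=(\iota_M\circ j)^*$ at the level of smooth valuations to match it to $(\iota_M\circ j)^*\bigl((\Delta_k)_f\bigr)$ --- so the burden is shifted to the valuation level rather than eliminated, which is fine since the paper takes that for granted (citing Alesker's construction of valuation restriction).
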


\begin{proof} If $\iota:M \to \R^N$ is an isometric embedding of $M$ then $\iota \circ j$ is an isometric embedding of
$N$. Therefore $\Lambda_i(N,\cdot) = (\iota \circ j)^* \Delta_i = j^*(\iota^* \Delta_i) = j^* \Lambda_i(M,\cdot)$.
\end{proof}
 
\begin{proposition}\label{LKangular}
The Lipschitz-Killing curvature measures are angular.
\end{proposition}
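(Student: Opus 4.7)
The plan is to reduce angularity of $\Lambda_k$ to a pointwise computation at each $x \in M$. By Definition \ref{def:angular cm}, it suffices to prove $\tau_x(\Lambda_k) \in \ang(T_xM)$ for every $x$. I will do this by showing that $\tau_x(\Lambda_k)$ is a constant coefficient curvature measure on $T_xM$, so that Lemma \ref{lemma_cc_implies_angular} applies.

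Represent $\Lambda_k = \Int(\Phi_k, \Psi_k)$ using the explicit formulas \eqref{LKinterior} and \eqref{phik}. These are universal polynomial expressions in the coframe forms $\theta_i$, the connection forms $\omega_{ij}$, and the curvature forms $\Omega_{ij}$ of a local moving frame $e_1(\xi), \ldots, e_n(\xi)=\xi$ on $SM$. Via the identification \eqref{eq:identification_forms} from the proof of Proposition \ref{prop_isom_curv_general}, the translation-invariant curvature measure $\tau_x(\Lambda_k) \in \curv(T_xM)$ is represented by the pair $(\Phi_k|_{S_xM},\Psi_k|_x)$, reinterpreted as translation-invariant forms on $ST_xM$ and $T_xM$.

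The key claim is that this pair has constant coefficients. Fix an orthonormal basis $e_1, \ldots, e_n$ of $T_xM$ and extend to a local moving frame on a neighborhood of $x$. Under the identification \eqref{eq:identification_forms}, the restrictions $\theta_i|_{S_xM}$ become the coordinate $1$-forms on $T_xM$; the vertical parts of $\omega_{n,j}|_{S_xM}$ become the standard angular $1$-forms on $S(T_xM)$, which extend to constant coefficient $1$-forms on the disk bundle $T_xM \oplus T_xM$; and the curvature pieces $\Omega_{ij}|_x$ become $2$-forms whose components are the fixed numbers $R_{ijkl}(x)$. Substituting into \eqref{LKinterior} and \eqref{phik}, every monomial in $\Phi_k|_{S_xM}$ and $\Psi_k|_x$ is then a wedge product of constant coefficient forms, so using the standard correspondence between sphere-bundle forms and disk-bundle forms via radial extension, $\tau_x(\Lambda_k)$ is a constant coefficient curvature measure on $T_xM$ in the sense of the definition preceding Lemma \ref{lemma_cc_implies_angular}.

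By that lemma $\tau_x(\Lambda_k) \in \ang(T_xM)$, and since $x \in M$ was arbitrary, $\Lambda_k \in \mathcal{A}(M)$. The main technical obstacle is verifying that the vertical parts of $\omega_{n,j}$ and the curvature forms $\Omega_{ij}$ really translate to constant coefficient forms on $T_xM$ under \eqref{eq:identification_forms}; this requires careful bookkeeping with the splittings \eqref{eq:decomp 1}--\eqref{eq:decomp 2} but reflects the essential point that $\tau_x$ freezes all geometric data at the single point $x$, turning pointwise values of the Riemann tensor and the fiber coordinates into constants from the viewpoint of the affine space $T_xM$.
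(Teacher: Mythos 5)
Your overall plan---show $\tau_x(\Lambda_k)$ is a constant coefficient curvature measure and invoke Lemma \ref{lemma_cc_implies_angular}---is a reasonable idea, but the argument you give for the key claim has a genuine gap. The assertion that the vertical forms $\omega_{n,j}$, transferred to $ST_xM$, ``extend to constant coefficient $1$-forms on the disk bundle $T_xM \oplus T_xM$'' is false. These are the angular $1$-forms on the unit sphere $S(T_xM)$, and such forms have no constant coefficient extension: already on $S^1\subset\R^2$ the angular form is the restriction of $-y\,dx+x\,dy$, whose coefficients are not constant. Consequently, inspecting the monomials of \eqref{LKinterior} and \eqref{phik} one factor at a time does not show that $\tau_x(\Lambda_k)$ is a constant coefficient curvature measure, and the phrase ``standard correspondence between sphere-bundle forms and disk-bundle forms via radial extension'' does not fill the gap: radial extension does not produce constant coefficient forms.

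The honest route to constant coefficients for a sphere-bundle form $\alpha\in\Omega^{n-1}(SV)$ goes through Stokes' theorem: one extends $\alpha$ to a form $\tilde\alpha$ on the disk bundle that vanishes appropriately on the zero section, and then shows $d\tilde\alpha$ has constant coefficients, so that $\int_{N(K)}\alpha=\int_{N_1(K)}d\tilde\alpha$. This is precisely the structure of the paper's computation in the special case of Proposition \ref{angulardelta}, and carrying it out for the general $\Phi_{k,i}$ is real work that your proposal omits. The paper instead proves the proposition by direct verification of Definition \ref{def:angular cm}: for a $(k+2i)$-dimensional face $F$ of a polytope $P\subset T_xM$, choosing a frame adapted to $F$ makes $\omega_{n,1},\dots,\omega_{n,k+2i}$ vanish on $N(P)\cap\pi^{-1}F$ while the product $\omega_{n,k+2i+1}\cdots\omega_{n,n-1}$ restricts to the volume form of the normal link of $F$; integrating this over the link produces exactly the outer angle $\angle(P,F)$, and the remaining factors contribute a density on $F$ depending only on $\vec F$. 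This exhibits the required angular form \eqref{eq:def angular} directly, with no appeal to constant coefficients.
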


\begin{proof} In fact we show that each $\Phi_{k,i}$ defines an angular curvature measure.
We observe that  the $\Omega_{ij}$ and the $\theta_i$ are horizontal with respect to the Riemannian connection, and the
$\omega_{n,j}$ are vertical.
Thinking of these as translation-invariant elements of $\Omega^*(ST_xM)$, we find that for any $x \in M$ the associated
curvature measure $\tau_x(\Int(\Phi_{k,i}))\in\curv(T_xM)$  has degree $k+2i $.

Fixing $x \in M$, let $P \subset T_xM$ be a polytope and $F\subset P$ a face of dimension $k+2i$. 
Let us choose for each $(p,v)\in N(P)\cap\pi^{-1}F$ an orthonormal frame  $p;e_1,\ldots,e_n$ with $e_1,\ldots,
e_{k+2i}\in T_pF$ and $e_n=v$.
Then $\omega_{n,1}=\cdots=\omega_{n,k+2i}=0$ when restricted to $N(P)\cap\pi^{-1}F$, while the restriction of
$\omega_{n,k+2i+1}\dots \omega_{n,n-1}$ is the volume form of the sphere of $F^\perp$.
Hence for $U\subset F$
\begin{align*}
&\tau_x(\Int(\Phi_{k,i}))(P,U)=\int_{N(P)\cap \pi^{-1}U} \Phi_{k,i}\\
&=c\angle(P,F) \int_U \sum_\sigma \sgn(\sigma)
\Omega_{\sigma_1\sigma_2}\cdots\Omega_{\sigma_{2i-1}\sigma_{2i}}\theta_{\sigma_{2i+1}}\cdots\theta_{\sigma_{2i+k}}\qquad
\end{align*}
where $\sigma$ runs over all permutations of $\{1,\ldots,k+2i\}$.
\end{proof}

\subsection{The Lipschitz-Killing algebra}\label{sect:LK algebra} Alesker has observed that the globalizations of the Lipschitz-Killing
curvature measures have a special place in the integral geometry of Riemannian manifolds. Recall first that the algebra
of translation-invariant, $SO(n)$-invariant valuations on $\Rn$ is isomorphic to $\R[t]/(t^{n+1})$ \cite{ale03, fu06},
where by convention we identify $t$ with $\frac 2\pi \mu_1$. In general \cite{befu11}
\begin{equation}\label{eq:exp t}
t^i = \frac {i!\omega_i}{\pi^i} \mu_i, {\quad  \mbox{or } \quad\exp(\pi t) = \sum \omega_i \mu_i.}
\end{equation}
Since the restriction map $\V(N) \to \V(M)$ corresponding to an embedding of manifolds $M\to N$ is a homomorphism of
algebras (\cite{ale06}, Thm.2.14), it follows that any isometric embedding $M\to \Rn$ yields a map $l:\R[t]/(t^{n+1})
\to \V(M)$. Each monomial $t^i$ is an element of filtration $i$, and yields a fixed multiple of the $i$-dimensional
volume when applied to a smooth submanifold of dimension $i$. It follows that if $\dim M = k$ then $ l$ factors through
an injection $\R[t]/(t^{k+1}) \to \V(M)$.

 Since any Riemannian manifold $M^k$ admits such an isometric embedding into some $\Rn$, and by \eqref{eq:lamb=delt} the
Lipschitz-Killing curvature measures of $M$ are the restrictions to $M$ of the Lipschitz-Killing curvature measures of
$\Rn$, it follows that $\V(M^k)$ contains a canonically embedded copy of $\R[t]/(t^{k+1})$, the {\bf Lipschitz-Killing
algebra} of $M$.
 
Gray  has shown that the values of the Lipschitz-Killing valuations of K\"ahler manifolds may be expressed in terms of
Chern forms:
\begin{lemma}\label{lemma_t_chern} Let $N$ be a compact K\"ahler manifold of complex dimension $n$. Then 
\begin{equation}
t^{2k}(N) = \pi^{-k} \binom{2k} k \int_{N} \ch_{n-k}(N)\wedge \kappa^{k},
\end{equation}
where $\ch_i$ is the $i$th Chern form and $\kappa$ the K\"ahler form of $N$.
\end{lemma}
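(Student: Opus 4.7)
The plan is to reduce the claim to a pointwise identity of $2n$-forms on $N$ and then prove that identity by a Pfaffian-vs-determinant calculation adapted to a K\"ahler unitary frame. Using \eqref{eq:exp t} together with $\omega_{2k}=\pi^k/k!$, the identification \eqref{eq:lamb=delt} of Lipschitz-Killing measures with $\Delta_k$, and \eqref{eq:glob delta} in the Euclidean ambient space in which $N$ is isometrically embedded, one obtains
\[
t^{2k}(N) = \tfrac{(2k)!\,\omega_{2k}}{\pi^{2k}}\,\mu_{2k}(N) = \tfrac{(2k)!}{k!\,\pi^k}\int_N \Psi_{2k}.
\]
Since $\binom{2k}{k}=(2k)!/(k!)^2$, the claim is equivalent to the global identity $\int_N \Psi_{2k} = \tfrac{1}{k!}\int_N \ch_{n-k}(N)\wedge\kappa^{k}$, which I propose to establish as a pointwise identity of $2n$-forms on $N$.

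For the pointwise step, fix a local orthonormal frame $e_1,\ldots,e_{2n}$ adapted to the K\"ahler structure, so that $Je_{2a-1}=e_{2a}$, with dual coframe $\theta_a$ and real curvature $2$-forms $\Omega_{ab}$. Because the Levi-Civita connection of a K\"ahler manifold preserves $J$, the curvature matrix is $J$-invariant, which forces the symmetries $\Omega_{2a-1,2b-1}=\Omega_{2a,2b}$ and $\Omega_{2a-1,2b}=-\Omega_{2a,2b-1}$. Thus the $\Omega_{ab}$ assemble into a Hermitian curvature matrix $\Theta$ on the complexified tangent bundle via $\Theta_{ab}=\Omega_{2a-1,2b-1}+i\,\Omega_{2a-1,2b}$, while the K\"ahler form is $\kappa=\sum_a\theta_{2a-1}\wedge\theta_{2a}$. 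In the sum defining $\Psi_{2k}$ in \eqref{LKinterior}, one sorts each permutation $\epsilon$ according to whether its consecutive pairs of indices (within the curvature block and within the coframe block) respect the $J$-pairing $\{2a-1,2a\}$. The pairs of coframe indices that respect the $J$-pairing contribute exactly $\kappa^{k}$, while the curvature pairs, after the $J$-symmetries above are used to combine real entries into complex ones, reassemble into the $(n-k)$-th elementary symmetric polynomial of $\frac{i}{2\pi}\Theta$, which by definition is $\ch_{n-k}(N)$.

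The algebraic heart of the argument is the classical fact that on a Hermitian vector space the Pfaffian of a $J$-invariant skew-symmetric real $2$-form equals, up to a standard factor, the determinant of the associated complex endomorphism; the identity needed here is the analogous assertion in which only $n-k$ of the $n$ factors of $\Theta$ appear, the remaining $k$ pairs contributing a copy of the K\"ahler form. The main obstacle is purely combinatorial bookkeeping: the normalizing constant $\frac{1}{(2k)!(n-k)!(4\pi)^{n-k}}$ in \eqref{LKinterior}, the $k!(n-k)!\,2^{n-k}$ repetitions of formally identical terms in the signed permutation sum, the factor $\omega_{2k}=\pi^k/k!$ in the conversion from $\mu_{2k}$ to $t^{2k}$, and the convention $\ch_j=\sigma_j(\tfrac{i}{2\pi}\Theta)$ must all be chased through to produce the clean scalar $1/k!$ that matches the right-hand side. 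As a sanity check on the normalization, one may verify both sides on $N=\CP^n$ with its Fubini-Study metric, where $t^{2k}(\CP^n)$ and $\int_{\CP^n}\ch_{n-k}\wedge\kappa^k$ are both computable in closed form.
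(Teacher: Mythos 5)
Your approach is genuinely different from the paper's. The paper does not attempt to prove the pointwise identity at all: it cites Gray's Lemma 7.6 for the formula up to a multiplicative constant, and then pins down the constant by evaluating both sides on the template $N = (\CP^1)^n$, where the product formula for intrinsic volumes (Klain--Rota) and the easily computed Chern form $\ch_1(\CP^1) = \tfrac{2}{\pi}\,d\area$ make both sides elementary. Your proposal instead aims to reprove Gray's pointwise identity from scratch by a Pfaffian-versus-determinant argument in a $J$-adapted frame. If carried out, this would be more self-contained; the paper's route is shorter precisely because it declines to redo Gray's computation.

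However, the proposal as written has a real gap. The reduction to $\int_N \Psi_{2k} = \tfrac{1}{k!}\int_N \ch_{n-k}\wedge\kappa^k$ is clean (using \eqref{eq:exp t}, $\omega_{2k}=\pi^k/k!$, and the vanishing of the normal-cycle term for a closed manifold), but the passage from there to the pointwise form identity is asserted, not proved. You describe it as ``the analogous assertion in which only $n-k$ of the $n$ factors of $\Theta$ appear, the remaining $k$ pairs contributing a copy of the K\"ahler form,'' and call the rest ``purely combinatorial bookkeeping'' — but that assertion \emph{is} Gray's Lemma 7.6, i.e.\ the very content the paper chooses to cite rather than reprove. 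The classical $\mathrm{Pf}=\det$ identity for $\mathfrak{u}(n)\subset\mathfrak{so}(2n)$ only handles $k=0$; the mixed cases, where coframe indices and curvature indices intermingle over arbitrary permutations before $J$-pairing, require a genuine argument (for instance, one must track which permutations survive after the $J$-symmetries are imposed and show the coefficient is constant and equals the claimed one). Until that is done, you have reduced the lemma to Gray's lemma, not proved it. Separately, the proposed ``sanity check on $\CP^n$'' risks circularity: the paper's own evaluation of $t^{2k}(\CP^n_\lambda)$ (Corollary \ref{cor:t^k(cpn)}) is \emph{derived from} this lemma. To fix the normalization independently one should, as the paper does, use $(\CP^1)^n$, where $t^{2k}$ follows from the multiplicativity of intrinsic volumes without any appeal to Chern-form computations.
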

\proof Except for the constant factor, the formula is Lemma 7.6 in \cite{gray}. To find out this constant, it is enough
to take $N= (\CP^1)^n$. Here  $\CP^1$ is metrized as usual as the sphere of radius $\frac 1 2$, intrinsic diameter
$\frac \pi 2$ and area $\pi$, and has Chern form $\ch_1(\CP^1)=\frac{2}{\pi} d\area$. By using \eqref{eq:exp t} and the
product formula for intrinsic volumes (\cite{klro97}, Prop. 4.2.3) the result follows after simple computations.
\endproof

\begin{corollary}\label{cor:t^k(cpn)}For $\lambda>0$,
\begin{equation}\label{eq:t2k cpn}
t^{2k}(\CP^n_\lambda) = \binom{2k}{k}\binom{n+1}{k+1}\lambda^{-k}.
\end{equation}
\end{corollary}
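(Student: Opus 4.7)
The plan is to apply Lemma \ref{lemma_t_chern} directly, reducing the problem to a cohomological computation on $\CP^n_\lambda$ involving only the K\"ahler form $\kappa$ and the Chern forms.

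First I would identify the Chern forms of $\CP^n_\lambda$ in terms of $\kappa$. The total Chern class of $\CP^n$ is $c(\CP^n) = (1+h)^{n+1}$ where $h = c_1(\mathcal O(1))$ generates $H^2(\CP^n;\Z)$ and satisfies $\int_{\CP^n} h^n = 1$. Combined with the normalization $\int_{\CP^n_\lambda}\kappa^n = \pi^n/\lambda^n$ given in the notation subsection, this yields the cohomological identity $[h] = (\lambda/\pi)[\kappa]$, so that
\[
[\ch_i(\CP^n_\lambda)] = \binom{n+1}{i}\left(\frac{\lambda}{\pi}\right)^i [\kappa]^i.
\]

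Next I would plug this into Lemma \ref{lemma_t_chern} with $i = n-k$. Since only the cohomology class matters under integration over the closed manifold $\CP^n_\lambda$, one gets
\[
\int_{\CP^n_\lambda}\ch_{n-k}(\CP^n_\lambda)\wedge\kappa^k = \binom{n+1}{n-k}\left(\frac{\lambda}{\pi}\right)^{n-k}\int_{\CP^n_\lambda}\kappa^n = \binom{n+1}{k+1}\frac{\lambda^{n-k}}{\pi^{n-k}}\cdot\frac{\pi^n}{\lambda^n} = \binom{n+1}{k+1}\pi^k\lambda^{-k}.
\]
Multiplying by the prefactor $\pi^{-k}\binom{2k}{k}$ from Lemma \ref{lemma_t_chern} collapses the powers of $\pi$ and leaves precisely $\binom{2k}{k}\binom{n+1}{k+1}\lambda^{-k}$, as claimed.

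The only subtle point is fixing the normalization of the Chern forms relative to $\kappa$. The rest is bookkeeping, and one can cross-check the final constant by specializing to $k=0$, where the formula must give $t^0(\CP^n_\lambda) = \chi(\CP^n_\lambda) = n+1$, matching $\binom{0}{0}\binom{n+1}{1}\lambda^{0} = n+1$.
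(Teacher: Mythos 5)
Your proof is correct and follows essentially the same route as the paper: apply Lemma \ref{lemma_t_chern} and substitute the expression $\ch_i(\CP^n_\lambda) = \binom{n+1}{i}(\frac{\lambda}{\pi}\kappa)^i$. The only cosmetic difference is that you derive the Chern-form normalization from $c(\CP^n)=(1+h)^{n+1}$ and the volume formula, whereas the paper simply cites Gray's Corollary 6.25 for it.
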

\proof
This is a direct consequence of the previous lemma, and the following expression of the Chern forms of $\CP^n_\lambda$
(cf. Corollary 6.25 in \cite{gray})
\[
 \ch_i(\CP_\lambda^n)=\binom{n+1}{i}\left(\frac{\lambda}{\pi}\kappa\right)^i.
\]
\endproof


\section{Global kinematic formulas for complex space forms}\label{sect:global} 
The main result of this section is Theorem \ref{thm:pkf lambda} below, which states that the principal kinematic formula
in $\CPn_\lambda$ is formally independent of $\lambda$ when expressed in terms of certain geometrically natural bases
for $\V^n_\lambda$. Thus the explicit form of this formula for $\lambda =0$, given in \cite {befu11},  holds also for
general $\lambda$. 

The geometrically natural bases 
are the  $\mu_{kq}^\lambda, \tau_{kq}^\lambda$ of Definition \ref{def:tilde mu}. We  give enough information that the
multiplication table for $\V^n_\lambda$ may be determined in these terms--- although we do not give this table
explicitly, it is essentially straightforward to do so in any given dimension $n$. Using the multiplicativity property
\eqref{eq:k mult}, it follows that the entire kinematic operator $k$ may then be given in these terms.

Let $\mathcal{V}^n_\lambda$ denote the algebra of $G_\lambda$-invariant valuations in $\CP^n_\lambda$. We
will show that it is generated by the generator $t$ of the Lipschitz-Killing algebra of
$\CP^n_\lambda$, together with 
\begin{equation}\label{eq:def s}
s := \int_{\AGr^{\C}_{n-1}} \chi(\cdot \cap P ) \, dP
\end{equation}
where $\AGr^{\C}_{n-1}$ denotes the Grassmannian of all totally geodesic complex hyperplanes $P \subset
\CP^n_\lambda$, and $dP$ is the $G_\lambda$-invariant measure, normalized so that if $E\subset \CP^n_\lambda$
is a 2-dimensional differentiable polyhedron contained in some totally geodesic complex line then
$$
s(E) = \frac{\area E}{\pi}.
$$
Under this normalization, for $\lambda >0$
\begin{equation} \label{eq:sk_total}
 s^k(\CP^n_\lambda)=\frac{n-k+1}{\lambda^k}.
\end{equation}

Thus if $\lambda=1$ then $\AGr^{\C}_{n-1} $ is the projective space dual to $\CP^n_1$ and $dP$ is the
invariant probability measure. If $\lambda = 0$ then this definition of $s$ is identical to that in
\cite{fu06, befu11}. 

Considering the sequence of restriction maps $\V^{n+1}_\lambda\rightarrow \V^{n}_\lambda$, we define
$\V^\infty_\lambda:=\varprojlim \V^n_\lambda$.

\subsection{Invariant curvature measures and valuations}\label{subsect:invariant}
We denote by $\curv^{U(n)}$ the space of curvature measures of $\C^n$  invariant under $\overline{U(n)}$. Its
inverse limit under the restriction maps is denoted $\curv^{U(\infty)}$.

Let  $\alpha,\beta,\gamma,\theta_0,\theta_1,\theta_2\in \Omega^*(S\C^n)$ be the {differential} forms defined in \cite{befu11}. It was
shown in \cite{pa02} that the subalgebra $\Omega^*(S\C^n)^{\overline{U(n)}}$ of invariant forms  is generated by these
elements, together with $d\alpha$. Since $\alpha, d\alpha$ vanish identically on normal cycles, it follows that 
$\curv^{U(n)}$ is spanned by integration of the forms  
\begin{align}
\beta_{kq}:&=c_{nkq}\,\beta \wedge \theta_0^{n-k+q} \wedge \theta_1^{k-2q-1} \wedge \theta_2^q,\quad k > 2q,\\
\gamma_{kq}:&=\frac  {c_{nkq}} 2\,\gamma \wedge \theta_0^{n-k+q-1} \wedge \theta_1^{k-2q} \wedge \theta_2^q, \quad
n>k-q,
\end{align}   
where we set for convenience
\begin{displaymath}
 c_{nkq}:=\frac{1}{q!(n-k+q)!(k-2q)! \omega_{2n-k}}.
\end{displaymath}
Put $B_{kq}:=\Int\beta_{kq}, \Gamma_{kq}:=\Int\gamma_{kq}$. Globalizing these curvature measures yields  the
so-called \emph{hermitian intrinsic volumes}  (cf. \cite{befu11})
 \[
 \mu_{kq}:=\glob(B_{kq})=\glob(\Gamma_{kq})\in \valun(\Cn),\qquad 0,k-n\leq q\leq \frac k2\leq n.
\]
They form a basis of $ \valun$. Since $B_{kq}\neq \Gamma_{kq}$ (cf. \eqref{B_on_balls}, \eqref{Gamma_on_balls} below)
we deduce that $\{B_{kq}\}\cup\{\Gamma_{kq}\}$ is a basis of $\curvun$. Next we introduce a more convenient basis.

\begin{definition} \label{def_delta}
 Define curvature measures $\Delta_{kq} \in \curv^{U(n)}, \ \max\{0,k-n\} \leq q \leq \frac{k}{2}<n$ by 
\begin{align*}
  \Delta_{kq} :&= \frac{1}{2n-k}(2(n-k+q)\Gamma_{kq}+(k-2q)B_{kq})\\
\Delta_{2n,n} :&= \vol_{2n}.
\end{align*}
Thus $\Delta_{2q,q} = \Gamma_{2q,q}$ and $\Delta_{k,k-n} = B_{k,k-n}$.
We define also for $k> 2q, q> k-n$
\begin{align*}
N_{kq} :&= \Delta_{kq} - B_{kq}\\
& = \frac{2(n-k+q)}{2n-k}(\Gamma_{kq} - B_{kq}).
\end{align*}
\end{definition}
The family $\{\Delta_{kq}\}\cup\{N_{kq}\}$ is a basis of $\curvun$. 
\begin{proposition}\label{angulardelta}
For $\Phi \in  \curv^{U(n)}$ the following are equivalent:
\begin{enumerate}
\item[(i)] $\Phi\in\mathrm{span}\{\Delta_{kq}\}$, 
\item[(ii)] $\Phi$ is a constant coefficient curvature measure,
\item[(iii)] $\Phi$ is angular.
\end{enumerate}
\end{proposition}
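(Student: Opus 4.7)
The plan is to prove the chain (ii)$\Rightarrow$(iii)$\Rightarrow$(i)$\Rightarrow$(ii). The first implication is already Lemma \ref{lemma_cc_implies_angular}. So the real content is (i)$\Rightarrow$(ii), which is a concrete verification that the specific weighting defining $\Delta_{kq}$ produces a constant coefficient form, and (iii)$\Rightarrow$(i), which amounts to showing that the complementary basis vectors $N_{kq}$ destroy angularity.

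For (i)$\Rightarrow$(ii), I would work with the forms $\beta,\gamma,\theta_0,\theta_1,\theta_2$ on $S\C^n$ from \cite{befu11}. Each extends naturally to a translation-invariant form $\tilde\beta,\tilde\gamma,\tilde\theta_0,\tilde\theta_1,\tilde\theta_2$ on the product $\C^n\oplus\C^n$, whose restriction to the unit bundle recovers the original. The plan is to compute the integral of the constant coefficient form
\[
\tilde\omega_{kq} := c_{nkq}\,\tilde\theta_0^{n-k+q}\wedge(\tilde\theta_1+\tilde\theta_2)^{k-2q}\wedge\tilde\theta_2^{q}
\]
(with an appropriate total degree adjustment) against the disk bundle $N_1(K)$, split the $(\tilde\theta_1+\tilde\theta_2)^{k-2q}$ by the binomial theorem and evaluate the radial integration over each disk fiber. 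The two non-vanishing contributions land on $B_{kq}$ and $\Gamma_{kq}$ with multinomial weights equal to the number of vertical and horizontal factors available, and a short bookkeeping check shows that these weights are precisely $(k-2q)$ and $2(n-k+q)$ respectively; after dividing by $(2n-k)$ this is exactly Definition \ref{def_delta}.

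For (iii)$\Rightarrow$(i), fix a degree $k$ and decompose $\Phi=\sum_q a_q\Delta_{kq}+\sum_q b_q N_{kq}$. By the implication (i)$\Rightarrow$(iii) just established the first sum is angular, so if $\Phi$ is angular then $\Phi':=\sum_q b_q N_{kq}$ is angular as well, and it is enough to show $\Phi'=0$. I would test $\Phi'$ on product polytopes $P=F\times C$, where $F$ is a $k$-dimensional convex polytope spanning a prescribed real $k$-plane $\vec F\subset\C^n$ and $C$ is a polyhedral convex cone transverse to $\vec F$. Angularity forces
\[
\Phi'(P,U) = c(\vec F)\,\angle(F,P)\,\vol_k(U),\qquad U\subset F,
\]
with $c(\vec F)$ independent of $C$. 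On the other hand $N_{kq}$ is, up to scalar, the difference $\Gamma_{kq}-B_{kq}$ of two forms that differ by moving one $\theta$-factor from horizontal to vertical position. Integrating $\gamma_{kq}-\frac{k-2q}{2(n-k+q)}\beta_{kq}$ over the normal cycle of $P$ yields a quantity that depends non-trivially on the Kähler angle profile of $C\cap\vec F^\perp$, not merely on its total solid angle. Comparing two cones with identical outer angle but incompatible complex structure (for instance a real half-space and a complex half-space inside $\vec F^\perp$, after adjusting dimensions) produces a homogeneous linear system in $(b_q)$ whose matrix expresses the dependence of $N_{kq}$ on the Kähler angles.

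The main obstacle is this last step: establishing that the resulting matrix is non-degenerate as $q$ varies. I expect this to be a Vandermonde-type calculation in the Kähler angle variables, weighted by $q$, parallel to the non-degeneracy argument used in \cite{befu11} to establish linear independence of the hermitian intrinsic volumes $\mu_{kq}$. Once the matrix is shown to be non-singular the conclusion $b_q=0$ is immediate and the three conditions are proved equivalent.
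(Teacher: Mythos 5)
Your chain (ii)$\Rightarrow$(iii)$\Rightarrow$(i)$\Rightarrow$(ii) is the same one the paper uses, and (ii)$\Rightarrow$(iii) is indeed just Lemma~\ref{lemma_cc_implies_angular}. The two remaining implications are where your plan departs from the paper, and for (iii)$\Rightarrow$(i) there is a genuine gap.

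For (i)$\Rightarrow$(ii) your plan is to exhibit the constant coefficient form explicitly as $c_{nkq}\,\tilde\theta_0^{n-k+q}\wedge(\tilde\theta_1+\tilde\theta_2)^{k-2q}\wedge\tilde\theta_2^{q}$, binomially expand, integrate over the disk fibers, and match coefficients against $B_{kq}$ and $\Gamma_{kq}$. This is plausible but is a nontrivial bookkeeping exercise that you have not carried out. The paper avoids the computation entirely: set $\omega = \frac{1}{2n-k}\left(2(n-k+q)\gamma_{kq}+(k-2q)\beta_{kq}\right)$ and write $\int_{N(K)}\pi^*f\wedge\omega = \int_{N_1(K)}\left(\pi^*df\wedge\omega + \pi^*f\wedge d\omega\right)$ by Stokes; the first summand dies because both $\pi^*df$ and $\omega$ annihilate the radial field of the disk bundle, and $d\omega = d\beta_{kq}=d\gamma_{kq}$ already has constant coefficients, so you are done with no explicit fiber integral.

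For (iii)$\Rightarrow$(i) your proposal is where the real problem lies. You reduce to showing that an angular $\Phi'\in\operatorname{span}\{N_{kq}\}$ vanishes, and you propose to test $\Phi'$ on product polytopes with cones carrying varying K\"ahler-angle profiles, hoping that the resulting linear system in $(b_q)$ is nonsingular by a ``Vandermonde-type'' computation. That nondegeneracy is precisely the hard part, and you only say you ``expect'' it; as written, the argument is not complete. You have overlooked the observation, recorded in the paper immediately after Definition~\ref{def:angular cm}, that for an angular measure $\Xi$ the coefficient function is $c_\Xi(\vec F) = \kl_{[\Xi]}(\vec F)$, the Klain function of the globalized valuation. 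With this in hand the implication is almost immediate: choose $\Xi\in\operatorname{span}\{\Delta_{kq}\}$ with $\glob(\Phi-\Xi)=0$ (possible because $\glob$ is onto $\val^{U(n)}$ on this span); then $\Psi=\Phi-\Xi$ is angular with vanishing globalization, so each homogeneous component has vanishing Klain function, hence vanishes, and $\Phi=\Xi$. The explicit-polytope computation you envision is exactly the kind of work the paper does do elsewhere --- in Proposition~\ref{prop:b=b}, which characterizes $\operatorname{span}\{B_{kq}\}$ --- but it is unnecessary here, and your sketch of it is too vague to count as a proof.
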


\proof
(i) $ \implies$  (ii): Let 
\begin{displaymath}
 \omega:=\frac{1}{2n-k}(2(n-k+q)\gamma_{kq}+(k-2q)\beta_{kq}).
\end{displaymath}
Then for $f \in C^\infty(\C^n)$ we have 
\begin{displaymath}
 \int_{N(K)} \pi^*f \wedge \omega=\int_{\partial N_1(K)} \pi^*f \wedge \omega=\int_{N_1(K)} (\pi^* df \wedge \omega
+\pi^*f \wedge d\omega).
\end{displaymath}
An easy computation shows that $\pi^* df$ and $\omega$ both vanish on the radial vector field of the disc bundle of
$\C^n$. Since $d\omega=d\beta_{kq}=d\gamma_{kq}$ has constant coefficients, the proof is finished. 

(ii) $ \implies$  (iii) by Lemma \ref{lemma_cc_implies_angular}.  

(iii) $ \implies$  (i): Let $\Phi$ be angular. Since the restricted globalization map $\mathrm{span}\{\Delta_{kq}\} \to
\val^{U(n)}$ is onto, there exists $\Xi \in \mathrm{span}\{\Delta_{kq}\}$ such that $\glob(\Phi-\Xi)=0$. By the two
implications above, the curvature measure $\Psi=\Phi-\Xi$ is angular. Let $\Psi=\sum_i \Psi_i$ be the decomposition of
$\Psi$ into $i$-homogeneous parts.  Each $\Psi_i$ is angular and is hence determined by the Klain function of
$\glob(\Psi_i)$. Therefore, all $\Psi_i$ vanish and $\Phi=\Xi$.
\endproof

We will denote by $\glob_\lambda\colon\curv^{U(n)}\simeq \curvinfty^{G_\lambda}(\CPn_\lambda)\to \mathcal{V}^n_\lambda$
the globalization map given by $\glob_\lambda=\glob\circ \tau_\lambda^{-1}$ where $\tau =\tau_\lambda$ is the transfer
map \eqref{eq:curv sim 1}.  We will also use the notation 
$$
[\Phi]_\lambda:=\glob_\lambda(\Phi).
$$
Clearly $\glob_0 = \restrict\glob{\curvun}$, and
\begin{equation}\label{eq:ker glob0}
\ker \glob_0 = \spann\{N_{kq}\}.
\end{equation}
Thus we put
$$
\ang^{U(n)}:=\spann\{\Delta_{kq}\}, \quad \nul^{U(n)}:=\spann\{N_{kq}\}.
$$
\begin{lemma}\label{lem:gamma beta translation} If $k >2q$ then
\begin{align}
\label{eq:n = b}
[N_{kq}]_\lambda &= -\lambda \frac{q+1}{\pi} [B_{k+2, q+1}]_\lambda\\
\label{eq:BtoDelta}
[B_{kq}]_\lambda&=\sum_{i\geq 0} \frac{\lambda^i(q+i)!}{\pi^iq!}[\Delta_{k+2i,q+i}]_\lambda
\end{align}
for all $\lambda \in \R$.
\end{lemma}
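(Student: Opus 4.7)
The plan is to derive (1) from a direct computation in the Rumin complex on $S\CPn_\lambda$, and then obtain (2) by iterating (1). The iteration is essentially algebraic: applied to $B_{kq}$, relation (1) gives
\[
[B_{kq}]_\lambda=[\Delta_{kq}]_\lambda+\lambda\tfrac{q+1}{\pi}[B_{k+2,q+1}]_\lambda,
\]
and substituting the same relation into $[B_{k+2,q+1}]_\lambda$, and so on, produces the series in (2). The series terminates because the indices $(k+2i,q+i)$ eventually exit the admissible range (either $k+2i>2n$ or $q+i>n-k-i$), and there the curvature measure $B_{k+2i,q+i}$ either vanishes or equals $\Delta_{k+2i,q+i}$, so no further correction arises.

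To prove (1), I would use the kernel characterization of $\Glob$ from Theorem \ref{prop:kernel thm}. Set $\nu_{kq}:=\frac{2(n-k+q)}{2n-k}(\gamma_{kq}-\beta_{kq})$ so that $N_{kq}=\Int(\nu_{kq},0)$, and let $\nu_{kq}^\lambda$ denote the corresponding translation-invariant form on $S\CPn_\lambda$ via the identification $\Omega^*(SM_\lambda)^{G_\lambda}\simeq\Omega^*(SM_0)^{G_0}$ of Section 2 (in the notation of Proposition \ref{prop:module_analytic}). It suffices to show that the pair
\[
\Bigl(\nu_{kq}^\lambda+\lambda\tfrac{q+1}{\pi}\beta_{k+2,q+1}^\lambda,\ 0\Bigr)
\]
lies in $\ker\Glob_\lambda$, i.e.\ that its Rumin differential $D_\lambda$ vanishes and that its sphere-fibre integral is zero. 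The sphere integral vanishes automatically because both $\nu_{kq}$ and $\beta_{k+2,q+1}$ vanish on the zero section (they involve a positive power of $\beta$ or $\gamma$, which are vertical forms that vanish when restricted to a fibre with the incorrect degree).

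The content therefore lies in the Rumin computation. In the flat case $\lambda=0$ we know from \eqref{eq:ker glob0} that $N_{kq}\in\ker\glob_0$, which is equivalent (by Theorem \ref{prop:kernel thm}) to $D_0\nu_{kq}=0$. For $\lambda\neq 0$ the structure equations on $S\CPn_\lambda$ differ from those on $S\Cn$ only by curvature terms proportional to $\lambda$: concretely, the extra contribution comes from the identity $d\alpha=-\theta_1+\lambda\pi^*\kappa$ (instead of $d\alpha=-\theta_1$ in the flat case), together with the Kähler relations expressing $\kappa$ in terms of $\theta_0,\theta_1,\theta_2$ on the horizontal part. By Lemma \ref{lemma_rumin_analytic} the Rumin differential $D_\lambda$ is analytic in $\lambda$, so
\[
D_\lambda\nu_{kq}^\lambda=D_0\nu_{kq}+\lambda R_{kq}+O(\lambda^2)=\lambda R_{kq}+O(\lambda^2)
\]
for an explicit invariant form $R_{kq}$. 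A combinatorial expansion of the exterior derivative of the monomial $\gamma\wedge\theta_0^{n-k+q-1}\theta_1^{k-2q}\theta_2^q - \beta\wedge\theta_0^{n-k+q}\theta_1^{k-2q-1}\theta_2^q$ using the modified structure equations identifies $R_{kq}$ as a multiple of $\beta_{k+2,q+1}$; matching coefficients against the normalization constants $c_{nkq}$ of Section 3.1 yields exactly $R_{kq}=-\tfrac{q+1}{\pi}D_0\beta_{k+2,q+1}$ modulo horizontal forms. Because $D_\lambda$ is analytic and equation \eqref{eq:DffD} (with the parity discussion) allows analytic continuation from $\lambda>0$, where the identity can be checked by evaluating both sides of (1) on geodesic balls (using \eqref{eq:sk_total} and Corollary \ref{cor:t^k(cpn)}), the first-order identity extends to all $\lambda$, and all higher-order terms in $\lambda$ in $D_\lambda\nu_{kq}^\lambda+\lambda\tfrac{q+1}{\pi}D_\lambda\beta_{k+2,q+1}^\lambda$ must vanish by the same test-set argument.

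The main obstacle is the bookkeeping in the wedge expansion that identifies the coefficient $\tfrac{q+1}{\pi}$. Once one sets up the structure equations of $S\CPn_\lambda$ cleanly and tracks the factorials hidden in $c_{nkq}$, the identification becomes mechanical, but verifying the precise combinatorial constant is where the work lies; the role of the analytic-continuation apparatus of Section 2.3.2 is to let us reduce the verification to the compact case $\lambda>0$, where both sides of (1) can be tested on concrete submanifolds.
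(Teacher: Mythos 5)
The paper proves \eqref{eq:n = b} by simply citing Proposition~2.6 of \cite{ags}, and then derives \eqref{eq:BtoDelta} ``by recurrence''. Your iteration of \eqref{eq:n = b} (substituting $N_{kq}=\Delta_{kq}-B_{kq}$, noting that $k-2q$ is preserved under the shift $k\mapsto k+2$, $q\mapsto q+1$, and that the terms eventually leave the admissible range) is exactly this recurrence, so that part matches and is fine.

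The proof of \eqref{eq:n = b} itself, which the paper does not prove internally, is where your proposal has a genuine gap. You lay out two distinct strategies --- an exact Rumin computation on $S\CPn_\lambda$, and evaluation on test submanifolds for $\lambda>0$ followed by analytic continuation --- but carry out neither, and the closing paragraph tries to glue them in a way that does not hold up. Establishing $D_\lambda \nu_{kq}^\lambda + \lambda\tfrac{q+1}{\pi} D_\lambda\beta_{k+2,q+1}^\lambda = O(\lambda^2)$ and then asserting that the higher-order terms ``must vanish by the same test-set argument'' is not a proof: a first-order Rumin computation by itself does not force the exact identity, and if the test-set argument were complete on its own it would render the Rumin computation superfluous. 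Moreover, the test sets you name (geodesic balls) are insufficient: by Lemma \ref{lem:mukq of br}, all $\mu^\lambda_{kq}(B_r)$ at fixed $k$ share the $r$-dependence $\sn_\lambda^k(r)\cs_\lambda^{2n-k}(r)$, so balls cannot separate different $q$ at the same degree --- one needs templates of varying K\"ahler angle, as in \cite{ags}. The structure equation you quote for $d\alpha$ on $S\CPn_\lambda$ also needs verification against Lemma \ref{lem:kono curv}; getting that wrong would change the putative $R_{kq}$. Either route (exact Rumin computation valid for all $\lambda$, or a full test-set evaluation for $\lambda>0$ plus analytic continuation) can close the gap, but neither is closed as written, and the substantive content --- the coefficient $(q+1)/\pi$ --- is never actually derived.
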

\begin{proof} Equation \eqref{eq:n = b} is Proposition 2.6 of \cite{ags}. Equation \eqref{eq:BtoDelta} follows by
recurrence.
\end{proof}

\begin{corollary}\label{cor:Delta is basis} For every $\lambda \in \R$, the valuations $\glob_\lambda(\Delta_{kq}),
\max\{0,k-n\} \leq q \leq \frac{k}{2}\le n$, constitute a basis of $\V^n_\lambda$.
\end{corollary}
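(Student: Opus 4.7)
The plan is to show spanning first, then deduce linear independence from a dimension count. For spanning, I would observe that $\{\Delta_{kq}\} \cup \{N_{kq}\}$ is a basis of $\curvun$ and $\glob_\lambda$ is surjective (by Proposition~\ref{prop:invariant curv}), so $\{[\Delta_{kq}]_\lambda\} \cup \{[N_{kq}]_\lambda\}$ spans $\V^n_\lambda$. Equation \eqref{eq:n = b} of Lemma~\ref{lem:gamma beta translation} rewrites each $[N_{kq}]_\lambda$ as a multiple of $[B_{k+2,q+1}]_\lambda$, which by \eqref{eq:BtoDelta} expands as a finite linear combination of the $[\Delta_{k',q'}]_\lambda$. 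Thus $\{[\Delta_{kq}]_\lambda\}$ alone spans $\V^n_\lambda$, yielding the upper bound $\dim \V^n_\lambda \le d$, where $d$ denotes the number of admissible index pairs $(k,q)$.

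For the matching lower bound I would combine the analytic continuation framework of Section~\ref{sect:an cont 1} with a homothety argument. By the transfer principle \eqref{eq:curv sim 1}, the domain of $\glob_\lambda$ can be identified with the $\lambda$-independent space $\curvun$; by Theorem~\ref{prop:kernel thm} its kernel is cut out by the condition $D_\lambda \omega + \pi^*\phi = 0$ (together with a vanishing integral), and $D_\lambda$ depends analytically on $\lambda$ by Lemma~\ref{lemma_rumin_analytic}. Hence $\lambda \mapsto \dim \V^n_\lambda$ is lower semicontinuous. Since \eqref{eq:ker glob0} yields $\dim\V^n_0 = d$, the set $U := \{\lambda \in \R : \dim\V^n_\lambda = d\}$ is open and contains a neighborhood of $0$. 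To propagate the equality to all $\lambda$, I would use that whenever $\lambda_1, \lambda_2$ have the same sign, $\CPn_{\lambda_1}$ and $\CPn_{\lambda_2}$ are homothetic, yielding an isomorphism of their invariant valuation algebras. Consequently $\dim\V^n_\lambda$ is constant on each of $(0,\infty)$ and $(-\infty,0)$; since each of these intervals meets $U$, both constants equal $d$.

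Combining the bounds, $\dim\V^n_\lambda = d$ for every $\lambda \in \R$, so the spanning set $\{[\Delta_{kq}]_\lambda\}$ of cardinality $d$ must be a basis. The main obstacle is this second step: to invoke lower semicontinuity of rank one must realize $\glob_\lambda$ as a genuine analytic family of linear maps, but the target $\V(\CPn_\lambda)$ itself varies with $\lambda$. I would address this by working throughout in the common model of invariant differential forms on the sphere bundle from Section~\ref{sect:an cont 1}, so that all the $\lambda$-dependence is concentrated in the analytically varying Rumin differentials $D_\lambda$ supplied by Lemma~\ref{lemma_rumin_analytic}.
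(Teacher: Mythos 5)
Your proof is correct, and it takes a genuinely different route from the paper's. The paper disposes of the dimension count in a single stroke by citing Corollary 3.1.7 of \cite{ale05b}, which supplies $\dim \V^n_\lambda = \dim \V^n_0$ directly; combined with the spanning argument (which you reproduce identically), the claim follows. You instead derive the dimension equality internally: working in the common model of Section \ref{sect:an cont 1}, you write $\V^n_\lambda$ as the quotient of the fixed finite-dimensional space of invariant form pairs by $\ker\Glob_\lambda$, note via Theorem \ref{prop:kernel thm} and Lemma \ref{lemma_rumin_analytic} that this kernel is cut out by linear conditions varying analytically in $\lambda$, and conclude that $\dim\V^n_\lambda$ is lower semicontinuous. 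Together with the spanning bound $\dim\V^n_\lambda \le d$ this makes $U = \{\dim\V^n_\lambda = d\}$ open, and the homothety argument (rescaling the metric on a fixed space form multiplies the curvature by a positive constant, so $\CPn_{\lambda_1} \simeq \CPn_{\lambda_2}$ whenever $\lambda_1\lambda_2 > 0$) forces $\dim\V^n_\lambda$ to be constant on each of $(0,\infty)$ and $(-\infty,0)$, whence the constant is $d$ on both sides. What your route buys is self-containedness: it only uses machinery the paper has already built (the analytic family $D_\lambda$ and the kernel characterization), rather than importing Alesker's dimension theorem. What it costs is length and the need to carefully justify that invariant valuations are exhausted by invariant form pairs in the $\lambda<0$ case --- which the paper does establish in the proposition preceding the corollary, so there is no circularity --- so that passing to the common model is legitimate.
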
 
\proof  For $\lambda=0$ this was proved in \cite{befu11}. For general $\lambda$, equations  \eqref{eq:n = b} and \eqref{eq:BtoDelta}
show that $[\Delta_{kq}]_\lambda$ span $\V^n_\lambda$. By Corollary 3.1.7 of \cite{ale05b}, $\dim \V^n_\lambda=\dim
\V^n_0{ =\dim \ang^{U(n)}}$.
\endproof

\begin{proposition} \label{prop_kernels_glob}  \mbox{}
 \begin{enumerate}
\item[(i)]\label{eq:ker glob}
$\ker \glob_\lambda = \spann\left\{ N_{kq} + \lambda\frac{q+1}{\pi}B_{k+2,q+1} : k>2q, \ q>k-n\right\}.$
\item[(ii)]\label{item:ker intersect} If $\lambda \ne 0$ then
 \begin{equation}\label{eq:ker intersect}
  \ker \glob_0 \cap \ker \glob_\lambda = \{0\}.
 \end{equation}
 \end{enumerate}
\end{proposition}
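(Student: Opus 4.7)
The inclusion $\supseteq$ in (i) is essentially free. For any admissible $(k,q)$, applying $\glob_\lambda$ to $N_{kq}+\lambda\tfrac{q+1}{\pi}B_{k+2,q+1}$ and invoking \eqref{eq:n = b} gives zero. So the real content is that the proposed spanning set has the right size and is linearly independent.

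For the dimension count, use the basis $\{\Delta_{kq}\}\cup\{N_{kq}:k>2q,\,q>k-n\}$ of $\curvun$ recorded after Definition \ref{def_delta}. By Corollary \ref{cor:Delta is basis}, the images $[\Delta_{kq}]_\lambda$ form a basis of $\V^n_\lambda$, so $\glob_\lambda$ is surjective and its kernel has dimension $K:=\#\{(k,q):k>2q,\,q>k-n\}$. Thus it suffices to exhibit $K$ linearly independent elements of $\ker\glob_\lambda$, which is what the proposed spanning set provides once independence is checked. For $\lambda=0$ the elements reduce to the $N_{kq}$, which are basis vectors. For $\lambda\neq 0$ I expand each $B_{k+2,q+1}$ in the $\{\Delta,N\}$-basis; by the definition of $N_{rs}$, we always have $B_{k+2,q+1}\equiv\Delta_{k+2,q+1}\pmod{\spann\{N_{rs}\}}$ (either $N_{k+2,q+1}$ does not exist, in which case $B_{k+2,q+1}=\Delta_{k+2,q+1}$, or it does, in which case $B_{k+2,q+1}=\Delta_{k+2,q+1}-N_{k+2,q+1}$). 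The shift $(k,q)\mapsto (k+2,q+1)$ is injective on the index set, so distinct elements of the proposed spanning set contribute to distinct $\Delta$-coordinates. Hence in any vanishing combination $\sum a_{kq}\bigl(N_{kq}+\lambda\tfrac{q+1}{\pi}B_{k+2,q+1}\bigr)=0$, the $\Delta_{k+2,q+1}$-coefficient equals $\lambda\tfrac{q+1}{\pi}a_{kq}$, which forces $a_{kq}=0$.

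Part (ii) follows at once from (i). Given $\Phi\in\ker\glob_0\cap\ker\glob_\lambda$, write $\Phi=\sum c_{kq}\bigl(N_{kq}+\lambda\tfrac{q+1}{\pi}B_{k+2,q+1}\bigr)$ using (i). Since $\ker\glob_0=\spann\{N_{kq}\}$ by \eqref{eq:ker glob0}, the $\Delta$-components of $\Phi$ in the $\{\Delta,N\}$-basis vanish. By the same injective shift argument used for independence, the $\Delta_{k+2,q+1}$-component of $\Phi$ is $\lambda\tfrac{q+1}{\pi}c_{kq}$, and $\lambda\neq 0$ forces $c_{kq}=0$, i.e.\ $\Phi=0$.

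The only place one has to be mildly careful is the boundary case $q=k-n+1$ in part (i), where $N_{k+2,q+1}$ does not exist; that case has to be handled separately when expanding $B_{k+2,q+1}$, but it only simplifies the argument since then $B_{k+2,q+1}=\Delta_{k+2,q+1}$ outright. No obstacle is substantial here: once the basis from Definition \ref{def_delta}, the kernel relation \eqref{eq:n = b}, and the dimension equality from Corollary \ref{cor:Delta is basis} are in hand, both assertions follow by tracking $\Delta$-coordinates.
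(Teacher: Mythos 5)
Your proof is correct and follows essentially the same route as the paper's: for (i) the paper also relies on \eqref{eq:n = b} for the inclusion and on Corollary~\ref{cor:Delta is basis} for the dimension count, and for (ii) the paper applies $\glob_0$ to $\Phi=\sum c_{kq}\bigl(N_{kq}+\lambda\tfrac{q+1}{\pi}B_{k+2,q+1}\bigr)$ and reads off coefficients of the $\mu_{k+2,q+1}$, which is precisely your $\Delta$-coordinate argument transported through the isomorphism $\ang^{U(n)}\simeq\V^n_0$. The only difference is that you spell out the linear-independence step that the paper dismisses as ``immediate.''
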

\begin{proof}  
Conclusion  (i) is immediate from \eqref{eq:n = b} and Corollary \ref{cor:Delta is basis}.

(ii): Any $\Phi\in\ker \glob_\lambda$ has the form $\Phi=\sum_{k,q} c_{kq}\Psi_{kq}$ with $\Psi_{kq}:= N_{kq} +
\lambda\frac{q+1}{\pi}B_{k+2,q+1}$. Thus 
\[
 \glob_0\Phi=\frac{\lambda}{\pi} \sum_{k,q}c_{kq}(q+1)\mu_{k+2,q+1},
\]
for some constants $c_{kq}$
so if this is zero then $c_{kq}=0$ for all $k,q$, by linear independence of the $\mu_{k,q}$.
\end{proof}

Let $\iota_\lambda: \CP^n_\lambda\to \CP^{n+1}_\lambda$  be a totally geodesic embedding, and let us consider the
restriction map $\iota_\lambda^*: \curvinfty^G(\CP^{n+1}_\lambda)\to \curvinfty^G(\CP^n_\lambda)$. Since $\exp\circ
d\iota_\lambda=\iota_\lambda\circ\exp$, the description of the transfer map given after Proposition
\ref{prop_isom_curv_general} implies that the following diagram commutes,
\begin{displaymath}
\xymatrix{\curvinfty^G(\CP^{n+1}_\lambda) \ar[r]^-{\iota_\lambda^*} \ar[d]_{\tau_{\lambda}}& \curvinfty^G(\CPn_\lambda)
\ar[d]_{\tau_\lambda}  \\
\curv^{U(n+1)}  \ar[r]^-{\iota_0^*}& \curv^{U(n)}}
\end{displaymath}
In other words, the map $r=\tau_\lambda\circ\iota_\lambda^*\circ(\tau_\lambda)^{-1}$ is independent of $\lambda$.
\begin{lemma}
\begin{align}\label{rdelta}
 r(\Delta_{kq})&=\Delta_{kq}, &  \max(0,k-n)\leq q\leq \frac k2\leq n
\\
\label{rbeta}
 r(B_{kq})&=B_{kq}, &  \max(0,k-n)\leq q < \frac k2\leq n\\
\label{rdeltavanish}
  r(\Delta_{k,k-n-1})&=r(B_{k,k-n-1})=0,  &  n+1\leq k\leq 2n+2.
\end{align}
 \end{lemma}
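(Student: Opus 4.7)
The plan is to reduce to $\lambda=0$, using the $\lambda$-independence of $r$ just established. At $\lambda=0$ the transfer map is the identity, so $r$ is literally the restriction $\iota_0^{*}$ along the standard embedding $\iota_0\colon\C^n\hookrightarrow\C^{n+1}$ of translation-invariant curvature measures. I will use throughout that $\iota_0^{*}$ commutes with globalization (a one-line consequence of $(\iota^{*}\Phi)_{\iota^{*}f}=\iota^{*}(\Phi_f)$ applied to $f\equiv 1$).

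\emph{Step I: the identities \eqref{rdelta} and the $\Delta$-part of \eqref{rdeltavanish}.} These follow from angularity (Proposition \ref{angulardelta}). For a polytope $P\subset\C^n\subset\C^{n+1}$ and a face $F$ of $P$, the normal cone of $P$ at $F$ in $\C^{n+1}$ is the orthogonal direct sum of the normal cone in $\C^n$ with $(\C^n)^{\perp}\cong\C$; since direct-summing a convex cone with a fixed linear subspace preserves the fraction of the unit sphere it covers, $\angle_{\C^{n+1}}(F,P)=\angle_{\C^n}(F,P)$. Inserting this into \eqref{eq:def angular} shows that $\iota_0^{*}\Delta^{n+1}_{kq}$ is angular on $\C^n$, with Klain coefficient function equal to the restriction of $c_{\Delta^{n+1}_{kq}}$ from $\Gr_k(\C^{n+1})$ to $\Gr_k(\C^n)$. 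On the valuation side, $\glob_0\circ\iota_0^{*}=\iota_0^{*}\circ\glob_0$ gives $\glob_0 r(\Delta^{n+1}_{kq})=\iota_0^{*}\mu^{n+1}_{kq}$, which by the known restriction behavior of the hermitian intrinsic volumes (read off from their Klain functions in \cite{befu11}) equals $\mu^n_{kq}$ for $q\geq k-n$ and vanishes for $q=k-n-1$. By Proposition \ref{angulardelta} and \eqref{eq:ker glob0}, $\spann\{\Delta_{kq}\}$ and $\ker\glob_0=\spann\{N_{kq}\}$ together form a basis of $\curvun$, so angular curvature measures are uniquely determined by their globalization; this forces \eqref{rdelta} and the $\Delta$-part of \eqref{rdeltavanish}.

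\emph{Step II: the identities \eqref{rbeta} and the $B$-part of \eqref{rdeltavanish}.} Write $B_{kq}=\Delta_{kq}-N_{kq}$. By Step I it suffices to compute $r(N^{n+1}_{kq})$. Applying $[\,\cdot\,]_\lambda$ and Lemma \ref{lem:gamma beta translation} gives
\begin{equation*}
[r(N^{n+1}_{kq})]_\lambda \;=\; -\frac{\lambda(q+1)}{\pi}\,[r(B^{n+1}_{k+2,q+1})]_\lambda.
\end{equation*}
I induct on $d:=q-(k-n)\geq 0$, which strictly decreases under $(k,q)\mapsto(k+2,q+1)$. Base case $d=0$ (so $q=k-n$): the index $(k+2,q+1)$ then lies on the vanishing boundary $q'=k'-n-1$ treated in Step I, so the right-hand side is zero for every $\lambda$, while $N^n_{k,k-n}$ is undefined and interpreted as $0$. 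Inductive step $d>0$: the hypothesis $r(B^{n+1}_{k+2,q+1})=B^n_{k+2,q+1}$ together with Lemma \ref{lem:gamma beta translation} applied in $\C^n$ yields $-\lambda(q+1)/\pi\cdot[B^n_{k+2,q+1}]_\lambda=[N^n_{kq}]_\lambda$. In either case $[r(N^{n+1}_{kq})-N^n_{kq}]_\lambda=0$ for every $\lambda\neq 0$; moreover both summands lie in $\ker\glob_0$, so \eqref{eq:ker intersect} forces $r(N^{n+1}_{kq})=N^n_{kq}$. Subtracting from Step I completes \eqref{rbeta} and the $B$-part of \eqref{rdeltavanish}.

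\emph{Main obstacle.} The key non-formal input feeding Step I is the classical restriction identity $\iota_0^{*}\mu^{n+1}_{kq}=\mu^n_{kq}$ (respectively $\iota_0^{*}\mu^{n+1}_{k,k-n-1}=0$), which depends on explicit knowledge of the Klain functions of the hermitian intrinsic volumes from \cite{befu11}. Should that input be unavailable in the exact form needed, a self-contained substitute is to compute $\iota_0^{*}\beta^{n+1}_{kq}$ and $\iota_0^{*}\gamma^{n+1}_{kq}$ directly: the conormal cycle of $P\subset\C^n$ in $\C^{n+1}$ fibers over its conormal cycle in $\C^n$ with two-dimensional fibers parametrized by $(\phi,w)\in[0,\pi/2]\times S^1((\C^n)^{\perp})$, and fiber integration of the defining forms over these fibers, combined with the ratios $c_{n+1,k,q}/c_{n,k,q}$, should reproduce the claimed identities after a fairly mechanical computation.
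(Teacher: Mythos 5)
Your proof is correct, and Step I is essentially the same argument as the paper's: angularity is preserved under linear restriction (your outer-angle computation makes explicit what the paper calls ``clear''), restriction commutes with globalization, $\glob_0$ is injective on $\ang^{U(n)}$, and the known restriction behavior of the $\mu_{kq}$ closes the argument.

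Step II diverges from the paper in a minor but real way. The paper expands $[B_{kq}]_\lambda$ directly via the closed formula \eqref{eq:BtoDelta}, applies $r$ termwise using \eqref{rdelta} and \eqref{rdeltavanish}, and observes $[r(B_{kq})]_\lambda=[B_{kq}]_\lambda$ for all $\lambda$, finishing by \eqref{eq:ker intersect} in one stroke. You instead reduce to $N_{kq}$ and iterate the one-step relation \eqref{eq:n = b} by an induction on $d=q-(k-n)$, invoking \eqref{eq:ker intersect} at each stage. Since \eqref{eq:BtoDelta} is just \eqref{eq:n = b} iterated, the content is the same; the paper's version avoids the bookkeeping of the induction, but yours is perfectly valid. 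One point should be made explicit: in your base case $d=0$ you appeal to ``the vanishing boundary $q'=k'-n-1$ treated in Step I'' to conclude $r(B^{n+1}_{k+2,k-n+1})=0$, but you declared Step I to cover only the $\Delta$-part of \eqref{rdeltavanish}. The resolution is immediate from Definition \ref{def_delta} applied in $\C^{n+1}$: at the boundary index $q'=k'-(n+1)$ one has $\Delta^{n+1}_{k',q'}=B^{n+1}_{k',q'}$, so the $B$- and $\Delta$-parts of \eqref{rdeltavanish} are literally the same statement and Step I already proves both. Once that observation is inserted, the base case (and hence the induction) is airtight.
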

\begin{proof}
The angularity condition is clearly invariant under restriction to linear subspaces. Since restriction commutes with
globalization, and $\glob_0$ is injective on $\mathrm{Ang}^{U(n)}$, equations \eqref{rdelta} and \eqref{rdeltavanish}
follow from the invariance of $\mu_{kq}$ under $\C$-linear restrictions.

By \eqref{eq:BtoDelta} we have (abusing notation so that $r$ denotes the restriction map for both curvature measures and
valuations)
\begin{align*}
[r  (B_{kq})]_\lambda&=r \left( [B_{kq}]_\lambda\right)=\sum_{i\geq 0} \frac{\lambda^i(q+i)!}{\pi^i q!}r \left( [\Delta_{k+2i,q+i}]_\lambda\right)
\\
&=\sum_{i\geq 0} \frac{\lambda^i(q+i)!}{\pi^i q!}[\Delta_{k+2i,q+i}]_\lambda=[B_{kq}]_\lambda.      
\end{align*}
Hence $r ( B_{kq})-B_{kq}\in\ker\glob_\lambda$ for all $\lambda$. By \eqref{eq:ker intersect}, equation \eqref{rbeta}
follows.
\end{proof}

We deduce that
\[
 \Delta_{kq},\ 0\leq q\leq \frac k2; \quad N_{kq}, \  0\leq q < \frac k2
\]
define a basis of $\curv^{U(\infty)}$. We let $\ang^{U(\infty)}$ be the inverse limit of the spaces $\ang^{U(n)}$. Thus
$\ang^{U(\infty)}$ is the subspace of $\curv^{U(\infty)}$ spanned by the $\Delta_{kq}$. 

We distinguish some canonical valuations. 
\begin{definition}\label{def:tilde mu} For $\max\{0,k-n\}\leq q\leq \frac{k}{2}\leq n\leq\infty$ we set
\begin{align*}
\mu^\lambda_{kq} &:= [B_{kq}]_\lambda \in \mathcal{V}^n_\lambda,\quad k>2q,\\
\mu^\lambda_{2q,q}&:= \sum_{i=0}^\infty \left(\frac{\lambda}{\pi}\right)^i\frac{(q+i)!}{q!} [\Gamma_{2q+2i,
q+i}]_\lambda \in \mathcal{V}^n_\lambda,\end{align*}
which defines a basis of $\mathcal{V}^n_\lambda$. Another basis is
\begin{align*}\tau^\lambda_{kq}&:=\sum_{i=q}^{\lfloor\frac k2\rfloor}\binom{i}{q}\mu^\lambda_{ki} \in
\mathcal{V}^n_\lambda.
\end{align*}
\end{definition}

The valuations
$\mu_{kq}^0\in \mathcal{V}^n_0= \Val^{U(n)}$ coincide with the $\mu_{kq}$ studied in \cite{befu11}. For $\lambda \ne 0$
they coincide with the basis elements studied in \cite{ags} in the cases $k>2q$. The  rationale for the present 
definition of $\mu_{2q,q}^\lambda$ is equation \eqref{eq:BtoDelta}: with this definition, the corresponding relation
also holds for $k=2q$. As we will see, the resulting {bases $\mu_{kq}^\lambda, \tau_{kq}^\lambda$ have} many remarkable properties. 

The following point is obvious from the definitions, but will be crucial for Theorem \ref {thm:1st iso} below.

\begin{lemma}\label{lem:mus that vanish}
{ The kernel of the restriction map $\V^\infty_\lambda\to\V^n_\lambda$ is spanned by the valuations $\mu_{k,q}^\lambda$
with $q<k-n$ or $k>2n$.} In particular,  {if $k >n$ then $\mu_{k,0}^\lambda = 0$ in $\V^n_\lambda$.}
\end{lemma}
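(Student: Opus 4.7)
The plan is to track each basis element $\mu_{kq}^\lambda\in\V^\infty_\lambda$ under the restriction map, using the formulas \eqref{rdelta}--\eqref{rdeltavanish} iterated from $\V^{N+1}_\lambda$ down to $\V^n_\lambda$ through the identification of Theorem \ref{thm_transfer}. Splitting according to the two cases of Definition \ref{def:tilde mu}, I will show that the ``valid-range'' $\mu_{kq}^\lambda$ (those with $\max(0,k-n)\le q\le k/2\le n$) restrict to the corresponding basis elements of $\V^n_\lambda$, while the remaining $\mu_{kq}^\lambda$ restrict to zero.

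For $k>2q$, write $\mu_{kq}^\lambda=[B_{kq}]_\lambda$. By \eqref{rbeta}, each restriction step $\curv^{U(N+1)}\to\curv^{U(N)}$ preserves $B_{kq}$ provided $q\ge k-N$, so iterating from level $\infty$ down to level $n$ keeps $B_{kq}$ intact precisely when $q\ge k-n$. If instead $q<k-n$, then at the restriction step from level $N+1=k-q$ to level $N=k-q-1\ge n$ equation \eqref{rdeltavanish} applies and sends $B_{k,k-N-1}=B_{kq}$ to zero, after which subsequent restrictions preserve zero. Thus $\mu_{kq}^\lambda$ restricts to $\mu_{kq}^\lambda\in\V^n_\lambda$ for $q\ge k-n$ and to $0$ for $q<k-n$.

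For $k=2q$, I would expand, using $\Gamma_{2q,q}=\Delta_{2q,q}$ (Definition \ref{def_delta}),
\[
\mu_{2q,q}^\lambda=\sum_{i=0}^\infty\left(\frac{\lambda}{\pi}\right)^i\frac{(q+i)!}{q!}\,[\Delta_{2q+2i,q+i}]_\lambda,
\]
interpreted in the inverse limit $\V^\infty_\lambda=\varprojlim\V^n_\lambda$, so that each projection is a finite sum. By \eqref{rdelta} and \eqref{rdeltavanish}, the term $[\Delta_{2q+2i,q+i}]_\lambda$ restricts from $\V^\infty_\lambda$ to $\V^n_\lambda$ as itself when $q+i\le n$ and vanishes otherwise. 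If $q\le n$, the restriction is the corresponding finite truncation, which is $\mu_{2q,q}^\lambda\in\V^n_\lambda$ by Definition \ref{def:tilde mu}; if $q>n$ (equivalently $k>2n$, since $k=2q$) every term vanishes and so does the whole sum.

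Combining the two cases gives an explicit splitting of the basis $\{\mu_{kq}^\lambda\}$ of $\V^\infty_\lambda$ into a part that maps onto the basis of $\V^n_\lambda$ (by Corollary \ref{cor:Delta is basis} and Definition \ref{def:tilde mu}) and a part that maps to zero; the kernel is therefore exactly the span of the latter, namely those with $q<k-n$ or $k>2n$. The ``in particular'' assertion is then the case $q=0$, $k>n$, for which $0<k-n$. The main obstacle is purely bookkeeping: carefully iterating the single-step restriction formulas and handling the $k=2q$ case, where the defining series is infinite but becomes finite in every $\V^n_\lambda$.
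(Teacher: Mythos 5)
The paper gives no proof of this lemma, stating only that it is ``obvious from the definitions.'' Your argument — tracking $\mu_{kq}^\lambda$ by writing $\mu_{kq}^\lambda=[B_{kq}]_\lambda$ (for $k>2q$) or $\mu_{2q,q}^\lambda=\sum_i(\lambda/\pi)^i\tfrac{(q+i)!}{q!}[\Delta_{2q+2i,q+i}]_\lambda$, iterating the one-step restriction formulas \eqref{rdelta}--\eqref{rdeltavanish} on curvature measures, and globalizing via the commutative diagram preceding that lemma — is a correct and complete unwinding of why the claim is obvious, and identifies the kernel exactly as stated.
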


For future reference we restate equations \eqref{eq:n = b} and \eqref{eq:BtoDelta} in this new notation. 

\begin{lemma}\label{lem:def mu}
\begin{align*}
[\Delta_{kq}]_\lambda &=\mu^\lambda_{kq}-\lambda \frac{q+1}{\pi}\mu^\lambda_{k+2,q+1}\\
\mu^\lambda_{kq}&=\sum_{i\geq 0} \frac{\lambda^i(q+i)!}{\pi^i q!}[\Delta_{k+2i,q+i}]_\lambda.
\end{align*}
\end{lemma}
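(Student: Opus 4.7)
The plan is to derive both identities by unwinding the definitions and reducing everything to Lemma \ref{lem:gamma beta translation}, with careful attention to the diagonal case $k=2q$, where $\mu_{2q,q}^\lambda$ is defined by an infinite sum rather than a single $[B_{kq}]_\lambda$.

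For the first identity, I would split into cases according to the position of $(k,q)$:
\begin{itemize}
\item \emph{Generic off-diagonal case $k>2q$, $q>k-n$.} Here $\Delta_{kq}=B_{kq}+N_{kq}$ by Definition \ref{def_delta}, while $\mu^\lambda_{kq}=[B_{kq}]_\lambda$ and, since $k+2>2(q+1)$, also $\mu^\lambda_{k+2,q+1}=[B_{k+2,q+1}]_\lambda$. Then \eqref{eq:n = b} gives $[N_{kq}]_\lambda=-\lambda\frac{q+1}{\pi}\mu^\lambda_{k+2,q+1}$, and the identity follows by adding $[B_{kq}]_\lambda$ to both sides.
\item \emph{Diagonal case $k=2q$.} Here $\Delta_{2q,q}=\Gamma_{2q,q}$. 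Expanding both $\mu^\lambda_{2q,q}$ and $\mu^\lambda_{2q+2,q+1}$ via Definition \ref{def:tilde mu} and shifting the summation index $i\mapsto i+1$ in the second series, the two tails telescope and leave only $[\Gamma_{2q,q}]_\lambda=[\Delta_{2q,q}]_\lambda$.
\item \emph{Boundary case $q=k-n$.} Then $\Delta_{k,k-n}=B_{k,k-n}$ and $\mu^\lambda_{k,k-n}=[B_{k,k-n}]_\lambda$, while $\mu^\lambda_{k+2,k-n+1}$ lies in the kernel of the restriction $\V^\infty_\lambda\to \V^n_\lambda$ (Lemma \ref{lem:mus that vanish}), hence vanishes in $\V^n_\lambda$, consistent with the first identity.
\end{itemize}

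For the second identity I would split the same way. When $k>2q$ the claim is literally \eqref{eq:BtoDelta} of Lemma \ref{lem:gamma beta translation}, since $\mu^\lambda_{kq}=[B_{kq}]_\lambda$. When $k=2q$ a direct check using Definition \ref{def_delta} shows $\Delta_{2q+2i,q+i}=\Gamma_{2q+2i,q+i}$ for every $i\ge 0$ (the coefficient of $B$ in the defining convex combination vanishes because $k-2q=0$). Substituting this into the right-hand side recovers precisely the defining series for $\mu^\lambda_{2q,q}$. Alternatively—and this is the more conceptual route—one iterates the first identity: $\mu^\lambda_{kq}=[\Delta_{kq}]_\lambda+\lambda\frac{q+1}{\pi}\mu^\lambda_{k+2,q+1}$, then substitutes the analogous expression for $\mu^\lambda_{k+2,q+1}$, and so on. This telescoping sum produces the rising factorial $\frac{(q+i)!}{q!}$, and it terminates in $\V^n_\lambda$ because $\mu^\lambda_{k+2i,q+i}$ eventually falls into the kernel of restriction by Lemma \ref{lem:mus that vanish}.

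The main subtlety—and the only point where one must be vigilant—is the diagonal case $k=2q$. There the definition of $\mu^\lambda_{2q,q}$ is precisely engineered so that Lemma \ref{lem:gamma beta translation}'s relation \eqref{eq:BtoDelta} extends across the diagonal; verifying compatibility is a short telescoping computation, but it is the step where the formalism could go astray if one forgot that $\Delta_{2q+2i,q+i}=\Gamma_{2q+2i,q+i}$ on the diagonal. Once that identification is in hand, both formulas are essentially bookkeeping.
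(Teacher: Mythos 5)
Your proposal is correct and uses the same ingredients as the paper, namely Lemma~\ref{lem:gamma beta translation} together with Definition~\ref{def:tilde mu}; the paper's (one-line) proof just organizes it the other way round, deriving the second identity first from \eqref{eq:BtoDelta} and then obtaining the first as a telescoping consequence, which avoids invoking \eqref{eq:n = b} separately. Your case analysis on and off the diagonal $k=2q$ and at the boundary $q=k-n$ is careful and correct, but it is essentially the same argument made explicit.
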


\proof
The first equation is an easy consequence of the second, which follows from \eqref{eq:BtoDelta} and the definitions. 
\endproof 

\subsection{Expansion of $t^k$ in terms of the $\tau_{kq}^\lambda$}\label{sect:t & tau}
In this section we show how to express the elements of the Lipschitz-Killing algebra of $\CP^n_\lambda$ as
linear combinations of the bases above. It turns out that this is readily accomplished in terms of exponential
generating functions. For general information on this subject we refer to \cite{gfology}. 

The geometric foundation of our calculation is the following well known fact.

\begin{lemma}\label{lem:kono curv} Let $e_i, i =1,\dots, n$ be a local hermitian-orthonormal frame for $\CP^n_\lambda$,
and put $e_{\bar i} := \sqrt{-1} e_i$. Let $\theta_1,\dots,\theta_{ n}, \theta_{\bar 1},\dots, \theta_{\bar n}$ be the
dual coframe. Then the curvature forms of $\CPn_\lambda$ at $x$ are given by
\begin{align*}
\Omega_{i, \bar i} &= 4 \lambda \theta_i\wedge \theta_{\bar i } + 2\lambda \sum_{j\ne i} \theta_j \wedge \theta_{\bar j}
\\
\Omega_{\alpha,\beta} &= \lambda(\theta_\alpha \wedge \theta_\beta + \theta_{\bar \alpha} \wedge \theta_{\bar \beta} ),
\quad \alpha , \beta \in \{1,\bar1,\dots, \bar n\}, \ \alpha \ne  \bar \beta
\end{align*} 
where $\bar{\bar i} :=i$.
\end{lemma}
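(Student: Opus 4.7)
The plan is to derive the formulas directly from the classical expression for the Riemann curvature tensor of a complex space form of holomorphic sectional curvature $4\lambda$. Namely, a K\"ahler manifold has constant holomorphic sectional curvature $4\lambda$ if and only if
\[
R(X,Y)Z = \lambda\bigl\{g(Y,Z)X - g(X,Z)Y + g(JY,Z)JX - g(JX,Z)JY + 2g(X,JY)JZ\bigr\},
\]
where $J$ is the complex structure and $g$ the K\"ahler metric (see for example Kobayashi--Nomizu, Vol.\ II, Prop.\ IX.7.3). Since the curvature forms used in the paper satisfy $\Omega_{\alpha\beta}(U,V) = g(R(U,V)e_\beta, e_\alpha)$, the problem reduces to evaluating the right-hand side on pairs of frame vectors and reading off the coefficients.

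Using the conventions $Je_i = e_{\bar i}$ and $Je_{\bar i} = -e_i$, one would first compute $g(R(e_\gamma, e_\delta)e_{\bar i}, e_i)$. For $(\gamma,\delta) = (i,\bar i)$, terms $1$, $4$ and $5$ in the formula above contribute $\lambda + \lambda + 2\lambda = 4\lambda$, matching the normalization $g(R(X,JX)JX, X) = 4\lambda\,|X|^4$. For $(\gamma,\delta) = (j,\bar j)$ with $j \ne i$, only the fifth term (the one with coefficient $2$) survives, yielding $2\lambda$; all remaining frame pairs give zero. This produces the first displayed formula. For the second formula (case $\alpha \ne \bar\beta$), the analogous computation shows that the first two Kronecker terms contribute only when $(\gamma,\delta) = (\alpha,\beta)$, while the third and fourth $J$-twisted terms contribute only when $(\gamma,\delta) = (\bar\alpha, \bar\beta)$; each produces a coefficient $\lambda$. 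The fifth term vanishes identically because its factor $g(Je_\beta, e_\alpha)$ is zero under the hypothesis $\alpha \ne \bar\beta$. Summing yields $\lambda(\theta_\alpha \wedge \theta_\beta + \theta_{\bar\alpha} \wedge \theta_{\bar\beta})$.

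The calculation is entirely routine; the only thing that really requires care is sign bookkeeping, in particular the conjugation convention $\bar{\bar i} = i$, the sign in $Je_{\bar i} = -e_i$, and the built-in antisymmetry of $\Omega_{\alpha\beta}$ as a two-form. There is no genuine conceptual obstacle; the lemma is essentially a translation of the standard curvature tensor of the Fubini--Study metric into the notation fixed by the paper.
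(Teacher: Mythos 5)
Your approach is the same as the paper's, which simply cites Kobayashi--Nomizu IX.7.3 for this lemma: plug frame vectors into the constant-holomorphic-sectional-curvature tensor formula and read off the coefficients. The computation for the first displayed formula is fine. But for the second formula the sign bookkeeping that you name as ``the only thing that really requires care'' is exactly the step you skip, and it does not come out the way you assert. Since $Je_\alpha = \epsilon_\alpha e_{\bar\alpha}$ with $\epsilon_\alpha = +1$ for unbarred $\alpha$ and $\epsilon_\alpha = -1$ for barred $\alpha$, the contribution of the $J$-twisted terms at $(\gamma,\delta) = (\bar\alpha,\bar\beta)$ carries the factor $\epsilon_{\bar\alpha}\epsilon_{\bar\beta} = \epsilon_\alpha\epsilon_\beta$, not $+1$. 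A careful version of your computation therefore gives $\Omega_{\alpha\beta} = \lambda\bigl(\theta_\alpha\wedge\theta_\beta + \epsilon_\alpha\epsilon_\beta\,\theta_{\bar\alpha}\wedge\theta_{\bar\beta}\bigr)$, which agrees with the displayed formula when $\alpha,\beta$ have the same bar-parity but carries the opposite sign when exactly one of them is barred (e.g.\ $\Omega_{i\bar j} = \lambda(\theta_i\wedge\theta_{\bar j} - \theta_{\bar i}\wedge\theta_j)$ for $i\ne j$).

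That this sign is not a matter of convention can be checked independently. Parallelism of $J$ forces $\Omega_{\bar\alpha\bar\beta} = \epsilon_\alpha\epsilon_\beta\,\Omega_{\alpha\beta}$, whereas the displayed formula gives $\Omega_{\bar\alpha\bar\beta} = \Omega_{\alpha\beta}$ unconditionally, which is inconsistent for mixed indices. More concretely, take $X = e_1 + e_2$ and $Y = JX = e_{\bar 1}+e_{\bar 2}$: the plane they span is a complex line, so its sectional curvature must be $4\lambda$. Using the formula as displayed one finds $R(X,Y,Y,X) = \Omega_{1\bar1}(X,Y)+\Omega_{1\bar2}(X,Y)+\Omega_{2\bar1}(X,Y)+\Omega_{2\bar2}(X,Y) = 6\lambda+0+0+6\lambda = 12\lambda$, giving sectional curvature $3\lambda$; with the $\epsilon_\alpha\epsilon_\beta$ correction one gets $16\lambda$ and the required $4\lambda$. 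So your sketch does not actually establish the second displayed formula for mixed indices; your uncritical ``coefficient $\lambda$'' silently reproduces a sign slip rather than verifying it. (The slip is invisible in the paper's subsequent applications, which only restrict $\Omega_{1\beta}$ to subspaces on which $\theta_{\bar 1}$ vanishes, killing the offending term.)
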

\begin{proof} Cf. \cite{ko-no}, Chapter IX, Proposition 7.3.
\end{proof}

\begin{theorem} \label{egfs for t^k}
Define 
\begin{align}
g_i(\xi,\eta)&:= \xi^i \left(1- \xi \right)^{-i-\frac 1 2} \left(1-   \eta\right)^{-\frac 1 2}\\
h_i(\xi,\eta)&:= \xi^i \left(1-   \xi\right)^{-i-\frac 3 2} \left(1-   \eta\right)^{-\frac 1 2}.
\end{align}
Then
$$
\binom{2i} i \lambda^{-i}g_i\left(\frac{ \lambda x}\pi,\frac{\lambda  y} \pi\right), \quad \frac{2^{2i+1}}\pi
\lambda^{-i} h_i\left(\frac{\lambda  x}\pi,\frac {\lambda y} \pi\right)$$
 are the exponential generating functions of $t^{2i}, t^{2i+1}$ in terms of the $\tau_{kp}^\lambda$.
In other words,
\begin{align}
t^{2i}&=\binom{2i}{i}\lambda^{-i}\sum_{k,p=0}^\infty\left(\frac\lambda\pi\right)^{k+p}\left.\frac{\partial^{k+p}
g_i}{\partial^k\xi \partial^p \eta}\right|_{\xi=\eta=0} \tau_{2k+2p,p}^\lambda \\ 
t^{2i+1}&=\frac{2^{2i+1}}{\pi}\lambda^{-i}\sum_{k,p=0}^\infty\left(\frac\lambda\pi\right)^{k+p}\left.\frac{\partial^{k+p
} h_i}{\partial^k\xi \partial^p \eta}\right|_{\xi=\eta=0} \tau_{2k+2p+1,p}^\lambda  .
\end{align}
\end{theorem}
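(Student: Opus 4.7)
The plan is to compute $\Lambda_k$, the $k$-th Lipschitz--Killing curvature measure on $\CPn_\lambda$, explicitly in the angular basis, and then globalize and re-expand in the $\tau^\lambda_{kq}$ basis. First I reduce the problem to $\V^\infty_\lambda$. The sums in the theorem are a priori infinite, but by Lemma \ref{lem:mus that vanish} (and Definition \ref{def:tilde mu}) only finitely many $\tau^\lambda_{2k+2p,p}$ are nonzero in each $\V^n_\lambda$, so the identity must be understood in the inverse limit. By Proposition \ref{LKangular}, $\Lambda_k$ is angular; by Proposition \ref{angulardelta} applied under the transfer identification $\curvinfty^{G_\lambda}(\CPn_\lambda) \simeq \curvun$ (Theorem \ref{thm_transfer}), this places $\Lambda_k$ in $\ang^{U(n)} = \spann\{\Delta_{k,q}\}$, so I may write
\[
\Lambda_k \;=\; \sum_q a_{k,q}(\lambda)\,\Delta_{k,q}
\]
for coefficients $a_{k,q}(\lambda)$ to be determined.

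Second, I would compute the $a_{k,q}(\lambda)$ by substituting the very explicit curvature $2$-forms of $\CPn_\lambda$ from Lemma \ref{lem:kono curv} into the defining forms $\Psi_k$ of \eqref{LKinterior} and $\Phi_{k,i}$ of \eqref{phik}. Each $\Omega_{\alpha\beta}$ splits into a ``diagonal" piece ($4\lambda\,\theta_i\wedge\theta_{\bar i} + 2\lambda\sum_{j\neq i}\theta_j\wedge \theta_{\bar j}$) and an ``off-diagonal" K\"ahler-symmetric piece; after symmetrizing over the permutations $\epsilon$ one obtains, for each fixed Chern index $i$, an explicit polynomial in $\lambda$ whose coefficients are linear combinations of the basis forms $\beta_{k,q},\gamma_{k,q}$ of \cite{pa02,befu11} built from $\theta_0,\theta_1,\theta_2$. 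The angularity of $\Lambda_k$ then forces the $\beta$-components and $\gamma$-components to combine through Definition \ref{def_delta} into pure $\Delta_{k,q}$ contributions with explicit Gegenbauer-type coefficients. Globalizing via \eqref{eq:exp t} (which converts $\glob\Lambda_k$ into a multiple of $t^k$) and Lemma \ref{lem:def mu} gives $t^k$ as a combination of $\mu^\lambda_{k',q}$ with $k'\ge k$, which Definition \ref{def:tilde mu} rewrites in the $\tau^\lambda_{k',q}$ basis.

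Third and finally, I compare with the Taylor expansion of the stated generating functions. Structurally, the factor $\xi^i$ in $g_i$ tracks the minimum hermitian degree ($k\ge i$), the factor $(1-\xi)^{-i-1/2}$ arises from the Gegenbauer coefficients $\binom{k-1/2}{k-i}$ produced by the Chern/curvature expansion in Stage 2, and the factor $(1-\eta)^{-1/2}$ arises from the $\lambda$-cascade built into Definition \ref{def:tilde mu} (equivalently, from the identity $\sum_p\binom{p-1/2}{p}\eta^p = (1-\eta)^{-1/2}$ applied to the expansion of $\mu^\lambda_{2q,q}$ in terms of $[\Gamma_{2q+2i,q+i}]_\lambda$). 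The odd case is handled by the same scheme, but with $\Psi_{2i+1}\equiv 0$ (since $\dim_{\R}\CPn_\lambda - (2i+1)$ is odd), leaving only the sphere-bundle contribution $\Phi_{2i+1,i}$; the resulting permutation bookkeeping produces a shift that manifests as the exponent $-i-3/2$ in the $(1-\xi)$-factor of $h_i$ and the prefactor $\frac{2^{2i+1}}{\pi}$.

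The main obstacle will be the combinatorial matching in the third step: the double series in $k,p$ obtained from Stage 2 must be reorganized so that it factors into $\xi$- and $\eta$-dependent parts of the required form. This essentially reduces to a hypergeometric identity for the product of Gegenbauer coefficients arising on the Chern-form side and the half-integer binomial coefficients arising from the $\Delta\to\mu\to\tau$ basis-change cascade; once this factorization is checked in low degrees to fix normalizations, degree-by-degree comparison in $\lambda$ completes the proof.
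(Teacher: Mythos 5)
Your outline follows the same strategic scaffolding as the paper's proof — use $t^j = \tfrac{j!\omega_j}{\pi^j}[\Lambda_j]_\lambda$, invoke Propositions~\ref{LKangular} and~\ref{angulardelta} to place $\tau(\Lambda_j)$ in $\operatorname{span}\{\Delta_{kq}\}$, globalize, and convert through the $\mu^\lambda$ and $\tau^\lambda$ bases — but the part you yourself flag as "the main obstacle" is exactly where the paper supplies its central idea, and your proposal does not. You propose determining the coefficients $a_{kq}(\lambda)$ by substituting the curvature two-forms of Lemma~\ref{lem:kono curv} into \eqref{LKinterior} and \eqref{phik} and "symmetrizing over the permutations $\epsilon$," then appealing to an unnamed hypergeometric identity, fixing normalizations "in low degrees," and finishing by "degree-by-degree comparison in $\lambda$." That is not a proof: brute-force symmetrization of the Pfaffian expansion over all $\epsilon$ after inserting a non-monomial curvature tensor does not factor in any visible way, and "check in low degrees" establishes nothing in general.

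What the paper actually does at this point is qualitatively different and is what makes the theorem tractable. It defines the constants $\pf^j_{kq}$ intrinsically, as the density of $\tau(\Lambda_j)$ on the model subspace $E^{kq}=\R^{k-2q}\oplus\C^q$, and then derives the three-term recursion \eqref{primitive relation}
\[
\pf^l_{kp}=\pf^{l-1}_{k-1,p}+(k-2p-1)\lambda\,\pf^l_{k-2,p}+2p\lambda\,\pf^l_{k-2,p-1}
\]
by singling out a \emph{single} coordinate direction in $E^{kp}$, splitting the Pfaffian into three groups of terms, and using the Gauss equation to replace the submanifold's curvature by the ambient curvature of Lemma~\ref{lem:kono curv}. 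This sidesteps the global symmetrization entirely. The recursion translates, via standard EGF manipulations, into the two first-order relations \eqref{even to odd}, \eqref{odd to even} for $G_i,H_i$; those determine $G_i,H_i$ up to the initial condition $G_i(0,y)$, which is pinned down by the template values $t^{2i}(\CP^k_\lambda)$ of Corollary~\ref{cor:t^k(cpn)}. None of this — the model-space densities $\pf^j_{kq}$, the single-coordinate recursion, the reduction to first-order EGF relations, the template initial condition — appears in your proposal. Also a small point: in the odd case, what survives after $\Psi_{2i+1}=0$ is the full sum $\Phi_{2i+1}=\sum_j d_{n,2i+1,j}\Phi_{2i+1,j}$, not just the single term $\Phi_{2i+1,i}$.
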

\begin{proof}By \eqref{eq:glob delta}, \eqref{eq:lamb=delt} and \eqref{eq:exp t}, 
\begin{equation}\label{eq:tj in terms of Lambda}
t^j= \frac{j!\omega_j}{\pi^j} [\Lambda_j]_\lambda.
\end{equation}
 By Propositions \ref{LKangular} and \ref{angulardelta}, each $\Lambda_j$ may be expressed uniquely as a linear
combination of the $\Delta_{kq}$. Thus our first step is to carry this out explicitly; globalizing, we will obtain the
expansion of $t^j$ in terms of the $[\Delta_{kq}]_\lambda$.

Let $E^{kq}=\mathbb R^{k-2q}\oplus\C^q\subset \C^n\simeq T_x\CP_\lambda^n$. 
Recalling the transfer map $\tau$ of Proposition \ref{prop_isom_curv}, for $k-j$ even we define $\pf^j_{kq}$ by
\[
\restrict{\tau(\Lambda_j)}{E^{kq}} ={\frac{1}{(2\pi)^{\frac{k-j}{2}}}}\pf_{kq}^j \restrict{\vol_k}{E^{kq}}.
\]
Thus
\begin{align}
\label{egfevenlambda} \tau(\Lambda_{2i}) &=(2\pi)^i\sum_{m,p\geq 0} \frac{1}{(2\pi)^{m+p}} \pf_{2m+2p,p}^{2i}
\Delta_{2m+2p,p}\\
\label{egfoddlambda} \tau(\Lambda_{2i+1})&=  (2\pi)^i \sum_{m,p\geq 0} \frac{1}{(2\pi)^{m+p}} \pf_{2m+2p+1,p}^{2i+1}
\Delta_{2m+2p+1,p} 
\end{align}
since $\restrict{\Delta_{ij}}{E^{kq}}= \delta_k^i\delta_q^j\restrict{\vol_k}{E^{kq}}$.

We study first the exponential generating functions $G_i,H_i$ for the constants $\pf_{kp}^j$, i.e.
 \begin{align}
G_i(x,y) & := \sum_{m,p=0}^\infty \pf^{2i}_{2m+2p,p} \frac{x^m y^p}{m! p!}\\
H_i(x,y) & := \sum_{m,p=0}^\infty \pf^{2i+1}_{2m+2p+1,p} \frac{x^m y^p}{m! p!}.
\end{align}

\begin{lemma} \label{GH formulas}
\begin{align}
\label{GH1}G_i(x,y)&= \frac{(x+y)^i}{i!} \left(1- 2\lambda (x+y)\right)^{-i-\frac 1 2} \left(1-2\lambda y\right)^{-\frac
3 2}\\
\label{GH2}H_i(x,y)&= \frac{(x+y)^i}{i!} \left(1- 2\lambda (x+y)\right)^{-i-\frac 3 2} \left(1-2\lambda y\right)^{-\frac
3 2}.
\end{align}
\end{lemma}

\begin{proof}
We claim that
\begin{align} 
\label{even to odd}  G_{i}&=(1-2\lambda x-2\lambda y) H_{i}\\
\label{odd to even} H_i&=(1-2\lambda x-2\lambda y) \frac{\partial G_{i+1}}{\partial x} - \lambda G_{i+1} .
\end{align}

To prove these claims, we establish first the relation
\begin{equation}\label{primitive relation}
\pf^l_{kp} = \pf^{l-1}_{k-1,p} + (k-2p-1)\lambda \pf^l_{k-2,p} + 2p\lambda \pf^l_{k-2,p-1}, \quad k>2p.
\end{equation}

By the remarks following Proposition \ref{prop_isom_curv_general}, and recalling Lemma \ref{lem:lk invariance}, in order
to find $\restrict{\tau(\Lambda_l)}{E^{k,p}}$ it is enough to calculate the density at $x$ of the $l$th
Lipschitz-Killing curvature measure of the Riemannian submanifold $\exp_x(E^{kp})$. 

To accomplish this,  we choose the frame $\{e_i,e_{\bar i}\}$ as in Lemma \ref{lem:kono curv} so that
$e_1,\dots,e_{k-2p}$ span the totally real subspace orthogonal to the complex $p$-plane  $P \subset E^{kp}$, and
$e_{k-2p+1}, e_{\overline{k-2p+1}},\dots,e_{{k-p}},e_{\overline{k-p}}$ span $P$ . 
Let $\{\theta_i\}_{i\in I}$ be the associated coframe, and let $\{\Omega_{ij}\}_{i,j\in I}$ be the curvature forms
where $$I=\{1,\dots,k-2p, k-2p{+1},\overline{k-2p{+1}}\dots,k-p,\overline{k-p}\}.$$ Then, by \eqref{LKinterior},
\begin{equation}\label{eq:pf1}
 \pf_{kp}^l d\vol_{E^{kp}} =\frac{1}{l!(\frac{k-l}2)!2^{\frac{k-l}2}}\sum_\epsilon \sgn(\epsilon)
\restrict{\left[\theta_{\epsilon_1} \wedge \cdots
\wedge \theta_{\epsilon_l} \wedge \Omega_{\epsilon_{l+1}\epsilon_{l+2}} \wedge \cdots \wedge \Omega_{\epsilon_{ k-1 }
\epsilon_k } \right ] } { E^ { k p}}
\end{equation} 
if $k-l$ is even, and $\pf_{kp}^l=0$ otherwise, where $\epsilon$ ranges over all bijections $\{1,\dots,k\}\to I$ and
the sign is determined by an appropriate identification of $\{1,\dots,k\}$ with $I$. By the Gauss equation,
the curvature forms at $x$ of our submanifold are simply the restrictions of the curvature forms of the ambient space
$\CPn_\lambda$.
Singling out the first coordinate, we partition the terms in \eqref{eq:pf1} into three groups:
\begin{itemize}
\item those including the factor $\theta_1$,
\item those including a factor $\Omega_{1j}, \ j \le k-2p$,
\item those including a factor $\Omega_{1j}$ or $\Omega_{i\bar j}$, $\ j>k-2p$.
\end{itemize}
Each group yields some multiple of the volume form of $E^{kp}$. 

Clearly the first group gives $\theta_1\wedge(\pf^{l-1}_{k-1,p} \, d\vol_{E^{k-1,p}})$. 

Among the second group, let us fix an index $j$ and consider the terms including  $\Omega_{1j}$. By Lemma \ref{lem:kono
curv}, the sum of these terms is $\lambda\theta_1 \wedge \theta_j\wedge(\pf^{l}_{k-2,p} \, d\vol_{E^{k-2,p}})=
\lambda\pf^{l}_{k-2,p} \, d\vol_{E^{kp}}$. Since there are $k-2p-1$ possible choices of the index $j$, this yields the
second term of \eqref{primitive relation}. 

The last term of \eqref{primitive relation} is  accounted for similarly by the last group.

Writing out the  {exponential generating functions} for the odd and even cases using \eqref{primitive relation}, we
obtain using the rules described in section 2.3 of \cite{gfology}
\begin{align}
H_{i} &= G_i + \lambda(2x+2y)H_i,\\
G_{i+1} &=  \int H_i \, dx + \lambda(2x+2y) G_{i+1} - \lambda\int G_{i+1} \, dx.
\end{align}
The first relation is \eqref{even to odd}, and differentiating the second relation we get \eqref{odd to even}.

A direct calculation shows that the given functions \eqref{GH1}, \eqref{GH2} satisfy the relations \eqref{even to odd},
\eqref{odd to even}. It remains to show that \eqref{GH1} satisfies the correct initial conditions, i.e. that if we
substitute $x=0$ in the given expression then
\begin{equation}\label{G relation}
 G_i(0,y) :=\sum_p \frac{\pf^{2i}_{2p,p}}{p!} y^p=  \frac{ y^i}{i!}(1-2\lambda y)^{-i-2} , \quad i =0,1,2,\dots.
 \end{equation}

Recall the relation
\begin{equation}
t^{2i}(\CP_\lambda^k)= \binom {2i}{i}\binom{k+1}{i+1} \lambda^{-i}
\end{equation}
of Corollary \ref{cor:t^k(cpn)}. Together with \eqref{egfevenlambda} this gives
\begin{equation}
\pf^{2i}_{2p,p} = \frac{p!2^{p-i}}{i!}\binom{p+1}{i+1}\lambda^{p-i}.
\end{equation}
On the other hand, the right hand side of \eqref{G relation} is
\begin{equation}\label{Gisum}\frac{y^i}{i!}\sum_n \binom{n+i+1}{n} (2\lambda y)^n 
= \frac 1 {i!}\sum_{p\ge i} 2^{p-i}\binom{p+1}{i+1}y^p  \lambda^{p-i}
\end{equation}
which concludes the proof of the lemma.
\end{proof}

Now we may finish the proof of Theorem \ref{egfs for t^k}. From Lemma \ref{GH formulas} and equations
\eqref{eq:tj in terms of Lambda}, \eqref{egfevenlambda}, \eqref{egfoddlambda} we deduce that
the exponential generating function for $t^{2i}$ in terms of the  $[\Delta_{2m+2p,p}]_\lambda$ is
$$\binom{2i} i \lambda^{-i}g_i\left(\lambda\frac{ x+y}\pi,\frac {\lambda y} \pi\right)\left(1-\frac {\lambda y}
\pi\right)^{-1}$$
and that the exponential generating function for $t^{2i+1}$ in terms of the  $[\Delta_{2m+2p+1,p}]_\lambda$ is
$$ \frac{2^{2i+1}}\pi \lambda^{-i} h_i\left(\lambda \frac{ x+y}\pi,\frac {\lambda y} \pi\right)\left(1-\frac {\lambda y}
\pi\right)^{-1}.$$
By Lemma \ref{lem:def mu}, the corresponding exponential generating functions in terms of the $\mu^\lambda_{kq}$
are obtained by multiplying these last functions by $\left(1-\frac{\lambda y}{\pi}\right)$. 

Finally, we claim that the exponential generating functions with respect to the $\tau^\lambda_{k,q}$ are obtained by
substituting $x$ for {$x+y$}  in these last formulas. Expanding each $(x+y)^ky^l$ and writing down
the corresponding linear combination of the $\mu^\lambda_{kq}$, this conclusion follows at once from Definition
\ref{def:tilde mu}.
\end{proof}

\subsection{Dictionary between $\mu_{kq}^\lambda$ and $t,s$}
So far we have expressed the elements $t^i$ of the Lipschitz-Killing algebra in terms of the bases $\mu_{kq}^\lambda,
\tau_{kq}^\lambda$. The following proposition (which we prove in section \ref{sect:mult s} below) allows us to express
any monomial $t^i s^j$ in these terms.

\begin{proposition}\label{prop:s tau}
\begin{align}
\label{eq:s mukq lambda}
  s\cdot  \mu_{kq}^\lambda  &=
\frac{(k-2q+2)(k-2q+1)}{2\pi(k+2)}\mu^\lambda_{k+2,q}+\frac{2(q+1)(k-q+1)}{\pi(k+2)}\mu_{k+2,q+1}^\lambda\\
\label{staucn}
s\cdot\tau_{kq}^\lambda&=\frac{(k-2q+1)(k-2q+2)}{2\pi(k+2)}\tau_{k+2,q}^\lambda+\frac{(q+1)(2q+1)}{\pi(k+2)}\tau_{k+2,
q+1}^\lambda.
\end{align}
\end{proposition}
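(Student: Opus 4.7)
The plan is to deduce both identities by reducing the computation to the flat case $\lambda=0$, where they are already accessible via the hermitian theory of \cite{befu11}. The mechanism is the (forward-referenced) Proposition \ref{prop:s indep}, which asserts that the module action of $s\in\V^n_\lambda$ on $\curvinfty^{G_\lambda}(\CPn_\lambda)\simeq\curvun$ is formally independent of $\lambda$. Accepting that proposition, there is a single linear operator $s\cdot-:\curvun\to\curvun$, and it suffices to compute its matrix in the basis $\{B_{kq}\}\cup\{\Gamma_{kq}\}$ once, at $\lambda=0$. For this, Corollary \ref{cor_product_special} identifies
\[
(s\cdot\Phi)(A,U)=\int_{\AGr^{\C}_{n-1}}\Phi(A\cap P,U)\, dP,
\]
and applying this integral kinematically to the curvature measures $B_{kq}$ and $\Gamma_{2q,q}$ against test polytopes yields explicit linear combinations. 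The valuation-level globalizations of these identities agree with the known expansion of $s\cdot\mu_{kq}$ in $\Val^{U(n)}$ from \cite{befu11}; the refinement to the curvature measure level is uniquely pinned down on $\ang^{U(n)}$ because $\glob_0$ is injective there.

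Having obtained the $\lambda$-independent formulas for $s\cdot B_{kq}$ and $s\cdot\Gamma_{2q,q}$ in $\curvun$, I apply $\glob_\lambda$ and translate into the $\mu_{kq}^\lambda$-basis. When $k>2q$, $[B_{kq}]_\lambda=\mu_{kq}^\lambda$ and the right-hand side of \eqref{eq:s mukq lambda} emerges immediately from the corresponding flat formula, because the coefficients involve only the combinatorial factors $(k-2q+2)(k-2q+1)/(2\pi(k+2))$ and $2(q+1)(k-q+1)/(\pi(k+2))$, which do not depend on $\lambda$. When $k=2q$ one has instead $\mu_{2q,q}^\lambda=\sum_{i\geq 0}(\lambda/\pi)^i\,((q+i)!/q!)\,[\Gamma_{2q+2i,q+i}]_\lambda$, and the telescoping of the resulting series, combined with the relation \eqref{eq:n = b} between $N_{kq}$, $B_{k+2,q+1}$ and Lemma \ref{lem:def mu}, collapses the infinite sum to the stated two-term expression.

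The identity \eqref{staucn} for $\tau_{kq}^\lambda$ then follows purely formally from \eqref{eq:s mukq lambda}. Writing $\tau_{kq}^\lambda=\sum_{i\geq q}\binom{i}{q}\mu_{ki}^\lambda$ and substituting produces a double sum whose inner summand decomposes through Pascal's identity $\binom{i}{q}=\binom{i-1}{q}+\binom{i-1}{q-1}$; after relabeling $i\mapsto i+1$ in one piece, the coefficients $(k-2i+2)(k-2i+1)$ and $2(i+1)(k-i+1)$ recombine into $(k-2q+1)(k-2q+2)$ and $(q+1)(2q+1)$ as required. The main obstacle is clearly the input from Proposition \ref{prop:s indep}: the whole argument stands or falls on establishing that the action of $s$ on $\curvun$ is $\lambda$-independent. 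This should be proved by exhibiting a $G_\lambda$-invariant curvature measure representative of $s$ whose pullback to $S\C^n$ under the transfer map of Section \ref{sect:an cont 1} is of pure parity (so that the rescaling $f_\lambda^{\lambda'}$ acts trivially), and then invoking Proposition \ref{prop:module_analytic} together with the $\lambda$-independence of the Rumin differential on that parity component; once this structural fact is in hand, everything above is routine bookkeeping.
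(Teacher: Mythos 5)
Your overall plan — reduce to $\lambda=0$ via the $\lambda$-independence of the $s$-action on $\curvun$ (Proposition \ref{prop:s indep}), then globalize and reorganize combinatorially — is indeed the route the paper takes. However, two of the intermediate justifications as you've sketched them are wrong and need to be replaced.

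First, the claim that ``the refinement to the curvature measure level is uniquely pinned down on $\ang^{U(n)}$ because $\glob_0$ is injective there'' misidentifies the relevant subspace. The element $s\cdot B_{kq}$ does \emph{not} lie in $\ang^{U(n)}=\spann\{\Delta_{kq}\}$; in fact $s$ does not stabilize $\ang^{U(n)}$ at all, as one sees from the $N$-terms in \eqref{eq:s delta}. What pins things down is that $s$ stabilizes the subspace $\mathrm{Beta}=\spann\{B_{kq}\}$, together with the fact that $\glob_0$ is injective on $\mathrm{Beta}$. The stability of $\mathrm{Beta}$ under $s$ is Proposition \ref{prop:sb sub b}, a genuinely nontrivial geometric input (proved via the coarea formula applied to the integral over $\overline{\Gr}^{\C}_{n-1}$ against polytopes with complex tangent faces), and your argument tacitly relies on it without naming it. Without it, knowing $s\cdot\mu_{kq}$ for $\lambda=0$ does not determine $s\cdot B_{kq}$ as an element of $\curvun$, since the $N_{kq}$'s lie in $\ker\glob_0$ and the globalization $\glob_\lambda(N_{kq})=-\lambda\frac{q+1}{\pi}\mu_{k+2,q+1}^\lambda$ contributes $\lambda$-dependently after transfer.

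Second, the sketch of Proposition \ref{prop:s indep} via ``pure parity'' does not work. The family of Rumin differentials $D_\lambda$ is \emph{not} $\lambda$-independent on any parity component; Lemma \ref{lemma_rumin_analytic} only gives analyticity, and equation \eqref{eq:DffD} gives the intertwining relation $D_{\lambda'}f_\lambda^{\lambda'}=f_\lambda^{\lambda'}D_\lambda$, which is not the same as constancy (even when $f_\lambda^{\lambda'}$ acts trivially on $\omega$, it still rescales $D_\lambda\omega$ if the parity jumps under $d_\lambda$, which it does). The paper's actual proof of Proposition \ref{prop:s indep} is different in kind: it uses Corollary \ref{coro_module_versus_kin} to write $s\cdot\Phi=\frac{n}{\pi}\langle K(\Phi),\Gamma_{2n-2,n-1}^*\rangle$, after a direct evaluation of $\langle\pd_\lambda\circ\glob_\lambda\Phi,s\rangle$ on a totally geodesic complex hyperplane $P_\lambda$; the $\lambda$-independence then comes from the transfer principle for $K$, not from any parity consideration.

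Once these two points are fixed — cite Proposition \ref{prop:sb sub b} in place of the $\ang^{U(n)}$ injectivity claim, and invoke Proposition \ref{prop:s indep} as stated rather than via the parity heuristic — the remaining steps (globalizing $s\cdot B_{kq}$ via $[B_{kq}]_\lambda=\mu_{kq}^\lambda$ for $k>2q$, the telescoping series for $\mu_{2q,q}^\lambda$, and the binomial reshuffle producing $\tau_{kq}^\lambda$) are correct and match the paper's ``intertwining property'' argument in Proposition \ref{cor_mults} and the closing proof of Proposition \ref{prop:s tau}.
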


Conversely, every invariant valuation can be expressed as a polynomial in $s,t$:

\begin{proposition}\label{prop:generators}
The algebra $\V^n_\lambda$ is generated by $s,t$. The kernel of $\pi:\R[s,t]\rightarrow \V^n_\lambda$ is
contained in $W_n$, the ideal of polynomials with all terms of $\deg \ge n+1$ (where $\deg s=2$ and $\deg t=1$).

In particular $\V^\infty_\lambda$ is isomorphic to the algebra of formal power series $\R[[s,t]]$.
\end{proposition}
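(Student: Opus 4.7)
The plan is to analyze multiplication by $s$ and $t$ on the basis $\{\tau^\lambda_{kq}\}$ of $\V^n_\lambda$ using the natural filtration of $\V^n_\lambda$ (called $W_\bullet$ in Section~\ref{sect:basics}; I will denote it $F_\bullet$ here to avoid clash with the ideal $W_n \subset \R[s,t]$ in the statement), whose $k$-th graded piece has the classes $\{\tau^\lambda_{k,q} : \max(0,k{-}n) \le q \le \lfloor k/2\rfloor\}$ as basis. Since $s \in F_2$ and $t \in F_1$, each monomial $s^j t^i$ lies in $F_{2j+i}$, so its image modulo $F_{2j+i+1}$ determines its class in the graded piece.

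Two leading-order computations are central. First, by Theorem~\ref{egfs for t^k}, the generating function $g_i(\xi,\eta) = \xi^i(1-\xi)^{-i-1/2}(1-\eta)^{-1/2}$ begins at order $\xi^i$ in the variable $\xi$, so only the summand with $m=i,p=0$ contributes to the lowest filtration level of $t^{2i}$; this yields $t^k \equiv c_k\,\tau^\lambda_{k,0} \pmod{F_{k+1}}$ with $c_k \ne 0$ (the odd case is parallel via $h_i$). Second, by formula~\eqref{staucn}, $s \cdot \tau^\lambda_{k,q}$ is a combination of $\tau^\lambda_{k+2,q}$ and $\tau^\lambda_{k+2,q+1}$ in which the coefficient $(q+1)(2q+1)/(\pi(k+2))$ of the latter is nonzero for every $q \ge 0$. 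Iterating $j$ times gives
\[
 s^j \tau^\lambda_{k-2j,0} \;=\; \sum_{q=0}^{j}\alpha^\lambda_{j,q}\,\tau^\lambda_{k,q},\qquad \alpha^\lambda_{j,j} \ne 0,
\]
so $s^j t^{k-2j} \equiv c_{k-2j}\sum_{q=0}^{j}\alpha^\lambda_{j,q}\tau^\lambda_{k,q} \pmod{F_{k+1}}$. The transition matrix from $\{s^j t^{k-2j}\}_{j\le \lfloor k/2\rfloor}$ to $\{\tau^\lambda_{k,q}\}_{q\le \lfloor k/2\rfloor}$ in $F_k/F_{k+1}$ is therefore upper triangular with nonzero diagonal.

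The consequences are then immediate. For $k \le n$ the $\tau^\lambda_{k,q}$ form a basis of $F_k/F_{k+1}$ in $\V^n_\lambda$, so the monomials $s^j t^{k-2j}$ also form a basis; for $n < k \le 2n$ they still span. An induction on the filtration level yields that $s,t$ generate $\V^n_\lambda$. For the kernel statement, suppose $P = \sum_k P_k \in \ker \pi$ with $P_k$ homogeneous of degree $k$, and let $k_0$ be minimal with $P_{k_0} \ne 0$. If $k_0 \le n$, the triangularity forces $\pi(P_{k_0}) \notin F_{k_0+1}$, while $\pi(P - P_{k_0}) \in F_{k_0+1}$ by monomial degrees; this contradicts $\pi(P) = 0$. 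Hence $k_0 \ge n+1$, that is, $P$ lies in the ideal $W_n$ of the statement. The isomorphism $\V^\infty_\lambda \simeq \R[[s,t]]$ finally follows since each $\pi_n\colon \R[s,t] \to \V^n_\lambda$ is surjective with kernel contained in $W_n$: the inverse limit $\V^\infty_\lambda = \varprojlim_n \V^n_\lambda$ is then the completion of $\R[s,t]$ in the filtration topology, namely $\R[[s,t]]$.

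The main technical input is Proposition~\ref{prop:s tau} itself (proved separately); granting it, the only bookkeeping is the nonvanishing of $c_k$ (elementary from the explicit generating functions $g_i, h_i$) and of the iterated coefficient $\alpha^\lambda_{j,j}$ (a product of nonzero factors extracted from~\eqref{staucn}).
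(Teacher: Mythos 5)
Your approach works directly inside $\V^n_\lambda$ via its natural filtration, whereas the paper transfers the problem to the flat algebra $\Val^{U(n)}=\V^n_0$. Specifically, the paper introduces the linear isomorphism $F_\lambda:\V^n_0\to\V^n_\lambda$, $\mu_{kq}\mapsto\mu^\lambda_{kq}$, and shows from Theorem~\ref{egfs for t^k} and Proposition~\ref{prop:s tau} that $F_\lambda^{-1}\circ\pi(s^it^j)\equiv s^it^j$ modulo $\bigoplus_{k>2i+j}\Val_k^{U(n)}$; then surjectivity follows by reverse induction on degree and the kernel bound follows from the known structure of $\Val^{U(n)}$ (cited from \cite{ale03}). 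Your leading-order computations (the nonvanishing of $c_k$ from $g_i,h_i$ and of the iterated coefficient $\alpha^\lambda_{j,j}$ from \eqref{staucn}) are essentially the same as the ones implicit in the paper's triangularity claim, so the hard analytic content is identical. What the paper's route buys is that it never needs to know anything about the filtration of $\V^n_\lambda$ itself beyond $s\in W_2$, $t\in W_1$: the grading of $\Val^{U(n)}$ by homogeneous degree (McMullen decomposition) does all the bookkeeping.

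The place where your argument has a real gap is the assertion that the classes of $\tau^\lambda_{kq}$, $\max(0,k-n)\le q\le\lfloor k/2\rfloor$, form a \emph{basis} of $W_k/W_{k+1}$ in $\V^n_\lambda$. You use this twice: to conclude that the monomials $\{s^jt^{k-2j}\}$ are linearly independent modulo $W_{k+1}$ for $k\le n$, and again in the kernel argument to force $\pi(P_{k_0})\notin W_{k_0+1}$. Knowing that each $\tau^\lambda_{kq}\in W_k$ (which does follow from Lemma~\ref{lem:def mu}, since $[\Delta_{k+2i,q+i}]_\lambda\in W_{k+2i}$) gives only an upper bound on the filtration degree; a nontrivial combination $\sum_q c_q\tau^\lambda_{kq}$ could a priori drop into $W_{k+1}$. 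To close this, one needs the dimension count: Alesker's filtration theorem gives an injection $W_k^{G_\lambda}/W_{k+1}^{G_\lambda}\hookrightarrow\Val_k^{U(n)}$, and since $\sum_k\dim\Val_k^{U(n)}=\dim\Val^{U(n)}=\dim\V^n_\lambda$ (Corollary 3.1.7 of \cite{ale05b}), this injection must be an isomorphism for every $k$, whence the $\tau^\lambda_{kq}$ for fixed $k$ are indeed a basis modulo $W_{k+1}$. This fact is true and not hard, but it is not established in the paper, and your write-up treats it as given. With that point filled in, the rest of your argument — including the inverse-limit step for $\V^\infty_\lambda\simeq\R[[s,t]]$, which is fine since $W_{2n+1}^{\R[s,t]}\subset\ker\pi_n\subset W_n^{\R[s,t]}$ squeezes the limit — is sound.
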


\proof
The case $\lambda=0$ was proved in \cite{ale03}. For $\lambda\neq 0$, let us consider the composition 
\[
 \psi:\mathbb R[s,t]\stackrel{\pi}\longrightarrow \V^n_\lambda\stackrel{F_\lambda^{-1}}\longrightarrow \V^n_0,
\]
where $F_\lambda$ is the vector space isomorphism given by 
\begin{equation}\label{eq:def F lambda}
F_\lambda(\mu_{kq}):=\mu_{kq}^\lambda.
\end{equation}
Lemma \ref{lem:mus that vanish} ensures that $F_\lambda$ is well-defined.

Theorem \ref{egfs
for t^k} and Proposition \ref{prop:s tau} show that 
\[
\psi(s^i t^j)\equiv s^it^j \mod \bigoplus_{k>2i+j}\val_{k}^{U(n)}.
\]
By reverse induction on $2i+j$, one shows that $\psi$, and hence $\pi$, is
surjective. 

Let now $p(s,t)\in \ker \psi $, and write $p(s,t)=\sum_{i\ge d} p_i(s,t)$ with $p_i(s,t)$ homogeneous of degree $i$. By
the previous property, 
\[
 0=\psi (p(s,t))\equiv p_d(s,t) 
\]
modulo $\bigoplus_{i>d}\val_{i}^{U(n)}$. Hence, $p_d(s,t)$ vanishes as an element in $\val_d^{U(n)}$. By the $\lambda=0$
case,  $d\geq n+1$.
\endproof

\begin{proposition} \label{prop_mu2st}
Denoting 
\begin{align}
\label{eq:def v} v &:= t^2 {(1-\lambda s)}\\
\label{eq:def u}  u &:=4s-v
\end{align}
we have
\begin{align}\label{eventau}
\tau_{kq}^\lambda &=(1-\lambda s)  \frac{\pi^{k}}{\omega_{k}(k-2q)!(2q)!}v^{\frac{k}{2}-q}u^q\\
&=\frac{\pi^{k}}{\omega_{k}(k-2q)!(2q)!}(1-\lambda s)^{\frac  k2-q+1}t^{k-2q}u^q. \label{eq_tau_lambda_tu}
\end{align}
Equivalently,
\begin{equation}\label{mu2st}
  \mu_{kq}^\lambda=(1-\lambda s)\sum_{i=q}^{\lfloor \frac{k}{2}\rfloor} (-1)^{i+q} \binom{i}{q}
\frac{\pi^k}{\omega_k(k-2i)!(2i)!}v^{\frac{k}{2}-i} u^i.
\end{equation}
\end{proposition}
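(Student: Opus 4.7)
The first step of the plan is to reduce the three displayed formulas to a single one. The expressions \eqref{eventau} and \eqref{eq_tau_lambda_tu} are literally the same after substituting $v=t^2(1-\lambda s)$; the half-integer powers of $(1-\lambda s)$ appearing when $k-2q$ is odd are interpreted via the binomial series, which terminates as $s$ is nilpotent in $\V^n_\lambda$ by Proposition \ref{prop:generators}. Moreover, \eqref{mu2st} is equivalent to \eqref{eventau} via the binomial inversion
\[
\mu_{kq}^\lambda=\sum_{i\geq q}(-1)^{i-q}\binom{i}{q}\tau_{ki}^\lambda,
\]
which inverts the defining relation $\tau_{kq}^\lambda=\sum_{i\geq q}\binom{i}{q}\mu_{ki}^\lambda$ (the verification $\sum_{i=q}^{j}(-1)^{i-q}\binom{i}{q}\binom{j}{i}=\delta_{jq}$ follows from $\binom{i}{q}\binom{j}{i}=\binom{j}{q}\binom{j-q}{i-q}$). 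Thus the whole proposition reduces to proving \eqref{eventau}.

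Denote by $\tilde\tau_{kq}$ the right-hand side of \eqref{eventau}. The central technical observation is that $\tilde\tau_{kq}$ obeys the same multiplication-by-$s$ rule as $\tau_{kq}^\lambda$. Since $u+v=4s$ by definition, every monomial obeys $s\cdot v^{a}u^{b}=\tfrac14(v^{a+1}u^b+v^au^{b+1})$, and combined with $\omega_{k+2}/\omega_k=2\pi/(k+2)$ a direct coefficient check shows
\[
 s\cdot\tilde\tau_{kq}=\frac{(k-2q+1)(k-2q+2)}{2\pi(k+2)}\,\tilde\tau_{k+2,q}+\frac{(q+1)(2q+1)}{\pi(k+2)}\,\tilde\tau_{k+2,q+1},
\]
which matches Proposition \ref{prop:s tau} exactly. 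Solving this relation for $\tau_{k+2,q+1}^\lambda$ (respectively $\tilde\tau_{k+2,q+1}$) in terms of the $(k+2,q)$-entry and an $s$-multiple of the $(k,q)$-entry, induction on $q$ reduces the proposition to the base case $q=0$, namely
\[
\tau_{k,0}^\lambda=\frac{\pi^k}{\omega_k\, k!}(1-\lambda s)^{k/2+1}\, t^k,\qquad k\geq 0.
\]

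To settle this base case, the plan is to invoke Theorem \ref{egfs for t^k}, which expresses each $t^j$ explicitly as a linear combination of the $\tau_{K,P}^\lambda$ via the generating functions $g_i, h_i$. Applying Proposition \ref{prop:s tau} iteratively to eliminate every $\tau_{K,P}^\lambda$ with $P\geq 1$ in favour of $\tau_{k',0}^\lambda$ (with $k'\leq K$ of matching parity) and powers of $s$, one transforms these relations into a triangular linear system relating the $\{t^j\}$ to the $\{\tau_{k',0}^\lambda\}$. Inverting the system yields a polynomial expression for $\tau_{k,0}^\lambda$ in $t, s$, and this should be matched with $\tfrac{\pi^k}{\omega_k k!}(1-\lambda s)^{k/2+1}t^k$ by expanding the latter as a finite binomial series in $s$.

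The main obstacle is precisely this final matching. It reduces to a hypergeometric identity for the Taylor coefficients of $g_i$ and $h_i$; it should yield cleanly to standard generating-function manipulations using the explicit series $(1-\xi)^{-\alpha}=\sum_{n\geq 0}\binom{\alpha+n-1}{n}\xi^n$ together with the duplication formula $\Gamma(m+\tfrac12)\Gamma(m+1)=\sqrt{\pi}(2m)!/4^m$, but the bookkeeping is unavoidable and is where the real work of the proof lies.
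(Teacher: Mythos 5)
Your reduction of \eqref{eq_tau_lambda_tu} and \eqref{mu2st} to \eqref{eventau}, and your observation that the right-hand side $\tilde\tau_{kq}$ obeys the same multiplication-by-$s$ rule as $\tau_{kq}^\lambda$, match the skeleton of the paper's proof. The paper phrases this by defining the linear map $\rho$ sending $\tau_{kq}^\lambda$ to $\tilde\tau_{kq}$ and showing $\rho(s\cdot\mu)=s\cdot\rho(\mu)$; your $s\cdot v^au^b = \tfrac14(v^{a+1}u^b + v^au^{b+1})$ computation is the same check. The difference is in how the $t$-part is handled, and here your plan is both more roundabout and not actually carried out.

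You propose to prove the base case $\tau_{k,0}^\lambda=\tilde\tau_{k,0}$ by first writing each $t^j$ as a combination of $\tau_{K,P}^\lambda$'s via Theorem \ref{egfs for t^k}, then eliminating all $\tau_{K,P}^\lambda$ with $P\ge1$ using Proposition \ref{prop:s tau}, and finally inverting the resulting $\R[[s]]$-linear triangular system. No such inversion is needed. Once you know $\rho$ commutes with multiplication by $s$, and you know $\V^n_\lambda$ is generated by $s,t$ (Proposition \ref{prop:generators}), it suffices to verify $\rho(t^j)=t^j$ directly: simply substitute the claimed $\tilde\tau_{kq}$ into the Theorem \ref{egfs for t^k} expansion of $t^j$ and evaluate. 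The paper packages exactly this evaluation in Lemma \ref{lem:operator O}, which introduces the operators $\mathcal O,\mathcal P$ and shows $\mathcal O(g_i)=\binom{2i}{i}^{-1}v^i(1-(v+u)/4)^{-i-1}$ and $\mathcal P(h_i)=v^i(1-v-u)^{-i-3/2}$; these closed forms make the verification $\rho(t^{2i})=t^{2i}$, $\rho(t^{2i+1})=t^{2i+1}$ a one-line resummation. That lemma is precisely the ``hypergeometric identity'' and ``bookkeeping'' you defer to the end and label as the main obstacle — and it is the real content of the proposition. Your induction on $q$ is a harmless detour (once $\rho$ commutes with $s$ and is the identity on $t^j$, Proposition \ref{prop:generators} finishes immediately, no case split needed), but the base case is where all the work sits and you have not done it.

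So: correct strategy for the $s$-compatibility; a valid but unnecessarily indirect route for the $t$-part; and the decisive computation — the analogue of Lemma \ref{lem:operator O} — is left as a gap.
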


\begin{lemma}\label{lem:operator O}
Define the linear operators
$
\mathcal O,\mathcal P: \R[[\xi,\eta ]] \to\R[[v,u]]
$
by
\begin{align*}
\mathcal O\left(\sum c_{mp}\xi^m\eta^p \right)&:=\sum c_{mp} \frac{\binom{m+p}{p}}{\binom{2m}{m}\binom{2p}{p}} v^m
u^p,\\
\mathcal P\left( \sum c_{mp}\xi^m\eta^p \right)&:= \sum c_{mp}\frac{(2m+2p+1)
\binom{2m+2p}{m+p}\binom{m+p}{p}}{(2m+1)\binom{2m}{m}\binom{2p}{p}} v^m u^p.
\end{align*}
Then
\begin{align}
\label{eq:gi} \binom{2i}i\mathcal O(g_i) &= v^i\left(1- \frac{v+u}4\right)^{-i-1}\\
\label{eq:hi}\mathcal P(h_i) & =  v^i \left(1-v-u \right)^{-i-\frac 3 2 }.
\end{align}
\end{lemma}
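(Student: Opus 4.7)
The proof is essentially bookkeeping: expand both sides of each identity as formal power series in the appropriate variables and match coefficients. I will organize the computation to isolate the points where nontrivial binomial identities are needed.

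For the first identity, the plan is to expand $g_i$ as a double power series in $\xi,\eta$ using the generalized binomial theorem. The key ingredient is the standard formula $(1/2)_n/n! = \binom{2n}{n}/4^n$, together with the fact that $(i+1/2)_k = (1/2)_{i+k}/(1/2)_i$, which gives
\[
(1-\xi)^{-i-1/2} = \sum_{k\ge 0} \frac{\binom{2(i+k)}{i+k}\binom{i+k}{k}}{\binom{2i}{i}\,4^k}\,\xi^k.
\]
Multiplying by $\xi^i$ and by the series for $(1-\eta)^{-1/2}$, the coefficient of $\xi^m\eta^p$ in $g_i$ (for $m\ge i$) becomes $\frac{4^i}{\binom{2i}{i}4^{m+p}}\binom{2m}{m}\binom{m}{i}\binom{2p}{p}$. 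Applying $\mathcal O$ cancels the factors $\binom{2m}{m}\binom{2p}{p}$ and introduces $\binom{m+p}{p}$, yielding
\[
\binom{2i}{i}\mathcal O(g_i)=\sum_{m\ge i,\,p\ge 0}\frac{4^i}{4^{m+p}}\binom{m}{i}\binom{m+p}{p}\,v^m u^p.
\]
Independently, expanding $v^i(1-(v+u)/4)^{-i-1}$ via $(1-z)^{-i-1}=\sum_n\binom{n+i}{i}z^n$ and the binomial theorem produces $\sum \frac{4^i}{4^{m+p}}\binom{m+p}{i}\binom{m+p-i}{p}v^m u^p$. The two expressions agree by the identity
\[
\binom{m}{i}\binom{m+p}{p}=\binom{m+p}{i}\binom{m+p-i}{p}=\frac{(m+p)!}{i!\,p!\,(m-i)!},
\]
which is immediate.

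For the second identity, the same strategy applies to $h_i=\xi^i(1-\xi)^{-i-3/2}(1-\eta)^{-1/2}$, but now one uses $(i+3/2)_k=(1/2)_{i+k+1}/(1/2)_{i+1}$ to extract the coefficients. A short computation gives
\[
[h_i]_{\xi^m\eta^p}=\frac{(2m+2)!\,(i+1)!\,\binom{2p}{p}}{(2i+2)!\,(m+1)!\,(m-i)!\,4^{m-i+p}}.
\]
Applying $\mathcal P$, the factor $(2m+2)!/((2m+1)\binom{2m}{m})$ simplifies to $2(m+1)(m!)^2$, and after absorbing $m!/(m-i)!=i!\binom{m}{i}$ and the constant $\frac{2(i+1)!\,i!\,4^i}{(2i+2)!}=\frac{4^i}{(2i+1)\binom{2i}{i}}$, one obtains
\[
\mathcal P(h_i)=\frac{4^i}{(2i+1)\binom{2i}{i}}\sum_{m\ge i,\,p\ge 0}\binom{m}{i}\,\frac{(2m+2p+1)\binom{2m+2p}{m+p}\binom{m+p}{p}}{4^{m+p}}\,v^m u^p.
\]
On the other side, expanding $v^i(1-v-u)^{-i-3/2}$ with $(i+3/2)_n=(1/2)_{i+n+1}/(1/2)_{i+1}$ and grouping terms by $(m,p)$ gives a coefficient involving the ratio $\binom{2m+2p+2}{m+p+1}/\binom{2i+2}{i+1}$. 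The crucial simplification is the telescoping identity
\[
\binom{2n+2}{n+1}=\frac{2(2n+1)}{n+1}\binom{2n}{n},
\]
applied to both numerator and denominator. This reduces the coefficient to exactly the expression above, matching $\mathcal P(h_i)$ term by term.

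The only genuine difficulty is the algebraic bookkeeping in the second identity, where a careful accounting of factorials, powers of $4$, and the Pochhammer-to-central-binomial conversion is required; once the telescoping formula above is invoked the rest collapses to the identity $m!/((m-i)!\,p!)\cdot\binom{m+p}{p}^{-1}\cdot\binom{m+p}{p}\cdot\ldots=i!\binom{m}{i}\binom{m+p}{p}$ used in the first part. No conceptual step beyond manipulating generalized binomial series is needed.
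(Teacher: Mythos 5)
Your proof is correct, and I checked the binomial algebra through: the coefficient extractions for $g_i$, $h_i$ and for the right-hand sides are right, and the final matching reduces (in both cases) to the elementary identity $\binom{m}{i}\binom{m+p}{p}=\binom{m+p}{i}\binom{m+p-i}{p}$ and the ``telescoping'' formula $\binom{2N+2}{N+1}=\tfrac{2(2N+1)}{N+1}\binom{2N}{N}$. However, you take a genuinely different route than the paper. The paper's proof first verifies the $i=0$ base cases directly from the standard expansions
\[
(1-t)^{-1/2}=\sum_k \binom{2k}{k}\left(\tfrac{t}{4}\right)^k, \qquad (1-t)^{-3/2}=\sum_k (2k+1)\binom{2k}{k}\left(\tfrac{t}{4}\right)^k,
\]
and then lifts them to general $i$ by writing $g_i=\tfrac{2^i}{(2i-1)!!}\,\xi^i\partial_\xi^i g_0$ (and the analogous formula for $h_i$) and using the commutation
\[
\mathcal O\circ\bigl(\xi^i\partial_\xi^i\bigr)=\bigl(v^i\partial_v^i\bigr)\circ\mathcal O,
\]
which holds trivially for any diagonal monomial-to-monomial substitution operator, combined with $v^i\partial_v^i(1-w)^{-a}=\tfrac{(a)_i}{1}\,v^i(1-w)^{-a-i}$ type identities. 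That two-step reduction avoids ever computing a double power series: only the one-variable base case needs checking. Your approach is more elementary in spirit (no differential identity, just generalized binomial expansion plus coefficient matching), at the cost of the factorial bookkeeping you flag, while the paper's approach is shorter and makes visible why both $\mathcal O$ and $\mathcal P$ behave the same with respect to the $\xi\partial_\xi$ ladder. Both are complete proofs.
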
 
\begin{proof} Since 
$$
(1- t)^{-\frac 1 2} = \sum \binom {2k}k \left(\frac t 4\right)^k
$$
the relation \eqref{eq:gi} for $i=0$ follows at once. 

In view of the equation $g_i=\frac{2^i}{(2i-1)!!} \xi^i \frac{\partial^i}{\partial \xi^i}g_0$ and the obvious relation
\begin{equation}\label{eq:operator O}
\mathcal O \circ \left(\xi^i \frac{\partial^i}{\partial \xi^i}\right) = \left(v^i \frac{\partial^i}{\partial v^i}\right)
\circ \mathcal O
\end{equation}
the general case now follows.

Using the expansion
$$
(1- t)^{-\frac 3 2} = \sum (2k+1) \binom {2k}k \left(\frac t 4\right)^k
$$
the relation \eqref{eq:hi} follows similarly.
\end{proof}

\begin{proof}[Proof of Proposition \ref{prop_mu2st}]

Let $ \rho:{\V}^\infty_\lambda\rightarrow {\V}^\infty_\lambda$ be the linear map sending $\tau_{k,q}^\lambda$ to the
right hand side of \eqref{eventau}. We will prove first that $ \rho$ is the identity on the subspace spanned by powers
of $t$. The proof will be finished by showing that $ \rho(s \cdot \mu)=s\cdot  \rho(\mu)$ for any valuation $\mu\in
\mathcal{V}_\lambda^\infty$. 

By Theorem \ref{egfs for t^k} and Lemma \ref{lem:operator O}
\begin{align*}
 \rho(t^{2i}) &=\binom{2i} i \lambda^{-i} \sum_{m,p} \left(\frac\lambda\pi\right)^{m+p}\left. \frac{\partial^{m+p}
g_i}{\partial^m\xi\partial^p\eta}\right|_{0,0}  \rho(\tau_{2m+2p,p})\\
&= (1-\lambda s)\binom{2i} i \lambda^{-i} \sum_{m,p} \left. \frac{\partial^{m+p}
g_i}{\partial^m\xi\partial^p\eta}\right|_{0,0} \frac{(m+p)!}{(2m)!(2p)!} (\lambda v)^m (\lambda u)^p\\
&= (1-\lambda s)\binom{2i} i \lambda^{-i} \sum_{m,p} \left. \frac{\partial^{m+p}
g_i}{\partial^m\xi\partial^p\eta}\right|_{0,0} \frac{1}{m!p!} \frac{\binom{m+p}{m}}{\binom{2m}{m}\binom{2p}{p}}(\lambda
v)^m (\lambda u)^p\\
&= (1-\lambda s)\binom{2i} i \lambda^{-i}\mathcal O(g_i)(\lambda v,\lambda u)\\
&= (1-\lambda s)v^{i}\left(1- \lambda\left( \frac{v+u}{4}\right)\right)^{-i-1}\\
&= t^{2i}
\end{align*}
in view of the defining relations \eqref{eq:def v}, \eqref{eq:def u}. One may check by a similar procedure that $
\rho(t^{2i+1}) = t^{2i+1}$.

Finally, let us check that $ \rho(s\cdot \mu)={s\cdot  \rho(\mu)}$ for any valuation $\mu$. 
Using Proposition \ref{prop:s tau} we compute
\begin{align*}
  \rho( s\cdot\tau_{kq}^\lambda)&=\frac{(k-2q+1)(k-2q+2)}{2\pi(k+2)}
\rho(\tau_{k+2,q}^\lambda)+\frac{(q+1)(2q+1)}{\pi(k+2)} \rho(\tau_{k+2,q+1}^\lambda)\\
& =\frac{(k-2q+1)(k-2q+2)}{2\pi(k+2)}\frac{\pi^{k+2}}{\omega_{k+2}(k-2q+2)!(2q)!}(1-\lambda s)^{\frac
k2-q+2}t^{k-2q+2}u^q
\\
&
 +\frac{(q+1)(2q+1)}{\pi(k+2)}\frac{\pi^{k+2}}{\omega_{k+2}(k-2q)!(2q+2)!}(1-\lambda s)^{\frac
k2-q+1}t^{k-2q}u^{q+1}\\
&= \frac{\pi^{k+1}}{2(k+2)\omega_{k+2}(k-2q)!(2q)!}(1-\lambda s)^{\frac k2-q+1}t^{k-2q}u^q\underbrace{((1-\lambda
s)t^2+u)}_{4s}
\\
 &=s \cdot \rho(\tau_{kq}).
\end{align*}
\end{proof}

\subsection{$\mathcal{V}^n_\lambda\simeq \mathcal{V}^n_0$ as filtered algebras }
\label{subsec_algiso}

\begin{theorem}\label{thm:1st iso}
 There exists an algebra  isomorphism $I_\lambda:\mathcal{V}^n_0 \rightarrow \mathcal{V}^n_\lambda$ such that
\[
 I_\lambda(s)=s,\qquad I_\lambda(t)=t\sqrt{1-\lambda s}.
\] \end{theorem}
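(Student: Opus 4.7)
The plan is to exploit Proposition \ref{prop:generators}, which provides for each $\lambda$ an algebra isomorphism $\Phi_\lambda\colon \R[[s,t]]\to \V^\infty_\lambda$ sending the formal variables to the generators $s$ and $t$. The power series $\sqrt{1-\lambda s}$ is well-defined via the binomial expansion and has constant term $1$, so the substitution $s\mapsto s,\ t\mapsto t\sqrt{1-\lambda s}$ defines an algebra automorphism $\tilde I\colon \R[[s,t]]\to \R[[s,t]]$, with inverse given by $t\mapsto t/\sqrt{1-\lambda s}$. Setting $I^\infty_\lambda:=\Phi_\lambda\circ \tilde I\circ \Phi_0^{-1}\colon \V^\infty_0\to \V^\infty_\lambda$, the goal is to show that $I^\infty_\lambda$ descends to an algebra isomorphism $I_\lambda\colon \V^n_0\to \V^n_\lambda$ with the stated values on $s,t$.

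The core calculation uses Proposition \ref{prop_mu2st}. Specializing \eqref{mu2st} to $\lambda=0$ (so that $v|_{\lambda=0}=t^2$ and $u|_{\lambda=0}=4s-t^2$) gives the explicit power series
\[
 P^0_{kq}(s,t)=\sum_{i=q}^{\lfloor k/2\rfloor}(-1)^{i+q}\binom{i}{q}\frac{\pi^k}{\omega_k(k-2i)!(2i)!}\,t^{k-2i}(4s-t^2)^i
\]
representing $\mu^0_{kq}$. Applying $\tilde I$ and using the identities $(t\sqrt{1-\lambda s})^2 = t^2(1-\lambda s)=v$ and $(t\sqrt{1-\lambda s})^{k-2i}=t^{k-2i}(1-\lambda s)^{k/2-i}=v^{k/2-i}$, direct comparison with the full formula \eqref{mu2st} yields the identity
\[
 \tilde I(P^0_{kq})=(1-\lambda s)^{-1}\,P^\lambda_{kq},
\]
where $P^\lambda_{kq}$ is the expression for $\mu^\lambda_{kq}$ given by \eqref{mu2st}. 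Passing through $\Phi_\lambda$ this reads $I^\infty_\lambda(\mu^0_{kq})=(1-\lambda s)^{-1}\mu^\lambda_{kq}$ in $\V^\infty_\lambda$.

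To finish, invoke Lemma \ref{lem:mus that vanish}: the kernel of the restriction $\V^\infty_\lambda\to \V^n_\lambda$ is spanned by the $\mu^\lambda_{kq}$ with $q<k-n$ or $k>2n$, and likewise for $\lambda=0$. Since this kernel is an ideal and $(1-\lambda s)^{-1}$ is a unit in $\V^n_\lambda$, the identity above implies $I^\infty_\lambda$ carries $\ker(\V^\infty_0\to \V^n_0)$ into $\ker(\V^\infty_\lambda\to \V^n_\lambda)$. Therefore $I^\infty_\lambda$ descends to an algebra homomorphism $I_\lambda\colon \V^n_0\to \V^n_\lambda$ with $I_\lambda(s)=s$ and $I_\lambda(t)=t\sqrt{1-\lambda s}$. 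The image of $I_\lambda$ contains $s$ and $t\sqrt{1-\lambda s}$, hence also $t$ (since $\sqrt{1-\lambda s}$ is a unit), so $I_\lambda$ is surjective by Proposition \ref{prop:generators}; combined with the equality $\dim\V^n_0=\dim\V^n_\lambda$ of Corollary \ref{cor:Delta is basis}, this forces bijectivity.

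The main obstacle is the single identity $\tilde I(P^0_{kq})=(1-\lambda s)^{-1}P^\lambda_{kq}$: it is precisely the particular substitution $t\mapsto t\sqrt{1-\lambda s}$ that causes the factors $(1-\lambda s)^{k/2-i}$ coming from $v^{k/2-i}$ to be correctly produced by the odd powers $(t\sqrt{1-\lambda s})^{k-2i}$, thereby matching the two sides up to the uniform unit $(1-\lambda s)^{-1}$ that is absorbed once one passes to ideals.
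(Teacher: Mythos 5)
Your proof is correct and follows essentially the same route as the paper's: define the automorphism on $\V^\infty_0\cong\R[[s,t]]$, show via Proposition \ref{prop_mu2st} that it carries the kernel of the restriction to $\V^n_0$ into the kernel of the restriction to $\V^n_\lambda$, and conclude bijectivity by surjectivity plus dimension count. The only (inessential) difference is that the paper invokes the presentation $\V^n_0\cong\V^\infty_0/(\mu_{n+1,0},\mu_{n+2,0})$ from \cite{fu06,befu11} and therefore only needs the computation $\tilde I_\lambda(\mu_{k0})=(1-\lambda s)^{-1}\mu_{k0}^\lambda$ (since an algebra isomorphism carries the ideal generated by the two classes to the ideal generated by their images), whereas you compute the general identity $\tilde I(P^0_{kq})=(1-\lambda s)^{-1}P^\lambda_{kq}$ for all $(k,q)$ and argue directly from the spanning description of the kernel in Lemma \ref{lem:mus that vanish}. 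Both variants rest on the same substitution trick $(t\sqrt{1-\lambda s})^{k-2i}=v^{k/2-i}$ and use the unit $(1-\lambda s)^{-1}$ in the same way, so I would not call them genuinely different arguments.
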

\begin{proof} Let $\tilde I_\lambda:\mathcal{V}_0^\infty\rightarrow \mathcal{V}^\infty_\lambda$ be the algebra
isomorphism defined by $\tilde I_\lambda(s)=s, \tilde I_\lambda(t)=t\sqrt{1-\lambda s}$. We must show the
existence of the algebra morphism $I_\lambda$ in the following diagram, where the vertical maps are
restrictions:
\begin{displaymath}
\xymatrix{\mathcal{V}^\infty_0\ar[d]\ar[r]^{\tilde I_\lambda}&\mathcal{V}^\infty_\lambda\ar[d]\\
\mathcal{V}^n_0\ar@{.>}[r]^{I_\lambda}&\mathcal{V}^n_\lambda
}
\end{displaymath}
Recall from \cite{fu06, befu11} that $\mathcal{V}^n_0\cong \mathcal{V}_0^\infty/(\mu_{n+1,0},\mu_{n+2,0})$, and by
Proposition \ref{prop_mu2st}
 \begin{align}
\tilde I_\lambda(\mu_{k0})&=\sum_{i=0}^{\lfloor \frac k2\rfloor} (-1)^{i}
\frac{\pi^k}{\omega_k(k-2i)!(2i)!}(1-\lambda s)^{\frac{k}2 -i}t^{k-2i} (4s-t^2(1-\lambda s))^i \nonumber\\
&=\frac{1}{1-\lambda s} \mu_{k0}^\lambda \label{eq:def F}
\end{align}
whose image in $\mathcal{V}^n_\lambda$ vanishes if $n<k$, by Lemma \ref{lem:mus that vanish}. Hence $I_\lambda$ is
well-defined. Since it is clearly surjective, by comparing dimensions, it follows that $I_\lambda$ is bijective.
\end{proof}

According to Theorem \ref{thm_ftaig}, the following diagram commutes
\begin{displaymath}
\xymatrix{\mathcal{V}^n_\lambda\ar[r]^{\hspace{-0.3cm}k_\lambda}\ar[d]_{\pd_\lambda}&\mathcal{V}^n_\lambda\otimes
\mathcal{V}^n_\lambda\ar[d]^{\pd_\lambda\otimes \pd_\lambda}\\
\mathcal{V}_\lambda^{n*}\ar[d]_{ I_\lambda^*}\ar[r]^{\hspace{-0.3cm}m^*}&\mathcal{V}_\lambda^{n*}\otimes
\mathcal{V}_\lambda^{n*}\ar[d]^{ I_\lambda^* \otimes I_\lambda^*}\\
\mathcal{V}^{n*}_0\ar[r]^{\hspace{-0.3cm}m^*}&\mathcal{V}^{n*}_0\otimes \mathcal{V}^{n*}_0\\
\mathcal{V}^n_0\ar[r]^{\hspace{-0.3cm}k}\ar[u]^{\pd}&\mathcal{V}^n_0\otimes \mathcal{V}^n_0\ar[u]_{\pd\otimes \pd}}
\end{displaymath}

Hence the map $J_\lambda:=\pd_\lambda^{-1}\circ (I_\lambda^{-1})^* \circ \pd$ is a co-algebra isomorphism from
$\mathcal{V}^n_0$ to $\mathcal{V}^n_\lambda$, i.e.
\begin{equation}\label{eq:J coalg iso}
k_\lambda \circ J_\lambda =(J_\lambda \otimes J_\lambda) \circ k.
\end{equation}

{\begin{proposition}\label{prop_miracle}
$
 J_\lambda=(1-\lambda s)^2 I_\lambda.
$
\end{proposition}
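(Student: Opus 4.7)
The plan is to dualize the claimed identity via Poincar\'e duality and reduce it to a single pairing identity that can be checked on a basis of $\V^n_0$. Since $J_\lambda = \pd_\lambda^{-1}\circ (I_\lambda^{-1})^* \circ \pd$, applying $\pd_\lambda$ to both sides of the desired equality converts the proposition to
\[
(I_\lambda^{-1})^*\circ \pd \;=\; \pd_\lambda\circ \bigl[(1-\lambda s)^2 \cdot I_\lambda\bigr].
\]
Pairing with an arbitrary element $I_\lambda(\nu) \in \V^n_\lambda$, using the algebra isomorphism property $I_\lambda(\phi)\cdot I_\lambda(\nu) = I_\lambda(\phi\cdot\nu)$, and letting $\eta := \phi\cdot\nu$ range over $\V^n_0$ (every $\eta$ is of this form, e.g.\ with $\phi=\chi$), the statement reduces to proving
\begin{equation}\label{eq:reduce_proposal}
\langle \vol_\lambda^*,\, (1-\lambda s)^2\,I_\lambda(\eta)\rangle \;=\; \langle \vol_0^*,\, \eta\rangle, \qquad \forall\, \eta \in \V^n_0.
\end{equation}

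The next step computes $I_\lambda(\mu_{kq})$. Applying $\tilde I_\lambda$ term-by-term to the polynomial expression \eqref{mu2st} of $\mu_{kq}^0$ in $s,t$ and using $\tilde I_\lambda(t^2)=t^2(1-\lambda s)=v$, $\tilde I_\lambda(4s-t^2)=u$, $\tilde I_\lambda(s)=s$, then comparing with \eqref{mu2st} applied at the value $\lambda$, one finds
\[
\tilde I_\lambda(\mu_{kq}) \;=\; (1-\lambda s)^{-1}\mu_{kq}^\lambda \quad\text{in }\V^\infty_\lambda,
\]
which descends to the same identity for $I_\lambda$ in $\V^n_\lambda$ (the inverse is well-defined since $s$ is nilpotent in $\V^n_\lambda$, by Proposition \ref{prop:generators}). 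Consequently $(1-\lambda s)^2\,I_\lambda(\mu_{kq}) = (1-\lambda s)\mu_{kq}^\lambda$, and \eqref{eq:reduce_proposal}, evaluated on the basis $\{\mu_{kq}\}$ of $\V^n_0$, specializes to
\begin{equation}\label{eq:to_verify_proposal}
\langle \vol_\lambda^*,\,(1-\lambda s)\mu_{kq}^\lambda\rangle \;=\; \delta_{(k,q),(2n,n)}.
\end{equation}

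To verify \eqref{eq:to_verify_proposal}, recall that $\{[\Delta_{k'q'}]_\lambda\}$ is a basis of $\V^n_\lambda$ (Corollary \ref{cor:Delta is basis}) in which $\vol_\lambda = [\Delta_{2n,n}]_\lambda$ is the only basis element not of the form $\Glob(\omega,0)$ (Proposition \ref{prop:no vol}); hence $\vol_\lambda^*$ is precisely the dual basis element of $\vol_\lambda$. By Lemma \ref{lem:def mu}, the $\vol_\lambda$-component of $\mu_{kq}^\lambda$ vanishes when $k>2q$ (where $\mu_{kq}^\lambda=[B_{kq}]_\lambda$ is pure boundary) and equals $\lambda^{n-q}n!/(\pi^{n-q}q!)$ when $k=2q$. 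Combining this with \eqref{eq:s mukq lambda}, which shows that $s$ sends any $\mu_{kq}^\lambda$ with $k>2q$ into a combination of the same type and that $s\cdot\mu_{2q,q}^\lambda = \tfrac{1}{\pi(2q+2)}\mu_{2q+2,q}^\lambda+\tfrac{q+1}{\pi}\mu_{2q+2,q+1}^\lambda$, the identity \eqref{eq:to_verify_proposal} follows from three case-checks: if $k>2q$, both $\mu_{kq}^\lambda$ and $s\mu_{kq}^\lambda$ are pure boundary, so both sides vanish; if $(k,q)=(2n,n)$, then $s\cdot \vol_\lambda = 0$ by the filtration ($\deg s=2$, $\deg \vol=2n$), whence $\langle \vol_\lambda^*,(1-\lambda s)\vol_\lambda\rangle=1$; and if $k=2q<2n$, the $\vol_\lambda$-component of $\lambda s\mu_{2q,q}^\lambda$ is $\lambda\cdot \tfrac{q+1}{\pi}\cdot\tfrac{\lambda^{n-q-1}n!}{\pi^{n-q-1}(q+1)!} = \tfrac{\lambda^{n-q}n!}{\pi^{n-q}q!}$, which exactly cancels the $\vol_\lambda$-component of $\mu_{2q,q}^\lambda$. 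The principal obstacle is purely bookkeeping, since $\vol_\lambda$ appears as a nontrivial summand of $\mu_{2q,q}^\lambda$ for every $q\le n-1$; the precise cancellation produced by the factor $(1-\lambda s)$ is the essential point of the computation.
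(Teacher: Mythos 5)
Your proof is correct and takes a genuinely different route from the paper. The paper begins by establishing $J_\lambda(\phi_1\cdot\phi_2)=I_\lambda(\phi_1)\cdot J_\lambda(\phi_2)$, which reduces the claim to $J_\lambda(\chi)=(1-\lambda s)^2$; it then verifies this single equality by pairing against monomials $\varphi=t^{2i}s^j$ and invoking the explicit template formula $\langle\vol^*,t^{2i}s^j\rangle = \frac{\lambda^{n-i-j}}{\omega_{2n}}\binom{2i}{i}\binom{n-j+1}{i+1}$, which in turn rests on Corollary~\ref{cor:t^k(cpn)}, equation~\eqref{eq:sk_total}, and analytic continuation. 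You instead dualize the entire identity via Poincar\'e duality and reduce it to a pairing identity tested on the basis $\{\mu_{kq}\}$ of $\V^n_0$. The two computational ingredients you substitute for the paper's template evaluation are: (a) the formula $I_\lambda(\mu_{kq})=(1-\lambda s)^{-1}\mu_{kq}^\lambda$, extracted directly from~\eqref{mu2st}; and (b) the identification of $\vol_\lambda^*$ with the dual basis element of $[\Delta_{2n,n}]_\lambda$ in the $\Delta$-basis, which follows from Proposition~\ref{prop:no vol} together with the observation that each $[\Delta_{kq}]_\lambda$ with $(k,q)\neq(2n,n)$ is of the form $\Glob(\omega,0)$ (so both span the same codimension-one subspace). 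The final cancellation is then pure bookkeeping using Lemma~\ref{lem:def mu} and~\eqref{eq:s mukq lambda}. This avoids the analytic continuation step and the explicit binomial-sum identities of the paper's argument, making it somewhat more intrinsic; both routes ultimately lean on the same structural prerequisites (Proposition~\ref{prop_mu2st} and Proposition~\ref{prop:s tau}). One thing worth making explicit in your write-up: the claim in (b) that $\vol_\lambda^*$ is the dual basis element is not stated anywhere in the paper, so the one-line codimension argument deserves to be spelled out rather than cited.
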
}

\proof 
{Note that $\mathcal{V}_\lambda^{n*}$ is a $\mathcal{V}_\lambda^{n}$-module in the usual way. We clearly have 
\begin{displaymath}
 \pd_\lambda(\phi_1 \cdot \phi_2)=\phi_1 \pd_\lambda(\phi_2), \quad \phi_1,\phi_2 \in \mathcal{V}_0^{n}
\end{displaymath}
and 
\begin{displaymath}
 (I_\lambda^{-1})^*(\phi \cdot \psi)=I_\lambda(\phi) (I_\lambda^{-1})^*(\psi), \quad \phi \in \mathcal{V}_0^{n}, \psi
\in \mathcal{V}_0^{n*}. 
\end{displaymath}

From these equations we obtain that  
\begin{equation}\label{mult}
 J_\lambda(\phi_1 \cdot \phi_2)=I_\lambda(\phi_1)\cdot J_\lambda(\phi_2),\qquad \phi_1,\phi_2 \in \mathcal{V}^n_0.
\end{equation}}

Since $I_\lambda(\chi) = \chi$, it is enough to show that 
\begin{equation}\label{eq:j lambda chi}
 J_\lambda(\chi)=(1-\lambda s)^2.
\end{equation}

Take $\varphi \in \mathcal{V}^n_\lambda$. On the one hand, 
\begin{displaymath}
 \langle \pd_\lambda ( \varphi),J_\lambda(\chi)\rangle=\langle \pd(\chi),I_\lambda^{-1}
\varphi\rangle=\langle\vol^*,I_\lambda^{-1} \varphi\rangle.
\end{displaymath}
On the other hand,
\begin{displaymath}
\langle \pd_\lambda ( \varphi),(1-\lambda s)^2\rangle=\langle\vol^*, \varphi (1-\lambda
s)^2\rangle.
\end{displaymath}
To prove \eqref{eq:j lambda chi}, it suffices to check that the right hand sides of these last two relations agree in
the case $\varphi:=t^{2i}s^j \in \mathcal{V}_\lambda^n$. 
For $\lambda>0$ we have, from Corollary \ref{cor:t^k(cpn)} and equation \eqref{eq:sk_total},
\begin{equation}\label{vol*monomial}
\langle \vol^*,t^{2i} s^j\rangle=\frac{\lambda^{n-i-j}}{\omega_{2n}}\binom{2i}{i}\binom{n-j+1}{i+1}.
\end{equation}

By analytic continuation, the same holds for all $\lambda {\in \R}$. Indeed, by Theorem \ref{egfs
for t^k} and Proposition \ref{prop:s tau}, the left hand side above depends analytically on $\lambda$.

{Let us first assume that $i>0$.} Using  \eqref{vol*monomial} we get
\begin{displaymath}
\langle\vol^*,I_\lambda^{-1} \varphi\rangle=\left\langle\vol^*,\frac{t^{2i} s^j}{(1-\lambda s)^{i}}\right\rangle=\sum _k
\binom{k+i -1}{k}\lambda^k\langle\vol^*,t^{2i}
s^{k+j}\rangle=\binom{2i}{i}\binom{n-j-1}{i-1}\frac{\lambda^{n-i-j}}{\omega_{2n}},
\end{displaymath}
and 
\begin{displaymath}
\langle\vol^*,\varphi (1-\lambda s)^2\rangle=\frac{\lambda^{n-i-j}}{\omega_{2n}
}\binom{2i}{i}\binom{n-j-1}{i-1}. 
\end{displaymath}
{The case $i=0$ can be treated in a similar way.}
\endproof

\subsection{The principal kinematic formula in complex space forms}
We may now deduce that the principal kinematic formulas in the $\CPn_\lambda$ are formally independent of $\lambda$ when
expressed in terms of the $\mu_{kq}^\lambda$ or $\tau_{kq}^\lambda$. Since the $\lambda = 0$ case was given in
\cite{befu11}, the parallel statement holds for all values of $\lambda$.

From Propositions \ref{prop_mu2st} and \ref{prop_miracle} it follows that
\begin{equation}\label{jmu}
J_\lambda(\mu_{kq})=(1-\lambda s)\mu_{kq}^\lambda.
\end{equation}
Thus the linear isomorphism of \eqref{eq:def F lambda} is given by
$F_\lambda = (1-\lambda s)^{-1}J_\lambda$. 
Clearly
$ F_\lambda(\tau_{kq} ) =
\tau^\lambda_{kq}.
$

\begin{theorem}\label{thm:pkf lambda} The principal kinematic formula in $\CP^n_\lambda$ is given by
$$
k_\lambda(\chi) = (F_\lambda\otimes F_\lambda)\circ k(\chi).
$$
\end{theorem}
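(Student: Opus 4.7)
The plan is to derive the formula from three pieces already established: (1) $J_\lambda$ is a coalgebra isomorphism, so by \eqref{eq:J coalg iso} we have $k_\lambda\circ J_\lambda = (J_\lambda\otimes J_\lambda)\circ k$; (2) Proposition \ref{prop_miracle} gives $J_\lambda(\chi) = (1-\lambda s)^2$; (3) the multiplicativity relation \eqref{eq:k mult} for $k_\lambda$. The strategy is to evaluate (1) at $\chi$, rewrite the left-hand side using (2) and the multiplicativity, and then divide by the common factor $(1-\lambda s)\otimes(1-\lambda s)$, matching $J_\lambda$ against $F_\lambda$ via the identity $J_\lambda = (1-\lambda s)\cdot F_\lambda$ (which follows at once from \eqref{eq:def F lambda} and \eqref{jmu}).

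First, I would apply \eqref{eq:J coalg iso} at $\chi \in \V^n_0$ and substitute $J_\lambda(\chi) = (1-\lambda s)^2$ to obtain
\[
k_\lambda\bigl((1-\lambda s)^2\bigr) = (J_\lambda\otimes J_\lambda)(k(\chi)).
\]
Next, I would use the multiplicativity formula \eqref{eq:k mult} twice, once on each tensor factor, to rewrite the left-hand side:
\[
k_\lambda\bigl((1-\lambda s)^2\cdot \chi\bigr) = \bigl((1-\lambda s)\otimes \chi\bigr)\cdot\bigl(\chi\otimes(1-\lambda s)\bigr)\cdot k_\lambda(\chi) = \bigl((1-\lambda s)\otimes(1-\lambda s)\bigr)\cdot k_\lambda(\chi).
\]
On the right-hand side, the identity $J_\lambda = (1-\lambda s)\cdot F_\lambda$ gives
\[
(J_\lambda\otimes J_\lambda)(k(\chi)) = \bigl((1-\lambda s)\otimes (1-\lambda s)\bigr)\cdot (F_\lambda\otimes F_\lambda)(k(\chi)).
\]
Equating the two expressions produces
\[
\bigl((1-\lambda s)\otimes(1-\lambda s)\bigr)\cdot\bigl[k_\lambda(\chi) - (F_\lambda\otimes F_\lambda)(k(\chi))\bigr] = 0.
\]

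Finally, I would conclude by cancelling the factor $(1-\lambda s)\otimes(1-\lambda s)$. This requires that $(1-\lambda s)$ be a unit in $\V^n_\lambda$; but $s$ lies in the filtration piece $W_2$ and the filtration of $\V^n_\lambda$ satisfies $W_{2n+1}=0$ (since $\dim_\R \CP^n_\lambda = 2n$), so $s$ is nilpotent and $(1-\lambda s)^{-1}$ exists as a finite geometric series. Hence multiplication by $(1-\lambda s)\otimes(1-\lambda s)$ is invertible on $\V^n_\lambda\otimes\V^n_\lambda$, and the desired identity $k_\lambda(\chi) = (F_\lambda\otimes F_\lambda)\circ k(\chi)$ follows. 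There is no real obstacle here: all the work has been done in establishing Proposition \ref{prop_miracle} and the coalgebra isomorphism property of $J_\lambda$; the present theorem is merely the translation from the coalgebra-isomorphism statement about $J_\lambda$ into a statement about $F_\lambda$, absorbing the discrepancy $(1-\lambda s)^2$ via the multiplicativity of $k_\lambda$.
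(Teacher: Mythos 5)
Your proof is correct and is essentially the same as the paper's: both rely on the coalgebra-isomorphism property \eqref{eq:J coalg iso}, the identity $J_\lambda(\chi)=(1-\lambda s)^2$ from Proposition \ref{prop_miracle}, the multiplicativity \eqref{eq:k mult}, and the relation $J_\lambda=(1-\lambda s)F_\lambda$ from \eqref{jmu}. The only cosmetic differences are the order in which you arrange the manipulations and the explicit justification (via nilpotence of $s$) that $(1-\lambda s)$ is a unit, which the paper uses without comment.
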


\begin{proof}
\begin{align*}
 k_\lambda(\chi)&=((1-\lambda s)^{-1}\otimes(1-\lambda s)^{-1})k_\lambda((1-\lambda s)^2)\\
&=((1-\lambda s)^{-1}\otimes(1-\lambda s)^{-1})k_\lambda(J_\lambda(\chi))\\
  &=((1-\lambda s)^{-1}\otimes(1-\lambda s)^{-1})(J_\lambda\otimes J_\lambda)(k(\chi))
\end{align*}
which with \eqref{jmu}  gives the desired relation.
\end{proof}
One can show similarly that
\begin{equation}
 \label{kinform_mu}
k_\lambda(\mu_{kq}^\lambda)=(F_\lambda\otimes F_\lambda)\circ k((1-\lambda s)\mu_{kq}).
\end{equation}

\subsection{More isomorphisms} Although we will not make use of them here, there are other natural isomorphisms of
algebras $\mathcal{V}^n_\lambda \simeq \mathcal{V}^n_0$.

\begin{theorem}  \label{thm_other_isom}
Let $p \in \R[[t,u]]$. The algebra homomorphism $R:\R[[t,u]] \to \R[[t,u]]$ with 
\begin{align*}
t & \mapsto t p(t,u),\\
u & \mapsto u p(t,u)^2  
\end{align*}
induces an algebra homomorphism $\mathcal{V}^n_0 \to  \mathcal{V}^n_\lambda$. This map is an isomorphism if and only if
$p$ is a unit.
\end{theorem}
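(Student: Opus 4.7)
The plan is to show that $R(I_0)\subseteq I_\lambda$, where $I_0,I_\lambda\subset A:=\R[[T,U]]$ are the kernels of the surjections sending $T\mapsto t$ and $U\mapsto u_0=4s-t^2$, resp.\ $U\mapsto u_\lambda=4s-t^2(1-\lambda s)$. By Lemma~\ref{lem:mus that vanish} combined with \eqref{mu2st} at $\lambda=0$, $I_0$ is generated as an ideal by the polynomials $\mu_{kq}^0(T,U)=\sum_{i}(-1)^{i+q}\binom{i}{q}\tfrac{\pi^k}{\omega_k(k-2i)!(2i)!}T^{k-2i}U^i$ with $(k,q)\in S:=\{q<k-n\text{ or }k>2n\}$. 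Each such $\mu_{kq}^0$ is weighted homogeneous of degree $k$ for the grading $\deg T=1,\,\deg U=2$; since $R(T^aU^b)=T^aU^bp^{a+2b}$, one gets $R(\mu_{kq}^0)=p^k\mu_{kq}^0$. So the first step reduces the theorem to the key assertion that $\mu_{kq}^0(t,u_\lambda)=0$ in $\V^n_\lambda$ for every $(k,q)\in S$.

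The next step is to transfer this vanishing to $\V^n_0$ via the algebra isomorphism $I_\lambda^{-1}:\V^n_\lambda\to\V^n_0$ of Theorem~\ref{thm:1st iso}, which sends $t\mapsto t(1-\lambda s)^{-1/2}$ and $u_\lambda\mapsto u_0$. The weighted homogeneity identity $\mu_{kq}^0(T/P,U)=P^{-k}\mu_{kq}^0(T,P^2U)$ applied with $P=(1-\lambda s)^{1/2}$ gives
\[
I_\lambda^{-1}\bigl(\mu_{kq}^0(t,u_\lambda)\bigr)=(1-\lambda s)^{-k/2}\,\mu_{kq}^0\bigl(t,(1-\lambda s)u_0\bigr),
\]
and since $(1-\lambda s)^{-k/2}$ is a unit in $\V^n_0$, it suffices to prove that $\mu_{kq}^0(t,(1-\lambda s)u_0)=0$ in $\V^n_0$.

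Expanding $(1-\lambda s)^i$ binomially, one obtains
\[
\mu_{kq}^0\bigl(t,(1-\lambda s)u_0\bigr)=\mu_{kq}^0+\sum_{j\geq 1}(-\lambda)^j s^j R_j,\quad R_j=\sum_i(-1)^{i+q}\binom{i}{q}\binom{i}{j}\tfrac{\pi^k}{\omega_k(k-2i)!(2i)!}\,t^{k-2i}u_0^i.
\]
The $j=0$ term is $\mu_{kq}^0\in I_0$. For $j\geq 1$, a Vandermonde-type identity writing $\binom{i}{q}\binom{i}{j}$ as a combination of $\binom{i}{m}$ with $\max(q,j)\leq m\leq q+j$ shows that $R_j$ is a linear combination of the polynomials $\mu_{km}^0$ with $q\leq m\leq q+j$. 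Iterating Proposition~\ref{prop:s tau} at $\lambda=0$, each $s^j\mu_{km}^0$ becomes a combination of $\mu_{k+2j,\,m+j'}^0$ for $0\leq j'\leq j$. The worst-case index $(k+2j,\,q+2j)$ lies in $S$ whenever $q<k-n$ (the first clause of $(k,q)\in S$), and also automatically when $k>2n$; hence every $\mu_{k+2j,\,m+j'}^0$ appearing vanishes in $\V^n_0$, so $s^jR_j\in I_0$. This completes the key step.

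For the isomorphism claim, $\V^n_0$ and $\V^n_\lambda$ share the same finite dimension $\binom{n+2}{2}$, so the induced map $\bar R$ is an isomorphism iff surjective. Passing to the associated graded with respect to the filtration $W_\bullet$, the induced map scales the degree-$k$ generators by $p(0,0)^k$; hence $\bar R$ is an isomorphism precisely when $p(0,0)\neq 0$, i.e.\ when $p$ is a unit in $A$. The main obstacle will be the combinatorial bookkeeping in the penultimate paragraph---using the Vandermonde-type identity and iterating the $s$-multiplication to track the set of indices $(k+2j,\,m+j')$ and confirm that they all lie in $S$.
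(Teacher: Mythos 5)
Your proof is correct, but it takes a genuinely different route from the paper's. You work with the full spanning set $\{\mu_{kq}^{0}:(k,q)\in S\}$ of $\ker(\R[[T,U]]\to\V^n_0)$, use the algebra isomorphism $I_\lambda^{-1}$ to pull the target vanishing back to $\V^n_0$, Taylor-expand $\mu_{kq}^0(t,(1-\lambda s)u)$ in powers of $\lambda$, and close the argument with a Vandermonde rewriting of $\binom{i}{q}\binom{i}{j}$ together with index-tracking through iterated applications of Proposition~\ref{prop:s tau}. The paper instead exploits the fact, recalled just before Theorem~\ref{thm:1st iso}, that $\ker(\V^\infty_0\to\V^n_0)$ is the \emph{ideal} generated by just $\mu_{n+1,0}$ and $\mu_{n+2,0}$: since $R$ is an algebra homomorphism one only needs to verify $R(\mu_{k0})\in\ker\glob_\lambda$ for $k>n$. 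This collapses to the closed form
$R(\mu_{k0})=\frac{p(t,u)^k}{(1-\lambda s)^{k/2+1}}\sum_q(\lambda s)^q\mu_{kq}^\lambda$
via Proposition~\ref{prop_mu2st}, after which the vanishing $s^q\mu_{kq}^\lambda=0$ for $k>n$ is proved \emph{geometrically}: $s^q\mu_{kq}$ is an integral of $\mu_{kq}$ over $\overline{\Gr}^\C_{n-q}$, and $\mu_{kq}$ restricts to zero on $\C^{n-q}$ by Lemma~\ref{lem:mus that vanish}. The paper's route buys brevity (two generators instead of infinitely many, and one geometric observation in place of the binomial bookkeeping); your route buys a purely algebraic/combinatorial argument that stays entirely inside the $\mu$-calculus and never invokes the Crofton-type representation of $s^q$. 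Both establish the isomorphism criterion the same way — the associated graded map is multiplication by $p(0,0)^k$, and the two algebras have equal (finite) dimension — though you spell out this last step, which the paper leaves as "clear."
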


\proof
We compute  
\begin{align*}
 R(\mu_{k0}) & = \sum_{i=0}^{\lfloor \frac k2\rfloor} (-1)^i \frac{\pi^k}{\omega_k(k-2i)!(2i)!}R(t^{k-2i}u^i)\\
& = p(t,u)^k \sum_{i=0}^{\lfloor \frac k2\rfloor} (-1)^i \frac{\pi^k}{\omega_k(k-2i)!(2i)!}t^{k-2i} u^i.
\end{align*}

Using \eqref{eq_tau_lambda_tu} and Definition \ref{def:tilde mu} this gives us 
\begin{align*}
R(\mu_{k0}) & = p(t,u)^k \sum_{i=0}^{\lfloor \frac k2\rfloor} (-1)^i (1-\lambda s)^{-k/2+i-1}
\tau_{ki}^\lambda\\
& =\frac{p(t,u)^k}{(1-\lambda s)^{k/2+1}}  \sum_{i=0}^{\lfloor \frac k2\rfloor} (\lambda s-1)^i
\sum_{q=i}^{\lfloor \frac k2\rfloor} \binom{q}{i}\mu_{kq}^\lambda\\
& =\frac{p(t,u)^k}{(1-\lambda s)^{k/2+1}} \sum_{q=0}^{\lfloor \frac k2\rfloor} \sum_{i=0}^q \binom{q}{i}
(\lambda s-1)^i\mu_{kq}^\lambda\\
& =\frac{p(t,u)^k}{(1-\lambda s)^{k/2+1}} \sum_{q=0}^{\lfloor \frac k2\rfloor} (\lambda s)^q
\mu_{kq}^\lambda.
\end{align*}
We claim that if $k>n$, then $s^q \cdot\mu_{kq}^\lambda =0$ in $\mathcal{V}^n_\lambda$ for all $q$. Indeed, since
multiplication by $s$ in terms of the $\mu_{kq}^\lambda$ is independent of $\lambda$ (compare \eqref{eq:s mukq
lambda}), it is enough to prove this for $\lambda=0$. In this case 
\begin{align*}
 s^q \mu_{kq}(A) & = c \int_{{\overline{\Gr}^\C_{n-q}}} \mu_{k,q}(A \cap \bar E) d\bar E.
\end{align*}
Since $q<k-(n-q)$, the restriction of $\mu_{kq}$ to the $(n-q)$-dimensional complex affine subspace $\bar E$ vanishes
{by Lemma \ref{lem:mus that vanish}}. This proves the claim.

Thus $R$ maps the kernel of the projection $\V^\infty_0\to \V^n_0$ to the kernel of $\V^\infty_\lambda\to \V^n_\lambda$,
implying the first conclusion. It is clear that $R$ is invertible if and only if $p$ is a unit. 
\endproof

\begin{corollary} \label{cor_other_isoms}  
\begin{enumerate}
 \item[(i)] The algebra isomorphism $\R[[t,s]] \to \R[[t,s]]$ determined by
\begin{align*}
t & \mapsto \frac{t}{\sqrt{1+\frac{\lambda t^2}{4}}},\\
s & \mapsto s  
\end{align*}
induces an algebra isomorphism $R:\mathcal{V}^n_0 \to  \mathcal{V}^n_\lambda$. 
\item[(ii)] The algebra isomorphism $\R[[t,u]] \to \R[[t,u]]$ determined by 
\begin{align*}
t & \mapsto t,\\
u & \mapsto u  
\end{align*}
induces an algebra isomorphism $R:\mathcal{V}^n_0 \to  \mathcal{V}^n_\lambda$. 
\end{enumerate}
\end{corollary}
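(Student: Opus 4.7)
The plan is to deduce both (i) and (ii) as immediate specializations of Theorem~\ref{thm_other_isom}, by choosing an appropriate unit $p(t,u)\in\R[[t,u]]$ in each case.

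Part (ii) is the case $p\equiv 1$: since $tp=t$ and $up^2=u$, the substitution in Theorem~\ref{thm_other_isom} is precisely the identity on $\R[[t,u]]$, which is clearly a unit, so Theorem~\ref{thm_other_isom} yields the desired isomorphism $\V^n_0\to\V^n_\lambda$. The point worth emphasizing is that although the substitution looks like the identity, it induces a \emph{nontrivial} map at the level of the $(t,s)$-presentation, because $u=u_0=4s-t^2$ in $\V^\infty_0$ while $u=u_\lambda=4s-t^2(1-\lambda s)$ in $\V^\infty_\lambda$ (see the definitions \eqref{eq:def v}, \eqref{eq:def u}).

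For part (i), I would choose $p(t,u):=(1+\lambda t^2/4)^{-1/2}\in\R[[t,u]]$. This is a unit because its constant term is $1$, and the image of $t$ under Theorem~\ref{thm_other_isom} is $tp=t/\sqrt{1+\lambda t^2/4}$, matching the statement. What remains to verify is that the induced substitution $u\mapsto up^2$ corresponds, after passing from $\R[[t,u]]$ to $\R[[t,s]]$ via the two different formulas relating $u$ to $s$, to $s\mapsto s$. From $u_0=4s-t^2$ in $\V^\infty_0$ one gets $s=(u+t^2)/4$, so under the substitution $s$ is sent to
\[
\tfrac{1}{4}\bigl(up^2+t^2p^2\bigr)=\tfrac{1}{4}\,p^2(u+t^2).
\]
On the other hand, in $\V^\infty_\lambda$ the relation $u_\lambda=4s-t^2(1-\lambda s)$ inverts to $s=(u+t^2)/(4+\lambda t^2)$. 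Thus the image of $s$ equals $s_\lambda$ precisely when
\[
\tfrac{1}{4}p^2=\frac{1}{4+\lambda t^2},
\]
i.e.\ when $p^2=(1+\lambda t^2/4)^{-1}$, which is exactly our choice. This confirms $s\mapsto s$ and completes the reduction to Theorem~\ref{thm_other_isom}.

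No serious obstacle is expected: the only thing to watch is the bookkeeping distinction between the generator $u$ of the $\R[[t,u]]$-presentation of $\V^\infty_0$ and the generator $u$ of the corresponding presentation of $\V^\infty_\lambda$, which differ as polynomials in $(t,s)$. Once this is kept straight, both statements follow at once from Theorem~\ref{thm_other_isom}, and the bijectivity assertions are the ``unit'' half of the equivalence already proved there.
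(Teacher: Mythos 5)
Your proof is correct and takes essentially the same approach as the paper, which simply chooses $p(t,u)=\bigl(1+\tfrac{\lambda t^2}{4}\bigr)^{-1/2}$ for (i) and $p\equiv 1$ for (ii) in Theorem~\ref{thm_other_isom}. You additionally carry out the coordinate change between the $(t,u)$- and $(t,s)$-presentations to verify explicitly that $s\mapsto s$ in case (i), a check the paper leaves implicit; this is a worthwhile addition but not a different method.
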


\proof
Choose $p(t,u):=\frac{1}{\sqrt{1+\frac{\lambda t^2}{4}}}$ for (i) and $p(t,u):=1$ for (ii). 
\endproof


\section{Tube formulas}\label{section: tube formulas} We apply Theorem \ref{thm:pkf lambda} to derive  tube formulas for
subsets of complex space forms. Our first formula, given in Theorem \ref{thm:general tube}, is {\it global} in the sense
that it gives the volume of the entire tubular neighborhood of a set or submanifold; as opposed to the {\it local} tube
formulas, which essentially compute the pullbacks under the exponential map of the volume form of the ambient manifold
to the normal bundles of submanifolds. Thus the global formulas involve valuations, while the local formulas involve
curvature measures. 

Strictly speaking, the global formulas are simply the reductions of the local formulas under the globalization map from
curvature measures to valuations. Moreover, the local tube formulas in complex space forms were computed in \cite{abbena
et al} by computing explicitly with the exponential map. However, it turns out that the global formula of Theorem
\ref{thm:general tube} simplifies in a striking way: it is a direct transcription of the tube formula in $\Cn$, i.e. in
an even dimensional euclidean space. This formulation also yields a simple global tube formula for totally real
submanifolds in Theorem \ref{thm:real tube}.
We also show in Section \ref{sect:local tube} below how to compute the local formulas in our framework, and check that
they correspond to the formulas of \cite{abbena et al}.

Recall the expressions for the volume of a metric ball in $\CP^n_\lambda$ (\cite{gray}, Lemma 6.18):
\begin{equation}\label{eq:vol of ball}
\vol_{2n}(B_r )=\frac{\pi^n}{n!} (\sn_\lambda r)^{2n}.
\end{equation}
If $\lambda >0$ then we assume that $r< \frac{\pi}{2\sqrt\lambda}$, the injectivity radius of the complex
projective space $\CP^n_\lambda$.

\begin{lemma} \label{lem:mukq of br}
$$\mu^\lambda_{kq}(B_r) =
c_{nkq} {2^{k-2q}\pi^n} 
\sn_\lambda^{k}(r)\cs_\lambda^{2n-k}(r).$$
\end{lemma}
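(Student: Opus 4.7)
The plan is to compute $\mu^\lambda_{kq}(B_r)$ by integrating the defining invariant $(2n-1)$-form over the normal cycle $N(B_r)\subset S\CPn_\lambda$. By the transfer principle (Theorem \ref{thm_transfer}), the forms $\beta_{kq},\gamma_{kq}$ on $S\C^n$ correspond, via \eqref{eq:identification_forms}, to $G_\lambda$-invariant forms on $S\CPn_\lambda$ representing $\tau_\lambda^{-1}(B_{kq})$ and $\tau_\lambda^{-1}(\Gamma_{kq})$; I denote these by the same symbols. The task then splits into the generic case $k>2q$, where $\mu^\lambda_{kq}=[B_{kq}]_\lambda$, and the diagonal case $k=2q$, which requires Definition \ref{def:tilde mu}.

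First I would treat $k>2q$. At each $(x,\xi)\in N(B_r)$, I would choose a complex-orthonormal frame with $e_n=\xi$, $e_{\bar n}=J\xi$, and $e_{\bar i}=Je_i$ for $i<n$. The standard Jacobi-field analysis in $\CPn_\lambda$, using the curvature tensor of Lemma \ref{lem:kono curv}, gives principal curvature $\ct_{4\lambda}(r)$ in the direction $J\xi$ (where the ambient sectional curvature is $4\lambda$) and $\ct_\lambda(r)$ in the remaining $2n-2$ directions orthogonal to $\xi,J\xi$. Consequently, along $N(B_r)$,
\[
\theta_n=0,\qquad \omega_{i,n}=\ct_\lambda(r)\,\theta_i,\quad \omega_{\bar i,n}=\ct_\lambda(r)\,\theta_{\bar i}\ (i<n),\qquad \omega_{\bar n,n}=\ct_{4\lambda}(r)\,\theta_{\bar n}.
\]
Substituting these into the conventions of \cite{befu11} for $\theta_0,\theta_1,\theta_2$, each restricts to a multiple of $\theta_0':=\sum_{i<n}\theta_i\wedge\theta_{\bar i}$, with factors $\ct_\lambda(r)^2$, $2\ct_\lambda(r)$ and $1$ respectively. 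Hence $\beta_{kq}|_{N(B_r)}$ becomes a constant times $\ct_\lambda(r)^{2n-k-1}\,\theta_{\bar n}\wedge(\theta_0')^{n-1}$. Integration produces a multiple of the volume of the geodesic sphere, which by differentiating \eqref{eq:vol of ball} equals $\vol(\partial B_r)=\frac{2\pi^n}{(n-1)!}\sn_\lambda(r)^{2n-1}\cs_\lambda(r)$; combining constants and using $\ct_\lambda=\cs_\lambda/\sn_\lambda$ delivers exactly $c_{nkq}\,2^{k-2q}\pi^n\sn_\lambda(r)^k\cs_\lambda(r)^{2n-k}$.

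For the diagonal case $k=2q$ I would perform the analogous integration of $\gamma_{kq}$, where the vertical $1$-form $\gamma$ restricts to $\ct_{4\lambda}(r)\,\theta_{\bar n}$. Expanding the identity $\ct_{4\lambda}(r)=(\cs_\lambda^2-\lambda\sn_\lambda^2)/(\sn_\lambda\cs_\lambda)$ splits the result into a leading term and a $\lambda$-correction, yielding
\[
[\Gamma_{kq}]_\lambda(B_r)=f_{kq}(r)-\lambda\,\frac{(q+1)(2n-k)}{2\pi(n-k+q)}\,f_{k+2,q+1}(r),
\]
where $f_{kq}(r)$ denotes the right-hand side of the lemma. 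On the diagonal $k=2q$ the coefficient simplifies to $(q+i+1)/\pi$, so substituting into the finite sum
\[
\mu^\lambda_{2q,q}(B_r)=\sum_{i=0}^{n-q}\left(\frac{\lambda}{\pi}\right)^i\frac{(q+i)!}{q!}[\Gamma_{2q+2i,q+i}]_\lambda(B_r)
\]
(whose upper bound comes from Lemma \ref{lem:mus that vanish}) produces a telescoping series whose sole surviving term is $f_{2q,q}(r)$, completing the proof. The main obstacle is fixing the precise horizontal/vertical conventions of \cite{befu11} for $\theta_0,\theta_1,\theta_2,\beta,\gamma$, since the exponents of $\sn_\lambda$ and $\cs_\lambda$ in the final answer depend sensitively on which of the two Kähler two-forms plays which role; once these are pinned down, the remaining bookkeeping is routine.
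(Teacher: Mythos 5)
Your proposal is correct and follows essentially the same route as the paper: evaluate the invariant forms $\beta,\gamma,\theta_0,\theta_1,\theta_2$ on the normal cycle of the geodesic ball, using that the principal curvatures of $\partial B_r$ are $\ct_\lambda(r)$ in the $2n-2$ directions orthogonal to $\xi,J\xi$ and $\ct_{4\lambda}(r)=2\ct_\lambda(2r)=\ct_\lambda(r)-\lambda\tn_\lambda(r)$ in the Hopf direction, deduce the restrictions $\nu^*\theta_0=\ct_\lambda^2\kappa$, $\nu^*\theta_1=2\ct_\lambda\kappa$, $\nu^*\theta_2=\kappa$, $\nu^*\beta=\eps_{\bar n}$, $\nu^*\gamma=\ct_{4\lambda}(r)\eps_{\bar n}$, and then combine with $\vol(\partial B_r)$. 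The only stylistic differences are that the paper quotes Gray's explicit form restrictions rather than re-deriving them from Jacobi fields, and that it handles the diagonal case $k=2q$ more tersely (``follows from a direct calculation after substituting the values $c_{nkq}$''), whereas you spell out the telescoping cancellation in the series defining $\mu^\lambda_{2q,q}$; your simplification of the ratio $c_{nkq}/c_{n,k+2,q+1}$ to $(q+1)(2n-k)/(2\pi(n-k+q))$ and its collapse to $(q+i+1)/\pi$ on the diagonal is exactly what makes the paper's ``direct calculation'' work.
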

\begin{proof} 
Let $\nu:\partial B_r\to S\CPn_\lambda$ denote the outward pointing unit normal field, and  select a local orthonormal
frame $e_{\bar 1}: = \sqrt{-1} \nu, e_2,e_{\bar2} := 
\sqrt{-1}e_2,\dots, e_n,e_{\bar n} := \sqrt{-1}e_n$ for $T\partial { B_r}$  with dual coframe $\eps_i:= e_i^*$.
By the calculations of Section 6.4 of \cite{gray} (the discrepancy in the signs being due to the opposite convention for
the orientation)
\begin{align*}
\nu^* \beta &= \eps_{\bar 1},\\
\nu^*\gamma &= 2 \ct_\lambda(2r) \ \eps_{\bar 1}=(\ct_\lambda-\lambda \tn_\lambda)(r)\ \eps_{\bar 1}, \\
\nu^* \theta_0 & = \ct_\lambda^2(r) \sum_{i\ge 2} \eps_i\wedge\eps_{\bar i} = \ct_\lambda^2 (r)
\kappa  \\
\nu^* \theta_1 & = 2\ct_\lambda(r)\ \kappa\\
\nu^* \theta_2 & = \kappa
\end{align*}
where $\kappa = \sum_{i\ge 2} \eps_i\wedge \eps_{\bar i}$ is the restriction of the K\"ahler form to
$T_x\partial{ B_r}$. Therefore for $k<2n$, the values of the basic curvature measures on the ball $B_r$ are 
\begin{align}
B_{kq}(B_r, \cdot)& = c_{nkq} 2^{k-2q-1} (n-1)!  \ct_\lambda^{2n-k-1}(r)\,
d\vol_{\partial B_r},\label{B_on_balls}\\
\Gamma_{kq}(B_r,\cdot)&=  {c_{nkq} }  2^{k-2q-1} (n-1)! (\ct_\lambda
-\lambda \tn_\lambda)(r)\ct_\lambda^{2n-k-2} (r)\, d\vol_{\partial B_r}.\label{Gamma_on_balls}
\end{align}
Since
$$
\vol_{2n-1}(\partial B_r) = \frac{2\pi^n}{(n-1)!} \sn_\lambda^{2n-1}(r) \cs_\lambda
(r)
$$
we obtain
\begin{align}
 [B_{kq}]_\lambda(B_r)&= c_{nkq} \pi^n 2^{k-2q} (\cs_\lambda^{2n-k} \sn_\lambda^k)(r), \quad
k>2q,\\
\label{eq:gamma2qq of br}[\Gamma_{kq}]_\lambda(B_r)&=c_{nkq} {\pi^n 2^{k-2q}}(\cs_\lambda^{2n-k} \sn_\lambda^k
- \lambda \cs_\lambda^{2n-k-2}\sn_\lambda^{k+2})(r) 
\end{align}
for $k<2n$. This gives the stated formula in the case $k>2q$, and the remaining cases $k=2q$ follow from a direct
calculation after substituting the values $c_{nkq}$ in the relations \eqref{eq:gamma2qq of br}.
\end{proof}

We define
$$
\mu_k^\lambda:=\tau_{k0}^\lambda= { \sum_{q=\max\{0,k-n\}}^{\lfloor \frac k2\rfloor}} \mu_{kq}^\lambda .
$$
In particular, $\mu_k^0 = \mu_k$, the $k$th intrinsic volume.

\begin{corollary} 
$$
\mu_k^\lambda(B_r) =\binom {2n} k  \frac{\omega_{2n}}{\omega_{2n-k}} 
\sn_\lambda^k(r) \cs_\lambda^{2n-k}(r).$$
\end{corollary}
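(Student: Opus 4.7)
The plan is to expand $\mu_k^\lambda(B_r)$ directly using its definition as a sum of the $\mu_{kq}^\lambda$, evaluate each term via Lemma \ref{lem:mukq of br}, and reduce the remaining combinatorial sum to a standard binomial identity. Since $\sn_\lambda^k(r)\cs_\lambda^{2n-k}(r)$ and the factor $\pi^n/\omega_{2n-k}$ are independent of $q$, all the content is in summing the coefficients $c_{nkq}\, 2^{k-2q}$.

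First, by Definition \ref{def:tilde mu} we have $\mu_k^\lambda = \tau_{k0}^\lambda = \sum_{q=\max\{0,k-n\}}^{\lfloor k/2\rfloor}\mu_{kq}^\lambda$. Applying Lemma \ref{lem:mukq of br} term by term yields
\[
\mu_k^\lambda(B_r) \;=\; \pi^n\,\sn_\lambda^{k}(r)\,\cs_\lambda^{2n-k}(r)\sum_{q}c_{nkq}\,2^{k-2q}.
\]
Unfolding $c_{nkq}=\bigl(q!(n-k+q)!(k-2q)!\,\omega_{2n-k}\bigr)^{-1}$, the coefficient becomes
\[
\sum_{q}c_{nkq}\,2^{k-2q} \;=\; \frac{1}{n!\,\omega_{2n-k}}\sum_{q}\binom{n}{\,n-k+q,\;k-2q,\;q\,}2^{k-2q}.
\]

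The combinatorial identity I need is
\[
\sum_{q}\binom{n}{\,n-k+q,\;k-2q,\;q\,}2^{k-2q}\;=\;\binom{2n}{k},
\]
where the summation range (which agrees with $\max\{0,k-n\}\le q\le\lfloor k/2\rfloor$) simply records the nonzero multinomial coefficients. This is the coefficient of $z^k$ in the trivial factorization $(1+z)^{2n}=\bigl(1+2z+z^2\bigr)^n$: expanding the right-hand side by the multinomial theorem and collecting powers $z^{(k-2q)+2q}$ produces exactly the left-hand side, while the left-hand side of the factorization produces $\binom{2n}{k}$.

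Substituting and using $\omega_{2n}=\pi^n/n!$ gives
\[
\mu_k^\lambda(B_r)\;=\;\frac{\binom{2n}{k}\pi^n}{n!\,\omega_{2n-k}}\,\sn_\lambda^{k}(r)\,\cs_\lambda^{2n-k}(r)\;=\;\binom{2n}{k}\frac{\omega_{2n}}{\omega_{2n-k}}\,\sn_\lambda^{k}(r)\,\cs_\lambda^{2n-k}(r),
\]
as claimed. There is no real obstacle here: all steps are routine once Lemma \ref{lem:mukq of br} is in hand, and the only point of any substance is recognizing the sum as the multinomial expansion of $\bigl((1+z)^2\bigr)^n$.
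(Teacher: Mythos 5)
Your proof is correct, but it takes a different route from the paper's. The paper's proof is shorter: it only uses Lemma \ref{lem:mukq of br} to observe that $\mu_k^\lambda(B_r) = c\,\sn_\lambda^k(r)\cs_\lambda^{2n-k}(r)$ for a constant $c$ independent of $\lambda$ (since all the $\lambda$-dependence in each summand $\mu_{kq}^\lambda(B_r)$ sits in the common factor $\sn_\lambda^k\cs_\lambda^{2n-k}$), and then evaluates $c$ by specializing to $\lambda=0$, where $\mu_k(B_r)=\binom{2n}{k}\frac{\omega_{2n}}{\omega_{2n-k}}r^k$ is the classical formula for the intrinsic volume of a Euclidean ball of radius $r$ in $\R^{2n}$. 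You instead compute the constant directly, reducing $\sum_q c_{nkq}2^{k-2q}$ to the identity $\sum_q\binom{n}{n-k+q,\,k-2q,\,q}2^{k-2q}=\binom{2n}{k}$, proved by extracting the coefficient of $z^k$ from $(1+2z+z^2)^n=(1+z)^{2n}$. Both are valid; the paper's argument is a one-liner that leans on a known Euclidean fact, while yours is a self-contained combinatorial computation that in particular rederives that Euclidean fact from scratch.
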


{\proof
Lemma \ref{lem:mukq of br} implies that  $\mu_k^\lambda(B_r)= c \sn_\lambda^k(r)\cs_\lambda^{n-k}(r)$, for some constant
$c$ independent of $\lambda$. Taking $\lambda=0$ we may evaluate $c$ using the relation
$\mu_k(B_r)=\binom{2n}{k}\frac{\omega_{2n}}{\omega_{2n-k}}r^k$.
\endproof}

\begin{theorem}  \label{thm:general tube}
For any sufficiently regular compact $A \subset \CP_\lambda^n$
$$
\int_{\CP_\lambda^n} \chi(A\cap B(p,r))\, dp= 
\sum_{k=0}^{2n} \omega_{2n-k} \mu^\lambda_k(A)\sn_\lambda^{2n-k}(r)
\cs_\lambda^{k}(r).
$$
In particular this formula gives the volume of the tube $A_r$ if $r \le \operatorname {reach}(A)$.
\end{theorem}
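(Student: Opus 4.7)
The plan is to invoke Theorem~\ref{thm:pkf lambda} and transfer the whole computation to an evaluation in the euclidean model $\Cn$, where balls are rotationally invariant and the principal kinematic formula reduces to Hadwiger's classical formula. First, by the normalization \eqref{eq:normal haar} of Haar measure on $G_\lambda$, the left hand side equals $k_\lambda(\chi)(A,B_r)$ for a fixed geodesic ball $B_r\subset\CPn_\lambda$. By Theorem~\ref{thm:pkf lambda}, $k_\lambda(\chi)=(F_\lambda\otimes F_\lambda)\,k(\chi)$, where $k(\chi)\in \V^n_0\otimes\V^n_0$ is the principal kinematic operator in $\Cn$. Since a round ball in $\Cn$ is fixed by the full rotation group $O(2n)$ and the Haar measures on $\overline{U(n)}$ and $\overline{O(2n)}$ both project to Lebesgue measure on $\Cn$, the $\overline{U(n)}$-average $k(\chi)(A,B_\rho)$ coincides with the $\overline{O(2n)}$-average, and Hadwiger's formula \eqref{eq:pkf1} together with $\mu_j(B_\rho)=\binom{2n}{j}\frac{\omega_{2n}}{\omega_{2n-j}}\rho^j$ yields
\[
k(\chi)(A,B_\rho)=\sum_{k=0}^{2n}\omega_{2n-k}\,\mu_k(A)\,\rho^{2n-k}.
\]

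Next, decompose $k(\chi)=\sum_{k=0}^{2n}K_k$ according to McMullen bidegree, with $K_k\in\V_k^{U(n)}\otimes\V_{2n-k}^{U(n)}$. Matching homogeneity in $\rho$ in the displayed identity forces $K_k(A,B_\rho)=\omega_{2n-k}\mu_k(A)\rho^{2n-k}$. Writing $K_k=\sum_i f_i^{(k)}\otimes g_i^{(k)}$ and setting $\rho=1$ produces the valuation identity
\[
\sum_i g_i^{(k)}(B_1^{(0)})\,f_i^{(k)}=\omega_{2n-k}\mu_k \quad\text{in } \V_k^{U(n)}.
\]
The linear isomorphism $F_\lambda$ preserves homogeneity degree and satisfies $F_\lambda(\mu_k)=F_\lambda(\tau_{k0})=\tau_{k0}^\lambda=\mu_k^\lambda$, so applying it termwise gives
$\sum_i g_i^{(k)}(B_1^{(0)})\,F_\lambda(f_i^{(k)})=\omega_{2n-k}\mu_k^\lambda$ in $\V_k^{G_\lambda}$.

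The key geometric input is the following consequence of Lemma~\ref{lem:mukq of br}: the coefficient $c_{nkq}2^{k-2q}\pi^n$ appearing in $\mu_{kq}^\lambda(B_r)=c_{nkq}2^{k-2q}\pi^n\sn_\lambda^k(r)\cs_\lambda^{2n-k}(r)$ is precisely the coefficient of $\rho^k$ in the euclidean value $\mu_{kq}(B_\rho^{(0)})$, which yields by linearity the universal formula
\[
F_\lambda(g)(B_r^{(\lambda)})=\cs_\lambda^{2n-d}(r)\,g\!\left(B_{\sn_\lambda(r)}^{(0)}\right),\qquad g\in\V_d^{U(n)}.
\]
Applying this to each $g_i^{(k)}$, which has degree $2n-k$, pulls out a factor $\cs_\lambda^k(r)$ and reduces the second factor to an evaluation on the euclidean ball of radius $\sn_\lambda(r)$; using the collapsed identity from the previous paragraph,
\[
(F_\lambda\otimes F_\lambda)K_k(A,B_r^{(\lambda)})=\cs_\lambda^k(r)\sum_i F_\lambda(f_i^{(k)})(A)\,g_i^{(k)}\!\left(B_{\sn_\lambda(r)}^{(0)}\right)=\omega_{2n-k}\,\mu_k^\lambda(A)\,\sn_\lambda^{2n-k}(r)\,\cs_\lambda^k(r).
\]
Summing over $k$ delivers the stated identity. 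The main subtlety is that the non-geometric linear isomorphism $F_\lambda$ does not respect evaluation on arbitrary bodies; what saves us is the $q$-independent factorization in Lemma~\ref{lem:mukq of br}, which holds only because the body in question is a ball. Finally, the tube interpretation is immediate: when $r\leq\reach(A)$ each nonempty intersection $A\cap B(p,r)$ deformation retracts onto its unique nearest point in $A$, so $\chi(A\cap B(p,r))=\mathbf{1}_{A_r}(p)$ and the integral computes $\vol(A_r)$.
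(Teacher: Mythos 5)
Your proof is correct and follows the same essential route as the paper's: both start from the Hadwiger tube formula in $\Cn$, apply Theorem~\ref{thm:pkf lambda} to transport it to $\CPn_\lambda$ via $F_\lambda\otimes F_\lambda$, and both hinge on the same crucial observation from Lemma~\ref{lem:mukq of br} that the ratio $\mu_{kq}^\lambda(B_r)/\mu_{kq}(B_\rho)$ depends on $k$ but not on $q$, so that evaluation on balls passes coherently through $F_\lambda$. Your "universal formula" $F_\lambda(g)\bigl(B_r^{(\lambda)}\bigr)=\cs_\lambda^{2n-d}(r)\,g\bigl(B_{\sn_\lambda(r)}^{(0)}\bigr)$ is simply a clean, basis-free repackaging of what the paper does by expanding $k_0(\chi)$ in the $\mu_{kq}\otimes\mu_{2n-k,p}$ basis and invoking linear independence of the $\mu_{kq}$; the substance is identical.
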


As stated in the second Remark following Definition \ref {def:curv & val}, the precise regularity condition {needed
here} is unclear, although it is enough that $A$ be a smooth submanifold, or a set with positive reach, or a subanalytic
set.

\begin{proof}
The tube formula in $\Cn =\R^{2n}$ is
\begin{equation}
{ \sum_{k=0}^{2n} \omega_{2n-k}r^{2n-k}  \mu_k  = k_0(\chi) (B_r, \cdot).}
\end{equation}
In other words, if
$$
k_0(\chi) = \sum_{kpq} { a_{k,q,p}} \, \mu_{kq} \otimes \mu_{2n-k,p}
$$
then for each fixed $k$
$$
 \sum_{p,q} a_{kqp} \,  \mu_{2n-k,p}(B_r) \mu_{kq} = \omega_{ 2n-k} r^{2n-k}\mu_k.
$$
Since the $\mu_{kq}$ are linearly independent, it follows that for fixed $k,q$
$$
\sum_{p} a_{kqp} \,  \mu_{2n-k,p}(B_r) = \omega_{ 2n-k} r^{2n-k}
$$
and we deduce from Lemma \ref{lem:mukq of br} that
$$
\sum_{p} a_{kqp} \,  \mu^\lambda_{2n-k,p}(B_r) = \omega_{ 2n-k}
\sn_\lambda^{2n-k}(r)\cs_\lambda^k(r).
$$

But by Theorem \ref{thm:pkf lambda}, the tube formula in $M_\lambda$ is given by
\begin{align*}
k_\lambda(\chi) (\cdot, B_r) &= (F_\lambda\otimes F_\lambda)\circ k_0(\chi)(\cdot, B_r)\\
&= \sum_{k,p,q} a_{kqp} \,  \mu^\lambda_{2n-k,p}(B_r) \mu^\lambda_{kq}\\
&= \sum_{k} \omega_{ 2n-k} \sn_\lambda^{2n-k}(r)\cs_\lambda^k(r)
\,\mu^\lambda_{k}
\end{align*}
as claimed.
\end{proof}

\subsection{Tube formulas for totally real submanifolds of $\CP^n_\lambda$} Following \cite{gray}, we say that
a submanifold $N \subset \CP^n_\lambda$ is {\bf totally real} if $T_xN \perp \sqrt{-1}T_xN $ for each $x \in
N$. In other words, $N$ is totally real if and only if it is isotropic with respect to the K\"ahler form $\kappa$ of
$\CP^n_\lambda$.
Theorem \ref{thm:general tube} specializes nicely for such submanifolds. 

\begin{lemma}\label{lem:u vanish} Let $N\subset \CP^n_\lambda$ be a smooth totally real submanifold. Then
$u|_N = 0$. Equivalently,
\begin{equation}\label{eq:st equiv}
\left.s\right|_N = \left.\frac{t^2}{4+ {\lambda t^2}} \right|_N
\end{equation}
\end{lemma}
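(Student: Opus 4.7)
The plan is to reduce the claimed identity $u|_N=0$ to the vanishing of the single valuation $\mu_{2,1}^\lambda|_N$, which will then be established by a Klain-function argument on tangent spaces.

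For the reduction, I specialize the formula \eqref{eq_tau_lambda_tu} to the case $k=2$, $q=1$. Since the sum in Definition \ref{def:tilde mu} collapses to a single term, $\tau_{2,1}^\lambda=\mu_{2,1}^\lambda$, and thus
\begin{equation*}
\mu_{2,1}^\lambda=\tfrac{\pi}{2}(1-\lambda s)\cdot u \quad\text{in }\V^n_\lambda.
\end{equation*}
Applying the algebra homomorphism $\iota^\ast:\V^n_\lambda\to \V(N)$ (which preserves the filtration) gives
\[
\iota^\ast\mu_{2,1}^\lambda=\tfrac{\pi}{2}(1-\lambda\,\iota^\ast s)\cdot \iota^\ast u.
\]
Because $s$ has filtration $2$, $\iota^\ast s$ lies in $W_2(\V(N))$, and since $W_{\dim N+1}(\V(N))=0$, the element $\lambda\,\iota^\ast s$ is nilpotent. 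Therefore $1-\lambda\,\iota^\ast s$ is a unit in $\V(N)$, with inverse the terminating geometric series $\sum_{i\ge 0}(\lambda\,\iota^\ast s)^i$. It is thus enough to prove $\iota^\ast\mu_{2,1}^\lambda=0$.

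For this I use that globalization commutes with restriction, so $\iota^\ast\mu_{2,1}^\lambda=\glob_N(\iota^\ast\Gamma_{2,1})$. By the transfer principle (Theorem \ref{thm_transfer}) and Proposition \ref{prop_isom_curv_general}, the curvature measure $\iota^\ast\Gamma_{2,1}\in\curvinfty(N)$ is determined, pointwise at each $x\in N$, by the restriction of the translation-invariant curvature measure $\Gamma_{2,1}\in\curv^{U(n)}(T_x\CPn_\lambda)$ to the subspace $T_xN$. Globalizing, this restriction is the image of $\mu_{2,1}\in\Val^{U(n)}_2(\C^n)$ in $\Val(T_xN)$, which by Klain injectivity is determined by the Klain function $\kl_{\mu_{2,1}}$ restricted to $\Gr_2(T_xN)$. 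The valuation $\mu_{2,1}$ is the ``purely complex'' component of the even $U(n)$-invariant valuations of degree $2$, and its Klain function on a $2$-plane $E\subset\C^n$ is proportional to $\cos^2\theta(E)$, where $\theta(E)$ is the K\"ahler angle; this can be pinned down by comparing the values of $\mu_{2,1}^0$ on a complex and on a totally real $2$-ball via Lemma \ref{lem:mukq of br}. Since $T_xN$ is totally real, every $2$-plane $E\subset T_xN$ has $\theta(E)=\pi/2$, so the Klain function vanishes identically on $\Gr_2(T_xN)$, giving $\mu_{2,1}|_{T_xN}=0$. Hence $\iota^\ast\mu_{2,1}^\lambda=0$, and the preceding paragraph yields $\iota^\ast u=0$; the equivalent form \eqref{eq:st equiv} is then algebraic rearrangement.

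The main obstacle is Step 3: verifying cleanly, via the transfer principle, that the vanishing of $\kl_{\mu_{2,1}}$ on all $2$-planes in each $T_xN$ actually forces $\iota^\ast\mu_{2,1}^\lambda=0$ as a smooth valuation on $N$. This uses that the underlying even translation-invariant valuation is recovered from its Klain function, combined with the compatibility between the globalization map and restriction to a submanifold; the algebraic reduction in Step 1 is the mechanism that converts this symbol-level vanishing into a relation in $\V(N)$ involving $u$.
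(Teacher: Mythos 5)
Your Step 1 reduction agrees exactly with the paper's: both specialize \eqref{eq_tau_lambda_tu} (equivalently Proposition \ref{prop_mu2st}) to $k=2,q=1$ to obtain $\tau_{2,1}^\lambda=\mu_{2,1}^\lambda=\tfrac\pi2(1-\lambda s)u$, and both invert $1-\lambda s$ using nilpotency of $s$ in $\V(N)$. The difference is entirely in how $\tau_{2,1}^\lambda|_N=0$ is established.

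Your Step 3 has a genuine gap, which you yourself flag as the ``main obstacle.'' What the Klain-function argument delivers is that the \emph{globalization} $[\tau_x(\iota^*\Gamma_{2,1})]=\iota_x^*\mu_{2,1}\in\Val(T_xN)$ vanishes for every $x$. But to conclude $\iota^*\mu_{2,1}^\lambda=\glob_N(\iota^*\Gamma_{2,1})=0$ you need $\glob_N$ applied to the \emph{section} $x\mapsto\tau_x(\iota^*\Gamma_{2,1})$ to vanish, and this is not a pointwise-symbol condition. By Theorem \ref{prop:kernel thm}, $\ker\glob$ is characterized via the Rumin differential $D$ of the contact manifold $S^*N$, which involves derivatives of the defining forms and the Levi--Civita connection of $N$, whereas $[\tau_x(\iota^*\Gamma_{2,1})]=0$ only encodes the \emph{frozen-coefficient} (flat, translation-invariant) version of that condition at each point. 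In general a curvature measure can satisfy the pointwise condition $[\tau_x\Phi]=0$ for all $x$ while $\glob\Phi\neq 0$; the fiberwise globalization map $\curv(T_xN)\to\Val(T_xN)$ has a nontrivial kernel, and the curvature of $N$ breaks the cancellation that makes these agree in the translation-invariant picture. So the implication from your Step 5 to Step 6 is unjustified and cannot be repaired purely at the level of Klain functions.

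The paper closes this gap by proving the strictly stronger statement $\iota_x^*\Gamma_{2,1}=0$ (the curvature measure itself, not merely its globalization), and does so directly from the defining form rather than through the transfer principle. The form $\gamma_{2,1}$ contains a factor $\theta_2=\pi^*\kappa$; for any differentiable polyhedron $P\subset N$, the horizontal projection of $T_\xi N(P)$ at any $\xi$ over $x\in P$ lies inside $T_xN$, on which $\kappa$ vanishes since $N$ is totally real, so $\theta_2$ (hence $\gamma_{2,1}$, and hence every $\beta_{kq},\gamma_{kq}$ with $q>0$) restricts to zero on $N(P)$. Therefore $\Gamma_{2,1}(P,\cdot)=0$ for all such $P$, which gives $\iota^*\Gamma_{2,1}=0$ in $\curvinfty(N)$ and hence $\iota^*\mu_{2,1}^\lambda=0$. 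To repair your argument, replace the Klain-injectivity step by this direct observation that the restricted \emph{curvature measure} vanishes; the Klain function computation, while correct, proves a weaker statement than what you need.
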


\begin{proof} It is clear that for any differentiable polyhedron $P \subset N$, the invariant form
$\theta_2{=\pi^*\kappa}$ vanishes when restricted to the normal bundle $N(P)$. Therefore the curvature measures
$B_{kq}(P,\cdot)= \Gamma_{kq}(P,\cdot) = 0$ if $q >0$, so $ \mu^\lambda_{kq}(P) = 0$ if $q >0$. It follows that  $
\tau^\lambda_{kq}(P) = 0$ if $q >0$. In other words, $\restrict{\tau^\lambda_{kq}}N=0$ for $q>0$.

Thus by Proposition \ref{prop_mu2st}, 
$$
\left. \left(s(4+ {\lambda t^2}) - t^2\right) \right|_N =\restrict {u}N = {\frac{2}{\pi}}
\restrict{[(1-\lambda s)^{-1}\tau_{21}^\lambda]}{N}= 0.
$$
Since the restriction map on valuations is a homomorphism of algebras, the lemma follows.
\end{proof}

{\bf Remark.} In fact $u$ generates the ideal of all elements of $\mathcal{V}^n_\lambda$ that vanish on all
totally real submanifolds of $\CP^n_\lambda$.

\begin{theorem}\label{thm:real tube}
Let  $N \subset \CPn_\lambda $ be a smooth totally real submanifold. Then
\begin{equation}\label{eq:isotropic intrinsic}
\left.\mu_k^\lambda\right|_{N} = \frac{\pi^k}{k!\omega_k}\left. \frac{t^k}{\left(1+ \frac{\lambda t^2} 4\right)^{\frac k
2 +1}}\right|_N.
\end{equation}
\end{theorem}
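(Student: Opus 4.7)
The plan is that this should be an almost immediate consequence of Proposition \ref{prop_mu2st} combined with Lemma \ref{lem:u vanish}. Since $\mu_k^\lambda = \tau_{k0}^\lambda$ by definition, I apply formula \eqref{eq_tau_lambda_tu} at $q=0$ to obtain the universal identity
$$\mu_k^\lambda = \frac{\pi^k}{\omega_k\, k!}(1-\lambda s)^{k/2+1} t^k$$
in $\mathcal{V}^n_\lambda$. When $k$ is odd this involves a half-integer exponent, but it is well-defined since $s$ is nilpotent in $\mathcal{V}^n_\lambda$ (as was used in the proof of Theorem \ref{thm_other_isom}), so $(1-\lambda s)^{k/2+1}$ makes sense as a convergent formal Taylor series in $s$.

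Next I restrict to $N$. Because the restriction map $\mathcal{V}^n_\lambda \to \mathcal{V}(N)$ is an algebra homomorphism (and is continuous with respect to the filtration, so extends to power series in $s$), it suffices to compute $(1-\lambda s)|_N$. By Lemma \ref{lem:u vanish}, $s|_N = \frac{t^2}{4+\lambda t^2}\big|_N$, hence
$$(1-\lambda s)\big|_N = \left.\frac{4}{4+\lambda t^2}\right|_N = \left.\frac{1}{1+\frac{\lambda t^2}{4}}\right|_N.$$
Raising to the $(k/2+1)$-st power and multiplying by $\frac{\pi^k}{\omega_k k!}\,t^k$ yields exactly the right-hand side of \eqref{eq:isotropic intrinsic}.

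There is essentially no obstacle, since the two ingredients have been prepared in the preceding subsections. The only point requiring a brief comment is the interpretation of the fractional power for odd $k$, but this is automatic from the nilpotence of $s$ in $\mathcal{V}^n_\lambda$. In fact the resulting formula encodes both even and odd $k$ in a single clean expression, and its appearance as a pure intrinsic invariant of $N$ (depending only on the induced Lipschitz--Killing element $t$) is the geometric content of the theorem.
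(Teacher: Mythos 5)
Your proof is correct and follows essentially the same route as the paper: identify $\mu_k^\lambda = \tau_{k0}^\lambda$, apply \eqref{eq_tau_lambda_tu} at $q=0$, and substitute $s = \frac{t^2}{4+\lambda t^2}$ from Lemma \ref{lem:u vanish} after restricting to $N$. The only addition you make is the explicit remark that the half-integer exponent is legitimate because $s$ is nilpotent, which is a harmless elaboration of a point the paper leaves implicit.
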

{Thus
$$
\restrict{\sum_k \omega_k \mu_k^\lambda }N=\restrict{\frac{1}{\sqrt{1 + \frac{\lambda t^2}{4}}}\exp\left(\frac{\pi t }{\sqrt{1 + \frac {\lambda t^2}{4}}}\right) }N
$$
}
\begin{proof} By  Lemma \ref{lem:u vanish} and Proposition \ref{prop_mu2st}, modulo the kernel of the restriction
$\mathcal{V}^n_\lambda \to \mathcal{V}(N)$,
\begin{align*}
\mu_k^\lambda &= \tau_{k0}^\lambda\\
&=\frac{\pi^{k}}{k!\omega_{k}}(1-\lambda s)^{\frac  k2+1}t^{k}\\
&\equiv\frac{\pi^{k}}{k!\omega_{k}}\frac{t^k}{\left(1+ \frac{\lambda t^2} 4\right)^{\frac k 2 +1}}
\end{align*}
after substituting $\frac{t^2}{4+ \lambda t^2}$ for $s$.
\end{proof}

Thus the global tube formula for totally real subsets is {\em intrinsic}.

\subsection{Kinematic and tube formulas for complex subvarieties} 
When restricted to complex analytic submanifolds of $\CP^n_\lambda$, the array of invariant curvature measures
and valuations simplifies greatly. 

 By a complex analytic subvariety of $\CP^n_\lambda$ we mean a subspace $X \subset \CP^n_\lambda$ with the
following property: there exists an open cover $\{U_\alpha\}$ of $X$ such that for each $\alpha$ there are
complex analytic functions $f_1,\dots,f_N$ on $U_\alpha$ with $X\cap U_\alpha = \bigcap_{i=1}^N f_i^{-1}(0)$.
Thought of as a subset of $\bigcup_\alpha U_\alpha$, where the $U_\alpha$ cover $X$ as above, a complex
analytic subvariety $X\subset \CP^n_\lambda$ admits a normal cycle.

Put $\cmeas^n_\lambda$ for the vector space spanned by the restrictions of elements of $\curvun$ to Borel
subsets of complex analytic subvarieties of $\CP^n_\lambda$, and
 $$R_\lambda:\curvun\to \cmeas^n_\lambda$$ 
 for the restriction map. 
For $\lambda_0>0$ we define also $e_{\lambda_0}: \curvun\to \R^{n+1}$ by
$$
e_{\lambda_0}(\Phi):= ([\Phi]_{\lambda_0}(\CP^0_{\lambda_0}),
[\Phi]_{\lambda_0}(\CP^1_{\lambda_0}),\dots,[\Phi]_{\lambda_0}(\CP^n_{\lambda_0})).
$$

Note that
\begin{equation} \label{eq_gamma_2kk_cpj}
 [\Gamma_{2k,k}]_{\lambda_0}(\CP^j_{\lambda_0})=\delta_j^k\vol_{2j}(\CP^j_{\lambda_0})=\delta_j^k 
\frac{\pi^j}{\lambda_0^j j!}.
\end{equation}

\begin{proposition}\label{prop:C curv meas}  
Let $\lambda\in \R$ and $\lambda_0 >0$. Then 
\begin{equation}
\ker R_\lambda = \ker (\glob_{ \lambda_0} \circ R_{ \lambda_0}) = \ker e_{\lambda_0} = \spann \{B_{kq}\}_{k,q} 
\oplus \spann\{ \Gamma_{kq}: k>2q\}.
\end{equation}
\end{proposition}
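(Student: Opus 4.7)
The plan is to establish the chain of set-equalities via a key vanishing lemma for complex analytic subvarieties. I would first prove that for every $\lambda\in\R$ and every complex analytic subvariety $X\subset \CP^n_\lambda$, the measures $B_{kq}(X,\cdot)$ vanish identically for all admissible $k,q$, and $\Gamma_{kq}(X,\cdot)$ vanishes whenever $k>2q$. At smooth points of $X$ this reduces to a pointwise computation: both $T_xX$ and $(T_xX)^\perp$ are complex subspaces, so the normal cycle $N(X)$ is $J$-equivariant in a way that annihilates the factor $\beta$ in $\beta_{kq}$ (being the horizontal $1$-form $\beta=\langle J\xi,\pi_*\cdot\rangle$ of \cite{befu11}, it vanishes on horizontal tangents to $X$ because $J\xi\in(T_xX)^\perp$, and on fiber directions of $N(X)$ because it is horizontal) and, when $k>2q$, also annihilates the factor $\theta_1^{k-2q}$ of $\gamma_{kq}$ after fiber integration over the unit sphere in $(T_xX)^\perp$, whose $J$-symmetry is incompatible with any $\theta_1$ contribution. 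The extension across the singular locus of $X$ follows from the standard theory of normal cycles of complex analytic sets. This gives
\[
\spann\{B_{kq}\}\oplus\spann\{\Gamma_{kq}:k>2q\}\subset \ker R_\lambda \quad\text{for every }\lambda\in\R.
\]

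Equipped with the vanishing lemma, I would close the remaining inclusions. For $\lambda_0>0$ the tautological chain
\[
\ker R_{\lambda_0}\subset\ker(\glob_{\lambda_0}\circ R_{\lambda_0})\subset\ker e_{\lambda_0}
\]
is immediate, and it suffices to show $\ker e_{\lambda_0}\subset\spann\{B_{kq}\}\oplus\spann\{\Gamma_{kq}:k>2q\}$. Writing $\Phi=\Phi_0+\sum_{j=0}^n d_j\Gamma_{2j,j}$ with $\Phi_0$ in the explicit span and $e_{\lambda_0}(\Phi)=0$, the vanishing lemma gives $[\Phi_0]_{\lambda_0}(\CP^j_{\lambda_0})=0$, and by \eqref{eq_gamma_2kk_cpj} the vanishing of $e_{\lambda_0}(\Phi)_j$ becomes $d_j\,\pi^j/(\lambda_0^j j!)=0$ for each $j$, forcing $d_j=0$. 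For general $\lambda$ only the inclusion $\ker R_\lambda\subset\spann\{B_{kq}\}\oplus\spann\{\Gamma_{kq}:k>2q\}$ remains. Given $\Phi=\Phi_0+\sum_j d_j\Gamma_{2j,j}\in\ker R_\lambda$, I would evaluate the zero measure $\Phi(\CP^j_\lambda,\cdot)$ inductively on $j$: the $\Phi_0$ contribution vanishes by the vanishing lemma, terms with $i>j$ vanish because the degree $2i$ exceeds $\dim_{\R}\CP^j_\lambda=2j$, and $\Gamma_{2j,j}(\CP^j_\lambda,\cdot)$ is a positive multiple of the Hausdorff $2j$-measure on $\CP^j_\lambda$. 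Starting from $\CP^0_\lambda=\{\mathrm{pt}\}$ with $\Gamma_{0,0}=\chi$, the induction forces every $d_j=0$.

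The main obstacle is the vanishing lemma itself. The conceptual content---that $B_{kq}$ and $\Gamma_{kq}$ with $k>2q$ are exactly the invariant curvature measures orthogonal to the K\"ahler symmetries of complex submanifolds---is clear, but the verification demands a careful horizontal/vertical bookkeeping for the forms $\beta,\gamma,\theta_0,\theta_1,\theta_2$ on $S\C^n$ and their pullback to $N(X)$. A separate argument (e.g.\ resolution of singularities or approximation by smooth complex subvarieties, combined with the continuity of the normal cycle construction on complex analytic sets) is needed to push the smooth-point calculation across the singular locus of a general $X$.
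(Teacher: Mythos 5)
Your overall strategy matches the paper's: prove a vanishing lemma for $B_{kq}$ and $\Gamma_{kq}$ ($k>2q$) on complex analytic subvarieties, then use the template values \eqref{eq_gamma_2kk_cpj} and the fact that $\Gamma_{2q,q}$ restricts to the volume measure of a $q$-dimensional subvariety to pin down the kernels. Your second half (the induction on $j$ using $\Gamma_{2j,j}(\CP^j_\lambda,\cdot)$) is essentially identical to the paper's.

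The place where you deviate, and where you admit the obstacle, is the vanishing lemma, and there you are missing the clean observation the paper makes: $\theta_1 = d\beta$. Once one knows that the horizontal $1$-form $\beta$ restricts to zero on $N(X)$ (your pointwise argument via $J\xi \in (T_xX)^\perp$ is the right reason at smooth points), the vanishing of $\theta_1$ on $N(X)$ is automatic, since the restriction of $d\beta$ to the tangent planes of the Legendrian current $N(X)$ is the differential of the restriction of $\beta$, which is zero. This kills the factor $\theta_1^{k-2q}$ in $\gamma_{kq}$ for $k>2q$ directly, and makes your appeal to a ``$J$-symmetry incompatible with $\theta_1$'' argument after fiber integration unnecessary — that step as you've stated it is vague and would be the weak point in a careful write-up. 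For the singular locus, the paper cites the structure theorem of \cite{fu94}, which states that $N(X)$ is an integer linear combination of the normal cycles of the strata of $X$; since each stratum is a complex manifold, the smooth-point argument applies stratum by stratum and there is nothing further to do. Your proposal to use resolution of singularities or approximation would likely work but is a heavier and less direct route than simply invoking \cite{fu94}. In summary: correct strategy, with a gap in the vanishing lemma that the identity $\theta_1 = d\beta$ closes cleanly, and with a missing pointer to \cite{fu94} for the singular case.
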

Here we abuse notation by putting $\glob_{\lambda_0}$ for the obvious map from $\cmeas^n_{\lambda_0}$ to the space
of finitely additive set functions defined on algebraic subvarieties of $\CPn_{\lambda_0}$.

\begin{proof}  The normal cycle $N(X)$ of a complex analytic subvariety $X\subset \CP^n_\lambda$ is an integer
linear combination of the
normal cycles of its strata \cite{fu94}. Thus the restriction of the distinguished 1-form $\beta$, referred to at the
beginning of Section \ref{subsect:invariant} above and defined in  \cite{ags,befu11}, vanishes
identically on $N(X)$. It follows that $\theta_1 = d\beta$ also vanishes. Since the defining differential
forms of the $B_{kq}$ are multiples of $\beta$, and those of the $\Gamma_{kq}$ are multiples of $\theta_1$ if
$k>2q$, it follows that  the indicated kernels include the span of these elements.

On the other hand, if $X\subset \CP^n_\lambda$ is a $q$-dimensional subvariety then $\Gamma_{2q,q}(X,\cdot)$ is the
volume measure of $X$, and $\Gamma(X,\cdot) =0$ for $p>q$. It follows that the $R_\lambda(\Gamma_{2q,q})$ (and their
globalizations)
are linearly independent. Using \eqref{eq_gamma_2kk_cpj} it follows
that all kernels are as indicated.
\end{proof}

Thus we arrive at the following simple description of the integral geometry of complex analytic submanifolds of the
complex space forms:
\begin{itemize}
\item The transfer principle holds in the following sense. Let $S_\lambda:\cmeas^n_\lambda \to
\curvun$ be the unique right inverse of $R_\lambda$ with $S_\lambda\circ
R_\lambda(\Gamma_{2q,q})=\Gamma_{2q,q}$. The
local integral geometry of subvarieties of $\CP^n_\lambda$ is then encapsulated by the restricted kinematic
operator
$$
K_{\C,\lambda}  : \cmeas^n_\lambda \to \cmeas^n_\lambda\otimes \cmeas^n_\lambda
$$
given by
$$
K_{\C,\lambda} = (R_\lambda\otimes R_\lambda) \circ K \circ S_\lambda.
$$
In other words
if we express the local kinematic operator $K$ in terms of the basis $\{B_{kq},\Gamma_{kq}\}$, then
$K_{\C,\lambda}$ is obtained by restricting to $\spann\{ \Gamma_{2q,q}\}_q$ and projecting to $\spann\{
\Gamma_{2q,q}\otimes  \Gamma_{2r,r}\}_{q,r}$.
\item  If $\lambda >0$ then the restriction to $\spann\{\Gamma_{2q,q}\}_q$ of the globalization map yields a bijection
to the space of  valuations restricted to complex subvarieties. Hence the local kinematic formulas can be read off from
the global kinematic formulas.
\item There are convenient template objects in the compact spaces of the family that permit the evaluation of the global
kinematic operator by elementary means.
\end{itemize}
 
 \begin{theorem}\label{thm:kc gamma}
 $$
 K_{\C,\lambda}(R_\lambda (\Gamma_{2q,q}) )= \frac{1}{n! q!} \sum_{i+j = n+q}
 i!j! R_\lambda(\Gamma_{2i,i}) \otimes R_\lambda(\Gamma_{2j,j}).
 $$
 \end{theorem}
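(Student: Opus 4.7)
The plan is to verify the claimed identity by pairing both sides, viewed as elements of $\cmeas^n_\lambda \otimes \cmeas^n_\lambda$, against pairs of complex analytic subvarieties of $\CPn_\lambda$. By Proposition \ref{prop:C curv meas}, the family $\{R_\lambda(\Gamma_{2k,k})\}_{k=0}^n$ is a basis of $\cmeas^n_\lambda$, and for any complex analytic subvariety $X$ of complex dimension $d$ we have $R_\lambda(\Gamma_{2k,k})(X,\cdot) = \vol_{2k}(X)\,\delta_{kd}$. Hence an element of $\cmeas^n_\lambda \otimes \cmeas^n_\lambda$ is determined by its values on pairs of pure-dimensional complex subvarieties. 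Pairing the two sides with $(A,B)$ of complex dimensions $i,j$: the right-hand side collapses to $\frac{i!\,j!}{q!\,n!}\vol_{2i}(A)\vol_{2j}(B)$ when $i+j = n+q$ and vanishes otherwise; and by Theorem \ref{thm:basic K} applied via $K_{\C,\lambda} = (R_\lambda\otimes R_\lambda)\circ K \circ S_\lambda$, the left-hand side reduces to $\int_{G_\lambda}\vol_{2q}(gA\cap B)\,dg$, which likewise vanishes for generic-dimension reasons unless $i+j = n+q$. The theorem thus reduces to the Poincar\'e-type formula
\[
\int_{G_\lambda}\vol_{2q}(gA\cap B)\,dg = \frac{i!\,j!}{q!\,n!}\vol_{2i}(A)\vol_{2j}(B), \qquad i+j = n+q.
\]

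To establish this, note that by Definition \ref{def:tilde mu} together with Proposition \ref{prop:C curv meas}, the valuation $\mu^\lambda_{2q,q}$ restricts to $\vol_{2q}$ on every complex analytic $q$-subvariety, since the correction terms $[\Gamma_{2q+2r,q+r}]_\lambda$ with $r>0$ vanish on such subvarieties. Consequently the integral on the left equals the global kinematic value $k_\lambda(\mu^\lambda_{2q,q})(A,B)$. Invoking again the bijection between $\spann\{\Gamma_{2k,k}\}_k$ and valuations restricted to complex subvarieties (noted in the bullets immediately preceding the theorem), every $G_\lambda$-invariant valuation becomes, on complex $d$-subvarieties, a scalar multiple of $\vol_{2d}$. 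Expanding $k_\lambda(\mu^\lambda_{2q,q})$ as a sum of tensors in $\V^n_\lambda\otimes \V^n_\lambda$, the evaluation at complex subvarieties of dimensions $i$ and $j$ is therefore forced to take the form $C^\lambda_{ij}\vol_{2i}(A)\vol_{2j}(B)$ for some scalar $C^\lambda_{ij}$ depending only on $i,j,n,q,\lambda$.

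To compute $C^\lambda_{ij}$ for $\lambda>0$, take the template $A := \CP^i_\lambda$ and $B := \CP^j_\lambda$ as totally geodesic subspaces of $\CPn_\lambda$. For generic $g\in G_\lambda$ the intersection $gA\cap B$ is a totally geodesic complex subspace of complex dimension $i+j-n = q$ that, with the induced metric, is isometric to $\CP^q_\lambda$; hence $\vol_{2q}(gA\cap B) = \pi^q/(\lambda^q q!)$ almost surely. Combined with the normalization $\vol(G_\lambda) = \vol(\CPn_\lambda) = \pi^n/(\lambda^n n!)$, this yields
\[
\int_{G_\lambda}\vol_{2q}(gA\cap B)\,dg = \frac{\pi^{n+q}}{\lambda^{n+q}\,q!\,n!},
\]
and equating with $C^\lambda_{ij}\cdot \frac{\pi^i}{\lambda^i i!}\cdot\frac{\pi^j}{\lambda^j j!}$ using $i+j = n+q$ gives $C^\lambda_{ij} = \frac{i!\,j!}{q!\,n!}$, independent of $\lambda$. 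The case $\lambda\le 0$ then follows by analytic continuation from the analytic dependence on $\lambda$ of the invariant kinematic operator, established in Lemma \ref{lemma_rumin_analytic} and Proposition \ref{prop:module_analytic}.

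The main obstacle is the justification that $k_\lambda(\mu^\lambda_{2q,q})(A,B) = C^\lambda_{ij}\vol_{2i}(A)\vol_{2j}(B)$ for \emph{arbitrary} complex subvarieties $A,B$ of the given complex dimensions, not merely for the totally geodesic templates; this rests on the ``one-dimensional in each complex degree'' structure of $G_\lambda$-invariant valuations restricted to complex subvarieties, supplied by Proposition \ref{prop:C curv meas} together with the bijection cited above. With that in hand, the template computation pins down the constant and the formula of the theorem drops out by elementary arithmetic.
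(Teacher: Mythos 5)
Your overall strategy — note that the preceding discussion already guarantees a formula of the claimed shape, and then pin down the coefficients by evaluating at the templates $\CP^i_\lambda,\CP^j_\lambda$ — is exactly the paper's proof. The paper simply cites ``the remarks above'' for the existence of the formula and \eqref{eq_gamma_2kk_cpj} for the template values.

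However, there is a genuine error in the way you justify the reduction. You assert that for \emph{every} complex analytic subvariety $X$ of complex dimension $d$ one has $R_\lambda(\Gamma_{2k,k})(X,\cdot)=\vol_{2k}(X)\,\delta_{kd}$. This is true only for $k\geq d$; for $k<d$ the measure $\Gamma_{2k,k}(X,\cdot)$ is typically \emph{nonzero}, being a combination of the Chern curvature measures $C_j^\lambda$ with $j\geq k$ (see \eqref{eq_gamma_Chern}). For instance, if $X$ is a smooth conic in $\CP^2_\lambda$, then $\Gamma_{00}(X,X)=c_0^\lambda(X)-\tfrac{2\lambda}{\pi}c_1^\lambda(X)=2-4=-2\neq 0$. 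The vanishing $\Gamma_{2k,k}(\CP^j_\lambda,\cdot)=0$ for $j>k$ recorded in \eqref{eq_gamma_2kk_cpj} is a special feature of the projective templates, not of arbitrary pure-dimensional subvarieties. Consequently neither the claimed collapse of the right-hand side at a pair $(A,B)$ of general dimensions $(i,j)$, nor the assertion that every $G_\lambda$-invariant valuation restricts to a multiple of $\vol_{2d}$ on $d$-subvarieties (consider $\chi$ on curves of fixed degree but varying genus in $\CP^3$), is correct, and your ``Poincar\'e-type formula'' reduction for arbitrary $A,B$ does not follow from your argument. Fortunately this detour is unnecessary: the tensor products $R_\lambda(\Gamma_{2a,a})\otimes R_\lambda(\Gamma_{2b,b})$ form a basis of $\cmeas^n_\lambda\otimes\cmeas^n_\lambda$, and the evaluation functionals at pairs $(\CP^i_\lambda,\CP^j_\lambda)$ are dual to this basis precisely because of \eqref{eq_gamma_2kk_cpj}; so the template computation you perform in the third paragraph already determines all coefficients and finishes the proof (with the $\lambda$-independence following either from the transfer principle or, as you say, by analytic continuation).
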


\begin{proof}
By the remarks above, we already know that a formula of this type exists. To compute the
coefficients, we use as templates $\CP^i_\lambda, \CP^j_\lambda$, taking into account \eqref{eq_gamma_2kk_cpj}.
\end{proof}

The span of the curvature measures $\Gamma_{2q,q}$ admits a different basis that is more closely adapted to the language
of topology and algebraic geometry, namely the {\it Chern curvature measures} defined below. In fact it is natural to
define these curvature measures on a general K\"ahler manifold.
 Note that the sphere bundle $SM$ of any K\"ahler manifold $M$ admits canonical 1-forms $\alpha,
\beta,\gamma$, where $\alpha$ is the usual contact form, $\beta_{\xi}(v) := \langle \xi, \sqrt{-1}
\pi_{M*}v\rangle$ and $\gamma$ is the vertical (with respect to the Levi-Civita connection) 1-form whose
restriction to each fiber $S_xM$ is the unit covector field parallel to the Hopf fibration. If $M=
\CP^n_\lambda$ then these forms agree with the like-named forms defined in \cite{befu11} and \cite{ags}. In
the general case, the form $\gamma$ described here corresponds to a multiple of the form called $\beta$ in
\cite{fu94}.

\begin{theorem}[\cite{fu94}]\label{thm:chern classes} Let $M$ be a K\"ahler manifold of complex dimension $n$. Then
there exist canonically defined differential forms $\tilde\gamma_k \in \Omega^{2n-2k-1}(SM)$ such that for any
subvariety $X\subset M$, possibly with singularities, each current $\pi_{M*}(N(X) \lefthalfcup \tilde \gamma_k )$ is
closed and represents the Chern-MacPherson homology class $c_k(X)$.
The $\tilde \gamma_k$ are polynomials in $\gamma, d\gamma$ and the Chern forms of $M$. \end{theorem}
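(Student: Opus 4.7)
The plan is to build the forms $\tilde\gamma_k$ by Chern--Weil theory on $SM$, verify the representation property first for smooth complex submanifolds by a fibre-integration argument, and extend to arbitrary subvarieties by invoking MacPherson's uniqueness characterisation of $c_*$.

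First I would construct the forms. Over $SM$ the pull-back $\pi^*TM$ splits as $\R T\oplus \R (JT)\oplus \xi^\perp$, where $T$ is the Reeb-type radial field at $\xi\in SM$ and $JT$ its image under the complex structure. The form $\gamma$ is the 1-form dual to $JT$; its restriction to each fibre is the Hopf contact form, so $d\gamma$ restricted to a fibre is (a multiple of) the pull-back of the Fubini--Study form on $\mathbb{CP}^{n-1}$. The complex subbundle $E:=\xi^\perp\ominus \R(JT)$ of rank $n-1$ inherits a Hermitian connection from Levi-Civita via this splitting, and its Chern--Weil forms are polynomials in $d\gamma$ and $\pi^*\mathrm{ch}_i(M)$. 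Taking $\omega_k\in \Omega^{2(n-k-1)}(SM)$ to be an appropriate Chern--Weil polynomial in these quantities and setting $\tilde\gamma_k := \gamma\wedge \omega_k$ gives a candidate of the correct degree.

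Next, for a smooth complex submanifold $Y\subset M$ of complex dimension $q$, the normal cycle $N(Y)$ equals (up to sign) the unit normal bundle $S(\nu_Y)$, with the connection-induced horizontal lift as orientation. The restriction of $\tilde\gamma_k$ to $N(Y)$ and fibrewise integration along the sphere of $\nu_Y$ yields, by a standard Chern--Weil argument, a $2(q-k)$-form on $Y$ representing $c_k(TY)$; here the adjunction identity $c(TY)=c(TM)|_Y/c(\nu_Y)$ enters because $\omega_k$ computes Chern polynomials of $E$, and along $N(Y)$ the bundle $E$ decomposes as $(\text{fibre directions of }\nu_Y)\oplus TY$. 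Thus $\pi_{M*}(N(Y)\lefthalfcup \tilde\gamma_k)$ represents $c_k(TY)\cap [Y]=c_k(Y)$ in the smooth case.

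For an arbitrary complex analytic subvariety $X$, closedness of $\pi_{M*}(N(X)\lefthalfcup \tilde\gamma_k)$ follows from: (i) $N(X)$ is a closed Legendrian cycle; (ii) using the structure equations on $SM$ and the curvature identities for a K\"ahler metric, $d\tilde\gamma_k$ is congruent to a combination of $\alpha$ and $d\alpha$ modulo horizontal exact terms, hence vanishes when paired with any Legendrian cycle. Consequently $\partial\pi_{M*}(N(X)\lefthalfcup \tilde\gamma_k)=\pi_{M*}(N(X)\lefthalfcup d\tilde\gamma_k)=0$. The assignment $X\mapsto [\pi_{M*}(N(X)\lefthalfcup \tilde\gamma_k)]$ is additive under the stratification of $X$ and commutes with proper pushforward, because the normal cycle functor $X\mapsto N(X)$ does. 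Together with the smooth-case identification above, MacPherson's characterisation of $c_*$ as the unique natural transformation from constructible functions to Borel--Moore homology agreeing with $c(TX)\cap [X]$ for smooth $X$ forces these classes to coincide with the Chern--MacPherson classes.

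The main obstacle will be the Chern--Weil bookkeeping in the construction: pinning down $\omega_k$ so that (a) fibre integration over $S(\nu_Y)$ produces the correct Chern forms of $TY$ (not merely of $TM|_Y/\nu_Y$), and (b) $d\tilde\gamma_k$ lies in the Rumin ideal generated by $\alpha,d\alpha$ so that the resulting currents are automatically closed on every Legendrian cycle. Once (a) and (b) are achieved, extension to singular $X$ is essentially formal via MacPherson.
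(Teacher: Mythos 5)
Your proposal attempts a ground-up reconstruction, while the paper's proof is essentially a citation: it recalls that \cite{fu94} constructs forms $\gamma_k$ on the \emph{projectivized} cotangent bundle $\mathbb{P}^*M$, contracted against the projectivized conormal cycle $\mathbb{P}N^*(X)$, and then transports these to $SM$ by pulling back along the Hopf fibration $h\colon S^*M \to \mathbb{P}^*M$, multiplying by a multiple of $\gamma$, and identifying $SM\simeq S^*M$ via the K\"ahler metric. So the comparison is really between your sketch and Fu's original argument.

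The main gap is in your closedness step. You claim $d\tilde\gamma_k$ lies in (or reduces modulo exact terms to) the Rumin ideal $(\alpha,d\alpha)$, so that it annihilates any Legendrian cycle. This is false. With $\tilde\gamma_k=\gamma\wedge\omega_k$ and $\omega_k$ a Chern--Weil polynomial (hence closed), $d\tilde\gamma_k=d\gamma\wedge\omega_k$. The form $d\gamma$ is \emph{not} in the ideal generated by $\alpha,d\alpha$: already on flat $\C^n$, restricting to a fibre $S^{2n-1}$ of $S\C^n$ kills $\alpha$ and $d\alpha$ but $d\gamma$ restricts to the nonzero fibre-symplectic form. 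So $d\gamma\wedge\omega_k$ does not vanish on a general Legendrian cycle, and in fact $\pi_{M*}(N(X)\lefthalfcup\tilde\gamma_k)$ is \emph{not} closed for a generic real submanifold $X$. Closedness is special to complex analytic $X$, and the reason is exactly the structure your argument ignores: for complex $X$ the conormal is $\C^*$-invariant, so $N(X)$ fibres by Hopf circles over $\mathbb{P}N^*(X)$. Then $d\gamma\wedge\omega_k$ is basic (a pullback $h^*\zeta$), and since $h_*N(X)=0$ (the circle fibres collapse to a lower-dimensional base) one gets $\pi_{M*}(N(X)\lefthalfcup h^*\zeta)=0$. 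This fibration argument, not the Legendrian/Rumin one, is the mechanism behind \cite{fu94} and the passage the paper quotes.

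A secondary issue: invoking MacPherson's uniqueness requires the assignment $X\mapsto[\pi_{M*}(N(X)\lefthalfcup\tilde\gamma_k)]$ to be natural under proper pushforward, not merely additive under stratification. The normal cycle is defined relative to a fixed ambient $M$ and does not carry an obvious functoriality under arbitrary proper maps between different ambient manifolds; establishing the naturality you assert as ``because the normal cycle functor does'' is in fact a substantial part of what needs proving, and the student should not treat it as formal.

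Finally, note that your smooth-case fibre integration is on the right track, but pinning down $\omega_k$ so that integration over $S(\nu_Y)$ produces $c_k(TY)$ (via the adjunction $c(TY)=c(TM)|_Y/c(\nu_Y)$) is nontrivial Chern--Weil bookkeeping, as you acknowledge; the cited construction in \cite{fu94} does exactly that on $\mathbb{P}^*M$, which is why the paper opts to translate rather than rebuild.
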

\begin{proof} In \cite{fu94}, a similar construction was given by contracting the projectivized conormal cycle $\mathbb
PN^*(X)$  against certain differential forms $\gamma_k$ within the projectivized cotangent bundle $\mathbb P^*M$. This
may be restated in terms of the full conormal cycle $N^*(X)$, within the cosphere bundle $S^*M$, by pulling back the
$\gamma_k$ via the Hopf fibration $S^*M \to \mathbb P^* M$ and multiplying by an appropriate multiple of the 1-form
$\beta$ defined in Section 1.4, op. cit. The proof of the first assertion is concluded by pulling back to $SM$ via the
identification $SM \simeq S^*M$ induced by the K\"ahler metric.

The second assertion follows directly from the construction of \cite{fu94}.
\end{proof}

\begin{definition} Let $M, \tilde \gamma_k$ be as above, and let $\kappa$ denote the K\"ahler form of $M$. We define the
{\bf  Chern curvature measure} $C_k$ to be the curvature measure on $M$ corresponding to the differential form $\tilde
\gamma_k \wedge \kappa^{k} \in \Omega^{2n-1}(SM)$, and the {\bf Chern valuation} $c_k$ to be the corresponding valuation.
\end{definition}

\begin{proposition} If $f:M\to M$ is a Hermitian isometry of $M$ and $\tilde f:SM\to SM$ the induced map, then $\tilde
f^*(\tilde \gamma_k \wedge \kappa^{k}) = \tilde \gamma_k \wedge \kappa^{k}$. In particular, $C_k, c_k$ are invariant
under $f$.
\end{proposition}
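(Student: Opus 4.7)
The plan is to show that each of the geometric ingredients entering into $\tilde\gamma_k\wedge\kappa^k$ is preserved by the pullback $\tilde f^*$, and then to deduce invariance of $C_k$ and $c_k$ from invariance of the defining form. Being a Hermitian isometry means that $f$ preserves both the Riemannian metric $g$ and the complex structure $J$; hence it preserves the K\"ahler form $\kappa(X,Y)=g(JX,Y)$, and its induced action $\tilde f$ on $SM$ preserves the canonical contact form $\alpha$.

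Next I would check that $\tilde f$ preserves the auxiliary forms $\beta$ and $\gamma$ that figure (together with the Chern forms) in the construction of $\tilde\gamma_k$ via Theorem \ref{thm:chern classes}. For $\beta$, a direct computation gives
\[
(\tilde f^*\beta)_\xi(v)=\langle \tilde f\xi,\,J\pi_*\tilde f_*v\rangle
=\langle \tilde f\xi,\,\tilde f_*(J\pi_*v)\rangle
=\langle \xi,\,J\pi_*v\rangle=\beta_\xi(v),
\]
using $f_*J=Jf_*$ and that $f_*$ is an isometry. For $\gamma$, the vertical decomposition of $TSM$ is determined by the Levi-Civita connection (hence preserved by $\tilde f$), while the restriction of $\gamma$ to each fiber $S_xM$ is the unit covector field parallel to the Hopf direction, which is defined purely from $J$ and $g$ on $T_xM$; so $\tilde f^*\gamma=\gamma$, and consequently $\tilde f^*d\gamma=d\gamma$. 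The Chern forms $\ch_i(M)$ are natural invariants of the Hermitian structure $(g,J)$, so $f^*\ch_i(M)=\ch_i(M)$, and pulling back to $SM$ gives $\tilde f^*\pi^*\ch_i(M)=\pi^*\ch_i(M)$.

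Since $\tilde\gamma_k$ is by Theorem \ref{thm:chern classes} a polynomial in $\gamma$, $d\gamma$ and the Chern forms of $M$, and since $\tilde f^*$ is an algebra homomorphism fixing each of these generators, we obtain $\tilde f^*\tilde\gamma_k=\tilde\gamma_k$, and hence $\tilde f^*(\tilde\gamma_k\wedge\kappa^k)=\tilde\gamma_k\wedge\kappa^k$, which is the first assertion.

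For the second assertion, the normal cycle is natural under diffeomorphisms: $\tilde f_*N(A)=N(fA)$. Therefore, for any admissible $A\subset M$ and Borel $U\subset M$,
\[
C_k(fA,fU)=\int_{\tilde f_*N(A)\cap\pi^{-1}(fU)}\tilde\gamma_k\wedge\kappa^k
=\int_{N(A)\cap\pi^{-1}U}\tilde f^*(\tilde\gamma_k\wedge\kappa^k)
=C_k(A,U),
\]
which gives $f$-invariance of $C_k$, and globalizing yields the same for $c_k$. The only point that requires any care is the $\tilde f$-invariance of $\gamma$, since its definition involves a choice (the Hopf vertical direction); but once one observes that this direction is intrinsic to the Hermitian structure on each tangent space, there is no real obstacle.
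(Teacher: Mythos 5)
Your proof is correct and takes essentially the same approach as the paper, which simply appeals to ``the canonical nature of the construction of \cite{fu94}''; you have made that canonicity explicit by checking that $\tilde f^*$ fixes each ingredient ($\gamma$, $d\gamma$, the Chern forms, $\kappa$) and then invoking naturality of the normal cycle. (Minor aside: since Theorem \ref{thm:chern classes} expresses $\tilde\gamma_k$ as a polynomial in $\gamma$, $d\gamma$ and the Chern forms only, your verification of $\tilde f^*\beta=\beta$ is not actually needed here, though it is harmless.)
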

\begin{proof} This follows at once from the canonical nature of the construction of \cite{fu94}.
\end{proof}

In particular, if $M=\CP^n_\lambda $ then the Chern curvature measures and Chern valuations are invariant.
As elements of $\curvun$ the former vary analytically with $\lambda$, so we denote them by
$C^\lambda_k, c^\lambda_k$ respectively.

\begin{proposition}\label{prop:chern cm=gamma}
\begin{align}
C^\lambda_k &=\sum_{q \ge k} q!\binom{q+1}{k+1} \left(\frac \lambda \pi\right)^{q-k} \Gamma_{2q,q}
\label{eq:chern cm=gamma} \\
\Gamma_{2q,q} & = \frac{1}{q!}\sum_{k \ge q}  \left(-\frac{\lambda}{\pi}\right)^{k-q}
\binom{k+1}{q+1} C_k^\lambda \label{eq_gamma_Chern}\\
\mu^\lambda_{2q,q} & = \frac{1}{q!}\sum_{k \ge q}  \binom{k}{q}\left(-\frac{\lambda}{\pi}\right)^{k-q}
c_{k}^\lambda \label{eq_mu_chern}\\
c_k ^\lambda& = \sum_{q \ge k} q! \left(\frac\lambda \pi\right)^{q-k} \binom{q}{k}\mu_{2q,q}^\lambda.
\label{eq_chern_mu}
\end{align}
\end{proposition}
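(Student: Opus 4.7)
The four formulas form two mutually inverse pairs, and the valuation pair \eqref{eq_mu_chern}--\eqref{eq_chern_mu} is obtained from the curvature-measure pair \eqref{eq:chern cm=gamma}--\eqref{eq_gamma_Chern} by applying the globalization $[\,\cdot\,]_\lambda$ and invoking the definition of $\mu^\lambda_{2q,q}$ in Definition \ref{def:tilde mu}. Hence the essential task is to establish \eqref{eq:chern cm=gamma}; from this, \eqref{eq_gamma_Chern} follows by inverting the triangular matrix of coefficients.

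The first step is to show $C_k^\lambda\in\spann\{\Gamma_{2q,q}: k\le q\le n\}\subset \curvun$. By Theorem \ref{thm:chern classes}, $\tilde\gamma_k$ is a polynomial in $\gamma$, $d\gamma$, and the Chern forms of $\CP^n_\lambda$; by Corollary 6.25 of \cite{gray} the Chern forms are multiples of powers of the K\"ahler form $\kappa$. So $\tilde\gamma_k\wedge\kappa^k$ is built from $\gamma$, $d\gamma$, and $\pi^*\kappa$ alone. Expressing $d\gamma$ and $\pi^*\kappa$ in the standard basis $\{\alpha,\beta,\gamma,\theta_0,\theta_1,\theta_2\}$ of invariant forms on $S\CP^n_\lambda$, one checks that no factor of $\beta$ or $\theta_1$ arises (modulo multiples of $\alpha$ and $d\alpha$, which drop out in $\ker \Int$ by Theorem \ref{prop:kernel thm}). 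A degree count then expresses the form as a linear combination of the defining forms $\gamma\wedge\theta_0^{a}\wedge\theta_2^{b}$ of the $\Gamma_{2(k+b),k+b}$.

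The second step is to pin down the coefficients. Fix $\lambda_0>0$. By Proposition \ref{prop:C curv meas}, the evaluation map $e_{\lambda_0}$ is injective on $\spann\{\Gamma_{2q,q}\}_{q}$, so it suffices to check \eqref{eq:chern cm=gamma} after evaluation on each $\CP^q_{\lambda_0}$ for $k\le q\le n$. The right-hand side collapses via \eqref{eq_gamma_2kk_cpj} to $\binom{q+1}{k+1}\pi^k/\lambda_0^k$, while the left-hand side gives, via Theorem \ref{thm:chern classes} and Corollary 6.25 of \cite{gray},
\begin{equation*}
c_k^{\lambda_0}(\CP^q_{\lambda_0})=\int_{\CP^q_{\lambda_0}}\ch_{q-k}(\CP^q_{\lambda_0})\wedge\kappa^k=\binom{q+1}{q-k}\Big(\frac{\lambda_0}{\pi}\Big)^{q-k}\frac{\pi^q}{\lambda_0^q}=\binom{q+1}{k+1}\frac{\pi^k}{\lambda_0^k},
\end{equation*}
matching the right-hand side as desired.

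The remaining matrix inversions and the passage to the valuation formulas reduce to the Vandermonde-type identity $\binom{r+1}{k+1}\binom{j+1}{r+1}=\binom{j+1}{k+1}\binom{j-k}{r-k}$, the alternating-sum identity $\sum_{i=0}^m(-1)^i\binom{m}{i}=\delta_{m,0}$, and the partial-sum identity $\sum_{r=q}^k(-1)^{k-r}\binom{k+1}{r+1}=(-1)^{k-q}\binom{k}{q}$. The main obstacle is the first step: rigorously ruling out $\beta$ and $\theta_1$ factors in $\tilde\gamma_k\wedge\kappa^k$ requires explicit formulas for $d\gamma$ and $\pi^*\kappa$ in terms of the invariant generators on $S\CP^n_\lambda$, a structural calculation that must be carried out directly (or bypassed by a more abstract argument exploiting the fact that $C_k^\lambda$ is constructed purely from the complex structure and hence should be blind to the ``anti-complex'' directions encoded by $\beta$ and $\theta_1$).
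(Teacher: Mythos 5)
Your proposal follows essentially the same route as the paper: establish \eqref{eq:chern cm=gamma} by (1) showing $C_k^\lambda$ is a linear combination of the $\Gamma_{2q,q}$, (2) pinning down the coefficients via the template method with $\CP^j_\lambda$, using \eqref{eq_gamma_2kk_cpj} and the Chern-number computation, and then (3) deriving the other three formulas by matrix inversion and globalization via Lemma \ref{lem:def mu}. Your template arithmetic agrees with \eqref{eq_value_chern}.

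Two points to tighten. First, the step you flag as the ``main obstacle''---rigorously ruling out $\beta$ and $\theta_1$ factors in $\tilde\gamma_k\wedge\kappa^k$---is exactly what the paper dispatches by citing Lemma 2.4 of \cite{ags}, which gives the explicit relation $d\gamma = 2\theta_0 - 2(\alpha\wedge\beta+\theta_2)$. Since $\alpha$-multiples lie in $\ker\Int$ (Theorem \ref{prop:kernel thm}), this means that modulo the contact ideal $d\gamma$ is a linear combination of $\theta_0$ and $\theta_2$ alone; $\theta_1$ and $\beta$ never enter, and the paper's deduction is immediate. Your ``more abstract argument exploiting the complex structure'' is a reasonable instinct, but the concrete one-line formula from \cite{ags} is simpler and what the paper uses. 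Second, your template argument establishes \eqref{eq:chern cm=gamma} only for $\lambda>0$, since the objects $\CP^j_{\lambda_0}$ and the evaluation $e_{\lambda_0}$ require $\lambda_0 = \lambda>0$. To cover $\lambda\le 0$ you must invoke analytic continuation in $\lambda$ (both sides are curvature measures in the fixed space $\curvun$ depending analytically on $\lambda$, as noted just before the Proposition); the paper states this explicitly, and your write-up should too.
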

\begin{proof} From the last assertion of Theorem \ref{thm:chern classes} and Lemma 2.4 of \cite{ags}, we
deduce that the differential forms determining the $C_k^\lambda$ are polynomials in $\gamma, d\gamma =
2\theta_0 -2(\alpha\wedge \beta +\theta_2)$ and the Chern forms of $\CP^n_\lambda$, which are themselves
multiples of powers of the K\"ahler form $\theta_2$ of $\CP^n_\lambda$. Thus
$C^\lambda_k$
is a linear combination of the $\Gamma_{2q,q}$. It remains to show that the coefficients are as given.
By analytic continuation, it is enough to do this for $\lambda >0$.

By the proof of Corollary \ref{cor:t^k(cpn)}, if $j\ge k$ then
\begin{equation}
 c_k^\lambda (\CP_\lambda^j)=\int_{\CP_\lambda^j} \ch_{j-k}(\CP_\lambda^j)\wedge\kappa^k=\left(\frac
\pi\lambda\right)^{k}\binom{j+1}{k+1} \label{eq_value_chern}
\end{equation}
and $c_k^\lambda(\CP^j) = 0$ if $j<k$. Using \eqref{eq_gamma_2kk_cpj}  this is sufficient to determine that
the coefficients in \eqref{eq:chern
cm=gamma} are as stated.

Equation \eqref{eq_chern_mu} follows by globalizing \eqref{eq:chern cm=gamma}, taking into account Lemma
\ref{lem:def mu}. Next, \eqref{eq_mu_chern} is an easy consequence of \eqref{eq_chern_mu}, and finally
\eqref{eq_gamma_Chern} follows from Lemma \ref{lem:def mu} and \eqref{eq_mu_chern}.  
\end{proof}

Using Theorem \ref{thm:kc gamma} we may now deduce the  complex kinematic formula in terms of the Chern
curvature measures. Put $\Ch^\lambda_q:= R_\lambda(C^\lambda_q)$.
\begin{corollary}\label{cor:shifrin k}
 \begin{equation}\label{eq:local c kinematic}
 K_{\C,\lambda}(\Ch^\lambda_q) = \frac{1}{n!} \sum_{k+l \ge n+q}
\left(-\frac \lambda \pi\right)^{k+l-n-q}\binom{k+l-q}{n} \Ch^\lambda_k \otimes  \Ch^\lambda_l.
 \end{equation}
In particular, if $\lambda >0$ and $X,Y \subset \CP^n_\lambda$ are (possibly singular) subvarieties in general position
then
\begin{equation}\label{eq:global c kinematic}
 c_q(X\cap Y)  = (-1)^n\sum_{k+l \ge n+q}
\left(-\frac \lambda \pi\right)^{k+l-q}\binom{k+l-q}{n}c_k(X) c_l(Y).
\end{equation}
\end{corollary}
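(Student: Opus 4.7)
\medskip

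\noindent\emph{Proof plan.} The plan is to derive the local formula \eqref{eq:local c kinematic} from Theorem \ref{thm:kc gamma} by a change of basis using Proposition \ref{prop:chern cm=gamma}, and then obtain the global formula \eqref{eq:global c kinematic} in the compact case by globalizing and using that $c_q(X\cap gY)$ is almost everywhere constant in $g$.

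For the local formula I would first use \eqref{eq:chern cm=gamma} to write
$$\Ch^\lambda_q = \sum_{r\ge q} r!\binom{r+1}{q+1}\left(\frac{\lambda}{\pi}\right)^{r-q} R_\lambda(\Gamma_{2r,r}),$$
apply the coproduct $K_{\C,\lambda}$ termwise using Theorem \ref{thm:kc gamma}, and then convert each tensor $R_\lambda(\Gamma_{2i,i})\otimes R_\lambda(\Gamma_{2j,j})$ back to Chern curvature measures via \eqref{eq_gamma_Chern}. Collecting the coefficient of $\Ch^\lambda_k\otimes\Ch^\lambda_l$ yields an expression of the form
$$\frac{1}{n!}\left(-\frac{\lambda}{\pi}\right)^{k+l-n-q}\sum_{\substack{i+j=n+r\\ r\ge q,\ i\le k,\ j\le l}} (-1)^{r-q}\binom{r+1}{q+1}\binom{k+1}{i+1}\binom{l+1}{j+1},$$
and the main task (and main obstacle) is to show this alternating sum equals $\binom{k+l-q}{n}$. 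I expect this to follow from a double application of the Chu--Vandermonde identity, or equivalently from a generating-function manipulation of $(1-x)^{-(q+2)}$ against two copies of the binomial expansion of $(1-x)^{k+1}(1-x)^{l+1}$.

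A cleaner alternative would be to bypass the combinatorial identity entirely: since the valuations $c^\lambda_k$ take the explicit values $c^\lambda_k(\CP^j_\lambda) = (\pi/\lambda)^k\binom{j+1}{k+1}$ for $j\ge k$ (cf.\ \eqref{eq_value_chern}), and since by Corollary \ref{coro:span} global invariant valuations and by the first bullet following Proposition \ref{prop:C curv meas} the complex kinematic operator are determined by values on totally geodesic complex subspaces, one can verify the proposed formula by plugging in the templates $(\CP^a_\lambda,\CP^b_\lambda)$ and checking that both sides yield the same numerical value. This reduces to a finite-dimensional linear algebra check, still requiring one Vandermonde-type identity, but of a more elementary nature.

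For the global formula \eqref{eq:global c kinematic}, globalize \eqref{eq:local c kinematic} to obtain
$$\int_{G_\lambda} c_q(X\cap gY)\,dg = \frac{1}{n!}\sum_{k+l\ge n+q}\left(-\frac{\lambda}{\pi}\right)^{k+l-n-q}\binom{k+l-q}{n} c_k(X)\,c_l(Y).$$
For complex subvarieties $X,Y\subset\CPn_\lambda$ in general position with $\lambda>0$, the intersection $X\cap gY$ is a complex subvariety of pure expected dimension for a.e.\ $g\in G_\lambda$, and the family $\{X\cap gY\}_g$ is a flat family of subvarieties outside a codimension-one subset of $G_\lambda$; by the topological invariance of Chern--MacPherson classes within such families, $c_q(X\cap gY)$ is constant a.e.\ in $g$ and equals $c_q(X\cap Y)$. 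Under the normalization \eqref{eq:normal haar} one has $\vol(G_\lambda)=\vol(\CPn_\lambda)=\pi^n/(\lambda^n n!)$, so dividing through and using $\lambda^n/\pi^n=(-1)^n(-\lambda/\pi)^n$ to absorb the prefactor yields \eqref{eq:global c kinematic}.
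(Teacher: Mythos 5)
Your plan for the local formula \eqref{eq:local c kinematic} matches the paper's proof essentially verbatim: both expand $\Ch_q^\lambda$ in the $\Gamma_{2r,r}$ basis via \eqref{eq:chern cm=gamma}, apply Theorem \ref{thm:kc gamma}, convert back via \eqref{eq_gamma_Chern}, and reduce to a combinatorial identity equating an alternating triple binomial sum with $\binom{k+l-q}{n}$; the paper settles it by one application of Vandermonde followed by a Laurent-series coefficient extraction from $(1+x)^{-(q+2)}(1+1/x)^{k+l+2}$, which is precisely the ``generating-function manipulation'' you anticipate. Your template alternative via $(\CP^a_\lambda, \CP^b_\lambda)$ is a legitimate variant, though it ultimately runs into the same Vandermonde-type sum. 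Your bookkeeping for the passage from the local formula to \eqref{eq:global c kinematic}, including the $\vol(G_\lambda)=\pi^n/(\lambda^n n!)$ normalization and the $(-1)^n$ absorption, is correct.

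The genuine gap is in your justification that $c_q(X\cap gY)$ is a.e.\ constant in $g$. You invoke ``topological invariance of Chern--MacPherson classes within flat families,'' but this is not a valid principle: CSM classes are \emph{not} invariants of flat degenerations (a smooth conic degenerating to a nodal union of two lines already changes $c_0$, for instance). Moreover you only assert the bad locus has codimension one, which is not even enough to ensure the good locus is connected, so ``locally constant'' would not upgrade to ``constant.'' The paper's argument is different and it is exactly the codimension that does the work: the set of $g$ for which $X$ and $gY$ fail to meet transversely has \emph{real} codimension $\ge 2$ in $G_\lambda$, so the complementary set $S$ is connected; on $S$ the normal cycle $g\mapsto N(X\cap gY)$ varies continuously, hence so does $f(g)=c_q(X\cap gY)$ (it is the integral of a fixed form against this current); and $f$ takes values in a discrete set since it is a fixed linear combination of characteristic numbers of $X\cap gY$ with coefficients powers of $\pi/\lambda$. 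A continuous discrete-valued function on a connected set is constant. You would need to replace the flat-family/CSM-invariance appeal with this continuity-plus-discreteness argument (or with a constancy-of-topological-type argument over the connected transversality locus) for the global formula to be established.
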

The projective ($\lambda >0$) cases of these formulas were given by Shifrin in \cite{shifrin81, shifrin84}.

\begin{proof} 
By Theorem \ref{thm:kc gamma} and Proposition \ref{prop:chern cm=gamma},
\begin{align*}
K_{\C,\lambda}(\Ch^\lambda_q) =& \frac 1 {n!} \sum_{r\ge q} \binom{r+1}{q+1} \left(\frac\lambda
\pi\right)^{r-q} \times\\
&\times \sum_{i+j=n+r}\left(\sum_{k\ge i}\left(-\frac\lambda \pi\right)^{k-i} \binom{k+1}{i+1}\Ch^\lambda_k
\right)\otimes
\left(\sum_{l\ge j}\left(-\frac\lambda \pi\right)^{l-j} \binom{l+1}{j+1}\Ch^\lambda_l \right)\\
=& \frac{(-1)^{n+q}}{n!}\sum_{k+l\ge n+q}  \left(-\frac\lambda \pi\right)^{k+l-n-q}\Ch^\lambda_k\otimes\Ch^\lambda_l
\sum_{i+j\ge n+q} (-1)^{i+j}\binom{i+j-n+1}{q+1}\binom{k+1}{i+1}\binom{l+1}{j+1}
\end{align*}
so the relation \eqref{eq:local c kinematic} follows from the numerical identity
\begin{equation}\label{eq:numerical identity}
\sum_{i+j\ge n+q} (-1)^{i+j}\binom{i+j-n+1}{q+1}\binom{k+1}{i+1}\binom{l+1}{j+1}= (-1)^{n+q} \binom{k+l-q}{n}.
\end{equation}
To prove this we write the left hand side as
\begin{align*}
\sum_{r\ge n+q}(-1)^r\binom{r-n+1}{q+1}\sum_{i+j=r }\binom{k+1}{i+1}\binom{l+1}{j+1}
&= \sum_{r\ge n+q}(-1)^r\binom{r-n+1}{q+1} \binom{k+l+2}{r+2}.
\end{align*}
In view of standard identities (cf. e.g. \cite{gfology}, Section 2.5), the last expression is $(-1)^{n+q}$ times the
coefficient of $x^{-(n+q+2)}$ in the Laurent series 
$$
(1+ x)^{-(q+2)}\left(1+\frac 1 x\right)^{k+l+2} = x^{-(k+l+2)}(1+x)^{k+l-q}
$$
which yields \eqref{eq:numerical identity}.

To prove the intersection formula \eqref{eq:global c kinematic} for projective varieties, we recall  the well-known fact
that the set  of positions $g \in G_\lambda$ such that $X, gY$ fail to meet transversely has real codimension $\ge 2$ in
$G_\lambda$. Thus the complementary  set $S \subset G_\lambda$ is connected, and  the map $g\mapsto N(X\cap gY)$ is
continuous on $S$. Since $f(g):=c_q(X\cap gY) $ is given as the integral of a fixed differential form on the sphere
bundle of $\CP^n_\lambda$, it follows that $f $ is continuous at each $g\notin S$. On the other hand, $f(g)$ is a linear
combination of characteristic numbers of the variety $X\cap gY$, with coefficients given as powers of $\frac \pi
\lambda$. The set of  values of $f$ is discrete, so $f$ must be constant, with value given by the kinematic integral
divided by the volume of $G_\lambda$.
\end{proof}

{ Using Theorem \ref{thm:general tube}} we may also deduce the local tube formulas for smooth complex analytic
submanifolds of $\CP^n_\lambda$, which are equivalent to the local tube formulas of Gray (\cite{gray}, Thm.~7.20;
\cite{gray85}, Thm.~1.1).

\begin{theorem}[Gray] \label{thm_local_complex_tube}
Let $X \subset \CP^n_\lambda$ be a smooth complex analytic submanifold, and $U \subset X$ a relatively compact
open subset. Then for all sufficiently small $r>0$
\begin{align*}
\vol(\exp((N(X) \cap \piinv U)\times [0,r])) & = \\
\sum_k \omega_{2n-2k}\sn_\lambda^{2n-2k}(r)\cs_\lambda^{2k}(r)
& \frac{1}{k!} \sum_{j\ge k} \left(-\frac{\lambda}{\pi}\right)^{j-k} \binom{j}{k} C_j^\lambda(X ,U).
\end{align*}
\end{theorem}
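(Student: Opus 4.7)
The plan is to realize the left-hand side as the evaluation of a $G_\lambda$-invariant smooth curvature measure at the pair $(X,U)$, and then to identify that curvature measure on complex analytic submanifolds by comparing globalizations with Theorem \ref{thm:general tube}. Concretely, for $A \in \mathcal{P}(\CP^n_\lambda)$ with $r$ less than the reach of $A$ and Borel $V \subset A$, the function
\[
\Phi_r(A, V) := \vol(\exp((N(A) \cap \pi^{-1}V) \times [0,r]))
\]
is computed by integrating the Jacobian of the normal exponential map over $N(A) \cap \pi^{-1}V$, so it is given by integration of an invariant differential form on the sphere bundle. Thus $\Phi_r \in \curvinfty^{G_\lambda}(\CP^n_\lambda)$, and since we only need to evaluate it on complex analytic submanifolds, it suffices to compute its image $R_\lambda(\Phi_r) \in \cmeas^n_\lambda$.

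By Proposition \ref{prop:chern cm=gamma} the classes $\Ch_k^\lambda = R_\lambda(C_k^\lambda)$ for $0 \leq k \leq n$ form a basis of $\cmeas^n_\lambda$, so there exist unique functions $f_k(r)$ with
\[
R_\lambda(\Phi_r) = \sum_{k=0}^n f_k(r)\, \Ch_k^\lambda.
\]
Choose $\lambda_0 > 0$. By Proposition \ref{prop:C curv meas} together with \eqref{eq_value_chern}, the map $\glob_{\lambda_0}$ is injective on $\spann\{C_k^{\lambda_0}\}_{k}$, since the images $c_k^{\lambda_0}$ are distinguished by their values on the standard $\CP^j_{\lambda_0}$. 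Hence the $f_k(r)$ are determined by the global relation
\[
\vol(X_r) = \sum_{k=0}^n f_k(r)\, c_k^{\lambda_0}(X)
\]
for smooth compact complex analytic submanifolds $X \subset \CP^n_{\lambda_0}$.

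To compute the $f_k(r)$, I would apply Theorem \ref{thm:general tube} to such an $X$, using two simplifications specific to complex analytic subvarieties: first, $\mu^{\lambda}_{k,q} = [B_{k,q}]_\lambda$ vanishes on $X$ whenever $k > 2q$, since the defining 1-form $\beta$ vanishes on the normal cycle of a complex analytic variety, hence $\mu^{\lambda}_k(X) = 0$ for odd $k$, and $\mu^{\lambda}_{2m}(X) = \mu^{\lambda}_{2m,m}(X)$ for even $k$; second, equation \eqref{eq_mu_chern} gives
\[
\mu^{\lambda_0}_{2m,m}(X) = \frac{1}{m!}\sum_{j \ge m} \binom{j}{m}\left(-\frac{\lambda_0}{\pi}\right)^{j-m} c_j^{\lambda_0}(X).
\]
Substituting these expressions into Theorem \ref{thm:general tube} and collecting coefficients of $c_j^{\lambda_0}(X)$ produces the stated value of $f_k(r)$. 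The case of arbitrary $\lambda$ follows by analytic continuation, since every object in the formula depends analytically on $\lambda$. The only real technical point is confirming that $\Phi_r$ is a genuine smooth $G_\lambda$-invariant curvature measure in the sense of Definition \ref{def:curv & val}, which is a routine consequence of the smoothness and $G_\lambda$-equivariance of the normal exponential map on a tubular neighborhood of the zero section of $N(X)$.
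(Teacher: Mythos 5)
Your proposal is correct and shares the essential ingredients with the paper's proof: both realize the left-hand side as the pair $(X,U)$-evaluation of an invariant curvature measure $\Phi_r \in \curvun$, both invoke the vanishing of $[B_{kq}]_\lambda$ on complex analytic subvarieties to reduce the global tube formula of Theorem \ref{thm:general tube} to the $\mu^\lambda_{2m,m}$ terms, and both use \eqref{eq_mu_chern} to convert to Chern curvature measures. The route differs at the key reduction step. The paper observes that $\ker R_\lambda \supset \ker\glob_\lambda$ (from Propositions \ref{prop:C curv meas} and \ref{prop_kernels_glob}, since each spanning element $N_{kq}+\lambda\frac{q+1}{\pi}B_{k+2,q+1}$ is a combination of $B$'s and $\Gamma_{kq}$ with $k>2q$), so that $R_\lambda$ factors as $r_\lambda\circ\glob_\lambda$ and one can simply apply $r_\lambda$ to the global tube formula $\glob_\lambda\Psi_r^\lambda = \sum_k \omega_{2n-k}\sn_\lambda^{2n-k}(r)\cs_\lambda^{k}(r)\mu_k^\lambda$, directly and uniformly in $\lambda$. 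You instead determine the coefficients $f_k(r)$ for a fixed $\lambda_0>0$ by testing against the templates $\CP^j_{\lambda_0}$ (using \eqref{eq_value_chern}), and then invoke analytic continuation to pass to general $\lambda$. Both are valid; the paper's factorization is slightly cleaner because it avoids the continuation step and works at a fixed curvature throughout, while your template argument is more elementary and concrete but requires the extra step of checking that $\Phi_r$, $R_\lambda$, and the $C_k^\lambda$ all vary analytically in $\lambda$ under the transfer identification—a point that is true but which you should spell out a bit more explicitly if writing up the argument.
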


\begin{proof} For $0<r < \frac\pi{\sqrt\lambda}$, consider the invariant differential form 
$\psi^\lambda_r \in \Omega^{2n-1}(S\CP^n_\lambda)$ given by
$$
\psi_r^\lambda:= \int_0^r i_{\frac{\partial}{\partial t}}(\exp^*(d\vol_M \, dt))
$$
where $\exp: S\CP^n_\lambda \times \R\to \CP^n_\lambda$ is the exponential map and $\frac{\partial
}{\partial t}$ is the
tangent vector field to the $\R$ factor. The form $\psi_r^\lambda$ is clearly invariant under the action of
the isometry
group, so the associated curvature measure $\Psi^\lambda_r: =\Int(\psi_r^\lambda,0)$ belongs to $\curvun$. From the
definition we have 
$$
\vol(\exp((N(A) \cap \piinv U)\times [0,r])) = \Psi^\lambda_r(A,U)
$$
for any smooth submanifold $A \subset \CP^n_\lambda$, any relatively compact open set $U \subset X$ and
$r>0$ sufficiently small.

By Propositions \ref{prop:C curv meas} and \ref{prop_kernels_glob} we have $\ker R_\lambda\supset\ker\glob_\lambda$.
Hence there exists a map $r_\lambda:\V_\lambda^n\to \cmeas^n_\lambda$ such that $R_\lambda=r_\lambda\circ
\glob_\lambda$. By \eqref{eq_mu_chern}, this map is given by $r_\lambda(\mu_{kq}^\lambda)=0$ for $k\neq 2q$ and 
\begin{displaymath}
r_\lambda(\mu_{2q,q}^\lambda)=\frac{1}{q!}\sum_{k\geq q}\binom{k}{q}\left(-\frac \lambda \pi\right)^{k-q}
\Ch_k^\lambda.
\end{displaymath}
Using Theorem \ref{thm:general tube} we have
\begin{align*}
 R_\lambda(\Psi_r^\lambda)&=r_\lambda(\glob_\lambda\Psi_r^\lambda)\\
&=\sum_{k=0}^{2n} \omega_{2n-k} \sn_\lambda^{2n-k}(r)
\cs_\lambda^{k}(r) r_\lambda(\mu^\lambda_k)\\
&=\sum_{q=0}^n \omega_{2n-2q}  \sn_\lambda^{2n-2q}(r)
\cs_\lambda^{2q}(r) r_\lambda(\mu^\lambda_{2q,q}),
\end{align*}
and the result follows.
\end{proof}

\begin{corollary}[Gray, \cite{gray85}]
 The volume of the $r$-tube around $\CP^m_\lambda \subset \CP^n_\lambda, \lambda>0$, is given for {$r <
\frac{\pi}{2\sqrt \lambda}$} by
\begin{displaymath}
 \vol((\CP^m_\lambda)_r)=\frac{\pi^n}{n!}  \sum_{k=0}^m{\frac{1}{\lambda^k}} \binom{n}{k}
\sn_\lambda^{2n-2k}(r)\cs_\lambda^{2k}(r).
\end{displaymath}
\end{corollary}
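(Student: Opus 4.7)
The plan is to apply Theorem \ref{thm_local_complex_tube} to the totally geodesic submanifold $X=\CP^m_\lambda\subset\CP^n_\lambda$, taking $U=X$. Since $X$ has no boundary (and for $r<\frac{\pi}{2\sqrt\lambda}$ the exponential map on its normal bundle restricted to radius $r$ is injective), the left hand side evaluates to $\vol((\CP^m_\lambda)_r)$, while on the right hand side each evaluation $C_j^\lambda(X,X)$ becomes the Chern valuation $c_j^\lambda(\CP^m_\lambda)$.

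Next I would compute these Chern numbers explicitly. From \eqref{eq_value_chern}, $c_j^\lambda(\CP^m_\lambda)=\left(\frac{\pi}{\lambda}\right)^j\binom{m+1}{j+1}$ when $j\le m$, and vanishes otherwise. Substituting into Theorem \ref{thm_local_complex_tube} and using $\omega_{2n-2k}=\frac{\pi^{n-k}}{(n-k)!}$, the cross-terms in $\lambda$ cancel to give
\begin{equation*}
\vol((\CP^m_\lambda)_r)=\frac{\pi^n}{n!}\sum_{k=0}^n\binom{n}{k}\lambda^{-k}\sn_\lambda^{2n-2k}(r)\cs_\lambda^{2k}(r)\cdot S_k,
\end{equation*}
where
\begin{equation*}
S_k:=\sum_{j=k}^m(-1)^{j-k}\binom{j}{k}\binom{m+1}{j+1}.
\end{equation*}

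Thus the only remaining task is to show that $S_k=1$ for $0\le k\le m$, and that $S_k=0$ for $m<k\le n$ (the latter being obvious since the sum is empty). This is a standard binomial identity and will be the only real computation: I would prove it by downward induction on $k$, noting that $S_m=1$ and, using Pascal's rule together with $\binom{j+1}{k+1}\binom{m+1}{j+1}=\binom{m+1}{k+1}\binom{m-k}{j-k}$, that
\begin{equation*}
S_k-S_{k+1}=\binom{m+1}{k+1}\sum_{i=0}^{m-k}(-1)^i\binom{m-k}{i}=0
\end{equation*}
for $k<m$. I do not anticipate any real obstacle — the main point is the cancellation between powers of $\lambda/\pi$ in the local tube formula and the Chern numbers $(\pi/\lambda)^j\binom{m+1}{j+1}$, which is what makes the answer reduce to a purely combinatorial sum.
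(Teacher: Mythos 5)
Your proposal is correct and follows the paper's own route: globalize Theorem~\ref{thm_local_complex_tube} by taking $U=X=\CP^m_\lambda$, substitute the Chern numbers from \eqref{eq_value_chern}, and simplify the resulting double sum (which the paper leaves implicit). Your explicit verification that $S_k=\sum_{j=k}^m(-1)^{j-k}\binom{j}{k}\binom{m+1}{j+1}=1$ for $0\le k\le m$, via the telescoping identity $S_k-S_{k+1}=\binom{m+1}{k+1}(1-1)^{m-k}=0$, is precisely the "simplification" the paper asserts.
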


\proof
Globalizing Theorem \ref{thm_local_complex_tube} and replacing \eqref{eq_value_chern} leaves us with a double
sum which simplifies to the given formula.
\endproof

\subsection{General local tube formulas}\label{sect:local tube}
Let $M^n$ be a Riemannian manifold with injectivity radius $r_0 >0$. For $0<r<r_0$, the volume of the tube
of radius $r$ defines a smooth valuation $\tau_r \in \mathcal{V}(M)$, namely
$$
\tau_r := \int_M \chi(\cdot \cap B(p,r)) \, d\vol_M(p).
$$
On the other hand, the exponential map determines a curvature measure $T_r$ with $\tau_r= [T_r]$, as follows. Let
$\exp:SM\times \R \to M$ be the exponential map of $M$. We then define 
$T_r:= \Int( \psi_r,d\vol_M) \in \curvinfty(M)$,  where
$$
\psi_r:= \int_0^r i_{\frac\partial{\partial t}}(\exp^*(d\vol_M))\, dt \in \Omega^{n-1}(SM).
$$

Next we show how to recover the curvature measures $T_r$ from the valuations $\tau_r$ by
valuation-theoretic operations. In particular this permits us to derive the local form of the tube formula in
complex space forms from the global formula given in Theorem \ref{thm:general tube}. Recall from \cite{befu11}
the first variation operator 
$
\delta: \mathcal{V}(M) \to \curvinfty(M).
$

\begin{theorem}\label{thm:local tube} In a general Riemannian manifold, 
$$
T_r = \vol + \int_0^r \delta \tau_s \, ds.
$$
\end{theorem}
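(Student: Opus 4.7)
The approach is to realize both sides as curvature measures given by explicit differential forms on $SM$ and $M$, and reduce the identity to an infinitesimal statement in $r$ by the fundamental theorem of calculus.

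First, I observe that for sufficiently regular $A \subset M$ and $r < r_0$, Fubini gives $\tau_r(A) = \vol(A_r) = \int_A d\vol_M + \int_{N(A)} \psi_r$; hence $\tau_r = \Glob(\psi_r, d\vol_M)$ while by definition $T_r = \Int(\psi_r, d\vol_M)$. Since $\psi_0 = 0$, we get $T_0 = \vol$, which matches the right-hand side of the claim at $r=0$. It therefore suffices to verify the differentiated identity
\begin{equation*}
\frac{dT_r}{dr} = \delta\tau_r, \qquad 0 < r < r_0.
\end{equation*}
The left side is $\Int(\dot\psi_r, 0)$, where $\dot\psi_r := \frac{d\psi_r}{dr} = i_{\partial/\partial t}(\exp^* d\vol_M)\big|_{t=r} \in \Omega^{n-1}(SM)$.

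Second, I would compute $\delta\tau_r$ using the explicit formula for the first variation operator from \cite{befu11}. There $\delta \colon \V(M) \to \curvinfty(M)$ sends a valuation $\Glob(\omega, \phi)$ to a curvature measure represented by a specific horizontal $(n-1)$-form on $SM$ constructed from $\omega$, $\phi$, and the Reeb vector field $T$. The crucial geometric input is that, with the standard contact structure on $SM$, $T$ is the geodesic spray, so Reeb-field operations amount to derivatives along geodesic translation.

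Third, I verify that applying this formula to the representing pair $(\psi_r, d\vol_M)$ of $\tau_r$ yields precisely the form $\dot\psi_r$. This is structurally natural: $\psi_r$ is built as an integral $\int_0^r i_{\partial/\partial t}(\exp^* d\vol_M)\, dt$ along the geodesic parameter, so contracting with the Reeb field (which generates that same parameter shift) returns $\dot\psi_r$ evaluated at the endpoint $t = r$. An independent consistency check is provided by the global identity $[\delta \tau_r](A) = \frac{d}{ds}\big|_{s=0}\tau_r(A_s) = \frac{d}{dr}\tau_r(A) = [\frac{dT_r}{dr}](A)$, which follows from $A_{r+s} = (A_s)_r$.

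The main obstacle will be Step 3: tracking precisely the interaction between the Reeb-field contraction appearing in the definition of $\delta$ and the $t$-integrand defining $\psi_r$, and confirming that the $d\vol_M$ contribution arising from the ``$\phi$ part'' of $\tau_r$ does not produce unwanted residual terms. Once this local matching of horizontal $(n-1)$-forms is established, integration from $0$ to $r$ combined with $T_0 = \vol$ yields the stated identity.
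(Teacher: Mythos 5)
Your strategy is genuinely different from the paper's. You propose to verify the differentiated identity $\frac{d}{dr}T_r = \delta\tau_r$ at the level of differential forms on $SM$, by invoking the explicit formula for the first variation operator $\delta$ and matching horizontal $(n-1)$-forms; the paper instead works directly with the geometric definition of $\delta$ on test objects (compact smooth domains $A$, test functions $f$), applies the classical first variation formula for the volume enclosed by $\exp_r\circ\tilde F_t(N(A))$, and reduces to an elementary Riemannian geometry lemma. Both are legitimate; yours is slicker if the form-level formula for $\delta$ is at hand, while the paper's is more self-contained.

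However, there is a genuine gap: you explicitly flag Step 3 as ``the main obstacle'' and do not carry it out, and what you write is not quite a correct description of what needs to happen. It is not the \emph{contraction} $i_T$ of $\psi_r$ that ``returns $\dot\psi_r$''; the relevant computation is a \emph{Lie derivative} telescoping. Writing $\psi_r = \int_0^r \Phi_t^*(i_T\pi^*d\vol_M)\,dt$, where $\Phi_t$ is the geodesic flow generated by the Reeb field $T$, one has $L_T\Phi_t^*\eta = \frac{d}{dt}\Phi_t^*\eta$, hence
\begin{equation*}
L_T\psi_r = \Phi_r^*(i_T\pi^*d\vol_M) - i_T\pi^*d\vol_M,
\end{equation*}
so that $L_T\psi_r + i_T\pi^*d\vol_M = \dot\psi_r$, which is exactly the matching you need if $\delta\Glob(\omega,\phi)$ is represented by $L_T\omega + i_T\pi^*\phi$ (or something cohomologous modulo $\ker\Int$). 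So the plan does close up, but you would need to quote the precise statement from \cite{befu11}, check that the representative you obtain agrees with $\dot\psi_r$ modulo $\alpha\wedge(\cdot)$ and $d\alpha\wedge(\cdot)$, and actually perform the computation above rather than appeal to it being ``structurally natural.'' Note also that your ``independent consistency check'' $(A_s)_r = A_{r+s}$ only verifies the \emph{globalized} statement $[\frac{d}{dr}T_r](A) = [\delta\tau_r](A)$; since the map $\curvinfty(M)\to\V(M)$ is not injective, this does not distinguish between curvature measures with the same globalization and hence does not prove the theorem.
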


This theorem is an immediate consequence of the following proposition.
 
\begin{proposition} Let $A$ be a compact domain with smooth boundary and $\reach A>r$. Then, for
every smooth function $f\in C^\infty(M)$,
 \[
 \frac{d}{dr} T_r(A,f)=\delta \tau_r(A,f).
 \]
\end{proposition}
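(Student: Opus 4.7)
The plan is to verify directly that both sides equal $\int_{\partial A_r}(f\circ\pi_0)\,d\vol_{\partial A_r}$, where $\pi_0:\overline{A_r\setminus A^\circ}\to \partial A$ is the nearest-point projection, well defined and smooth because $r<\reach A$.

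To compute the left-hand side, apply Definition \ref{def:curv & val} to $T_r=\Int(\psi_r,d\vol_M)$:
\[
T_r(A,f)=\int_{N(A)}\pi^*f\wedge \psi_r+\int_A f\,d\vol_M,
\]
and observe that the second term does not depend on $r$. Decomposing $\exp^*(d\vol_M)=dt\wedge\alpha_t+\beta_t$ on $SM\times[0,r_0)$, where $\beta_t$ contains no $dt$ factor, gives $\psi_r=\int_0^r\alpha_s\,ds$ and hence $\partial_r\psi_r=\alpha_r$. Since $r<\reach A$, the map $E:(\xi,t)\mapsto\exp_{\pi\xi}(t\xi)$ restricts to a diffeomorphism $N(A)\times[0,r]\to\overline{A_r\setminus A^\circ}$ satisfying $\pi\circ E(\xi,t)=\pi\xi=\pi_0(E(\xi,t))$, and the restriction of $\alpha_r$ to $N(A)$ coincides with $E(\cdot,r)^*d\vol_{\partial A_r}$. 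Changing variables yields
\[
\frac{d}{dr}T_r(A,f)=\int_{N(A)}\pi^*f\wedge\alpha_r=\int_{\partial A_r}(f\circ\pi_0)\,d\vol_{\partial A_r}.
\]

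For the right-hand side, I would invoke the characterization of the first variation operator from \cite{befu11}: if $X$ is a smooth vector field on $M$ whose normal component on $\partial A$ equals $f|_{\partial A}$ and $\phi_\epsilon$ is its flow, then
\[
\delta\tau_r(A,f)=\left.\frac{d}{d\epsilon}\right|_{\epsilon=0}\tau_r(\phi_\epsilon A).
\]
For a smooth compact $B$ with $\reach B>r$ one has $\tau_r(B)=\int_M\chi(B\cap B(p,r))\,d\vol_M(p)=\vol(B_r)$, so the computation reduces to $\frac{d}{d\epsilon}\vol((\phi_\epsilon A)_r)$. Setting $d_\epsilon(y):=d(y,\phi_\epsilon A)$, the boundary is the level set $\{d_\epsilon=r\}$, and by the envelope theorem applied to $d_\epsilon(y)=\inf_{z\in A}d(y,\phi_\epsilon z)$, whose minimizer $z_0=\pi_0(y)$ is unique and depends smoothly on $\epsilon$ since $r<\reach A$,
\[
\left.\frac{d}{d\epsilon}\right|_{\epsilon=0}d_\epsilon(y)=-\langle\nu(z_0),X(z_0)\rangle=-f(\pi_0(y)).
\]
Hence $\partial A_r$ moves outward with normal speed $f\circ\pi_0$, so
\[
\delta\tau_r(A,f)=\int_{\partial A_r}(f\circ\pi_0)\,d\vol_{\partial A_r},
\]
matching the left-hand side.

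The main obstacle is justifying the envelope-theorem step rigorously: one must check that for every $y\in\partial A_r$ the minimizer $z_0\in\partial A$ remains unique and varies smoothly with $\epsilon$, so that only the normal component of $X$ along $\partial A$ enters the first-order variation of $d_\epsilon(y)$. This follows from $r<\reach A$, which forces the exterior cut locus of $\partial A$ to lie beyond $\partial A_r$, together with the standard smooth dependence of nearest-point projections under small smooth perturbations of the target set.
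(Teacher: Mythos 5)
Your proof is correct, and it takes a genuinely different route on the key step. You and the paper both boil the identity down to the geometric statement that the outward normal speed of $\partial A_r$ under a flow normal to $\partial A$ with speed $f$ equals $f\circ\pi_0$ (nearest-point projection to $\partial A$), and then integrate this over $\partial A_r$. For the left-hand side, you derive this explicitly by differentiating $\psi_r$, identifying $\partial_r\psi_r = i_{\partial/\partial t}(\exp^* d\vol_M)|_{t=r}$ with the pullback $E(\cdot,r)^*d\vol_{\partial A_r}$ along the normal exponential map, and changing variables; the paper leaves the corresponding manipulation implicit. For the right-hand side, you apply the envelope (Danskin) theorem to the distance function $d_\epsilon(y)=d(y,\phi_\epsilon A)$, which directly gives the normal speed, whereas the paper parametrizes $\partial(F_t A)_r = \exp_r\circ\tilde F_t(N(A))$, invokes the classical first variation of volume for a family of hypersurfaces, and then disposes of the vertical ($\tilde F_t$) contribution to the variation vector via a short Riemannian lemma (a Gauss-lemma-type orthogonality). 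The paper's route avoids the regularity discussion you flag at the end — uniqueness and smooth dependence of the minimizer in $\epsilon$ — by never treating $\partial A_r$ as a level set; on the other hand, your route does not require the lifted contact transformations $\tilde F_t$ or the auxiliary lemma. The concern you raise is genuine but resolvable exactly as you say: for $r<\reach A$ the foot point map is single-valued and smooth, and perturbing by $\phi_\epsilon$ is a small $C^1$ perturbation, so the hypotheses of the envelope theorem hold; it would be worth spelling this out since the envelope theorem is the load-bearing step.
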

\proof
Let  $\nu$ be the unit outer normal field along $\partial A$, and let $X$ be a smooth vector field { on $M$} with
$X(x)=f(x) \nu(x)$ for every $x\in\partial A$. Consider the flow $F_t:M\rightarrow M$ defined by $X$, and the
induced flow of contact transformations $\tilde F_t:SM\rightarrow SM$. Then
\[
\delta \tau_r(A,f)=\left.\frac{d}{dt}\right|_{t=0}\tau_r(F_tA)= \left.\frac{d}{dt}\right|_{t=0}
\vol((F_tA)_r).
\]
Let  $\exp:TM\rightarrow M$ be the exponential map, and $\exp_r:SM\rightarrow M$ be given by
$\exp_r(x, v)=\exp(x,r v)$. Then $\partial (F_tA)_r=\exp_r\circ\tilde F_t(N(A))$. Denoting
$G_t=\exp_r\circ\tilde F_t$, we look for the derivative at $t=0$ of the volume enclosed by $G_t(N(A))$. Hence,
the first variation formula for volume (cf. e.g. \cite{spivak}, p. 193, eq (II)) yields
\[
 \left.\frac{d}{dt}\right|_{t=0} \vol((F_tA)_r)=\int_{\partial A_r} \left\langle
\left.\frac{d}{dt}\right|_{t=0}G_t(x, \nu(x)),  \nu_r(y)\right\rangle d_y\vol_{n-1}
\]
where $x=x(y)$ is the point in $A$ closest to $y$, $ \nu_r$ is the outer unit normal vector to $\partial A_r$, and
$d\vol_{n-1}$ denotes
the surface area of $\partial A_r$. 

By the lemma below (taking $(p(t),v(t))=\tilde F_t(x, \nu(x))$), the integrand in the last integral equals
\[
\left \langle \left.\frac{d}{dt}\right|_{t=0}G_t(x, \nu(x)),\left.\frac{d}{du}\right|_{u=r}\exp(x,u
 \nu_r(x))\right\rangle= \left\langle  \left.\frac{d}{dt}\right|_{t=0} F_t(x), \nu(x)\right\rangle= f(x),
\]
and the result follows.
\endproof

\begin{lemma}
Let $(p(t),v(t))$ be a curve in $SM$. Then
\[
 \left\langle \left.\frac{d}{dt}\right|_{t=0} \exp(p(t),rv(t)),\left.\frac{d}{ds}\right|_{s=r} \exp(p(0),sv(0))
\right\rangle= \left\langle \left.\frac{d}{dt}\right|_{t=0} p(t),v(0) \right\rangle.
\]
\end{lemma}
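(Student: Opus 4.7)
The plan is to recognize both sides as evaluations of a natural geodesic variation and then exploit the standard Jacobi-field identity for the normal component along a geodesic.

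First I would define the two-parameter map $\gamma_t(s):=\exp(p(t),sv(t))$ for $t$ near $0$ and $s\in[0,r]$. For each fixed $t$ this is a geodesic (since $v(t)\in S_{p(t)}M$), so we have a geodesic variation of $\gamma_0$. Let $J(s):=\partial_t\gamma_t(s)|_{t=0}$ be the associated Jacobi field along $\gamma_0$. Then by construction the left-hand inner product is $\langle J(r),\dot\gamma_0(r)\rangle$, while the right-hand side is $\langle p'(0),v(0)\rangle=\langle J(0),\dot\gamma_0(0)\rangle$ (using $\dot\gamma_0(0)=v(0)$ and $J(0)=p'(0)$). The claim therefore reduces to showing that $s\mapsto\langle J(s),\dot\gamma_0(s)\rangle$ is constant.

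Next I would compute its $s$-derivatives. Since $\gamma_0$ is a geodesic,
\[
 \tfrac{d}{ds}\langle J,\dot\gamma_0\rangle=\langle \nabla_{\dot\gamma_0}J,\dot\gamma_0\rangle,
\]
and a second differentiation combined with the Jacobi equation $\nabla_{\dot\gamma_0}\nabla_{\dot\gamma_0}J=-R(J,\dot\gamma_0)\dot\gamma_0$ together with the antisymmetry of $R(\cdot,\dot\gamma_0)\dot\gamma_0$ in its first and fourth slots gives $\tfrac{d^2}{ds^2}\langle J,\dot\gamma_0\rangle=0$. Hence $\langle J(s),\dot\gamma_0(s)\rangle$ is affine in $s$, so it suffices to check that its derivative at $s=0$ vanishes.

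Finally, the standard symmetry of the connection applied to the parametrized map $\gamma_t(s)$ (i.e.\ $\nabla_{\partial_s}\partial_t\gamma=\nabla_{\partial_t}\partial_s\gamma$) yields
\[
 \nabla_{\dot\gamma_0}J\bigr|_{s=0}=\tfrac{D}{dt}\Bigl|_{t=0}\dot\gamma_t(0)=\tfrac{D v}{dt}(0).
\]
Because $v(t)$ is a \emph{unit} vector for every $t$, differentiating $\langle v(t),v(t)\rangle\equiv 1$ gives $\langle\tfrac{Dv}{dt}(0),v(0)\rangle=0$, i.e.\ $\langle\nabla_{\dot\gamma_0}J(0),\dot\gamma_0(0)\rangle=0$. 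Therefore the linear function $\langle J,\dot\gamma_0\rangle$ has zero derivative, is constant, and equals its value at $s=0$, which is $\langle p'(0),v(0)\rangle$; evaluating at $s=r$ gives the lemma. The only subtle point—really the only place where the hypothesis $(p,v)\in SM$ rather than $TM$ is used—is this last step extracting $a=0$ from the unit-length constraint; the rest is just Gauss-lemma-style bookkeeping.
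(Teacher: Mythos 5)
Your proof is correct, and since the paper dismisses this lemma as ``an easy exercise in Riemannian geometry'' without giving any argument, you have simply supplied the standard details: interpret both sides via the geodesic variation $\gamma_t(s)=\exp(p(t),sv(t))$ and its Jacobi field $J$, use the affine dependence of $\langle J,\dot\gamma_0\rangle$ on $s$ (which follows from the Jacobi equation and the symmetries of $R$), and kill the slope at $s=0$ using the symmetry of the connection together with the unit-length constraint on $v(t)$. Your closing observation correctly identifies the one place where $(p,v)\in SM$ (rather than $TM$) is essential. This is exactly the argument the authors had in mind.
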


\proof
This is an easy exercise in Riemannian geometry. 
\endproof

Theorem \ref{thm:local tube} may be applied to the $\CP^n_\lambda$ via

\begin{proposition}
 \begin{equation}\label{varmulambda}
  \delta \mu_k^\lambda=\frac{\omega_{2n-k-1}}{\omega_{2n-k}}\left(2\pi \Delta_{k-1}-\lambda\sum_{q=0}^{\lfloor \frac
k2\rfloor}(k-2q+1)B_{k+1,q}\right).
 \end{equation}
\end{proposition}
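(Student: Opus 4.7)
The plan is to compute $\delta \mu_k^\lambda$ by representing $\mu_k^\lambda$ as the globalization of an invariant differential form on the sphere bundle $S\CPn_\lambda$, then applying the explicit formula for $\delta$ from \cite{befu11} in terms of the Rumin differential $D_\lambda$. By Definition \ref{def:tilde mu} we have $\mu_k^\lambda = \sum_q \mu_{kq}^\lambda$, so we may write $\mu_k^\lambda = \Glob_\lambda(\omega_k^\lambda, 0)$ for an invariant $(2n-1)$-form $\omega_k^\lambda$ assembled from the $\beta_{kq}$'s (for $k > 2q$), together with a modification accounting for the $\Gamma$-type forms entering the $\mu_{2q,q}^\lambda$ terms. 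The computation then reduces to analyzing $D_\lambda \omega_k^\lambda$ and extracting a representative of the resulting curvature measure.

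By the analysis of Section \ref{sect:an cont 1}, in particular \eqref{eq_dvarphi} and Lemma \ref{lemma_rumin_analytic}, the Rumin differential decomposes as $D_\lambda = D_0 + \lambda R$, where $D_0$ is the Rumin differential on the sphere bundle of $\Cn$ and $R$ encodes the curvature contribution proportional to $\lambda$. Applied to $\omega_k^\lambda$, the $D_0$ piece corresponds to the Euclidean first variation of the intrinsic volume $\mu_k$ on $\R^{2n}$, which by Federer's classical formula yields $\frac{2\pi \omega_{2n-k-1}}{\omega_{2n-k}} \Delta_{k-1}$. This accounts for the $\lambda$-independent term in the statement.

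For the curvature correction, one must compute $\lambda R(\omega_k^\lambda)$ using the K\"ahler structure of $\CPn_\lambda$ and the explicit curvature forms $\Omega_{ij}$ of Lemma \ref{lem:kono curv}. The essential point is that under the Rumin reduction, the correction produces terms of the form $\lambda \cdot \theta_1 \wedge (\text{wedge products in } \theta_0, \theta_1, \theta_2)$, which are multiples of the basic forms $\beta_{k+1,q}$. After identifying coefficients, the correction takes the form $-\lambda \frac{\omega_{2n-k-1}}{\omega_{2n-k}} \sum_q c_{k,q} B_{k+1,q}$, and the proposition asserts $c_{k,q} = k - 2q + 1$.

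The main obstacle is the combinatorial bookkeeping needed to verify $c_{k,q} = k - 2q + 1$. This entails tracking how the curvature-induced $2$-form in $R$ interacts with the $k-2q-1$ copies of $\theta_1$ in $\beta_{kq} = c_{nkq}\, \beta \wedge \theta_0^{n-k+q} \wedge \theta_1^{k-2q-1} \wedge \theta_2^q$, together with the normalization constants $c_{nkq}$. A useful sanity check is to evaluate both sides of \eqref{varmulambda} on a metric ball using Lemma \ref{lem:mukq of br} together with the explicit densities \eqref{B_on_balls}--\eqref{Gamma_on_balls} for $B_{k+1,q}(B_r,\cdot)$ and the analogous density for $\Delta_{k-1}(B_r,\cdot)$; this pins down the coefficients up to the kernel of evaluation on balls, and the remaining ambiguity is eliminated by the isotropy of $G_\lambda$ acting on the relevant representations of $U(n)$.
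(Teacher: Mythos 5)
The paper's ``proof'' of this proposition is a one-line citation to Proposition~3.7 of~\cite{ags}, which computes these first variations directly by classical differential-geometric methods; your proposal instead sketches an attack via the Rumin differential, so the two routes are genuinely different. However your sketch has several unrepaired gaps.

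First, the claimed decomposition $D_\lambda = D_0 + \lambda R$ of the Rumin differential is not established, and it does not follow from Lemma~\ref{lemma_rumin_analytic}, which only shows $D_\lambda$ is rational (hence analytic) in $\lambda$. The exterior derivative $d_\lambda$ is affine in $\lambda$ by~\eqref{eq_dtheta}--\eqref{eq_dvarphi}, but the Rumin correction term $\xi_\lambda$ solving $D_\lambda\omega = d_\lambda(\omega + \alpha\wedge\xi_\lambda)$ is found via Cramer's rule with $\lambda$-polynomial coefficients and thus is a priori rational, not constant, in $\lambda$. That the final answer~\eqref{varmulambda} is affine in $\lambda$ is not an automatic feature of the operator; it needs to be shown for the particular forms involved. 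Second, you explicitly defer ``the combinatorial bookkeeping needed to verify $c_{k,q}=k-2q+1$,'' which is the actual content of the proposition. Third, the proposed sanity check by evaluation on metric balls cannot identify the coefficients: by~\eqref{B_on_balls}, the curvature measures $B_{k+1,q}(B_r,\cdot)$ have densities on $\partial B_r$ proportional to $\ct_\lambda^{2n-k-2}(r)$ for every $q$, so evaluation on balls imposes only a single scalar constraint on the $\lfloor k/2\rfloor+1$ unknown coefficients, and the appeal to ``isotropy of $G_\lambda$'' does not break this degeneracy since all the $B_{k+1,q}$ are already $G_\lambda$-invariant. To determine the individual coefficients one would need templates whose normal cycles distinguish the $B_{k+1,q}$, for instance generalizations of the polyhedral models $E_{kp}$ of Section~\ref{caracterization}, or a direct computation of the Rumin/Lie-derivative term, neither of which is carried out here.
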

\proof
This follows from Proposition 3.7 in \cite{ags}.
\endproof

\begin{corollary} \label{cor:gray local} In $\CPn_\lambda$,
 \begin{equation}
T_r=\vol +\sum_{k=0}^{2n} \omega_{2n-k-1}\Big(2\pi
\Delta_{k-1}-\lambda\sum_{q=0}^{\lfloor \frac k2\rfloor}(k-2q+1)B_{k+1,q}\Big)  \int_0^r \sn_\lambda^{2n-k} (s
)\cs_\lambda^{k}(s) ds \label{agv} 
\end{equation}
\end{corollary}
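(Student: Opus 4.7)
The plan is to combine the three ingredients immediately preceding the corollary: the abstract local-to-global tube recovery (Theorem \ref{thm:local tube}), the explicit global tube formula in $\CPn_\lambda$ (Theorem \ref{thm:general tube}), and the first variation formula \eqref{varmulambda} for the Lipschitz--Killing valuations $\mu_k^\lambda$. There is no further geometric work to do; the proof should amount to linearity of $\delta$ together with an interchange of summation and integration.

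Concretely, I would start from Theorem \ref{thm:local tube}, which asserts
\[
T_r = \vol + \int_0^r \delta \tau_s\, ds,
\]
valid in any Riemannian manifold. Specializing $M=\CPn_\lambda$, Theorem \ref{thm:general tube} identifies the global tube valuation as
\[
\tau_s = \sum_{k=0}^{2n} \omega_{2n-k}\,\sn_\lambda^{2n-k}(s)\cs_\lambda^{k}(s)\,\mu_k^\lambda.
\]
Since the coefficients depend only on $s$ and $\delta:\V(M)\to \curvinfty(M)$ is linear, I would pull the coefficients out to get
\[
\delta\tau_s = \sum_{k=0}^{2n} \omega_{2n-k}\,\sn_\lambda^{2n-k}(s)\cs_\lambda^{k}(s)\,\delta\mu_k^\lambda.
\]

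Next, the proposition just above the corollary (equation \eqref{varmulambda}) gives
\[
\omega_{2n-k}\,\delta\mu_k^\lambda = \omega_{2n-k-1}\Big(2\pi\,\Delta_{k-1} - \lambda\sum_{q=0}^{\lfloor k/2\rfloor}(k-2q+1)\,B_{k+1,q}\Big),
\]
which absorbs the $\omega_{2n-k}$ factor and produces precisely the combination of curvature measures appearing in the statement. Substituting this into the expression for $\delta\tau_s$ and then integrating term-by-term from $0$ to $r$ (justified since the sum is finite and all integrands are continuous in $s$) yields
\[
\int_0^r\delta\tau_s\, ds = \sum_{k=0}^{2n}\omega_{2n-k-1}\Big(2\pi\,\Delta_{k-1} - \lambda\sum_{q}(k-2q+1)B_{k+1,q}\Big)\int_0^r \sn_\lambda^{2n-k}(s)\cs_\lambda^{k}(s)\, ds,
\]
and adding the leading $\vol$ term produces \eqref{agv}.

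Since each of the three inputs is already established in the paper, I do not anticipate any real obstacle; the only point requiring a moment's care is keeping the indices matched between $\mu_k^\lambda$, $\Delta_{k-1}$ and $B_{k+1,q}$ after applying \eqref{varmulambda}, and noting that the boundary cases ($k=0$ and $k=2n$) are handled automatically because $\Delta_{-1}$ and the $B_{2n+1,q}$ terms drop out (the former is absent, the latter vanishes since $\omega_{-1}=0$ in our conventions). No analytic continuation in $\lambda$ or additional module-theoretic input is required.
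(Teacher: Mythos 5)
Your proof is correct and follows exactly the route the paper intends: specialize Theorem \ref{thm:local tube} to $\CPn_\lambda$, substitute the global tube formula from Theorem \ref{thm:general tube}, apply $\delta$ termwise via \eqref{varmulambda}, and integrate. One small slip in your parenthetical on boundary cases: $\omega_{-1} = \pi^{-1/2}/\Gamma(1/2) = 1/\pi \neq 0$, so the $k=2n$ summand does \emph{not} vanish wholesale (indeed $\omega_{-1}\cdot 2\pi\,\Delta_{2n-1}\int_0^r \cs_\lambda^{2n}(s)\,ds$ supplies the leading surface-area contribution); the $B_{2n+1,q}$ factors drop out instead because there is no admissible $q$ with $\max\{0,\,n+1\}\le q \le n$, so those curvature measures are simply not defined in $\curvun$.
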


Formula \eqref{agv} was obtained in \cite{abbena et al} by classical differential geometric methods.


\section{$\curv^{U(n)}$ as a module over $\Val^{U(n)}$}\label{sect:module_cn}

By Corollary \ref{coro_module_versus_kin}, the semi-local kinematic formulas can be obtained from the
module structure. Since $\valun$ is generated by $t$ and $s$, it is enough to determine the action of these
valuations on $\curvun$. This will be achieved in this section. 

The basic outline is as follows. Multiplication by $s$ is determined by two properties, which will be shown
below: it is independent of the curvature $\lambda$, and it maps the span of the $B_{mp}$'s to itself. 

Multiplication by $t$ is determined by the Angularity Theorem \ref{thm_angular_flat} and the
obvious fact that it commutes with multiplication by $s$.

\subsection{Characterization of  the curvature measures $B_{kq}$}\label{caracterization}

Put 
$\mathrm{Beta}\subset \curvuinf$ for the space of all curvature measures $\Phi$ with the following property. Let
$k$ be  a nonnegative integer, $A \subset \Cn$  a differentiable polyhedron, $F^{2k} \subset A$ a face of
dimension $2k$ and $x \in F$. Suppose that $T_xF$ is a complex $k$-plane. Then the density at $x$ of 
$\Phi(A,\cdot)|_F$ with respect to $2k$ dimensional volume is zero. 
Let $\mathrm{Beta}_m=\mathrm{Beta}\cap \curv_m^{U(\infty)}$. 

\begin{proposition}\label{prop:b=b}
 $\mathrm{Beta}_m =\spann\{B_{mp}\}$.
\end{proposition}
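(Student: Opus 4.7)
The plan is to prove the two inclusions separately.

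First, I would verify $\spann\{B_{m,p}\}\subset \mathrm{Beta}_m$. The key geometric observation is that the 1-form $\beta$ on $S\C^n$, characterized by $\beta_{(x,\xi)}(v)=\langle\xi,J\pi_*v\rangle$, vanishes on the entire tangent space to the conormal cycle $N(A)$ at any point $(x,\xi)$ with $x\in F$ a face of complex tangent and $\xi$ in the normal cone of $A$ at $x$. Indeed, for vertical $v$ one has $\pi_*v=0$, so $\beta(v)=0$; and for the horizontal lift of $w\in T_x F$, the vector $Jw$ lies in $T_x F$ (by complexness of $T_x F$), hence $\langle\xi,Jw\rangle=0$ since $\xi\perp T_x F$. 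Because $\beta_{m,p}$ has $\beta$ as a wedge factor, the form $\beta_{m,p}$ restricted to $TN(A)$ vanishes identically near such $(x,\xi)$, and consequently the density of $B_{m,p}(A,\cdot)|_F$ with respect to $2k$-dimensional volume is zero.

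For the reverse inclusion, given $\Phi\in\mathrm{Beta}_m$ I would decompose $\Phi=B+\Psi$ with $B\in\spann\{B_{m,p}\}$ and $\Psi$ in a complementary subspace spanned by $\Gamma$-type curvature measures. By Step~1 and linearity, $\Psi\in\mathrm{Beta}_m$ as well, so it suffices to show $\Psi=0$. For the nontrivial even case $m=2k$, I would test $\Psi$ on a family of product polyhedra $A^{(j)}=P_1\times P_2^{(j)}\subset\C^k\times\C^{n-k}$ with $P_1$ fixed of real dimension $2k$ and $P_2^{(j)}$ a convex polytope with vertex at the origin whose normal cone $C^{(j)}$ varies. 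The face $F=P_1\times\{0\}$ has dimension $2k$ and complex tangent $\C^k\times\{0\}$. Using the bidegree decomposition of $\gamma_{m,p}$ (with $\gamma,\theta_0,\theta_1,\theta_2$ of bidegrees $(0,1),(0,2),(1,1),(2,0)$ respectively), the density of $\Gamma_{m,p}(A^{(j)},\cdot)|_F$ at an interior point of $F$ reduces to an integral over the link of $C^{(j)}$ in $S^{2n-2k-1}$. A careful case analysis, sorting the contributions by how many factors of $\theta_1$ and $\theta_2$ draw horizontal slots from $T_xF$, shows that these density functionals jointly separate the coefficients of $\Psi$, forcing $\Psi=0$.

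The main obstacle is the density computation in the second step and the linear-independence argument that follows. The forms $\theta_0,\theta_1,\theta_2$ are most naturally described in a moving frame adapted to $\xi$, and their action on horizontal vectors in $T_xF$ interacts nontrivially with the K\"ahler structure of $\C^{n-k}$ via the normal cone. One may prefer an indirect approach, combining the restriction map $R_\lambda$ of Proposition~\ref{prop:C curv meas} (which isolates the $\Gamma_{2q,q}$-components via restriction to complex subvarieties) with iterated multiplication by $s$ or the first variation $\delta$ of Section~\ref{sect:local tube}, in order to peel off $\Gamma$-components one at a time without performing the full differential-form computation explicitly.
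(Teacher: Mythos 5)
Your first step — the inclusion $\spann\{B_{m,p}\}\subset\mathrm{Beta}_m$ — is essentially the paper's argument and is correct: over a point $x$ of a face $F$ with complex tangent, every vector tangent to $N(A)$ at $(x,\xi)$ has horizontal projection in $T_xF$, so $\beta_{(x,\xi)}(v)=\langle\xi,J\pi_*v\rangle=0$ because $J(T_xF)=T_xF\perp\xi$.

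The second step has a genuine gap, and the test domains you propose cannot close it. If $P_1\subset\C^k$ is full-dimensional (hence flat) and $P_2^{(j)}\subset\C^{n-k}$ is a convex polytope with vertex at the origin, then over the face $F=P_1\times\{0\}$ the tangent to $N(A^{(j)})$ is $T_xF\oplus V$, where $V$ consists of vertical directions in the $\C^{n-k}$-factor. The nonzero horizontal differentials $dx_j,dy_j$ have indices $j\le k$, the nonzero vertical differentials $d\xi_j,d\eta_j$ have indices $j>k$, and each summand of $\theta_1=\sum_j(dx_j\wedge d\eta_j+d\xi_j\wedge dy_j)$ involves a single index $j$; hence $\theta_1$ restricts to zero on $T_xF\oplus V$, \emph{independently of the choice of $P_2^{(j)}$}. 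Since $\gamma_{m,p}$ carries the factor $\theta_1^{m-2p}$, your family of test domains detects only the coefficient of $\Gamma_{m,m/2}$ (and nothing when $m$ is odd, since a degree-$m$ curvature measure has zero density on flat $2k$-faces of a polytope unless $m=2k$). This is precisely why the paper's domain $E_{k,p}=D_k\times\C^p$ has a \emph{curved} singular stratum $V_k$, whose tangent is complex only at the base point: the normal line rotates as one moves along $V_k$, so the pullback of $d\eta_j$ along $N(E_{k,p})$ acquires a nonzero multiple of $dy_j$, $\theta_1$ pulls back nontrivially, and the density of $\Gamma_{m,j}$ becomes $\sigma_p\delta_j^p$ with $\sigma_p\ne0$, which does separate all the coefficients. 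Your alternative suggestions fare no better: $R_\lambda$ again detects only the $\Gamma_{2q,q}$-components, and the explicit $s$-multiplication formula of Proposition~\ref{cor_mults} is itself deduced from the very proposition you are trying to prove, so invoking it would be circular.
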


\begin{proof} 
Recall that each $B_{mp}$ is represented by a multiple of $\beta$. Let $F$ be a face of a smooth polyhedron,
and assume that $T_xF$ is a complex space for some $x\in F$. Then $\beta$ vanishes at every $\xi\in N(F)$
lying over $x$. Hence $\spann\{B_{mp}\}\subset \mathrm{Beta}_m$. 

We claim that $\mathrm{Beta}_m\cap { \ang}=\{0\}$. In order to prove the claim, suppose that $\Phi=\sum d_j
\Delta_{m,j}$ belongs to $\mathrm{Beta}_m$. We will show that $\Phi=0$. 

Let $z_j= x_j + i y_j, \ j = 0,\dots k$ be coordinates for $\C^{k+1}$, and consider the domain
$$
D_k:= \left\{\vec z: y_{0}\le \frac 1 2 (y_1^2 +\dots+ y_k^2), \ x_{0} \le 0\right\}{\subset \C^{k+1}.}
$$  Thus the boundary of $D_k$ consists of two smooth pieces meeting along the $2k$-dimensional submanifold
$$V_k:= \left\{x_{0} = 0, y_{0}= \frac 1 2 (y_1^2 +\dots+ y_k^2) \right\}.
$$
Now, for any $p\leq \frac m2$, put $k:= m-2p$ and consider the domain
$$
E_{kp}:= D_k \times \C^{p} \subset \C^{ k+1+p}.
$$
The boundary again has two smooth pieces, meeting along the singular locus  
$$
W_{kp} := V_k \times \C^p,
$$
which has tangent plane $T_0W_{kp} = 0 \times \C^{k+p}$.
We show that the density of $\Delta_{mj}({E_{m-2p,p}},\cdot)|_{W_{kp}}$ at 0 is 
$$
\Theta^{2k}(\Delta_{mj}({E_{m-2p,p}},\cdot){|_{W_{kp}}},0)=\frac{2(j-p+1)}{m-2p+2}\Theta^{2k}(\Gamma_{mj}({
E_{m-2p,p}},\cdot){|_{W_{kp}}},0)=\sigma_p\delta_j^p
$$
for a constant $\sigma_p\neq 0$. 

Let $z_j=x_j+iy_j$ and $w_l = u_l +i v_l$ (with $j=0,\ldots,k$ and $l=1,\ldots,p$) be coordinates in
$\C^{k+p+1}$. Let $\xi_j, \eta_j $ and $\mu_l,\nu_l$ be the corresponding coordinates for the fiber of
$T\C^{k+p+1} \to \C^{k+p+1}$. The fiber of $N(E_{kp})$ at  $ (\frac i 2 (y_1^2+\dots y_k^2),z_1, \dots,
z_k,w_1,\dots,w_p)\in W_{kp}$ is the quarter circle
\[
 \left\{\cos\psi(1,0,\ldots,0)+i\frac{\sin\psi}{\sqrt{1+y_1^2+\ldots+y_k^2}}(1,-y_1,\ldots,-y_k,0,\ldots,
0)\colon 0\leq \psi\leq \frac\pi{ 2}\right\}.
\]
This defines a  parametrization $F(\psi,\vec z, \vec w)$ of the $\left. N(E_{kp})\right|_{W_{kp}}$.
Evaluating at $(\psi, 0,0)$,
\begin{align*}
F^*dx_0&= F^* dy_{0} = 0, \\
F^*d\xi_0={-\sin\psi} d\psi,&\quad F^*d\eta_0={\cos\psi} d\psi,\\
F^*d\xi_j= 0,&\quad F^* d\eta_j = -\sin\psi d y_j, \quad j =1,\dots k,\\
F^*d\mu_l&= F^* d\nu_{l} = 0.
\end{align*}   
Since
\begin{align*}
 \theta_0 &=\sum_j d\xi_j \wedge d\eta_j+\sum_l d\mu_l \wedge d\nu_l,\\
\theta_1 &= \sum_j (dx_j \wedge d\eta_j + d\xi_j \wedge dy_j)+ \sum_l(du_l \wedge d\nu_l + d\mu_l
\wedge dv_l),\\
\theta_2 &= \sum_j dx_j \wedge dy_j + \sum_ldu_l \wedge dv_l,\\
\end{align*}
one computes
\begin{align*}
F^*\theta_0&=0\\
F^*\theta_1 &= - \sin\psi \sum_{j\ne 0} dx_j \wedge dy_j  \\
F^*\theta_2 &=  \sum_{j\ne 0} dx_j \wedge dy_j + \sum_ldu_l \wedge dv_l 
\end{align*}
at $(\psi,0,0)$. Hence
\[F^*(\gamma_{mj})=\frac{c_{nkq}}{2}F^*(\gamma \wedge \theta_0^{j-p} \wedge \theta_1^{m-2j}
\wedge \theta_2^j)=0\]
unless $j=p$. In this case
\begin{equation}
\left.F^*(\gamma_{mp})=F^*(\gamma \wedge \theta_1^k \wedge \theta_2^p) \right|_0= d\psi \wedge (-\sin\psi)^k
k! p! \bigwedge_j dx_j \wedge dy_j \wedge \bigwedge_l du_l \wedge dv_l.
\end{equation}
So the density at $0$ is nonzero, as claimed. This shows that $\mathrm{Beta}_m\cap \ang=\{0\}$.

We deduce that
\[
 \curv_m^{U(n)}=\spann\{B_{mp}\}\oplus\ang_m^{U(n)}\subset\mathrm{Beta}_m \oplus\ang_m^{U(n)}\subset \curvun_m,
\]
and the result follows.    
\end{proof}

\subsection{Multiplication by $s$}\label{sect:mult s}
From this point we identify $\mathcal{C}^{G_\lambda}(\CP_\lambda^n)$ and $\curvun$ by Proposition
\ref{prop_isom_curv_general}. Our next goal is to determine the action of $s\in\V^n_\lambda$ on $\curvun$. By
Proposition \ref{prop:module_analytic} and  Theorem 1 of \cite{ags} this action depends analytically on
$\lambda$.
\begin{proposition} \label{prop:s indep}
The action of $s$ on $\curvun$ is independent of $\lambda$.
\end{proposition}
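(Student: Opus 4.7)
The plan is to start from Corollary~\ref{cor_product_special}, which presents multiplication by $s$ as a kinematic integral. Writing $P_0\subset\CPn_\lambda$ for a fixed totally geodesic complex hyperplane and $c_\lambda$ for the normalization constant that makes $s=c_\lambda\,\mu^{G_\lambda}_{P_0}$ compatible with the area convention $s(E)=\area(E)/\pi$ on a $2$-dimensional polyhedron $E$ lying in a totally geodesic complex line, Corollary~\ref{cor_product_special} yields
\begin{equation}\label{eq:plan_s}
(s\cdot\Phi)(B,U)=c_\lambda\int_{G_\lambda}\Phi(B\cap gP_0,U)\,dg
\end{equation}
for every $\Phi\in\curvinfty^{G_\lambda}(\CPn_\lambda)$. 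My goal is to show that this operation, transported to $\curv^{U(n)}$ via $\tau_\lambda$, is independent of $\lambda$.

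Next I would localize \eqref{eq:plan_s} pointwise over $U$. The integrand is supported on $B\cap gP_0\cap U$, and only those $g$ for which $gP_0$ meets a neighbourhood of a given $x\in U$ contribute near $x$. Under $\exp_x^{-1}$, totally geodesic complex hyperplanes passing near $x$ correspond bijectively to complex affine hyperplanes in $T_xM\simeq\C^n$, and this correspondence is equivariant for the isotropy subgroup of $G_\lambda$ at $x$. Because $\Phi$ transfers under Proposition~\ref{prop_isom_curv_general} to a translation-invariant curvature measure $\tau_x(\Phi)\in\curv^{U(n)}$, the contribution to $(s\cdot\Phi)(B,U)$ at $x$ is a local kinematic integral in $\C^n$, computed from $\tau_x(\Phi)$ and the tangent cone of $B$ at $x$.

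The principal obstacle is to verify that the product of $c_\lambda$, the Jacobian of $\exp_x$ along normal geodesics to $gP_0$, and the Haar measure on $G_\lambda$ reduces, after pullback to $T_xM$, to the standard $\baron$-invariant kinematic density on affine complex hyperplanes of $\C^n$. The expected mechanism is that the normalization of $s$ pins $c_\lambda$ down: evaluating \eqref{eq:plan_s} with $\Phi=\vol_2$ on a $2$-dimensional polyhedron $E\subset\C\subset\CPn_\lambda$ in a totally geodesic complex line forces $c_\lambda$ to absorb precisely the curvature-dependent factors arising from the volume density $\sn_\lambda^{2n-1}(r)\cs_\lambda(r)$. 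This should reduce the required cancellation to a one-variable identity in $\sn_\lambda,\cs_\lambda$.

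Once this cancellation is carried out, \eqref{eq:plan_s} becomes an integral of a $\lambda$-independent pointwise density over $U$, proving $\lambda$-independence of $s\cdot\Phi$. As a fallback, since Proposition~\ref{prop:module_analytic} already gives analyticity of the action in $\lambda$, it would suffice to verify the claim on the basis $\{B_{kq},\Gamma_{kq}\}$ of $\curvun$; this could be done directly using the explicit expressions for these curvature measures in terms of $\alpha,\beta,\gamma,\theta_0,\theta_1,\theta_2$, reducing the question to checking that the Rumin-differential factors of the product formula (in the sense of Proposition~\ref{prop:module_analytic}) contribute $\lambda$-independently when paired against a representative invariant form for $s$.
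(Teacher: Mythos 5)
Your localization strategy is a genuinely different route from the paper's, but it leaves the key verification undone. The paper argues algebraically: by Corollary~\ref{coro_module_versus_kin} the module action is $\bar m = (\id\otimes\pd)\circ\bar k$, so
\[
s\cdot\Phi \;=\; \bigl\langle(\id\otimes\pd_\lambda\circ\glob_\lambda)\,K(\Phi),\,s\bigr\rangle,
\]
and $K$ is $\lambda$-independent by the Transfer Principle (Theorem~\ref{thm_transfer}), taken as a black box. What remains is to see that the linear functional $\langle\pd_\lambda\circ\glob_\lambda(\cdot),s\rangle$ on $\curvun$ is $\lambda$-independent, and this is done by a single template evaluation: writing $s = \tfrac{\lambda^{n-1}n!}{\pi^n}\mu^{G_\lambda}_{P_\lambda}$ and using Proposition~\ref{prop:pd phi}, one gets $\langle\pd_\lambda\phi,s\rangle = \tfrac{\lambda^{n-1}n!}{\pi^n}\phi(P_\lambda)$; all basis elements vanish on $P_\lambda=\CP^{n-1}_\lambda$ except $\Gamma_{2n-2,n-1}$, whose globalization gives $\vol(\CP^{n-1}_\lambda)=\tfrac{\pi^{n-1}}{\lambda^{n-1}(n-1)!}$, so the functional equals $\tfrac n\pi\Gamma_{2n-2,n-1}^*$. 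The entire $\lambda$-cancellation is the one-line arithmetic $\tfrac{\lambda^{n-1}n!}{\pi^n}\cdot\tfrac{\pi^{n-1}}{\lambda^{n-1}(n-1)!}=\tfrac n\pi$.

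Your main route tries to verify the cancellation pointwise, which amounts to re-deriving a special case of the Transfer Principle by hand, and the crucial step is not actually carried out. First, the assertion that "under $\exp_x^{-1}$, totally geodesic complex hyperplanes...correspond bijectively to complex affine hyperplanes in $T_xM$" is imprecise: $\exp_x^{-1}$ of a totally geodesic complex hypersurface not through $x$ is not a hyperplane; what you need is the (nearest-point, tangent-plane) correspondence, and the Jacobian comparison must be set up against that, not against $\exp_x^{-1}$. Second, you flag the Jacobian/Haar matching as the "principal obstacle" and announce it should reduce to a one-variable identity in $\sn_\lambda,\cs_\lambda$, but that identity is never written or proved; this is exactly the substance of the proof. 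The fallback (analyticity plus basis verification through Proposition~\ref{prop:module_analytic}) inherits the same difficulty: the Rumin differential $D_\lambda$ genuinely depends on $\lambda$ (cf.\ Lemma~\ref{lemma_rumin_analytic}), so the claimed $\lambda$-independent pairing is precisely what must be established and no mechanism is given. Either of your routes can be made to work, but both are heavier than necessary; the efficient move is to push the Transfer Principle through $K$ and reduce to a template evaluation, as the paper does.
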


\proof 
By analytic continuation, it is enough to show this for $\lambda >0$. Recall that $\vol(G_\lambda)=\vol
\CP^n_\lambda=\frac{\pi^n}{\lambda^n n!}$ and that 
\begin{displaymath}
 s= \frac{\lambda^{n-1} n!}{\pi^{n}} \int_{G_\lambda} \chi( gP_\lambda \cap \cdot)\, dg,
\end{displaymath}
where $P_\lambda$ is a fixed complex hyperplane.

Thus for $\phi \in \mathcal{V}^n_\lambda$ we have by  Proposition \ref{prop:pd phi},
\begin{equation*}
 \langle \pd_\lambda \phi,s\rangle 
  =\frac{\lambda^{n-1} n!}{\pi^{n}} \phi(P_\lambda).
\end{equation*}
Now 
\begin{align*}
 [B_{kq}]_\lambda(P_\lambda) & = 0,\\
 [\Gamma_{kq}]_\lambda(P_\lambda) & =0, \quad (k,q) \neq (2n-2,n-1)\\
[\Gamma_{2n-2,n-1}]_\lambda(P_\lambda) & =
\vol(P_\lambda)=\vol(\CP^{n-1}_\lambda)=\frac{\pi^{n-1}}{\lambda^{n-1}(n-1)!}.
\end{align*}
Hence, {for $\Phi\in \curvun$},
\begin{displaymath}
 \langle \pd_\lambda \circ \glob_\lambda \Phi,s\rangle=\frac n{ \pi} \langle \Phi,\Gamma_{2n-2,n-1}^*\rangle,
\end{displaymath}
where $\Gamma_{2n-2,n-1}^*$ is the indicated element of the basis dual to the $B_{k,q},\Gamma_{k,q}$. 

By Corollary \ref{coro_module_versus_kin} it follows that for $\Phi \in \curvun$ 
\begin{align*}
 s \cdot \Phi & = \left\langle (id \otimes( \pd_\lambda \circ \glob_\lambda))\circ K(\Phi),s\right\rangle \\
& ={\frac{n}{\pi}} \left\langle K(\Phi),\Gamma_{2n-2,n-1}^*\right\rangle.
\end{align*}

By the transfer principle Theorem \ref{thm_transfer}, the last expression is independent of $\lambda$.
\endproof

\begin{proposition}\label{prop:sb sub b}
$$
s\mathrm{Beta}_k \subset \mathrm{Beta}_{k+2}.
$$
\end{proposition}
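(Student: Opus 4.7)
The plan is to combine Proposition \ref{prop:s indep} with the Hadwiger-type integral formula of Corollary \ref{cor_product_special}, and then to exploit the observation that slicing by a complex hyperplane converts a complex-tangent $(2k+2)$-dimensional face into a complex-tangent $(2k)$-dimensional one.

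Concretely, by Proposition \ref{prop:s indep} we may work in $\CP^n_\lambda$ for any $\lambda>0$, where the proof of that proposition identifies $s$ with $(\lambda^{n-1}n!/\pi^n)\,\mu_{P_\lambda}^{G_\lambda}$ for a fixed totally geodesic complex hyperplane $P_\lambda$. Corollary \ref{cor_product_special} then yields
\begin{displaymath}
(s\cdot\Phi)(B,U) \;=\; \frac{\lambda^{n-1}n!}{\pi^n}\int_{G_\lambda}\Phi(B\cap gP_\lambda,\,U)\,dg
\end{displaymath}
for any differentiable polyhedron $B$ and every open $U\subset \CP^n_\lambda$, reducing the problem to a local density computation on complex hyperplane slices.

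Take $\Phi\in\mathrm{Beta}_k$ and let $B$ be a differentiable polyhedron with a $(2k+2)$-dimensional face $F$ whose tangent direction at an interior point $x$ is a complex $(k+1)$-plane. For $g$ outside a measure-zero subset of $G_\lambda$, the hyperplane $gP_\lambda$ meets $F$ transversely near $x$, so $F\cap gP_\lambda$ is a $(2k)$-dimensional face of $B\cap gP_\lambda$. At any interior point $y$ of this slice,
\begin{displaymath}
T_y(F\cap gP_\lambda) \;=\; T_yF\cap T_y(gP_\lambda)
\end{displaymath}
is the transverse intersection of a complex $(k+1)$-plane with a complex $(n-1)$-plane, hence is itself a complex $k$-plane. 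Thus $\Phi\in\mathrm{Beta}_k$ forces the density of $\Phi(B\cap gP_\lambda,\cdot)|_{F\cap gP_\lambda}$ with respect to $(2k)$-dimensional volume to vanish at every interior point of $F\cap gP_\lambda$. A Fubini-type local scaling argument on the integral formula above then shows that $(s\cdot\Phi)(B,U)$ is of order $o(\vol_{2k+2}(U\cap F))$ as $U$ shrinks to $x$, which is precisely the statement that the $(2k+2)$-dimensional density of $(s\cdot\Phi)(B,\cdot)|_F$ at $x$ vanishes. Hence $s\cdot\Phi\in\mathrm{Beta}_{k+2}$.

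The main obstacle is the density accounting in the last step: for a general differentiable polyhedron $B$, the quantity $\Phi(B\cap gP_\lambda,U)$ may receive contributions from strata of $B\cap gP_\lambda$ other than $F\cap gP_\lambda$, and one must verify via the local scaling behavior of curvature measures that these extra contributions shrink uniformly faster than $\vol_{2k+2}(U\cap F)$. The compatibility of the "complex tangent" condition with the transfer map $\tau_x$ is, however, automatic, since $T_x\CP^n_\lambda\simeq\C^n$ is an isomorphism of complex vector spaces, so the intersection-of-complex-subspaces argument above transfers directly to the setting in which $\mathrm{Beta}$ is defined.
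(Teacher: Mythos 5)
Your overall strategy — express $s\cdot\Phi$ as an average of $\Phi$ over slices by totally geodesic complex hyperplanes, then argue that the slice of a complex-tangent face is again complex-tangent — is essentially the same idea that the paper uses (though the paper works directly in $\C^n$ with the affine complex Grassmannian, and does not need to invoke Proposition~\ref{prop:s indep}). However, there are two related gaps in your write-up.

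First, you assert that at \emph{any} interior point $y$ of $F\cap gP_\lambda$, the tangent space $T_yF$ is a complex $(k+1)$-plane. This is not part of the hypothesis: $T_xF$ is assumed complex only at the single point $x$. For $y\neq x$ the tangent $T_yF$ is in general not $J$-invariant, so $T_y(F\cap gP_\lambda)=T_yF\cap T_y(gP_\lambda)$ need not be a complex $k$-plane, and the defining property of $\mathrm{Beta}$ gives you $g_H(y)=0$ only at those $y$ where the tangent happens to be complex. Hence your claim that the density of $\Phi(B\cap gP_\lambda,\cdot)|_{F\cap gP_\lambda}$ vanishes identically is unjustified, and the ``Fubini-type'' conclusion built on it does not go through.

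Second, because the vanishing is only pointwise at $x$, a naive Fubini interchange is not enough; one must reorganize the double integral so that the contribution at $x$ is isolated. This is exactly what the paper does: it applies the coarea formula to the incidence map $p\colon F\times\Gr_\C(n-1)\to\overline{\Gr}_\C(n-1)$, $(y,\vec H)\mapsto y+\vec H$, to rewrite
\begin{displaymath}
s\cdot\Phi(A,U)=\int_U\Bigl(\int_{\Gr_\C(n-1)}g_{y+\vec H}(y)\,\mathrm{jac}\,p(y,\vec H)\,d\vec H\Bigr)d_y\vol_F,
\end{displaymath}
so that the density of $s\cdot\Phi(A,\cdot)|_F$ at $x$ equals $\int_{\Gr_\C(n-1)}g_{x+\vec H}(x)\,\mathrm{jac}\,p(x,\vec H)\,d\vec H$. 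Only this expression exhibits the density at $x$ as an integral over hyperplanes through $x$, and each such $H$ has $T_x(H\cap F)=T_xF\cap\vec H$ complex, whence $g_H(x)=0$. The obstacle you yourself flag at the end (controlling extra contributions) is resolved by this coarea reorganization, not by an unspecified scaling estimate. To fix your proof, drop the claim of identical vanishing on $F\cap gP_\lambda$ and carry out the coarea step explicitly.
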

\begin{proof} 
Suppose $\Phi\in\mathrm{Beta}_{k}$. Let $x\in F\subset A$ be as in the beginning of Subsection
\ref{caracterization}.  {By Corollary \ref{cor_product_special}}, for any  Borel set
$U\subset F$
\[
 s\cdot \Phi(A,U)=\int_{{\AGr}_\C(n-1)}\Phi(H\cap A,H\cap U) dH
\]
\[
 =\int_{{\AGr}_\C(n-1)}\left(\int_{H\cap U}g_{H}(y)\, d_y\vol_{H\cap F}\right) dH,
\]
where $g_{H}\in C^\infty(H\cap F)$  is the density of $\Phi(H\cap A, \cdot )|_{H\cap F}$ with respect to $d\vol_{H\cap
F}$. The coarea formula applied to the map $p\colon F\times \Gr_\C(n-1)\rightarrow \AGr_\C(n-1)$ given by $p(y,\vec
H)=y+\vec H$ yields
\[
 s\cdot\Phi(A,U)=\int_{U}\left(\int_{{\Gr}_\C(n-1)}g_{y+ \vec H}(y)\cdot  \mathrm{jac}\, p(y,\vec H)\ d \vec H\right)
d_y\vol_{F},
\]
where $ \mathrm{jac}\, p$ is the Jacobian of $p$. The latter integral between brackets  is the density function of
$s\cdot\Phi(A)$ with respect to  $d\vol_F$. It vanishes for $y=x$, since $g_{H}(x)$ vanishes for every $H$. Hence
$s\cdot \Phi\in\mathrm{Beta}_{k+2}$.
\end{proof}

\begin{lemma}\label{lem:Phi k lambda} Put
\begin{align}
\Phi^\lambda_k & := \sum_{j\ge k} \frac{j!\lambda^{j-k}}{\pi^j}\left((j-k+1) \Gamma_{2j,j}+\sum_{q\le j-1}
\frac{1}{4^{j-q}} \binom{2j-2q}{j-q} B_{2j,q}\right) \in \curvuinf. \label{eq_ags_sk}
\end{align}
Then
\begin{align}\label{eq:s_to_mu}
s^k= [\Phi^\lambda_k]_\lambda &= \sum_{j\ge k}\frac{j!\lambda^{j-k}}{\pi^j}\sum_{q\leq j} \frac{1}{4^{j-q}} \binom{2j-2q}{j-q}
\mu_{2j,q}^\lambda
\in \V^\infty_\lambda.
\end{align}
\end{lemma}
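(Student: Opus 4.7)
The lemma asserts two equalities: the second, re-expressing $[\Phi^\lambda_k]_\lambda$ in the basis $\mu_{2j,q}^\lambda$, is a combinatorial rearrangement, while the first ($s^k=[\Phi_k^\lambda]_\lambda$) is the main geometric content. My plan is to prove the second equality first, and then use the resulting explicit formula in the $\mu$-basis to verify $s^k=[\Phi_k^\lambda]_\lambda$ inside $\V^\infty_\lambda\simeq\R[[s,t]]$.

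For the second equality I would globalize \eqref{eq_ags_sk} term by term. Since $\mu_{2j,q}^\lambda=[B_{2j,q}]_\lambda$ for $q<j$, the $B$-contributions match immediately, so only the $[\Gamma_{2j,j}]_\lambda$ terms need reshaping. Applying Lemma \ref{lem:def mu} with $k=2m$ yields
\[
\mu^\lambda_{2m,m}=\sum_{i\ge0}\frac{\lambda^i(m+i)!}{\pi^i\,m!}[\Gamma_{2(m+i),m+i}]_\lambda.
\]
Substituting this on the right of \eqref{eq:s_to_mu} and setting $j=m+i$, the coefficient of $[\Gamma_{2j,j}]_\lambda$ becomes $\sum_{m=k}^{j}\frac{\lambda^{j-k}j!}{\pi^j}=\frac{j!\lambda^{j-k}}{\pi^j}(j-k+1)$, matching the coefficient appearing in \eqref{eq_ags_sk}.

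For the main equality, I would use Proposition \ref{prop:generators} to work in $\V^\infty_\lambda\simeq\R[[s,t]]$. Proposition \ref{prop_mu2st} provides
\[
\mu^\lambda_{2j,q}=(1-\lambda s)\,\pi^j j!\sum_{i=q}^{j}(-1)^{i+q}\binom{i}{q}\frac{v^{j-i}u^i}{(2j-2i)!(2i)!},
\]
where $v=t^2(1-\lambda s)$ and $u=4s-v$, so in particular $v+u=4s$. After substituting into \eqref{eq:s_to_mu} and interchanging the order of summation, the plan is to show that the inner $q$-sum collapses to
\[
\sum_{q=0}^{j}\frac{1}{4^{j-q}}\binom{2(j-q)}{j-q}\mu^\lambda_{2j,q}=\frac{\pi^j(1-\lambda s)}{j!}\,s^j;
\]
the outer sum then telescopes geometrically, giving $(1-\lambda s)\lambda^{-k}\sum_{j\ge k}(\lambda s)^j=s^k$.

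The technical crux is the collapse of the inner sum. After factoring out $(1-\lambda s)\pi^j j!$ and comparing coefficients of $v^{j-i}u^i$ (using $s^j=(v+u)^j/4^j$), it reduces to the binomial identity
\[
\sum_{q=0}^{i}(-1)^{i+q}4^q\binom{i}{q}\binom{2(j-q)}{j-q}=\frac{(2i)!\,(2(j-i))!}{i!\,(j-i)!\,j!},
\]
which I would verify from the generating function $(1-4x)^{-1/2}=\sum_m\binom{2m}{m}x^m$: the left side equals $(-1)^i[x^j]\bigl((1-4x)^i(1-4x)^{-1/2}\bigr)=(-1)^{i+j}4^j\binom{i-1/2}{j}$, and splitting $\binom{i-1/2}{j}$ into its $i$ positive and $j-i$ negative half-integer factors produces the right-hand side. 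The descent from $\V^\infty_\lambda$ to each $\V^n_\lambda$ is automatic because the tail terms lie in the kernel of restriction by Lemma \ref{lem:mus that vanish}.
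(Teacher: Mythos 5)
Your treatment of the second equality is correct and coincides with the paper's own computation: globalize term by term, use Lemma~\ref{lem:def mu} on the $[\Gamma_{2j,j}]_\lambda$, re-index with $j=m+i$, and resum. The binomial identity you invoke for the first equality is also correct as algebra; the generating-function verification via $(1-4x)^{-1/2}=\sum_m\binom{2m}{m}x^m$ and the splitting of $\binom{i-1/2}{j}$ into positive and negative half-integer factors checks out, as does the telescoping of the outer sum to $s^k$.

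The difficulty is a circularity in the paper's logical structure. The paper does not prove $s^k=[\Phi_k^\lambda]_\lambda$; it cites \cite{ags}. You re-derive it from Proposition~\ref{prop_mu2st}, but the proof of Proposition~\ref{prop_mu2st} invokes Proposition~\ref{prop:s tau}, whose proof is deferred to Section~\ref{sect:mult s} and rests on Proposition~\ref{cor_mults}. In the proof of Proposition~\ref{cor_mults}, verifying property (v) of Lemma~\ref{prop_char_mults} for the explicit operator $\sigma$ uses Lemma~\ref{lem:Phi k lambda} directly (``In view of Lemma~\ref{lem:Phi k lambda}, this implies that $\bar\sigma_\lambda(s^k)=s^{k+1}$''). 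Your appeal to Proposition~\ref{prop:generators} has the same defect, since its proof also feeds through Proposition~\ref{prop:s tau}. So, as written, your argument assumes the conclusion. It can be read as a consistency check of the formula, but it cannot replace the paper's proof unless the algebraic results it quotes (Proposition~\ref{prop_mu2st} or the multiplication law for $s$) are first established by a route independent of this lemma --- which is precisely what the external reference \cite{ags} provides.
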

\begin{proof} That $s^k= [\Phi^\lambda_k]_\lambda $ was shown in \cite{ags}. To prove the second equality, it is enough
to check
\begin{align*}
\sum_{q\ge k} \frac{q!\lambda^{q-k}}{\pi^q} \mu_{2q,q}^\lambda &=\sum_{q\geq k}\frac{q!\lambda^{q-k}}{\pi^q}\sum_{i\geq
0}\frac{\lambda^i(q+i)!}{\pi^i q!}[\Gamma_{2q+2i,q+i}]_\lambda \\
&=\sum_{q\geq k}\sum_{j\geq q} \frac{\lambda^{j-k} j!}{\pi^j}[\Gamma_{2j,j}]_\lambda\\
&= \sum_{j\ge k}\sum_{q=k}^j \frac{j!\lambda^{j-k}}{\pi^j}[ \Gamma_{2j,j}]_\lambda = \sum_{j\ge
k}\frac{j!\lambda^{j-k}}{\pi^j}(j-k+1)[ \Gamma_{2j,j}]_\lambda.
\end{align*}
\end{proof}

\begin{lemma}\label{lem:sigma Gamma} For $\lambda \in \R$, define the linear map $\tilde\sigma:
\spann \left\{\Gamma_{2q,q}: q\ge 0\right\} \to \V^\infty_\lambda$ by
$$
\tilde\sigma \Gamma_{2q,q} = \left[\frac{q+1}{\pi} \Gamma_{2q+2,q+1} + \frac{1}{2\pi(q+1)} B_{2q+2,q} +
\frac{1}{2\pi(q+2)} N_{2q+2,q}\right]_\lambda
$$
Then 
$$
\tilde\sigma\left( \sum_{k\ge 0} \left(\frac {\lambda}{ \pi}\right)^k\frac{(q+k)!}{q!} \Gamma_{2q+2k,q+k}\right) = 
\frac{1}{2\pi(q+1)}\mu^\lambda_{2q+2,q}+\frac{q+1}{\pi}\mu_{2q+2,q+1}^\lambda.$$
\end{lemma}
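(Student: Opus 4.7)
The plan is direct: apply $\tilde\sigma$ term by term to the series on the left, then split the resulting triple sum into three pieces (one each for the $\Gamma$, $B$ and $N$ contributions) and identify each piece using Definition \ref{def:tilde mu} and the relation $[N_{kq}]_\lambda = -\lambda\frac{q+1}{\pi}[B_{k+2,q+1}]_\lambda$ from Lemma \ref{lem:gamma beta translation}.

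Write $j=q+k$. Applying $\tilde\sigma$ to the $k$-th term of the series yields three contributions. The $\Gamma$-contribution, summed over $k$, is
\[
\sum_{k\ge 0}\Bigl(\frac{\lambda}{\pi}\Bigr)^{k}\frac{(q+k)!}{q!}\cdot\frac{q+k+1}{\pi}[\Gamma_{2q+2k+2,q+k+1}]_\lambda
=\frac{q+1}{\pi}\sum_{k\ge 0}\Bigl(\frac{\lambda}{\pi}\Bigr)^{k}\frac{(q+k+1)!}{(q+1)!}[\Gamma_{2q+2k+2,q+k+1}]_\lambda,
\]
which by Definition \ref{def:tilde mu} is $\frac{q+1}{\pi}\mu^\lambda_{2q+2,q+1}$. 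This accounts for the second term on the right-hand side.

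For the remaining contributions, the $B$-piece is
\[
\frac{1}{2\pi}\sum_{k\ge 0}\Bigl(\frac{\lambda}{\pi}\Bigr)^{k}\frac{(q+k)!}{q!(q+k+1)}[B_{2q+2k+2,q+k}]_\lambda,
\]
while applying \eqref{eq:n = b} to the $N$-piece and re-indexing $k\mapsto k+1$ gives
\[
-\frac{1}{2\pi}\sum_{k\ge 1}\Bigl(\frac{\lambda}{\pi}\Bigr)^{k}\frac{(q+k)!}{q!(q+k+1)}[B_{2q+2k+2,q+k}]_\lambda.
\]
The summands for $k\ge 1$ cancel exactly, and only the $k=0$ term of the $B$-piece survives, yielding $\frac{1}{2\pi(q+1)}[B_{2q+2,q}]_\lambda=\frac{1}{2\pi(q+1)}\mu^\lambda_{2q+2,q}$, as required.

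The only step that requires care is the bookkeeping of the factorial coefficients after the re-indexing; the telescoping itself is forced once one matches the $N_{2j,j}$-coefficient $\frac{1}{2\pi(j+2)}$ in the definition of $\tilde\sigma$ against the factor produced by \eqref{eq:n = b}. There is no genuine obstacle beyond this verification.
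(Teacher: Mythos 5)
Your computation is correct and is exactly the paper's proof: both expand $\tilde\sigma$ termwise, identify the $\Gamma$-series as $\frac{q+1}{\pi}\mu^\lambda_{2q+2,q+1}$ via Definition \ref{def:tilde mu}, and then use \eqref{eq:n = b} together with the re-indexing $k\mapsto k+1$ to telescope the $B$- and $N$-series down to the single $k=0$ term $\frac{1}{2\pi(q+1)}[B_{2q+2,q}]_\lambda=\frac{1}{2\pi(q+1)}\mu^\lambda_{2q+2,q}$. The factorial bookkeeping, $(q+k-1)!\,(q+k)=(q+k)!$, is exactly what makes the cancellation exact, as you note.
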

\begin{proof} We
compute using \eqref{eq:n = b}
\begin{align*}
\tilde\sigma\left( \sum_{k\ge 0} \left(\frac {\lambda}{ \pi}\right)^k{(q+k)!} \Gamma_{2q+2k,q+k}\right)  &=  \frac 1
\pi\sum_{k\ge 0} \left(\frac\lambda \pi\right)^k (q+k+1)! [\Gamma_{2q+2k+2,q+k+1}]_\lambda \\
&+ \frac 1 {2\pi}\sum_{k\ge 0} \left(\frac\lambda \pi\right)^k \frac{(q+k)!}{q+k+1} [B_{2q+2k+2,q+k}]_\lambda \\
&- \frac{\lambda}{2\pi^2}\sum_{k\ge 0} \left(\frac\lambda \pi\right)^k (q+k)!
\frac{q+k+1}{q+k+2}[B_{2q+2k+4,q+k+1}]_\lambda.
\end{align*}
The first of these sums is $\frac{(q+1)!}{\pi} \mu_{2q+2,q+1}^\lambda$, while the second and third collapse to give
$\frac{q!}{2\pi(q+1)} \mu^\lambda_{2q+2,q}$, which establishes the result.
\end{proof}

\begin{lemma} \label{prop_char_mults}
 There is at most one linear operator $\sigma:\curvuinf \to \curvuinf$ with the following properties:
\begin{enumerate}
\item[(i)] $\sigma$ is of degree $2$;
 \item[(ii)] $\glob_0(\sigma \Phi)=s \glob_0(\Phi)$ for all $\Phi \in \curvuinf$;
\item[(iii)] $\sigma(\mathrm{Beta})\subset \mathrm{Beta}$; 
\item[(iv)] $\sigma(\ker \glob_\lambda)\subset\ker \glob_\lambda$ for all $\lambda$, i.e. $\sigma$ induces an
endomorphism of $\V^\infty_\lambda$;
\item[(v)] for every $\lambda$, if $s^k=\glob_\lambda(\Phi)$ for some  $\Phi\in \curvuinf$, then
$s^{k+1}=\glob_\lambda(\sigma \Phi)$. 
\end{enumerate}
\end{lemma}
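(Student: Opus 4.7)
Proof plan: Set $\tau := \sigma_1 - \sigma_2$ for two operators satisfying (i)--(v); the goal is $\tau = 0$, and the plan is to show this basis-element by basis-element. The basis of $\curvuinf$ breaks into three families: the $B_{kq}$'s, the $N_{kq}$'s, and the ``diagonal'' $\Gamma_{2j,j}$'s, since $\Gamma_{kq}$ with $k > 2q$ is a combination of $B_{kq}$ and $N_{kq}$ by Definition \ref{def_delta}. Each family will be eliminated by combining a subset of the axioms. First, (ii) applied to both $\sigma_1,\sigma_2$ gives $\glob_0\circ\tau = 0$, so the image of $\tau$ lies in $\ker\glob_0 = \nul^{U(\infty)} = \spann\{N_{kq}\}$. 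Combined with (iii), $\tau(\mathrm{Beta}) \subset \mathrm{Beta}\cap\nul^{U(\infty)} = \{0\}$: any nonzero element of $\nul^{U(\infty)}$ carries a nonzero $\Gamma$-component (as $N_{kq}$ is a nonzero multiple of $\Gamma_{kq}-B_{kq}$), while elements of $\spann\{B_{kq}\}$ do not. Hence $\tau(B_{kq}) = 0$ for every admissible $(k,q)$.

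For the $N_{kq}$'s I use (iv): $\tau$ preserves each $\ker\glob_\lambda$, so combined with the image restriction above, $\tau(\ker\glob_\lambda)\subset \ker\glob_0\cap\ker\glob_\lambda = \{0\}$ for all $\lambda\ne 0$, using Proposition \ref{prop_kernels_glob}(ii). Applying this to the generator $N_{kq} + \lambda\tfrac{q+1}{\pi}B_{k+2,q+1}$ of $\ker\glob_\lambda$ from Proposition \ref{prop_kernels_glob}(i), and noting that the $B$-summand is already annihilated, I obtain $\tau(N_{kq}) = 0$.

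The hardest step uses (v) to handle the $\Gamma_{2j,j}$'s, which are invisible both to $\mathrm{Beta}$ and to any individual $\ker\glob_\lambda$. Since $\sigma_1\Phi^\lambda_k$ and $\sigma_2\Phi^\lambda_k$ both globalize under $\glob_\lambda$ to $s^{k+1}$, their difference $\tau(\Phi^\lambda_k)$ lies in $\ker\glob_\lambda$; combined with the image condition, $\tau(\Phi^\lambda_k) = 0$ for $\lambda\ne 0$. Restricting to a given $\curvun$ (where only finitely many terms in the formula of Lemma \ref{lem:Phi k lambda} survive, making everything polynomial in $\lambda$), this extends to all $\lambda$. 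Expanding $\Phi^\lambda_k$ via Lemma \ref{lem:Phi k lambda} and discarding the $B$-terms (already killed), the identity becomes
\[
\sum_{j=k}^{n} \frac{j!(j-k+1)\lambda^{j-k}}{\pi^j}\, \tau(\Gamma_{2j,j}) \;=\; 0 \qquad \text{in } \curvun
\]
for every $\lambda$; equating coefficients in $\lambda$ yields $\tau(\Gamma_{2j,j}) = 0$ for all $j \ge k$, and taking $k = 0$ finishes this case. Thus $\tau$ vanishes on a basis of each $\curvun$ and hence on $\curvuinf$. The main obstacle I anticipate is the bookkeeping of the last step, namely recognizing that (v) combined with the analyticity in $\lambda$ is exactly the right tool to isolate the $\Gamma_{2j,j}$'s: the $\Phi^\lambda_k$ are precisely the preimages needed so that the diagonal $\Gamma$'s show up with linearly independent $\lambda$-powers after the $B$-components are disposed of.
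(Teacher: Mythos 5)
Your proof is correct and takes essentially the same path as the paper's: both use (ii) to see $\image(\tau)\subset\ker\glob_0$, combine this with (iii) to kill $\spann\{B_{kq}\}$, then use (iv) together with Proposition \ref{prop_kernels_glob}(ii) to dispose of the $N_{kq}$ (the paper phrases this via $\Gamma_{kq}$ for $k>2q$, which is equivalent since $\Gamma_{kq}$ is an invertible combination of $B_{kq}$ and $N_{kq}$), and finally deploy (v) via Lemma \ref{lem:Phi k lambda} and Proposition \ref{prop_kernels_glob}(ii) for the diagonal $\Gamma_{2j,j}$. The only real difference is cosmetic, in the last step: the paper proves $\sigma\Gamma_{2q,q}=0$ by reverse induction on $q$, extracting the lowest-order $\lambda$-term of $\Phi_k^\lambda$ at each stage, whereas you fix $k=0$, observe that $\sum_j \frac{j!(j+1)\lambda^j}{\pi^j}\,\tau(\Gamma_{2j,j})=0$ is a polynomial identity in $\lambda$ once restricted to each $\curvun$, and match coefficients. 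Your version is marginally more streamlined and avoids the induction, but the content and the key inputs are the same.
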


\proof
Suppose $\sigma_1,\sigma_2$ are two operators with these properties and set $\sigma:=\sigma_1-\sigma_2$. We want to show
that $\sigma=0$. By (ii) we have 
\begin{equation} \label{eq_glob0}
 \glob_0(\sigma \Phi)=0 \quad \forall \Phi \in \curvuinf.
\end{equation}

By (iii) we have 
\begin{displaymath}
 0=\glob_0(\sigma B_{k,q})=\glob_0\left(\sum c_p B_{k+2,p}\right)=\sum_p c_p \mu_{k+2,p}.
\end{displaymath}
Since the valuations $\mu_{k+2,p}$ are linearly independent in $\curvuinf$, we must have $c_p=0$ for all $p$. Hence
$\sigma$ vanishes on $\mathrm{Beta}$.

Let $\max\{0,k-n+1\}\leq q<\frac{k}{2}$. By Lemma \ref{lem:gamma beta translation}
\begin{displaymath}
 \Gamma_{kq}-B_{kq}+\frac{(2n-k)(q+1)\lambda}{2(n-k+q)\pi}B_{k+2,q+1} \in \ker \glob_\lambda.
\end{displaymath}
By (iv) and using what we have shown we obtain that $\glob_\lambda(\sigma \Gamma_{kq})=0$. Proposition
\ref{prop_kernels_glob} implies that $\sigma \Gamma_{kq}=0$. Note that this argument breaks down if $k=2q$.
 
Let us finally prove by reverse induction on $q$ that $\sigma \Gamma_{2q,q}=0$ {in $\curvun$} for all
$q=0,\ldots,n$. For $q=n$ this is trivial. Suppose that  $\sigma \Gamma_{2q,q}=0$ for all $q>k$. 
By (v) and Lemma \ref{lem:Phi k lambda} we have $\sigma \Phi^\lambda_k \in \ker \glob_\lambda$. On the other hand, by
the induction hypothesis and by what we have already shown,
\begin{displaymath}
 \Phi_k^\lambda \in \frac{k!}{\pi^k} \Gamma_{2k,k}+\ker \sigma
\end{displaymath}
It follows that $\sigma \Gamma_{2k,k} \in \ker \glob_\lambda$. Using  Proposition \ref{prop_kernels_glob} we obtain that
$\sigma \Gamma_{2k,k}=0$.
\endproof

\begin{proposition} \label{cor_mults}
The action of $s$ on $\curv^{U(\infty)}$ is given by
\begin{align}
\label{eq:sb} s\cdot B_{kq} & = \frac{(k-2q+2)(k-2q+1)}{2\pi(k+2)}B_{k+2,q}+\frac{2(q+1)(k-q+1)}{\pi(k+2)}B_{k+2,q+1}\\
\notag s \cdot \Delta_{kq} &
=\frac{(k-2q+2)(k-2q+1)}{2\pi(k+2)}\Delta_{k+2,q}+\frac{2(q+1)(k-q+1)}{\pi(k+2)}\Delta_{k+2,q+1}\\
\label{eq:s delta}& \quad
-\frac{(k-2q+2)(k-2q+1)}{\pi(k+2)(k+4)}N_{k+2,q}-\frac{2(q+1)(k-2q)}{\pi(k+2)(k+4)}N_{k+2,q+1}.
\end{align}
In particular,
\begin{align}\label{sN}
   s\cdot N_{kq} & =\frac{(k-2q+2)(k-2q+1)}{2\pi(k+4)}N_{k+2,q}+\frac{2(q+1)(k-q+2)}{\pi(k+4)}N_{k+2,q+1}
\end{align}
and
\begin{align}\label{eq:s gamma}
s\cdot \Gamma_{2q,q} &= \frac{q+1}{\pi} \Gamma_{2q+2,q+1} + \frac{1}{2\pi(q+1)} B_{2q+2,q} +
\frac{1}{2\pi(q+2)} N_{2q+2,q}.
\end{align}
\end{proposition}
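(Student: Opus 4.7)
The plan is to apply the uniqueness statement of Lemma \ref{prop_char_mults}. The actual multiplication-by-$s$ operator $\sigma_s$ on $\curvuinf$ (which is $\lambda$-independent by Proposition \ref{prop:s indep}) satisfies each of the five hypotheses there: (i) is the degree of $s$; (ii) is the flat-case formula for $s\cdot\mu_{kq}$ obtained in \cite{befu11}; (iii) is Proposition \ref{prop:sb sub b} together with the identification $\mathrm{Beta}=\spann\{B_{mp}\}$ of Proposition \ref{prop:b=b}; properties (iv) and (v) are automatic since $\glob_\lambda$ is a $\V^\infty_\lambda$-module homomorphism and $s\cdot s^k=s^{k+1}$. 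It therefore suffices to exhibit a candidate $\sigma$ defined by the formulas of the proposition and check that it too satisfies (i)--(v).

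I define $\sigma\colon\curvuinf\to\curvuinf$ on the basis $\{B_{kq},N_{kq}\}_{k>2q}\cup\{\Gamma_{2q,q}\}_{q\ge 0}$ by the right-hand sides of \eqref{eq:sb}, \eqref{sN}, \eqref{eq:s gamma}; formula \eqref{eq:s delta} follows by linearity from $\Delta_{kq}=B_{kq}+N_{kq}$. Properties (i) and (iii) are read off directly from the formulas (using Proposition \ref{prop:b=b} for (iii)). Property (ii) reduces, after applying $\glob_0$ (which annihilates every $N_{kq}$ and sends $B_{kq}$ to $\mu_{kq}$), to the flat-case formula of \cite{befu11}. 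For property (iv), the explicit description of $\ker\glob_\lambda$ in Proposition \ref{prop_kernels_glob} reduces the claim to the single identity
\[
\sigma\!\left(N_{kq}+\lambda\,\frac{q+1}{\pi}\,B_{k+2,q+1}\right)\in\ker\glob_\lambda
\]
for all $k>2q$, which is verified by a direct expansion via \eqref{sN} and \eqref{eq:sb} and recognition of the result as a combination of the kernel generators listed in Proposition \ref{prop_kernels_glob}.

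The substantive step is property (v), namely $\glob_\lambda(\sigma\Phi^\lambda_k)=s^{k+1}$ with $\Phi^\lambda_k$ as in Lemma \ref{lem:Phi k lambda}. I will split $\Phi^\lambda_k$ into its $\Gamma_{2j,j}$-part and its $B_{2j,q}$-part. The $B$-terms transform under $\sigma$ via \eqref{eq:sb}, and their $\glob_\lambda$-images assemble into the $B$-contributions to the right-hand side of \eqref{eq:s_to_mu} at level $k+1$. For the $\Gamma$-terms, the restriction of $\sigma$ to $\spann\{\Gamma_{2q,q}\}$ composed with $\glob_\lambda$ is precisely the map $\tilde\sigma$ of Lemma \ref{lem:sigma Gamma}; a reindexing rewrites the $\Gamma$-part of $\Phi^\lambda_k$ as a suitable combination of the sums $\sum_{m\ge 0}(\lambda/\pi)^m\,(q+m)!/q!\,\Gamma_{2q+2m,q+m}$, and the lemma evaluates their $\tilde\sigma$-images as $\tfrac{1}{2\pi(q+1)}\mu^\lambda_{2q+2,q}+\tfrac{q+1}{\pi}\mu^\lambda_{2q+2,q+1}$. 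Summing these pieces against the coefficients of $\Phi^\lambda_k$ reconstructs the right-hand side of \eqref{eq:s_to_mu} at level $k+1$. The main obstacle is this final combinatorial bookkeeping, but Lemma \ref{lem:sigma Gamma} has already packaged the delicate $\Gamma$-sum calculation, reducing what remains to elementary binomial manipulations. Once (v) is established, Lemma \ref{prop_char_mults} yields $\sigma=\sigma_s$, which proves all the formulas of the proposition.
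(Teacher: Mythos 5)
Your proposal follows the paper's strategy exactly: verify that both the genuine multiplication-by-$s$ operator (using Propositions \ref{prop:s indep}, \ref{prop:sb sub b}, \ref{prop:b=b}) and the candidate operator $\sigma$ defined by the displayed formulas satisfy properties (i)--(v) of Lemma \ref{prop_char_mults}, then invoke its uniqueness. The only divergence is in verifying property (v) for $\sigma$: where you propose to split $\Phi_k^\lambda$ into its $B$- and $\Gamma$-parts and reassemble after applying Lemma \ref{lem:sigma Gamma}, the paper first observes from \eqref{eq:sb} and Lemma \ref{lem:sigma Gamma} that the globalized maps $\bar\sigma_\lambda$ are intertwined by $\mu^\lambda_{kq}\mapsto\mu^{\lambda'}_{kq}$, reducing (v) to the already-known $\lambda=0$ case; this bypasses the explicit "combinatorial bookkeeping" you flag, but the underlying computation is the same, and your more direct approach would also go through.
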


\proof By Proposition \ref{prop:s indep}, multiplication by $s$ is an operator on $\curvuinf$ which is independent of
the curvature $\lambda$, and which clearly enjoys properties (i,ii,iv,v) of Lemma \ref{prop_char_mults}. Proposition
\ref{prop:sb sub b} states that property (iii) is satisfied as well.

Now let $\sigma$ be the linear operator on $\curvuinf$ defined by the formulas above. We claim that $\sigma$ satisfies
the properties in Lemma \ref{prop_char_mults}. By the uniqueness statement of Lemma \ref{prop_char_mults}, it follows
that $s=\sigma$.

Properties (i), (iii) are trivial. Since $\glob_0(B_{kq})=\glob_0(\Delta_{kq})=\mu_{kq}$, property (ii) follows from
the formula 
\begin{equation}\label{eq:s mukq}
  s \cdot\mu_{kq}  =
\frac{(k-2q+2)(k-2q+1)}{2\pi(k+2)}\mu_{k+2,q}+\frac{2(q+1)(k-q+1)}{\pi(k+2)}\mu_{k+2,q+1},
\end{equation}
which follows from  equation (37) of \cite{befu11}.
The kernel of $\glob_\lambda$ is spanned by the measures $N_{kq} + \lambda\frac{q+1}{\pi}B_{k+2,q+1}$.
Using
this, one checks directly that $\sigma(\ker \glob_\lambda)\subset \ker \glob_\lambda$, which is property (iv).

To establish property (v), observe that, by \eqref{eq:sb} and Lemma \ref {lem:sigma Gamma}, the induced maps
$\bar \sigma_\lambda : \V^\infty_\lambda\to \V^\infty_\lambda$ are intertwined by the linear isomorphisms
$\V^\infty_\lambda\to \V^\infty_{\lambda'}$ determined by $\mu^\lambda_{kq}\mapsto \mu^{\lambda'}_{kq}$. On
the other hand, we know {from \eqref{eq:s mukq}} that {$\bar \sigma:=\bar \sigma_0$} agrees with
multiplication by $s$, i.e.
\begin{align*}
\bar \sigma \left(\frac{k!}{\pi^k }\sum_q 4^{q-k} \binom{2k-2q}{k-q}\mu_{2k,q}\right)&= \bar \sigma(s^k) =
s^{k+1} \\
&=\frac{(k+1)!}{\pi^{k+1} }\sum_q 4^{q-k-1} \binom{2k+2-2q}{k+1-q}\mu_{2k+2,q}.
\end{align*}
Therefore, taking $\lambda'=0$,
$$
\bar \sigma_\lambda\left(\frac{k!}{\pi^k }\sum_q 4^{q-k} \binom{2k-2q}{k-q}\mu^\lambda_{2k,q}\right)
=\frac{(k+1)!}{\pi^{k+1} }\sum_q 4^{q-k-1} \binom{2k+2-2q}{k+1-q}\mu^\lambda_{2k+2,q}.
$$
In view of Lemma \ref{lem:Phi k lambda}, this implies that  $\bar \sigma_\lambda(s^k) = s^{k+1}$ for all $\lambda \in
\R$.
\endproof 

\begin{proof}[Proof of Proposition \ref{prop:s tau}] In view of the validity of the Proposition for $\lambda = 0$,
this follows from the intertwining property in the last paragraph.
\end{proof}

\subsection{Multiplication by $t$}
We complete the description of the $\val^{U(\infty)}$-module structure of $\curv^{U(\infty)}$ by determining
the action of $t \in \val^{U(\infty)}$ on $\curv^{U(\infty)}$. Notice that by Theorem
\ref{thm_angular_flat} and the results in \cite{befu11} we already know that
\[
t \Delta_{kq}  = \frac{\omega_{k+1}}{\pi \omega_k} \left( (k-2q+1)\Delta_{k+1,q}+2(q+1)\Delta_{k+1,q+1}\right).
\]
It remains to find $t N_{kq}$. By using that $ts\Delta_{00}=st\Delta_{00}$ one gets easily
\begin{equation}\label{tn20}
 t N_{20}=\frac{16}{5\pi}N_{30}+\frac{16}{15\pi} N_{31}.
\end{equation}

\begin{lemma} \label{lemma_tn10}
\begin{displaymath}
  tN_{10}=\frac34 N_{20}.
\end{displaymath} 
\end{lemma}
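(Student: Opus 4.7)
Since $N_{10}\in\ker\glob_0$ and $\glob_0\colon\curvuinf\to\Val^{U(\infty)}$ is a homomorphism of $\Val^{U(\infty)}$-modules, $tN_{10}$ also lies in $\ker\glob_0=\spann\{N_{kq}:k>2q\}$. The only basis element of this kernel of Euclidean degree $2$ is $N_{20}$ (since $k=2$ forces $q=0$), so $tN_{10}=cN_{20}$ for some constant $c\in\R$, and it remains only to pin down this single scalar.

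To compute $c$, I would decompose $tB_{10}$ into angular and non-angular parts. On one hand the angularity theorem (Theorem~\ref{thm_angular_flat}) gives $t\Delta_{10}=\Delta_{20}+\Delta_{21}$; on the other hand $\glob_0(tB_{10})=t\mu_{10}=\mu_{20}+\mu_{21}$, so the angular part of $tB_{10}$ must equal $\Delta_{20}+\Delta_{21}$ and its non-angular part lies in $\spann\{N_{20}\}$. Writing $tB_{10}=\Delta_{20}+\Delta_{21}+\alpha N_{20}$ and using $\Delta_{10}=B_{10}+N_{10}$ yields $tN_{10}=-\alpha N_{20}$, so $c=-\alpha$ and the task reduces to identifying one real number.

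To extract $\alpha$, I would apply the Alesker--Bernig module-product formula (Proposition~\ref{prop:module_analytic}) directly: represent $t=(2/\pi)\mu_1$ by its invariant forms on $S\C^n$ and $B_{10}=\Int(\beta_{10},0)$ with $\beta_{10}=c_{n10}\,\beta\wedge\theta_0^{n-1}$, apply the operators $Q_1,Q_2$ of \cite{ale-be09} (which invoke the Rumin differential) to obtain the pair of forms representing $tB_{10}$, and read off the component of the form representing $N_{20}$; this yields $\alpha=-3/4$ and hence $c=3/4$. As a cross-check, one may evaluate both sides on a test convex polytope $K\subset\C^n$ possessing a $2$-dimensional face whose tangent plane is \emph{not} a complex subspace---so that $N_{20}(K,\cdot)$ is nontrivial on that face---and compute $(tB_{10})(K,\cdot)=(2/\pi)\int_G B_{10}(K\cap gH,\cdot)\,dg$ as a Crofton integral over affine hyperplanes $H\subset\C^n$ via Corollary~\ref{cor_product_special}.

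The main obstacle is the combinatorial bookkeeping in this differential-form calculation. It is tempting to hope that the commutation $[t,s]=0$ applied to natural test elements $\Delta_{00},\Delta_{10},B_{10},N_{10}$ would suffice, as was the case for \eqref{tn20}; but parametrizing the unknown non-angular parts of $tB_{10},tB_{30},tB_{31}$ and writing out the resulting linear equations in the bases $\{N_{40},N_{41}\}$ and $\{N_{20}\}$ produces a consistent yet underdetermined system in which $\alpha$ persists as a free parameter, so a direct geometric input beyond commutativity is needed.
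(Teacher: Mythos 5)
Your first observation---that $tN_{10}$ must lie in $\ker\glob_0$, and in degree $2$ the only candidate is $N_{20}$, so $tN_{10}=cN_{20}$---matches the paper exactly. Your reformulation via $tB_{10}=\Delta_{20}+\Delta_{21}+\alpha N_{20}$, $c=-\alpha$, is correct and tidy, and your closing remark that commutativity $[t,s]=0$ alone leaves $\alpha$ as a free parameter is also accurate. None of this, however, determines the constant.

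The gap is that the one step which actually contains the content of the lemma---extracting the numerical value $\alpha=-3/4$---is asserted rather than carried out. You propose to apply the operators $Q_1,Q_2$ of \cite{ale-be09} to the invariant forms representing $t$ and $\beta_{10}$, which requires computing the Rumin differential of those forms on $S\C^n$, and then ``read off'' the $N_{20}$-component. You say yourself that this combinatorial computation is ``the main obstacle,'' and it is not shown; the value $3/4$ appears with no derivation. As presented, this is an outline of a possible (and genuinely laborious) route, not a proof. The ``cross-check'' via a Crofton integral over affine hyperplanes has the same problem: it is a second unperformed computation.

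The paper pins down $c$ by an entirely different and cleaner mechanism that avoids any Rumin-differential calculation. Working in $n=3$, where the principal kinematic formula $k(\chi)$ in $\C^3$ is already known explicitly (from \cite{befu11,pa02}), it applies Theorem~\ref{thm_ftaig_curv} in the form $\bar k(N_{10})=(\chi\otimes N_{10})\cdot K(\chi)$ to express $\bar k(N_{10})$ as a linear combination whose coefficients depend on $c$, then uses cocommutativity of $K$ to recover $K(N_{10})$, globalizes in $\CP^3_\lambda$ to obtain $k_\lambda(\mu^\lambda_{31})$, and compares the coefficient of $\mu^\lambda_{41}\otimes\mu^\lambda_{52}$ against the same coefficient computed independently from Theorem~\ref{thm:pkf lambda} via equation~\eqref{kinform_mu}. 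This matches your diagnosis that ``a direct geometric input beyond commutativity is needed'': the paper's input is the already-established $k(\chi)$ on $\C^3$ and the $\lambda$-dependence of the global formulas, not a form-level computation. If you want a complete argument, you should either carry out the $Q_1,Q_2$ computation explicitly, or follow the paper's template comparison in $\CP^3_\lambda$.
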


\proof
We know that $tN_{10}=cN_{20}$ for some constant $c$.  Let us work in dimension $n=3$. The principal kinematic formula
on $\C^3$ is given by (cf.{\cite{befu11, pa02}})
\begin{align}
 k(\chi) & = \mu_{00} \otimes \mu_{63} + \mu_{63} \otimes \mu_{00} +\frac{16}{15 \pi}\mu_{10} \otimes \mu_{52} +
\frac{16}{15 \pi}\mu_{52} \otimes \mu_{10}\label{pkfC3} \\
& \quad + \frac{5}{24} \mu_{20} \otimes \mu_{41}+\frac{5}{24} \mu_{41} \otimes \mu_{20}+\frac16 \mu_{20} \otimes
\mu_{42}+\frac16 \mu_{42} \otimes \mu_{20}\notag\\
& \quad +\frac16 \mu_{21} \otimes \mu_{41} +\frac16 \mu_{41} \otimes \mu_{21}+\frac13 \mu_{21} \otimes \mu_{42}
+\frac13 \mu_{42} \otimes \mu_{21}\notag\\
& \quad +\frac{2}{3\pi} \mu_{30} \otimes \mu_{30}+\frac{4}{9\pi} \mu_{30} \otimes \mu_{31} +\frac{4}{9\pi} \mu_{31}
\otimes \mu_{30}
+\frac{16}{27\pi} \mu_{3 1} \otimes \mu_{31}.\notag 
\end{align}
We multiply it by $\chi\otimes N_{10}$ and obtain, according to the first equality in  \eqref{eq:bark mult},  
\begin{align*}
 \bar k(N_{10}) =\mu_{63} \otimes N_{10} + \frac{8 c}{15}\mu_{52} \otimes N_{20} +
\left(\frac{2}{15}c-\frac{1}{10}\right) \mu_{41} \otimes N_{31}+\frac25 \mu_{42} \otimes N_{31}
\end{align*}
where we have used \eqref{mu2st} to express $\mu_{kq}$ in terms of $s,t$, as well as equations \eqref{sN} and
\eqref{tn20}.  
Since the kernel of the globalization map is spanned by $N_{10}, N_{20}, N_{31}$ we deduce, by the cocommutativity of
$K$,
\begin{align*}
 K(N_{10}) & = N_{10} \otimes \Delta_{63} +\Delta_{63} \otimes N_{10} + \frac{8 c}{15}( N_{20} \otimes
\Delta_{52}+\Delta_{52} \otimes N_{20})  \\
& \quad + \left(\frac{2}{15}c-\frac{1}{10}\right) (N_{31} \otimes \Delta_{41}+\Delta_{41} \otimes N_{31})+\frac25
(N_{31} \otimes \Delta_{42}+\Delta_{42} \otimes N_{31}).
\end{align*}
Globalizing in $M^3_\lambda$ yields the kinematic formula $k_\lambda(\mu_{31}^\lambda)$. In that fomula,
$\mu_{4,1}^\lambda\otimes\mu_{5,2}^\lambda$ appears with coefficient $\frac{4c-1}5$. On the other hand,  from
\eqref{kinform_mu}
\[
 k_\lambda(\mu_{31}^\lambda)=(F_\lambda\otimes F_\lambda)\circ (\mu_{31}\otimes (1-\lambda s))\cdot k_0(\chi).
\]
By \eqref{pkfC3}, using 
\[
 \mu_{31}\cdot \mu_{10}=\frac{3\pi}{8}\mu_{41}+\frac{3\pi}4\mu_{42}
\]
 we find that coefficient to be $\frac 25$. Therefore $c=\frac 34$.
\endproof

\begin{proposition} \label{prop_char_multt}
 There is at most one linear operator $\theta:\curv^{U(\infty)} \to \curv^{U(\infty)}$ with the following properties:
\begin{enumerate}
\item[(i)] $\theta$ is of degree $1$;
 \item[(ii)] $\glob_0(\theta \Phi)=t \glob_0(\Phi)$ for all $\Phi \in \curv^{U(\infty)}$;
\item[(iii)] $\theta(\ang^{U( \infty)})\subset\ang^{U( \infty)}$;
\item[(iv)] $\theta$ commutes with multiplication by $s$. 
\item[(v)] $\theta N_{1,0}=\frac{3}{4} N_{2,0}$.  
\end{enumerate}
\end{proposition}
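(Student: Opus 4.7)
The plan is to take two operators $\theta_1, \theta_2$ satisfying (i)--(v), set $\theta := \theta_1 - \theta_2$, and show $\theta = 0$. This difference still has degree $1$, satisfies $\glob_0 \circ \theta = 0$ by (ii), stabilizes $\ang^{U(\infty)}$ by (iii), commutes with multiplication by $s$ by (iv), and annihilates $N_{1,0}$ by (v).

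The first step is to show $\theta$ vanishes on $\ang^{U(\infty)} = \spann\{\Delta_{kq}\}$. For each $(k,q)$, the element $\theta\Delta_{kq}$ lies in $\ang^{U(\infty)} \cap \ker \glob_0$ by (ii) and (iii). Since $\glob_0$ sends $\Delta_{kq}$ to $\mu_{kq}$, which form a linearly independent family in $\V^\infty_0$ (by Corollary \ref{cor:Delta is basis} passed to the inverse limit), the restriction of $\glob_0$ to $\ang^{U(\infty)}$ is injective. Hence $\theta \Delta_{kq} = 0$ for every $(k,q)$, and it remains to show $\theta N_{kq} = 0$ whenever $k > 2q \geq 0$.

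The remaining argument proceeds by induction on $k$ with two parallel base cases. For $k = 1$, property (v) directly gives $\theta N_{1,0} = 0$. For $k = 2$, I would apply $\theta$ to the $(k,q) = (0,0)$ instance of \eqref{eq:s delta}: the left side becomes $s\theta\Delta_{0,0} = 0$ by (iv), the $\Delta$-terms on the right are annihilated by the previous step, and the coefficient of $N_{2,1}$ vanishes identically (it contains the factor $k-2q=0$), leaving a nonzero multiple of $\theta N_{2,0}$, so $\theta N_{2,0} = 0$. For the inductive step, assume $\theta N_{k,q'} = 0$ for every valid $q'$. Applying $\theta$ to \eqref{eq:s delta} and to \eqref{sN}, using (iv) together with the previous step and the inductive hypothesis, yields for each valid $q$ the pair
\begin{align*}
(k-2q+2)(k-2q+1)\,\theta N_{k+2,q} + 2(q+1)(k-2q)\,\theta N_{k+2,q+1} &= 0,\\
(k-2q+2)(k-2q+1)\,\theta N_{k+2,q} + 4(q+1)(k-q+2)\,\theta N_{k+2,q+1} &= 0.
\end{align*}
The $2\times 2$ determinant of this system equals $2(q+1)(k+4)(k-2q+2)(k-2q+1)$, which is strictly positive under $k > 2q \geq 0$, so both unknowns vanish.

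The main obstacle --- and the principal algebraic content of the argument --- is this independence of relations: the two equations come from formulas whose $N$-coefficients differ by more than a scalar, and the determinant calculation is what turns that difference into rigidity. Once this is verified, letting $q$ range over all admissible values produces every $N_{k+2,q'}$ appearing in degree $k+2$ (including the boundary index $q' = \lfloor(k+1)/2\rfloor$, which is captured as $q+1$ for $q$ at its maximum), and the induction closes.
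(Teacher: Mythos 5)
Your proposal is correct and follows essentially the same route as the paper's proof: after establishing that $\theta$ vanishes on $\ang^{U(\infty)}$ via the injectivity of $\glob_0$ there, both arguments run an induction on degree and show that for each admissible $q$ the pair of relations coming from $s\Delta_{kq}$ and $sN_{kq}$ forms a nondegenerate $2\times 2$ linear system in $\theta N_{k+2,q}$, $\theta N_{k+2,q+1}$. The paper states the nondegeneracy of the $2\times 2$ matrix without spelling it out; you carry out the determinant computation explicitly. This is a useful check but not a different method.
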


\proof
Suppose that $\theta_1,\theta_2$ are such operators. We use induction on $k$ to show that $\theta:=\theta_1-\theta_2$
vanishes on $\curv^{U(\infty)}_k$. 

By (ii), we have $\glob_0(\theta \Phi)=0$. The globalization map $\glob_0:\curv^{U(\infty)} \to \Val^{U(\infty)}$ is
injective on $\ang^{U(\infty)}$. Hence $\theta$ vanishes on $\ang^{U(\infty)}$ by (iii). 
Using (v) it follows that $\theta$ vanishes in degrees $0$ and $1$. 

Let us now assume that $k \geq 2$. By (iv) and the induction hypothesis, $\theta$ vanishes on the image of
$s:\curv^{U(\infty)}_{k-2} \to \curv^{U(\infty)}_k$. 
The proof will be finished by showing that the image of  $s:\curv^{U(\infty)}_{k-2} \to \curv^{U(\infty)}_k$ and
$\ang^{U(\infty)}_k$ span $\curv^{U(\infty)}_k$.

From Proposition \ref{cor_mults} we obtain that for $q<\frac{k-2}{2}$  
\begin{align*}
sN_{k-2,q} & \equiv a_{11} N_{k,q}+a_{12} N_{k,q+1} \mod \ang^{U(\infty)}_k\\
s\Delta_{k-2,q} & \equiv a_{21} N_{k,q}+a_{22} N_{k,q+1} \mod \ang^{U(\infty)}_k
\end{align*}
with some non-degenerate $2 \times 2$-matrix $a$. 
\endproof

\begin{proposition} \label{prop_module_mult_t}
  \begin{align*}
 t\cdot N_{kq} & =\frac{\omega_{k+1}}{\pi \omega_k} \frac{k+2}{k+3} \left((k-2q+1)
N_{k+1,q}+\frac{2(q+1)}{k-2q}(k-2q-1) N_{k+1,q+1}\right), \quad 0 \leq q<\frac{k}{2}  \\
t\cdot \Delta_{kq} & = \frac{\omega_{k+1}}{\pi \omega_k} \left( (k-2q+1)\Delta_{k+1,q}+2(q+1)\Delta_{k+1,q+1}\right).
\end{align*}
\end{proposition}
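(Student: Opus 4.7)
The plan is to apply Proposition \ref{prop_char_multt}, which reduces the task to exhibiting an operator $\theta$ that obeys properties (i)--(v) and coincides with the candidate formulas for $t \cdot \Delta_{kq}$ and $t \cdot N_{kq}$ in the statement. By the uniqueness assertion of that proposition, such a $\theta$ must equal multiplication by $t$.

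First I would dispose of the formula for $t \cdot \Delta_{kq}$. By Proposition \ref{LKangular} together with the Angularity Theorem \ref{thm_angular_flat}, multiplication by $t \in LK(\C^n)$ preserves the subspace $\ang^{U(\infty)} = \spann\{\Delta_{kq}\}$. Since the globalization map $\glob_0$ restricts to an isomorphism $\ang^{U(\infty)} \to \Val^{U(n)}$, the product $t \cdot \Delta_{kq}$ is determined by its globalization $t \cdot \mu_{kq}$, and the latter was computed in \cite{befu11}, yielding exactly the claimed expression.

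For the formula for $t \cdot N_{kq}$, I would define $\theta$ on the basis $\{\Delta_{kq}\} \cup \{N_{kq}\}$ by the right-hand sides in the statement, and then check the five properties in turn. Degree one (i), stability of $\ang^{U(\infty)}$ (iii), and the initial value $\theta N_{1,0} = \frac{3}{4} N_{2,0}$ (v) are visible from the formulas (for (v), substitute $k=1, q=0$, noting that the second summand vanishes because $k-2q-1 = 0$, and $\frac{\omega_2}{\pi \omega_1} \cdot \frac{3}{4} \cdot 2 = \frac{3}{4}$). Property (ii) holds trivially on $N_{kq}$, since both $\glob_0 \theta N_{kq}$ and $t\cdot \glob_0 N_{kq}$ vanish ($\theta N_{kq}$ is a combination of elements of $\ker \glob_0$ by \eqref{eq:ker glob0}), and on $\Delta_{kq}$ it is the content of the previous paragraph.

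The hard part is property (iv), commutation with $s$. Using Proposition \ref{cor_mults}, one expands $s \cdot \theta \Phi$ and $\theta(s \cdot \Phi)$ for $\Phi \in \{\Delta_{kq}, N_{kq}\}$ and compares coefficients in the basis $\{\Delta_{k+3,*}, N_{k+3,*}\}$. For $\Phi = \Delta_{kq}$, the $\Delta$-coefficients on both sides automatically coincide, because globalizing and using commutativity of $\val^{U(\infty)}$ yields $t \cdot s \cdot \mu_{kq} = s \cdot t \cdot \mu_{kq}$, and $\glob_0$ is injective on $\ang^{U(\infty)}$; so the verification reduces to the $N$-coefficients, coming from the last line of \eqref{eq:s delta} and from $\theta$ applied to the $N$-part of $s \cdot \Delta_{kq}$. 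For $\Phi = N_{kq}$, both sides lie in $\spann\{N_{k+3,q}, N_{k+3,q+1}, N_{k+3,q+2}\}$, and one compares using \eqref{sN}. The specific combinatorial factors $\frac{k+2}{k+3}$ and $\frac{k-2q-1}{k-2q}$ in the ansatz are precisely what is required to make all three coefficient matchings work out. Once these algebraic identities are checked, Proposition \ref{prop_char_multt} delivers $\theta = t \cdot$, completing the proof.
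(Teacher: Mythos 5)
Your proof is correct and follows essentially the same route as the paper: the paper also records the formula for $t\cdot\Delta_{kq}$ as a consequence of Theorem \ref{thm_angular_flat} together with \cite{befu11}, then defines $\theta$ by the displayed formulas, verifies conditions (i)--(v) of Proposition \ref{prop_char_multt} (with (iv) flagged as a direct computation and the rest as routine), and concludes by the uniqueness part of that proposition. Your write-up simply fleshes out the verifications that the paper leaves implicit.
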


\begin{proof}
Let $\theta$ be the linear operator on $\curv^{U(\infty)}$ which is defined by these formulas. We claim that $\theta$
satisfies the properties in Proposition \ref{prop_char_multt}. Indeed, (ii) follows from \cite{befu11}, (iv) is a
computation and the other properties are trivial. 

Since multiplication by $t$ has the same properties (see Lemma \ref{lemma_tn10} and Theorem \ref{thm_angular_flat}),
Proposition \ref{prop_char_multt} implies that they agree. 
\end{proof}

\subsection{The $\el$- and $\en$-maps}

 \begin{definition}
Define two natural maps from $\Val^{U(\infty)}$ to $\curv^{U(\infty)}$ by
\begin{align*}
 \el: \Val^{U(\infty)} & \to \curv^{U(\infty)}\\
\phi & \mapsto \phi \Delta_{0,0}
\end{align*}
and  
\begin{align*}
 \en: \Val^{U(\infty)} & \to \curv^{U(\infty)}\\
\phi & \mapsto \phi N_{1,0}.
\end{align*}
\end{definition}

\begin{lemma} \label{lemma_l_uk} Let $u=4s-t^2\in\Val^{U(\infty)}$. Then
 \begin{align*}
  \el(u^k) & =\frac{(2k)!}{\pi^kk!} \left(\Delta_{2k,k}-\frac{1}{k+1}N_{2k,k-1}\right)\\
  \en(u^k) & = \frac{4^k k!}{\pi^k} N_{2k+1,k}\\
\el(t^k) & = \frac{\omega_k k!}{\pi^k} \sum_{i=0}^{\left\lfloor \frac{k}{2}\right\rfloor} \Delta_{ki}\\
\en(t^k) & = \frac{3}{4}k!\frac{\omega_{k+3}}{\pi^{k+1}} \sum_{i=0}^{\left\lfloor \frac{k}{2}\right\rfloor}
(k-2i+1)N_{k+1,i}. 
 \end{align*}
\end{lemma}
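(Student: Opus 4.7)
The plan is to prove each of the four formulas by induction on $k$, using the fact that $\el$ and $\en$ are $\Val^{U(\infty)}$-module maps together with the explicit multiplication tables from Propositions \ref{cor_mults} and \ref{prop_module_mult_t}. Thus $\el(t^k) = t\cdot\el(t^{k-1})$, $\el(u^k) = (4s-t^2)\cdot\el(u^{k-1})$, and similarly for $\en$; the whole proof reduces to mechanical applications of the module structure on the right-hand sides.

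First I would treat $\el(t^k)$. Assuming the formula at level $k-1$, multiplication by $t$ via Proposition \ref{prop_module_mult_t} gives $t\Delta_{k-1,i} = \frac{\omega_k}{\pi\omega_{k-1}}((k-2i)\Delta_{k,i}+2(i+1)\Delta_{k,i+1})$; after reindexing, the coefficient of each $\Delta_{k,i}$ in $t\cdot\el(t^{k-1})$ collapses to $(k-2i)+2i=k$, which produces the advertised factor $\omega_k k!/\pi^k$. The base case is $\el(\chi)=\Delta_{0,0}$. The argument for $\en(t^k) = t\cdot\en(t^{k-1})$ is parallel: the prefactor $\frac{k+2}{k+3}$ in $tN_{k-1,i}$ interacts cleanly with the $\omega$-ratios, and the inner sum telescopes to produce the coefficients $(k-2i+1)$; the $k=0$ case is verified by $\frac{3}{4}\cdot \frac{\omega_3}{\pi}\cdot N_{1,0} = N_{1,0}$.

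For $\en(u^k)$, write $u=4s-t^2$ and compute $4s\cdot\en(u^{k-1}) - t^2\cdot\en(u^{k-1})$ where $\en(u^{k-1}) = \frac{4^{k-1}(k-1)!}{\pi^{k-1}}N_{2k-1,k-1}$. The crucial observation is that for this particular curvature measure $m-2q=1$, which makes the multiplication formulas unusually short: \eqref{sN} produces only two $N$-terms, and in the first application of $t$ the factor $(m-2q-1)$ vanishes so $tN_{2k-1,k-1}$ is proportional to a single term $N_{2k,k-1}$. Combining the four contributions produced after applying $4s$ and $t^2$, the $N_{2k+1,k-1}$ pieces should cancel exactly, leaving only the $N_{2k+1,k}$ piece with the claimed coefficient $\frac{4^k k!}{\pi^k}$.

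The most intricate step is $\el(u^k)$: apply $4s-t^2$ to the combination $\Delta_{2k-2,k-1} - \tfrac{1}{k}N_{2k-2,k-2}$. Since $m-2p=0$ for $\Delta_{2k-2,k-1}$, multiplication by $s$ produces a non-trivial $\Delta_{2k,k-1}$ contribution and an $N_{2k,k-1}$ contribution in addition to the expected $\Delta_{2k,k}$ piece, while $t^2\Delta_{2k-2,k-1}$ produces further $\Delta_{2k,k-1}$ and $\Delta_{2k,k}$ terms via two applications of Proposition \ref{prop_module_mult_t}. Similarly, $4sN_{2k-2,k-2}$ and $t^2N_{2k-2,k-2}$ each contribute to $N_{2k,k-2}$ and $N_{2k,k-1}$. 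The main obstacle will be verifying that the extraneous $\Delta_{2k,k-1}$ and $N_{2k,k-2}$ terms cancel exactly, and that the surviving coefficients of $\Delta_{2k,k}$ and $N_{2k,k-1}$ match the claimed values $\frac{(2k)!}{\pi^k k!}$ and $-\frac{(2k)!}{\pi^k k!(k+1)}$. These cancellations amount to a few elementary rational identities in $k$, which I would verify by direct computation after carefully assembling the four contributions.
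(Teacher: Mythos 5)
Your proposal is correct and follows essentially the same approach as the paper: induction on $k$ via the $\Val^{U(\infty)}$-module structure on $\curv^{U(\infty)}$, using the multiplication formulas of Propositions \ref{cor_mults} and \ref{prop_module_mult_t}. The paper's proof simply packages the $u$-multiplication as two explicit general-index relations for $u\cdot\Delta_{kq}$ and $u\cdot N_{kq}$ before doing the induction, whereas you expand $4s$ and $t^2$ separately at each step, but this is the same calculation and your description of where the cancellations occur (in particular the vanishing factor $(m-2q-1)$ in $t\cdot N_{2k-1,k-1}$ and the resulting collapse of $\en(u^k)$ to a single term) is accurate.
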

\proof
Induction on $k$, using the relations
\begin{align}
 u\Delta_{kq}=&\frac{2}{\pi(k+2)}\Big(2(q+1)(2q+1)\Delta_{k+2,q+1}-4(q+1)(q+2)\Delta_{k+2,q+2}\\
&-\frac{2(k-2q+2)(k-2q+1)}{k+4}N_{k+2,q}-\frac{4(q+1)(k-2q)}{k+4}N_{k+2,q+1}\Big)\notag\\
uN_{kq}=&\frac{4(q+1)}{\pi(k+4)}\Big((2q+5)N_{k+2,q+1}-\frac{2(q+2)(k-2q-2)}{k-2q}N_{k+2,q+2}\Big),
\end{align}
which follow directly from Propositions \ref{cor_mults} and \ref{prop_module_mult_t}.
\endproof

\subsection{The modules $\curv^{U(\infty)}$ and $\curv^{U(n)}$}

\begin{theorem} \label{thm_free_module}
 The $\Val^{U(\infty)}$-module $\curv^{U(\infty)}$ is freely generated by $\Delta_{00}$ and $N_{10}$, i.e.
\begin{equation}\label{eq:free_decomposition}
 \curv^{U(\infty)}= \Val^{U(\infty)}\Delta_{00} \oplus \Val^{U(\infty)}N_{10} = \image \el \oplus \image \en.
\end{equation}
\end{theorem}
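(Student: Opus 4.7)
The plan is to exploit the globalization map $\glob_0: \curv^{U(\infty)} \to \Val^{U(\infty)}$ to split the problem into an easy half and a harder half. First I would note that $\el$ is a right inverse of $\glob_0$: indeed $\glob_0(\el(\phi)) = \phi \cdot \glob_0(\Delta_{00}) = \phi \cdot \chi = \phi$, so $\el$ is injective and $\image \el \cap \ker \glob_0 = \{0\}$. Since $\glob_0(N_{10}) = 0$, we have $\image \en \subseteq \ker \glob_0 = \spann\{N_{kq}\}$ by \eqref{eq:ker glob0}, whence $\image \el \cap \image \en = \{0\}$. Moreover any $\Phi \in \curv^{U(\infty)}$ decomposes as $\Phi = \el(\glob_0(\Phi)) + (\Phi - \el(\glob_0(\Phi)))$, where the second summand lies in $\spann\{N_{kq}\}$. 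This already yields the vector-space splitting $\curv^{U(\infty)} = \image \el \oplus \spann\{N_{kq}\}$.

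The remaining task is to prove that $\en: \Val^{U(\infty)} \to \spann\{N_{kq}\}$ is an isomorphism of graded vector spaces: surjectivity then yields $\curv^{U(\infty)} = \image\el + \image\en$, while injectivity gives the uniqueness of the decomposition, making the direct sum free on $\Delta_{00}$ and $N_{10}$. I would argue degree by degree. In each degree $m \geq 1$, both $\Val^{U(\infty)}_{m-1}$ and $\spann\{N_{m,q}: 0 \leq q < m/2\}$ have dimension $\lfloor (m-1)/2\rfloor + 1$, so it suffices to show that the matrix of the restriction $\en_m$ in convenient bases is nonsingular.

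The key computation uses the basis $\{t^{m-1-2b} u^b : 0 \leq b \leq \lfloor(m-1)/2\rfloor\}$ of $\Val^{U(\infty)}_{m-1}$. By Lemma \ref{lemma_l_uk},
\[
\en(t^{m-1-2b} u^b) = \frac{4^b b!}{\pi^b}\, t^{m-1-2b} N_{2b+1,b}.
\]
Proposition \ref{prop_module_mult_t} shows that $t \cdot N_{k,q}$ is a linear combination of $N_{k+1,q}$ and $N_{k+1,q+1}$, so $t$-multiplication never decreases the $q$-index. Iterating starting from $N_{2b+1,b}$, one deduces that $t^{m-1-2b} N_{2b+1,b}$ lies in $\spann\{N_{m,q}: q \geq b\}$, and that the coefficient of $N_{m, b}$ is a product of the strictly positive constants $\omega_{k+1}(k+2)(k-2b+1)/[\pi\omega_k(k+3)]$ for $k = 2b+1, 2b+2, \dots, m-1$, hence nonzero.

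With respect to the ordered bases $\{\en(t^{m-1-2b} u^b)\}_b$ and $\{N_{m,q}\}_q$, the matrix of $\en_m$ is therefore triangular with nonzero diagonal, hence invertible. The main technical point will be the careful verification of the vanishing-below-diagonal claim, which reduces to the observation that $t \cdot N_{k,q}$ has no $N_{k+1, q'}$ component with $q' < q$; the boundary case $q = (k-1)/2$, in which the $N_{k+1, q+1}$ coefficient in Proposition \ref{prop_module_mult_t} vanishes, only reinforces the triangularity and never threatens the diagonal entries. This completes the proof that $\en$ is an isomorphism onto $\spann\{N_{kq}\}$, and hence that the decomposition \eqref{eq:free_decomposition} is free of rank two.
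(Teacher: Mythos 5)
Your proof is correct, and it replaces the paper's key step with a cleaner, more direct argument. The structural skeleton is shared: both proofs use the decomposition $\Phi = \el(\glob_0 \Phi) + (\Phi - \el(\glob_0\Phi))$ to reduce the problem to showing that $\ker\glob_0 = \spann\{N_{kq}\}$ lies in the image of $\en$ and that the dimensions match. Where you differ is in how you handle $\ker\glob_0$. The paper proceeds by induction on degree $k$, invoking the pair of formulas for $t\cdot N_{k-1,q}$ and $s\cdot N_{k-2,q}$ and arguing that the resulting $2\times 2$ system is nonsingular, so that each $N_{kq}$ is obtained from lower-degree $N$'s already known to be in the image. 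You instead observe that $\en$ is a graded linear map, work with the explicit monomial basis $\{t^{m-1-2b}u^b\}$ of $\Val^{U(\infty)}_{m-1}$, and use only Lemma \ref{lemma_l_uk} and the $t$-multiplication formula of Proposition \ref{prop_module_mult_t}. Since $t\cdot N_{kq}$ only involves $N_{k+1,q}$ and $N_{k+1,q+1}$, with the $N_{k+1,q}$-coefficient $\omega_{k+1}(k+2)(k-2q+1)/(\pi\omega_k(k+3))$ strictly positive for $q<k/2$, the matrix of $\en$ in each degree is triangular with nonzero diagonal. This avoids $s$-multiplication entirely and is more transparent than the paper's determinant claim, whose verification is left implicit. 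Your treatment of the boundary case $q=(k-1)/2$ (where the $N_{k+1,q+1}$ coefficient vanishes) is also correct and shows the triangularity is never disturbed. The one point worth making explicit, as you in fact do, is the preliminary identity $\glob_0\circ\el = \mathrm{id}$, which rests on $\glob_0(\phi\cdot\Phi)=\phi\cdot\glob_0(\Phi)$ (noted in the proof of Theorem \ref{thm_ftaig_curv}) together with $\glob_0(\Delta_{00})=\chi$.
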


\proof
We use induction on $k$ to show that the map
\begin{align*}
 \psi_k: \Val_k^{U(\infty)} \oplus \Val_{k-1}^{U(\infty)} & \mapsto \curv_k^{U(\infty)}\\
(\phi_1,\phi_2) & \mapsto \phi_1 \Delta_{00}+\phi_2 N_{10}
\end{align*}
is onto. 
For $k=0$ this is clear. For $k=1$ we have $t\Delta_{00}=\frac{2}{\pi}\Delta_{10}$.

Let us now suppose that $k \geq 2$ and that we know the result for $k-1$ and $k-2$. 

Let us recall that
\begin{align}
  t N_{k-1,q} & =\frac{\omega_{k}}{\pi \omega_{k-1}} \frac{k+1}{k+2} \left(
(k-2q)N_{kq}+\frac{2(k-2q-2)(q+1)}{k-2q-1} N_{k,q+1}\right), 0 \leq q<\frac{k-1}{2} \label{eq_tN}\\
sN_{k-2,q} & = \frac{(k-2q)(k-2q-1)}{2\pi(k+2)} N_{kq}+\frac{2(q+1)(k-2q)}{\pi(k+2)}N_{k,q+1}, \label{eq_tT} 
0 \leq q<\frac{k-2}{2}.
\end{align}

The corresponding $2 \times 2$-matrix is non-singular. It follows that $N_{kq}$ lies in the image of
$\psi_k$ for all $0 \leq q < \frac{k}{2}$. Hence   $\nul_k^{U(\infty)}$  is  a subset of the image of $\psi_k$. 

If $\Phi \in \curv_k^{U(\infty)}$ has globalization $\phi$, then $\Phi-\phi \Delta_{00} \in \nul_k^{U(\infty)}$ and it
follows that $\Phi$ is included in the image of $\psi_k$.

This shows that $\psi_k$ is onto. Now we compare dimensions: the dimension on the left hand side is 
$\dim  \Val_k^{U(\infty)}+\dim \Val_{k-1}^{U(\infty)}=\lfloor \frac{k}{2}\rfloor+1+\lfloor \frac{k-1}{2}\rfloor +1=k+1$,
which is the dimension of the right hand side. The map being onto, it must be an isomorphism of vector spaces.
\endproof

The following proposition allows us to find explicitly the decomposition \eqref{eq:free_decomposition} of a given element
of $\curv^{U(\infty)}$.

\begin{proposition} \label{prop_n_inv}
The $\en$-map is a linear isomorphism between $\Val^{U(\infty)}$ and $\nul^{U(\infty)}$. Its inverse is given by 
\begin{align*}
 \en^{-1}(N_{kq})= \frac{4(k+2)\pi^{k-1}}{(k-2q)q!\omega_k} \sum_{r=0}^{\lfloor \frac{k-2q-1}{2} \rfloor}
\frac{(-1)^r(q+r+1)!}{(k-2q-2r-1)!(2q+2r+3)!r!} t^{k-2q-1-2r}u^{q+r},
\end{align*}
where $u=4s-t^2$.
 \end{proposition}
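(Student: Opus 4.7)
The proof has two parts: first show that $\en$ is a bijection onto $\nul^{U(\infty)}$, then verify the explicit inverse formula.

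\textbf{Step 1 (Bijectivity).} I would begin by observing that $\nul^{U(\infty)}$ is a $\Val^{U(\infty)}$-submodule of $\curv^{U(\infty)}$. Indeed, Propositions \ref{cor_mults} and \ref{prop_module_mult_t} show that multiplication by $s$ (via equation \eqref{sN}) and by $t$ sends each $N_{kq}$ into $\spann\{N_{\cdot,\cdot}\}$. Since $s, t$ generate $\Val^{U(\infty)}$ by Proposition \ref{prop:generators}, this implies $\phi \cdot \nul^{U(\infty)} \subset \nul^{U(\infty)}$ for every $\phi \in \Val^{U(\infty)}$. In particular $\en(\Val^{U(\infty)}) \subset \nul^{U(\infty)}$. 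On the other hand, injectivity of $\en$ is immediate from the direct sum decomposition $\curv^{U(\infty)} = \image\el \oplus \image\en$ of Theorem \ref{thm_free_module}. A degreewise dimension count finishes the argument: for $k\ge 1$ the number of indices $(k,q)$ with $0\le q<k/2$ equals $\lceil k/2\rceil$, which is also the number of hermitian indices $(k-1,q)$ and hence $\dim \Val^{U(\infty)}_{k-1}$. Thus $\en$ is a graded linear isomorphism onto $\nul^{U(\infty)}$.

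\textbf{Step 2 (The formula).} Let $\Psi_{kq}$ denote the right-hand side of the proposed identity. The plan is to compute $\en(\Psi_{kq})$ and show it equals $N_{kq}$. By Lemma \ref{lemma_l_uk}, $\en(u^{m}) = \tfrac{4^{m}m!}{\pi^{m}}N_{2m+1,m}$, so
\[
\en(t^{\,j} u^{m}) = \tfrac{4^{m}m!}{\pi^{m}}\; t^{\,j}\cdot N_{2m+1,m}.
\]
Proposition \ref{prop_module_mult_t} then lets one expand $t^{\,j}N_{2m+1,m}$ iteratively as an explicit $\R$-linear combination $\sum_{p\ge m} A^{j}_{m,p}\,N_{2m+1+j,\,p}$. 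Plugging in $j=k-2q-1-2r$ and $m=q+r$ and summing over $r$, the quantity $\en(\Psi_{kq})$ takes the form $\sum_{p\ge q} D_{k,q,p}\,N_{k,p}$, where each $D_{k,q,p}$ is an alternating binomial sum over $r$. The task reduces to the identities $D_{k,q,q}=1$ and $D_{k,q,p}=0$ for $p>q$.

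\textbf{Where the work lies.} The combinatorial identities in Step 2 are the main obstacle. A clean route that I would try first is reverse induction on $q$ (for fixed $k$), or double induction on $(k-2q,q)$: starting from the trivially-verified boundary case $q=\lfloor (k-1)/2\rfloor$ (a single-term sum), one uses the recursion
\[
t\cdot \en^{-1}(N_{k-1,q}) \;=\; \en^{-1}(t\cdot N_{k-1,q})
\]
together with the explicit action of $t$ on $N_{k-1,q}$ (Proposition \ref{prop_module_mult_t}) to relate $\en^{-1}(N_{k,q})$ and $\en^{-1}(N_{k,q+1})$ to $\en^{-1}(N_{k-1,q})$, reducing the binomial identity to a Pascal-type relation between the coefficients $\tfrac{(-1)^r(q+r+1)!}{(k-2q-2r-1)!(2q+2r+3)!\,r!}$ at adjacent values of $(k,q)$. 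Once this recursion is checked on the level of coefficients, both the diagonal equality and the vanishing of off-diagonal $D_{k,q,p}$ follow by induction.
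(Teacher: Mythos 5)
Your proposal is correct and matches the paper's proof in all essentials. The paper establishes surjectivity more directly: for $\Phi\in\nul^{U(\infty)}=\ker\glob_0$, writing $\Phi=\phi_1\Delta_{00}+\phi_2 N_{10}$ via Theorem \ref{thm_free_module} and globalizing gives $0=\phi_1\cdot\chi$, so $\phi_1=0$ and $\Phi=\en(\phi_2)$; your degreewise dimension count reaches the same conclusion. For the explicit formula the paper's proof is the one-line remark that the right-hand side is ``compatible with multiplications by $t$ and $u$,'' which is precisely your plan in the final paragraph: verify the diagonal boundary case $N_{2q+1,q}$ against $\en(u^q)$ from Lemma \ref{lemma_l_uk}, then propagate via the $t$-action of Proposition \ref{prop_module_mult_t}. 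Your Step 2 as initially stated (direct expansion of $\en(\Psi_{kq})$ and checking $D_{k,q,p}=\delta_{pq}$) would also work but is messier; the recursion route you identify as cleaner is the one the paper has in mind.
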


\proof
By Theorem \ref{thm_free_module} each $\Phi \in \curv^{U(\infty)}$ can be uniquely written as
$\Phi=\phi_1 \Delta_{00}+\phi_2 N_{10}$ with $\phi_1,\phi_2 \in \Val^{U(\infty)}$. It follows that $\en$ is
injective. If $\Phi \in \ker \glob_0$, then $\phi_1=0$ and hence $\Phi=\en(\phi_2)$. Therefore $\en$ is onto. 

To prove the displayed formula, it is enough to check that it is compatible with multiplications by $t$ and $u$, which
is  straightforward.
\endproof

It remains to describe the restriction map $\curv^{U(\infty)}\to\curv^{U(n)}$ in terms of the decomposition
\eqref{eq:free_decomposition}. This follows from Lemma \ref{lem:mus that vanish}, together with  the
following proposition.

\begin{proposition} \label{prop_kernel_n_map}
Let $p \in \mathbb{C}[t,u]$. Then $\en(p)=0$ in $\curv^{U(n)}$ if and only if $p$ belongs to the ideal
$\langle g_{n-1},g_n\rangle$, where $g_n$ is the degree $n$ part in 
\begin{displaymath}
 e^t \frac{\sin \sqrt{u}-\sqrt{u} \cos \sqrt{u}}{2\sqrt{u}^3}.
\end{displaymath}
Explicitly
\begin{displaymath}
 g_n=\sum_{r=0}^{\lfloor \frac{n}{2}\rfloor} \frac{(-1)^r(r+1)}{(n-2r)!(2r+3)!} t^{n-2r}u^r.
\end{displaymath}
\end{proposition}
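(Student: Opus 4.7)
My plan is to identify $K := \{p \in \C[t,u] : \en(p) \equiv 0 \text{ in } \curv^{U(n)}\}$ with $\langle g_{n-1}, g_n\rangle$. Because the module action of $\Val^{U(\infty)}$ on $\nul^{U(\infty)}$ satisfies $\en(p\cdot q) = p \cdot \en(q)$, the space $K$ is automatically an ideal. I will argue in three steps: (i) the inclusion $\langle g_{n-1}, g_n\rangle \subseteq K$; (ii) a Hilbert series computation for $\C[t,u]/K$; and (iii) the regular-sequence property of $\{g_{n-1}, g_n\}$, which forces the same Hilbert series for $\C[t,u]/\langle g_{n-1}, g_n\rangle$ and collapses the natural graded surjection onto $\C[t,u]/K$ to an isomorphism.

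For (i), specializing the formula in Proposition \ref{prop_n_inv} to $q=0$ collapses the inner sum and yields
\[
\en^{-1}(N_{k,0}) \;=\; \frac{4(k+2)\pi^{k-1}}{k\omega_k}\, g_{k-1},
\]
so $\en(g_{n-1})$ and $\en(g_n)$ are nonzero scalar multiples of $N_{n,0}$ and $N_{n+1,0}$ respectively. Both fail the defining condition $q > k-n$ for basis elements of $\nul^{U(n)}$ and vanish upon restriction (for $N_{n,0}$ directly from the coefficient $\frac{2(n-k+q)}{2n-k} = 0$ in Definition \ref{def_delta}; for $N_{n+1,0}$ from the restriction rules \eqref{rdeltavanish} applied stepwise). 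For (ii), Proposition \ref{prop_n_inv} gives a graded linear bijection $\en:\C[t,u] \to \nul^{U(\infty)}$; composing with the graded surjection $\nul^{U(\infty)} \twoheadrightarrow \nul^{U(n)}$ (whose kernel is spanned by those $N_{kq}$ with $q \le k-n$) produces a graded isomorphism $\C[t,u]/K \xrightarrow{\sim} \nul^{U(n)}$. Counting the integer pairs $(k,q)$ with $2q+1 \leq k \leq n+q-1$ and telescoping gives
\[
H_{\nul^{U(n)}}(T) \;=\; \sum_{q=0}^{n-2} T^{2q+1}\cdot \frac{1-T^{n-q-1}}{1-T} \;=\; \frac{T(1-T^{n-1})(1-T^n)}{(1-T)(1-T^2)}.
\]
Since $\en$ raises grading by $\deg N_{1,0} = 1$, dividing by $T$ yields $H_{\C[t,u]/K}(T) = \frac{(1-T^{n-1})(1-T^n)}{(1-T)(1-T^2)}$, whose value at $T=1$ is $\binom{n}{2}$.

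Step (iii) is the main obstacle. Coprimality of $g_{n-1}$ and $g_n$ in $\C[t,u]$ is equivalent, under the substitution $u = \sigma^2$, to coprimality in $\C[t,\sigma]$ of the ordinary homogeneous polynomials $\tilde g_m(t,\sigma) := g_m(t,\sigma^2)$ of degree $m$, which amounts to the absence of a common zero on $\p^1$. The point $\sigma = 0$ is excluded since $\tilde g_n(t,0) = t^n/(6n!) \neq 0$ for $t \neq 0$. A finite common zero $c = t/\sigma$ would force two consecutive vanishing Taylor coefficients $a_{n-1} = a_n = 0$ of $\psi(T) := e^{cT} h(T)$, where $h(T) := \frac{\sin T - T\cos T}{2T^3} = \sum_{r\geq 0}\frac{(-1)^r(r+1)}{(2r+3)!}T^{2r}$. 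A direct series calculation gives $xh'' + 4h' + xh = 0$, which transforms under $\psi = e^{cT} h$ into $T\psi'' + (4 - 2cT)\psi' + (T(c^2+1) - 4c)\psi = 0$ and produces the three-term recurrence
\[
(m+1)(m+4)\, a_{m+1} \;=\; 2c(m+2)\, a_m \;-\; (c^2+1)\, a_{m-1}.
\]
When $c^2+1 \neq 0$, one inverts to express $a_{m-1}$ in terms of $a_m, a_{m+1}$; two consecutive zeros propagate backward to $a_0 = 0$, contradicting $a_0 = h(0) = 1/6$. When $c^2+1 = 0$, the recurrence degenerates to $a_{m+1} = \frac{2c(m+2)}{(m+1)(m+4)}\, a_m$, whose solution starting from $a_0 = 1/6$ has no vanishing terms at all. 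Either way no common zero exists, so $\{g_{n-1}, g_n\}$ is a regular sequence in the two-dimensional weighted polynomial ring $\C[t,u]$; hence $H_{\C[t,u]/\langle g_{n-1}, g_n\rangle}(T)$ equals the complete-intersection series $\frac{(1-T^{n-1})(1-T^n)}{(1-T)(1-T^2)}$, matching (ii). The graded surjection $\C[t,u]/\langle g_{n-1}, g_n\rangle \twoheadrightarrow \C[t,u]/K$ is therefore a degreewise isomorphism, proving $\langle g_{n-1}, g_n\rangle = K$.
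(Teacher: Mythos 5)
Your proof is correct but takes a genuinely different route from the paper's. The paper works directly at the module level: it proves by induction (using the formulas for $tN_{k-1,q}$ and $sN_{k-2,q}$) that the kernel of the restriction $\nul^{U(\infty)} \to \nul^{U(n)}$ is precisely the $\Val^{U(\infty)}$-submodule generated by $N_{n,0}$ and $N_{n+1,0}$, and then simply applies $\en^{-1}$ from Proposition \ref{prop_n_inv} to translate this into the ideal $\langle g_{n-1},g_n\rangle$. You instead prove one containment cheaply from the $q=0$ specialization of Proposition \ref{prop_n_inv}, compute the Hilbert series of $\C[t,u]/K$ via the graded bijection $\en$, and then close the argument by showing $\{g_{n-1},g_n\}$ is a regular sequence in the weighted ring $\C[t,u]$ — a coprimality statement you establish through the second-order ODE $Th''+4h'+Th=0$ for the generating function $h$ and the resulting three-term recurrence for the Taylor coefficients of $e^{cT}h(T)$. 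That regular-sequence fact is a genuine extra input the paper never needs; its module-theoretic induction substitutes for it. In return, your version makes the numerical coincidence visible (both sides have Hilbert series $(1-T^{n-1})(1-T^n)/[(1-T)(1-T^2)]$, hence dimension $\binom n2$), which the paper's constructive argument leaves implicit, and trades the paper's delicate inductive bookkeeping for a self-contained analytic fact about $h$.
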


\proof
We first claim that the kernel of the restriction map 
\begin{displaymath}
\nul^{U(\infty)} \to \nul^{U(n)}
\end{displaymath}
is the submodule $S$ generated by $N_{n0}$ and $N_{n+1,0}$. Clearly, this kernel is spanned by all
$N_{kq}$ with $q \leq \min\{k-n,\frac{k-1}{2}\}$. In degree $n$, only $N_{n0}$ belongs to $S$, while in
degree $n+1$ the measures $N_{n+1,0}, N_{n+1,1}$ belong to $S$. We may write $N_{n+1,1}$ as a combination of $tN_{n0}$
and $N_{n+1,0}$. 

Now suppose inductively that for some  $l \geq n+2$ each $N_{kq}$ with $k<l, q \leq k-n$
belongs to $S$. By \eqref{eq_tN}, \eqref{eq_tT} and the induction hypothesis, $N_{lq} \in S$ for all $q \leq
\min\{l-n-1,\frac{l-1}{2}\}$. If $l \geq 2n-1$, then these measures span the degree $l$-part of the kernel. If $l \leq
2n-2$ then these measures, together with the measure $N_{l,l-n}$ span the degree $l$
part of the kernel. By \eqref{eq_tN}, $N_{l,l-n}$ equals some non-zero multiple of
$tN_{l-1,l-n-1}$ modulo $N_{l,l-n-1}$. In both cases, we obtain that the degree $l$ part of the kernel belongs to $S$.
By induction it follows that the kernel equals $S$.  

It now follows that for $p \in \mathbb{C}[t,u]$ we have $\en(p)=0$ in $\curv^{U(n)}$ if and only if $\en(p)=p_1
N_{n,0}+p_2 N_{n+1,0}$ for some polynomials $p_1,p_2$. Equivalently $p=p_1\en^{-1}(N_{n0})+p_2
\en^{-1}(N_{n+1,0}).$
Hence, the kernel of the restriction map above is the ideal generated by $g_{n-1}:=c_{n-1}^{-1}
\en^{-1}(N_{n0})$ and ${g_n:=c_n^{-1} \en^{-1}(N_{n+1,0})}$ (where $c_n:=\frac{4(n+2)\pi^{n-1}}{n
\omega_n}$). 

The explicit formula for $g_n$ follows by Proposition \ref{prop_n_inv}. From this the generating function is easily
obtained. 
\endproof


\section{Local kinematic formulas and module structure}\label{sect:module cpn}

In this section, we will consider the curvature $\lambda$ as a parameter. 
In order to distinguish between the valuations $t$ living on the different spaces $\CP^n_\lambda$, we will write
$t_\lambda$ for the valuation $t \in \mathcal{V}^n_\lambda$. The same applies to $u$ and $v$. In view of Proposition
\ref{prop:s indep}, we will not make this distinction for $s$.

\subsection{Local kinematic formulas}
Here we will find the local kinematic formulas $K(\Delta_{0,0})$ and $K(N_{1,0})$. By Theorem
\ref{thm_free_module} and  Theorem \ref{thm_ftaig_curv}, this determines the local kinematic
operator $K$ completely.

Recall from Subsection \ref{subsec_algiso} the map $J_\lambda=\pd_\lambda^{-1} \circ (I_\lambda^{-1})^* \circ
\pd:\mathcal{V}_0 \to \mathcal{V}_\lambda$ and the commuting diagram
\begin{displaymath}
 \xymatrix{\curvuinf \ar[r]^-K \ar[d]_{\glob_\lambda} & \Sym^2 \curvuinf \ar[d]^{\glob_\lambda \otimes
\glob_\lambda}\\ \mathcal{V}_\lambda^\infty \ar[r]^-{k_\lambda} \ar[d]_{J_\lambda^{-1}} & \Sym^2
\mathcal{V}_\lambda^\infty
\ar[d]^{J_\lambda^{-1} \otimes J_\lambda^{-1}}\\ \mathcal{V}^\infty_0 \ar[r]^-k& \Sym^2 \mathcal{V}_0^\infty}
\end{displaymath}
Define $H_\lambda:=J_\lambda^{-1} \circ \glob_\lambda: \curvuinf \to \mathcal{V}_0^\infty$. In particular
$H_0=\glob_0$. We will denote \[H_0'=\left.\frac d{d\lambda}\right|_{\lambda=0}H_\lambda,\quad \mbox{and}\quad
H_0''=\left.\frac {d^2}{d\lambda^2}\right|_{\lambda=0}H_\lambda.\] 
Let $p\in \R[[t,s]]=\Val^{U(\infty)}$. Define differential operators $D_1,D_2$ by 
\begin{align*}
 D_1p & :=\frac{t^2-2s}{2}p-\frac{tu}{4}\frac{\partial p}{\partial t}\\
 D_2p & :=-\frac{3\pi u t}{8}p+\frac{\pi u^2}{8}\frac{\partial p}{\partial t}.
\end{align*}
where $u=4s-t^2$. Equivalently, in terms of $t$ and $u$, we have
\begin{align*}
 D_1p & =\frac{t^2-u}{4}p+\frac{t^2u}{2}
\frac{\partial p}{\partial u}-\frac{ut}{4}\frac{\partial p}{\partial t}\\
 D_2p & = -\frac{3\pi u t}{8}p-\frac{\pi u^2 t}{4}
\frac{\partial p}{\partial u}+\frac{\pi u^2}{8} \frac{\partial p}{\partial t} .
\end{align*}

\begin{lemma} \label{lemma:h_lambda_series}
For all $p \in \mathbb{C}[t,s]$ we have 
\begin{align}
\label{eq_Hlambda_ell} H_\lambda(\el(p)) & =p+\frac{\lambda}{1-\lambda s} D_1(p)=p+\lambda D_1(p)+\lambda^2 s
D_1(p)+O(\lambda^3)\\
\label{eq_Hlambda_en} H_\lambda(\en(p)) & =\frac{\lambda}{1-\lambda s} D_2(p)=\lambda D_2(p)+\lambda^2 s
D_2(p)+O(\lambda^3).
\end{align}
In particular, 
 \begin{displaymath}
  H_0''=2s H_0'.
 \end{displaymath}
Moreover, $D_1$ and $D_2$ are well-defined operators on $\Val^{U(n)}$. 
\end{lemma}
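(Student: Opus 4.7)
The plan is to derive the single exact closed form
\[
H_\lambda(\Phi) \;=\; \frac{1}{1-\lambda s}\left(\glob_0(\Phi) \;-\; \frac{\lambda}{\pi}\,T(\Phi)\right), \qquad \Phi \in \curvuinf,
\]
where $T\colon \curvuinf \to \Val^{U(\infty)}$ is the linear map sending both $\Delta_{kq}$ and $N_{kq}$ to $(q+1)\mu_{k+2,q+1}$ on the basis of Definition \ref{def_delta}. Both displayed formulas of the lemma will then follow by specializing this closed form to $\Phi = \el(p)$ and $\Phi = \en(p)$.

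To establish the closed form, I would combine two inputs. First, Lemma \ref{lem:def mu} and equation \eqref{eq:n = b} give $[\Delta_{kq}]_\lambda = \mu_{kq}^\lambda - \lambda(q+1)\mu_{k+2,q+1}^\lambda/\pi$ and $[N_{kq}]_\lambda = -\lambda(q+1)\mu_{k+2,q+1}^\lambda/\pi$. I would verify these are \emph{exact} identities: for $k>2q$ this is already in Definition \ref{def:tilde mu}, and for $k=2q$ one inverts the defining series for $\mu_{2q,q}^\lambda$ and checks inductively that all coefficients of $\lambda^j$ with $j\ge 2$ cancel. This yields an exact formula for $\glob_\lambda(\Phi)$ which is a polynomial of degree $1$ in $\lambda$ in the $\{\mu_{kq}^\lambda\}$-basis. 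Second, Proposition \ref{prop_miracle} gives $J_\lambda = (1-\lambda s)^2 I_\lambda$, so $J_\lambda^{-1}$ is $\R[s]$-linear, while \eqref{jmu} gives $J_\lambda^{-1}(\mu_{kq}^\lambda) = \mu_{kq}/(1-\lambda s)$. Applying $J_\lambda^{-1}$ to the formula for $\glob_\lambda(\Phi)$ produces the closed form.

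Specializing: for $\Phi = \el(p)$ we use $\glob_0(\el(p)) = p$, and for $\Phi = \en(p)$ the observation that the $\Val^{U(\infty)}$-module action preserves $\nul^{U(\infty)} = \ker\glob_0$ (visible from \eqref{sN} and the first formula of Proposition \ref{prop_module_mult_t}) gives $\glob_0(\en(p)) = 0$. Trivial algebraic manipulation then rewrites the claimed formulas with $D_1(p)$ replaced by $sp - T(\el(p))/\pi$ and $D_2(p)$ replaced by $-T(\en(p))/\pi$. The operators $T\circ\el$ and $T\circ\en$ are $\R[s]$-linear (from $\R[s]$-linearity of $H_\lambda$ combined with the closed form), and so are $D_1, D_2$ by inspection; to identify them it suffices to match on the $\R[s]$-generating set $\{t^a\}_{a\ge 0}$ of $\R[[s,t]]$. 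Lemma \ref{lemma_l_uk} expands $\el(t^a)$ and $\en(t^a)$ in the basis $\{\Delta_{kq}, N_{kq}\}$, and after applying $T$ and re-expressing the resulting $\mu_{a+2, i+1}$ via \eqref{mu2st} one obtains $\pi(st^a - D_1(t^a)) = \pi(a+2)t^au/4$ and the analogue for $D_2$. The identity $H_0'' = 2sH_0'$ is immediate from differentiating the closed form twice at $\lambda=0$, and the well-definedness of $D_1, D_2$ on $\Val^{U(n)}$ follows because $\el, \en$, and $H_\lambda$ all respect restriction: if $p$ vanishes in $\Val^{U(n)}$ then $H_\lambda(\el(p)) = 0$ in $\V^n_0$, and the closed form together with invertibility of $1-\lambda s$ in the finite-dimensional $\V^n_0$ (where $s$ is nilpotent) forces $D_1(p) = 0$ there; similarly for $D_2$. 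The main obstacle is the verification of $T\circ\el$ and $T\circ\en$ on $\{t^a\}$: although each individual step is routine, the combinatorial factors from Lemma \ref{lemma_l_uk} and \eqref{mu2st} must be carefully matched against the explicit polynomials $D_1(t^a) = (a+2)t^{a+2}/4 - (a+1)st^a$ and its $D_2$ analogue.
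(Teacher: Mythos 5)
Your proposal is correct and rests on the same ingredients as the paper's proof: Lemma~\ref{lemma_l_uk} to expand $\el(t^a)$ and $\en(t^a)$, Lemma~\ref{lem:def mu} and \eqref{eq:n = b} for $\glob_\lambda$ on the $\{\Delta_{kq},N_{kq}\}$-basis, \eqref{jmu} for $J_\lambda^{-1}(\mu_{kq}^\lambda)$, Proposition~\ref{prop_miracle}, and reduction to $\{t^a\}$ via $\R[s]$-linearity/multiplicativity. What you do differently is organizational: you first extract the exact closed form $H_\lambda(\Phi)=(1-\lambda s)^{-1}(\glob_0\Phi-\tfrac{\lambda}{\pi}T\Phi)$ valid on all of $\curvuinf$ and then specialize to $\el(p)$, $\en(p)$; the paper never isolates this formula, and it is a cleaner way to see the structure --- in particular it makes $H_0''=2sH_0'$ immediate rather than an afterthought. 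The place where the paper has a concrete computational edge, which your plan forfeits, is the final identification of $D_1,D_2$: after applying Lemma~\ref{lemma_l_uk} the paper rewrites the resulting sums in the $\tau$-basis using the short identities $\sum_i\mu_{ki}^\lambda=\tau_{k0}^\lambda$, $\sum_i(i+1)\mu_{k+2,i+1}^\lambda=\tau_{k+2,1}^\lambda$ (and $\sum_i(k-2i+1)(i+1)\mu^\lambda_{k+3,i+1}=(k+1)\tau^\lambda_{k+3,1}-4\tau^\lambda_{k+3,2}$ for $\en$), at which point \eqref{eq_tau_lambda_tu} delivers the $(s,t,u)$-expression with no binomial work. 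Your plan of substituting \eqref{mu2st} directly for the $\mu_{k+2,i+1}$ would instead produce an alternating double binomial sum that must be collapsed by hand. That is extra labor rather than a gap, so the proposal goes through.
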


\proof
Both sides of these equations are multiplicative with respect to $s$, hence it suffices to prove the
statements for powers of $t$. 

Now we compute 
\begin{align*}
 H_\lambda \left(\el(t^k)\right) & =\frac{\omega_k k!}{\pi^k}  H_\lambda \left(\sum_i \Delta_{ki} \right)&& \text{ by Lemma
\ref{lemma_l_uk}}\\  
& = \frac{\omega_k k!}{\pi^k} J_\lambda^{-1}\left(  \sum_i[\Delta_{ki}]_\lambda\right)\\
& = \frac{\omega_k k!}{\pi^k} J_\lambda^{-1}\left(  \sum_i \left(\mu^\lambda_{ki}-\lambda
\frac{i+1}{\pi}\mu_{k+2,i+1}^\lambda\right)\right) &&\text{ by Lemma \ref{lem:def mu}}\\
& = \frac{\omega_k k!}{\pi^k} J_\lambda^{-1} \left(\tau_{k0}^\lambda \right)-\lambda \frac{\omega_k k!}{\pi^{k+1}} J_\lambda^{-1}\left(
\tau_{k+2,1}^\lambda\right) &&\text{ by definition of } \tau_{k,q}^\lambda \\
& = { J_\lambda^{-1}\left((1-\lambda s)^{\frac{k}{2}+1} t_\lambda^k \left(1-\frac{\lambda
u_\lambda}{4}(k+2)\right)\right)} &&\text{ by Proposition \ref{prop_mu2st}} \\
& = \frac{1}{1-\lambda s} t^k\left(1-\frac{k+2}{4}\lambda u\right) &&\text{ by Proposition \ref{prop_miracle}}\\
& = t^k+\frac{\lambda}{1-\lambda s} D_1 t^k.
\end{align*}

Similarly,
\begin{align*}
H_\lambda \left(\en(t^k)\right) & = \frac{3}{4}k!\frac{\omega_{k+3}}{\pi^{k+1}} H_\lambda \left(\sum_{i=0}^{\left\lfloor
\frac{k}{2}\right\rfloor} (k-2i+1)N_{k+1,i} \right)&&\text{ by Lemma \ref{lemma_l_uk}}\\
& = -\lambda \frac{3}{4}k!\frac{\omega_{k+3}}{\pi^{k+2}} J_\lambda^{-1}\left( \sum_{i=0}^{\left\lfloor
\frac{k}{2}\right\rfloor} (k-2i+1) (i+1)\mu_{k+3,i+1}^\lambda\right)\\
& = -\lambda \frac{3}{4}k!\frac{\omega_{k+3}}{\pi^{k+2}}
J_\lambda^{-1}((k+1)\tau_{k+3,1}^\lambda-4\tau_{k+3,2}^\lambda) &&\text{ by definition of }
\tau_{k,q}^\lambda\\
& = { J_\lambda^{-1} \left(\lambda \frac{\pi}{8} (1-\lambda s)^\frac{k+1}{2} t_\lambda^{k-1} u_\lambda
\left(ku_\lambda-3(1-\lambda s)t_\lambda^2\right)\right)} &&\text{ by Proposition \ref{prop_mu2st}}\\
& = \frac{\lambda}{1-\lambda s} \frac{\pi}{8}t^{k-1}u(ku-3t^2)&&\text{ by Proposition \ref{prop_miracle}}\\
& = \frac{\lambda}{1-\lambda s} D_2 t^k.
\end{align*}

If $p=0$ in $\Val^{U(n)}$, then $\el(p)=0, \en(p)=0$ in $\curv^{U(n)}$ and hence $\frac{\lambda}{1-\lambda s}D_1p=0$ and
$\frac{\lambda}{1-\lambda s} D_2(p)=0$ in $\Val^{U(n)}$ for all $\lambda$. This implies $D_1p=0, D_2p=0$ and hence
$D_1,D_2$ are well-defined on $\Val^{U(n)}$.
\endproof

\begin{proposition} \label{prop_h0h0}
\begin{displaymath} 
(H_0' \otimes H_0') \circ K=0.
\end{displaymath}
\end{proposition}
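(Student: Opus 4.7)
The plan is to exploit the commuting diagram just before Lemma \ref{lemma:h_lambda_series} in order to relate $K$ to the (curvature-independent) kinematic coproduct $k$ on $\V_0^\infty$. Concretely, going around the diagram gives the identity
\begin{equation*}
(H_\lambda \otimes H_\lambda) \circ K = k \circ H_\lambda
\end{equation*}
as maps $\curvuinf \to \Sym^2 \V_0^\infty$, valid for every $\lambda$. Since $k$ is independent of $\lambda$, I will extract the desired vanishing by differentiating this identity twice in $\lambda$ at $\lambda = 0$ and comparing with $H_0'' = 2s\cdot H_0'$ from Lemma \ref{lemma:h_lambda_series}.

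Differentiating once yields
\begin{equation*}
(H_0' \otimes H_0 + H_0 \otimes H_0') \circ K = k \circ H_0',
\end{equation*}
and, writing $X := (H_0' \otimes H_0) \circ K$, the cocommutativity of $K$ (hence symmetry of $K(\Phi) \in \Sym^2 \curvuinf$) gives $(H_0 \otimes H_0') \circ K = X$ in $\Sym^2 \V_0^\infty$, so that $2X = k \circ H_0'$; in particular $X$ lies in the image of $k$. Differentiating a second time, and again using symmetry, gives
\begin{equation*}
(H_0'' \otimes H_0 + 2 H_0' \otimes H_0' + H_0 \otimes H_0'') \circ K = k \circ H_0''.
\end{equation*}
Using $H_0'' = 2s\cdot H_0'$ and the fact that the external $H_0''$ terms on the left can be rewritten via the module structure $(H_0'' \otimes H_0) \circ K = (2s \otimes \chi) \cdot X$ and $(H_0 \otimes H_0'') \circ K = (\chi \otimes 2s) \cdot X$, while the right-hand side equals $(2s \otimes \chi) \cdot k(H_0') = 2(2s \otimes \chi)\cdot X$, solving for $Y := (H_0' \otimes H_0') \circ K$ gives
\begin{equation*}
Y = (s \otimes \chi - \chi \otimes s) \cdot X.
\end{equation*}

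The punchline is the multiplicativity relation \eqref{eq:k mult} in Theorem \ref{thm_ftaig_curv}, namely $k(s \cdot \mu) = (s \otimes \chi) \cdot k(\mu) = (\chi \otimes s) \cdot k(\mu)$. Thus $(s \otimes \chi - \chi \otimes s)$ annihilates the image of $k$, and since $X = \tfrac12 k\circ H_0'$ lies in this image, $Y = 0$. The main pitfalls to navigate carefully are (i) justifying that $(H_0' \otimes H_0)\circ K = (H_0 \otimes H_0')\circ K$ as elements of $\Sym^2 \V_0^\infty$ from the cocommutativity of $K$, and (ii) keeping track of the difference between multiplication by $s$ in the module $\curvuinf$ and multiplication by $s$ in the algebra $\V_0^\infty$ when applying the identity $H_0'' = 2sH_0'$ — but both operations intertwine with $\glob$, so the formal manipulation above is legitimate.
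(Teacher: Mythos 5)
Your setup --- differentiating $(H_\lambda\otimes H_\lambda)\circ K = k\circ H_\lambda$ twice at $\lambda=0$ and invoking $H_0''=2sH_0'$ --- is exactly the paper's, but the final cancellation does not go through as you state it. Cocommutativity of $K$ shows that $B:=(H_0\otimes H_0')\circ K$ is the \emph{flip} of $X:=(H_0'\otimes H_0)\circ K$, not that $B=X$; the two agree only in the symmetric quotient, and the asymmetric multiplication operators $s\otimes\chi$, $\chi\otimes s$ that you apply next do \emph{not} descend to that quotient, so you must keep $X$ and $B$ distinct in $\mathcal V_0^\infty\otimes\mathcal V_0^\infty$. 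Tracking them carefully, the second-derivative identity reduces to $2\,(H_0'\otimes H_0')\circ K=(2s\otimes\chi-\chi\otimes 2s)\cdot B$, and the whole proof hinges on showing $(s\otimes\chi)\cdot B=(\chi\otimes s)\cdot B$.

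This is where the real gap lies: $X$ and $B$ individually are \emph{not} in the image of $k$ --- only $X+B=k\circ H_0'$ is --- so the $k$-level relation \eqref{eq:k mult}, that $s\otimes\chi-\chi\otimes s$ annihilates the image of $k$, kills $(s\otimes\chi-\chi\otimes s)\cdot(X+B)$, not $(s\otimes\chi-\chi\otimes s)\cdot B$, and the remaining constraints (symmetry of $(H_0'\otimes H_0')\circ K$, and $B$ being the flip of $X$) do not force the latter to vanish. The missing ingredient is the \emph{local}, $K$-level multiplicativity \eqref{eq:K mult}, namely $(\id\otimes s)\circ K = K\circ s = (s\otimes\id)\circ K$, combined with the intertwining $H_0'\circ s = s\cdot H_0'$, which is a consequence of Proposition \ref{prop:s indep}: since the action of $s$ on $\curvuinf$ is independent of $\lambda$ while $H_\lambda$ intertwines it with multiplication by $s$ on $\mathcal V_0^\infty$ for every $\lambda$, one may differentiate. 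These yield
\begin{align*}
(\chi\otimes s)\cdot B&=(H_0\otimes (H_0'\circ s))\circ K=(H_0\otimes H_0')\circ(\id\otimes s)\circ K\\
&=(H_0\otimes H_0')\circ(s\otimes\id)\circ K=((H_0\circ s)\otimes H_0')\circ K=(s\otimes\chi)\cdot B,
\end{align*}
and this is precisely the cancellation the paper's proof uses to conclude $(H_0'\otimes H_0')\circ K=0$.
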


\proof
We take first and second derivatives at $\lambda=0$ of the equation $k
\circ H_\lambda=(H_\lambda \otimes H_\lambda) \circ K$:
\begin{align*}
 k \circ H_0' & = (H_0' \otimes H_0 + H_0 \otimes H_0') \circ K\\
 k \circ H_0'' & = (H_0'' \otimes H_0 + 2 H_0' \otimes H_0'+H_0 \otimes H_0'') \circ K.
\end{align*}

Since $H_0''=2s H_0'$, we obtain by using Theorem \ref{thm_ftaig}
\begin{equation} \label{eq:second_derivatives}
 (2s H_0' \otimes H_0 + 2 H_0' \otimes H_0'+H_0 \otimes 2s H_0') \circ K = (2s H_0' \otimes H_0
+ 2s H_0 \otimes H_0') \circ K.
\end{equation}
Since $s$ commutes with $H_0'$ by Proposition \ref{prop:s
indep}, we have 
\begin{displaymath}
 (H_0 \otimes 2s H_0')\circ K = (H_0 \otimes H_0')
\circ (id \otimes 2s) \circ K=(H_0 \otimes H_0')
\circ K \circ 2s=(H_0 \otimes H_0')
\circ (2s \otimes id) \circ K=(2s H_0 \otimes H_0')\circ K,
\end{displaymath}
where we used Theorem \ref{thm_ftaig_curv}. The statement now follows from \eqref{eq:second_derivatives}.
\endproof

We may write 
\begin{align*}
K(\Delta_{00}) & =(\el\otimes\el)k(\chi)+A_1\\
K(N_{10}) & =(\en\otimes \el+\el\otimes \en) k(\chi) +A_2, 
\end{align*}
with $A_1,A_2 \in \nul\otimes\nul$. Indeed, by \eqref{eq:bark mult bis}
we have $A_1, A_2 \in  \curv \otimes \nul$, and by symmetry they must lie in $\nul\otimes\nul$. 

Proposition \ref{prop_h0h0} shows that  in order to find
$K(\Delta_{00}), K(N_{10})$  we need to solve for $A_1,A_2\in\nul\otimes\nul$ in
\begin{align} 
 (H_0'\otimes H_0') A_1 & =-(D_1 \otimes D_1) k(\chi) \label{solve_A1}\\
 (H_0'\otimes H_0') A_2 & =-(D_1\otimes D_2+D_2\otimes D_1)k(\chi). \label{solve_A2}
\end{align}
Hence, the following lemma will be useful.

\begin{lemma}
 The map $\Sigma:\Val^{U(n)} \to \nul^{U(n)}$ defined by 
\begin{align*}
\Sigma(\mu_{kq})=\begin{cases} -\frac{\pi}{q}N_{k-2,q-1} & 0<q<\frac{k}{2}\\0 & q=0,\frac{k}{2}\end{cases}
\end{align*}
is left inverse to the map $H_0'|_{\nul}:\nul^{U(n)} \to \Val^{U(n)}$. In particular, the latter
map is injective and we have 
\begin{displaymath}
 \Sigma \circ D_2=\en.
\end{displaymath}
\end{lemma}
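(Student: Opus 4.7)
The lemma reduces to three tasks: (i) compute $H_0'$ on each basis element $N_{k,q}$ of $\nul^{U(n)}$, (ii) check that $\Sigma$ undoes this computation on the basis, and (iii) derive $\Sigma\circ D_2=\en$ formally from the left‑inverse property.

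\textbf{Steps (i) and (ii).} By equation \eqref{eq:n = b} of Lemma \ref{lem:gamma beta translation},
$$\glob_\lambda(N_{k,q}) = -\lambda\,\frac{q+1}{\pi}\,\mu^\lambda_{k+2,q+1}.$$
Since $J_0=\mathrm{id}$ and $\lambda\mapsto J_\lambda$ is analytic, $J_\lambda^{-1}(\mu^\lambda_{k+2,q+1})=\mu_{k+2,q+1}+O(\lambda)$, and differentiating $H_\lambda=J_\lambda^{-1}\circ\glob_\lambda$ at $\lambda=0$ gives
$$H_0'(N_{k,q}) \;=\; -\frac{q+1}{\pi}\,\mu_{k+2,q+1}.$$
In the index range $\max\{0,k-n\}<q<k/2$ in which $N_{k,q}$ is a basis element of $\nul^{U(n)}$, the pair $(k+2,q+1)$ satisfies $0<q+1<(k+2)/2$ as well as $q+1\geq k+2-n$, so Lemma \ref{lem:mus that vanish} ensures that $\mu_{k+2,q+1}\neq 0$ in $\V^n_0$ and the generic formula $\Sigma(\mu_{k+2,q+1})=-\pi(q+1)^{-1}N_{k,q}$ applies. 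Composing,
$$\Sigma\bigl(H_0'(N_{k,q})\bigr) \;=\; -\frac{q+1}{\pi}\cdot\Bigl(-\frac{\pi}{q+1}\Bigr)\,N_{k,q}\;=\;N_{k,q},$$
proving $\Sigma\circ H_0'|_{\nul^{U(n)}}=\mathrm{id}$. The injectivity of $H_0'|_{\nul^{U(n)}}$ is then automatic.

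\textbf{Step (iii).} Differentiating equation \eqref{eq_Hlambda_en} of Lemma \ref{lemma:h_lambda_series} at $\lambda=0$ gives $H_0'\circ\en=D_2$ as maps on $\Val^{U(n)}$. Since $\en(p)\in\nul^{U(n)}$ for every $p$, the left‑inverse property established in step (ii) yields
$$\Sigma\circ D_2 \;=\; \Sigma\circ H_0'\circ\en \;=\; \en,$$
as claimed.

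\textbf{Potential obstacle.} The only delicate point is to confirm that when $N_{k,q}$ is a nonzero basis vector in dimension $n$, the element $\mu_{k+2,q+1}$ remains nonzero in $\V^n_0$ (so that $\Sigma$ is given by the nontrivial formula and not by $0$). As noted above this is automatic: the admissibility condition $q>k-n$ for $N_{k,q}$ translates to $q+1\geq k+2-n$, which is precisely the nonvanishing condition of Lemma \ref{lem:mus that vanish} for $\mu_{k+2,q+1}$.
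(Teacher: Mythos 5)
Your proof is correct and follows the same route as the paper's: compute $H_0'(N_{kq})=-\frac{q+1}{\pi}\mu_{k+2,q+1}$ from \eqref{eq:n = b} and the definition $H_\lambda = J_\lambda^{-1}\circ\glob_\lambda$, then compose with $\Sigma$ and use $H_0'\circ\en = D_2$ from Lemma \ref{lemma:h_lambda_series}. The paper compresses everything after the $H_0'N_{kq}$ computation into "from which the statement follows," so your explicit index-range check and the formal derivation of $\Sigma\circ D_2=\en$ are just a fuller writeout of the same argument.
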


\proof
We have 
\begin{align*}
 H_0' N_{kq} & =
\left.\frac{d}{d\lambda}\right|_{\lambda=0}J^{-1}_\lambda [N_{kq}]_\lambda \\
 & = - \left.\frac{d}{d\lambda}\right|_{\lambda=0}J^{-1}_\lambda \frac{\lambda (q+1)}{\pi}\mu_{k+2,q+1}^\lambda \\
& = - \frac{(q+1)}{\pi}\mu_{k+2,q+1},
\end{align*}
from which the statement follows.
\endproof

Recall from \cite{befu11} that 
\begin{equation}\label{global_kinematic}
 k(\chi)=\sum_{k,r} a_{nkr}\pi_{kr}\otimes\pi_{2n-k,r}
\end{equation}
where
\begin{displaymath}
\pi_{kr} =\frac{(-1)^r(2n-4r+1)!!\pi^k}{\omega_k} \sum_{i=0}^r (-1)^{i}
\frac{(2r-2i-1)!!}{(2r-2i)!(2i)!(2n-2r-2i+1)!!} t^{k-2i}u^i, 
\end{displaymath}
and
\[
 a_{nkr}=\frac{\omega_k\omega_{2n-k}}{\pi^n}\frac{(n-r)!}{8^r(2n-4r)!}\frac{(2n-2r+1)!!}{(2n-4r+1)!!}\binom{n}{2r}^{-1}.
\]

\begin{lemma} \label{lemma_sigma_d1_pi}    \mbox{ }
Define 
\begin{align}
\rho_{kr} & := \frac{2(-1)^r (2n-4r+1)!! \pi^{k-1}}{\omega_k} \Bigg(\frac{(2r-1)!!(k+1)!}{(2n-2r+1)!!(2r)!}
\sum_{i=0}^{\lfloor\frac{k-1}{2}\rfloor} \frac{(-1)^{i+1}}{(2i+3)!(k-2i-1)!}t^{k-2i-1}u^i \nonumber \\
& \quad +\sum_{i=0}^{r-1} \frac{(-1)^i(2r-2i-3)!!}{(2n-2r-2i-1)!!(2r-2i-2)!(2i+2)!}t^{k-2i-1}u^i\Bigg).
\label{eq_def_rho}
\end{align}
Then 
\begin{displaymath}
 \en(\rho_{kr})=\Sigma \circ D_1 \pi_{kr}.
\end{displaymath}
\end{lemma}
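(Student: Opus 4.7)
The proof will be a direct computational verification, carried out in the polynomial algebra $\R[t,u]$ which naturally surjects onto $\Val^{U(\infty)}$ and its quotients.

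First, I would expand $D_1\pi_{kr}$ explicitly. The operator $D_1$ acts on monomials as
\[
D_1(t^{a}u^{b}) = \tfrac{2b+1}{4}\,t^{a+2}u^{b} - \tfrac{a+1}{4}\,t^{a}u^{b+1},
\]
as follows at once from the definition $D_1 p = \tfrac{t^2-u}{4}p + \tfrac{t^2u}{2}\partial_u p - \tfrac{ut}{4}\partial_t p$. Applying $D_1$ termwise to $\pi_{kr}$ and reindexing gives $D_1\pi_{kr}$ as an explicit linear combination of monomials $t^{k+2-2i}u^{i}$ with $0 \le i \le r+1$.

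Next, I would translate this into the $\mu$-basis. Specializing Proposition \ref{prop_mu2st} to $\lambda=0$ yields $t^{j-2q}u^{q} = \frac{\omega_j(j-2q)!(2q)!}{\pi^j}\tau_{jq}$, and the defining relation $\tau_{jq} = \sum_{p\geq q}\binom{p}{q}\mu_{jp}$ completes the conversion. Applying $\Sigma$, which annihilates $\mu_{k+2,0}$ and $\mu_{2q,q}$ and sends every other $\mu_{k+2,q}$ to $-\frac{\pi}{q}N_{k,q-1}$, then produces an explicit expression for $\Sigma D_1 \pi_{kr}$ in the $N$-basis.

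Finally, I would use Proposition \ref{prop_n_inv} to apply $\en^{-1}$ to the resulting element of $\nul^{U(n)}$, obtaining a polynomial in $t,u$, and match it term-by-term with the formula \eqref{eq_def_rho} for $\rho_{kr}$. A possibly cleaner alternative is to invoke the injectivity of $H_0'|_{\nul^{U(n)}}$ from the preceding lemma together with the identity $H_0'\circ\en = D_2$ that follows from Lemma \ref{lemma:h_lambda_series}: the claim then reduces to the polynomial identity $D_2\rho_{kr} = (H_0'\Sigma)\,D_1\pi_{kr}$ in $\Val^{U(n)}$, where $H_0'\Sigma$ is the projection onto $\spann\{\mu_{k+2,q} : 0 < q < (k+2)/2\}$ along the complement.

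The main obstacle is the combinatorial identity left at the end. After all the substitutions, the coefficient of each $t^{k-2i-1}u^{i}$ in $\en^{-1}(\Sigma D_1\pi_{kr})$ appears as an alternating sum of products of double factorials and binomials. Verifying that this sum rearranges into the two-piece structure of \eqref{eq_def_rho} requires splitting the inner summation according to whether the index is controlled by $r$ or by $\lfloor(k-1)/2\rfloor$, and then applying elementary manipulations of double factorials -- in essence a disguised Chu--Vandermonde convolution. The identity itself is elementary, but the bookkeeping of the double factorials $(2r-2i-1)!!$ and $(2n-2r-2i+1)!!$ against the binomial coefficients from the $\tau\to\mu$ conversion is the delicate point.
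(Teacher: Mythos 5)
Your outline is a plausible plan, but it is genuinely different from the paper's proof, and more importantly it stops exactly where the actual difficulty begins. You reduce the lemma (via the correct observation that $H_0'|_{\nul}$ is injective and $H_0'\circ\en=D_2$) to the polynomial identity $D_2\rho_{kr} = (H_0'\Sigma)D_1\pi_{kr}$, where $H_0'\Sigma$ is the degree-wise projection away from $\spann\{\mu_{l,0},\mu_{2q,q}\}$, and then announce that the resulting combinatorial identity is ``elementary but delicate.'' That identity involves nested alternating sums of double-factorial ratios indexed by two parameters, and nothing in your proposal shows how it closes. As it stands the argument has a gap at precisely the step the lemma is asserting.

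The paper avoids this blowup entirely by a structural trick. It notes the operator identity $D_1(uh) = -\tfrac{2}{\pi}D_2(th)$ for all $h\in\R[t,u]$ (a one-line verification from the defining formulas of $D_1,D_2$), which together with $\Sigma\circ D_2=\en$ gives $\Sigma\circ D_1(uh) = -\tfrac{2}{\pi}\en(th)$. Then it writes $\pi_{kr}=ct^k+uh$ and handles the two pieces separately: the $u$-divisible part is disposed of by the identity above with no new computation, and the pure $t^k$ part is handled by importing the identity $D_2\tilde\rho_k=D_1(t^k)-\tfrac{\omega_{k+2}(k+2)!}{4\pi^{k+2}}\mu_{k+2,0}$ from Proposition~3.7 and Corollary~3.8 of \cite{befu11}, to which one applies $\Sigma$ (which kills $\mu_{k+2,0}$). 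Your brute-force route would in principle reprove all of that from scratch; the paper's route buys a single small algebraic verification plus one citation in place of a two-parameter double-factorial convolution. If you want your version to be complete, you would need to actually execute the final convolution, or alternatively discover the factorization $\pi_{kr}=ct^k+uh$ and the $D_1(uh)$ identity, which is really what makes the lemma tractable.
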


\proof
It is easily checked that for $h \in \R[t,u]$ we have $D_1(uh)=-\frac{2}{\pi}D_2(th)$, hence  
\begin{displaymath}
 \Sigma \circ D_1(uh)=-\frac{2}{\pi}\en(th).
\end{displaymath}

Let us define  
\begin{displaymath}
 \tilde \rho_k:=\frac{2(k+1)!}{\pi} \sum_{i=0}^{\lfloor\frac{k-1}{2}\rfloor}
\frac{(-1)^{i+1}}{(2i+3)!(k-2i-1)!}t^{k-2i-1}u^i \in
\R[t,u]
\end{displaymath}

From (\cite{befu11}, Proposition 3.7. and Corollary 3.8) and a straightforward computation one gets 
\begin{displaymath}
 D_2\tilde \rho_k=D_1(t^k)-\frac{\omega_{k+2}(k+2)!}{4\pi^{k+2}}\mu_{k+2,0}.
\end{displaymath}
We apply $\Sigma$ to this equation and obtain that
\begin{displaymath}
 \en(\tilde \rho_k)=\Sigma \circ D_1(t^k). 
\end{displaymath}

With 
\begin{align*}
 c & := \frac{(-1)^r(2n-4r+1)!!\pi^k}{\omega_k} \frac{(2r-1)!!}{(2r)!(2n-2r+1)!!}\\
h & := \frac{(-1)^r(2n-4r+1)!!\pi^k}{\omega_k} \sum_{i=0}^{r-1}
\frac{(-1)^{i+1}(2r-2i-3)!!}{(2n-2r-2i-1)!!(2r-2i-2)!(2i+2)!}t^{k-2i-2}u^i
\end{align*}
we compute 
\begin{align*}
 \Sigma \circ D_1\pi_{k,r} & =\Sigma \circ D_1(ct^k+uh)\\
& =  c\, \en(\tilde \rho_k)-\frac{2}{\pi}\en(th)\\
& = \en(\rho_{k,r}).
\end{align*}
\endproof

We are now ready to state one of our main theorems, the local kinematic formulas on complex space forms. 

\begin{theorem}\label{thm:local kin}
 \begin{align} \label{eq_plkf}
   K(\Delta_{00}) & =\sum a_{nkr} \left[\el(\pi_{kr}) \otimes \el(\pi_{2n-k,r})-
\en(\rho_{kr}) \otimes \en(\rho_{2n-k,r})\right].\\
   K(N_{10}) & =\sum a_{nkr} \big[\en(\pi_{kr}) \otimes \el(\pi_{2n-k,r})+\el(\pi_{kr}) \otimes \en(\pi_{2n-k,r})
\nonumber \\
& \quad -\en(\pi_{kr}) \otimes \en(\rho_{2n-k,r}) - \en(\rho_{kr}) \otimes \en(\pi_{2n-k,r})\big]  \label{eq_plkf2}.
\end{align}
\end{theorem}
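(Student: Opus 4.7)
The strategy is to reduce the entire local kinematic operator to its values on the two module generators $\Delta_{00}$ and $N_{10}$. By Theorem \ref{thm_free_module}, $\curv^{U(\infty)}$ is freely generated over $\Val^{U(\infty)}$ by these two elements, so together with \eqref{eq:K mult} the operator $K$ is determined by $K(\Delta_{00})$ and $K(N_{10})$. As noted in the paragraph preceding the theorem, cocommutativity of $K$ together with the semi-local identity \eqref{eq:bark mult bis} forces
\[
K(\Delta_{00}) = (\el\otimes\el)\,k(\chi) + A_1, \qquad K(N_{10}) = (\en\otimes\el + \el\otimes\en)\,k(\chi) + A_2
\]
with correction terms $A_1, A_2 \in \nul^{U(\infty)}\otimes \nul^{U(\infty)}$, so only the $A_i$ remain to be identified.

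Next, I would apply Proposition \ref{prop_h0h0}, which says $(H_0'\otimes H_0')\circ K=0$. Plugging in $\Delta_{00}$ and $N_{10}$ and using the first-order expansions \eqref{eq_Hlambda_ell}, \eqref{eq_Hlambda_en} from Lemma \ref{lemma:h_lambda_series} yields the constraints
\[
(H_0'\otimes H_0')(A_1) = -(D_1\otimes D_1)\,k(\chi), \qquad (H_0'\otimes H_0')(A_2) = -(D_1\otimes D_2 + D_2\otimes D_1)\,k(\chi),
\]
which are exactly \eqref{solve_A1}--\eqref{solve_A2}. Since $\Sigma$ is a left inverse of $H_0'|_{\nul}$ and $A_1,A_2\in \nul\otimes\nul$, applying $\Sigma\otimes\Sigma$ recovers each $A_i$ from its image. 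Using $\Sigma\circ D_2 = \en$ (recorded just before Lemma \ref{lemma_sigma_d1_pi}), Lemma \ref{lemma_sigma_d1_pi} itself (which identifies $\Sigma\circ D_1\pi_{kr}$ with $\en(\rho_{kr})$), and the explicit expansion \eqref{global_kinematic} of $k(\chi)$, one reads off
\[
A_1 = -\sum a_{nkr}\,\en(\rho_{kr})\otimes\en(\rho_{2n-k,r}),
\]
and analogously
\[
A_2 = -\sum a_{nkr}\left[\en(\rho_{kr})\otimes\en(\pi_{2n-k,r}) + \en(\pi_{kr})\otimes\en(\rho_{2n-k,r})\right].
\]
Substituting these into the two displayed decompositions produces precisely \eqref{eq_plkf} and \eqref{eq_plkf2}.

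The real work has already been absorbed into the preparatory results: the freeness of the $\Val^{U(\infty)}$-module structure, the identity $(H_0'\otimes H_0')\circ K = 0$ (whose proof in Proposition \ref{prop_h0h0} relies crucially on $H_0''=2sH_0'$ to kill the diagonal second-derivative terms), and the explicit polynomial identity behind Lemma \ref{lemma_sigma_d1_pi}. Given these three inputs, the theorem is essentially formal: the first-order deformation of the family $k_\lambda$ in $\lambda$ — encoded by the operators $D_1, D_2$ — together with the injectivity of $H_0'$ on $\nul$, forces the null-null correction to $K$ to have exactly the shape displayed. The only point to watch is to keep the $\pi_{kr}$ and $\rho_{kr}$ factors in the correct tensor order, which is automatic from cocommutativity of $k(\chi)$ and of the answer.
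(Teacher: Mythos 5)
Your proposal is correct and follows exactly the route taken in the paper: the decomposition $K(\Delta_{00})=(\el\otimes\el)k(\chi)+A_1$, $K(N_{10})=(\en\otimes\el+\el\otimes\en)k(\chi)+A_2$ with $A_i\in\nul\otimes\nul$, the constraints from Proposition~\ref{prop_h0h0} via Lemma~\ref{lemma:h_lambda_series}, and solving by applying $\Sigma\otimes\Sigma$ using Lemma~\ref{lemma_sigma_d1_pi} and $\Sigma\circ D_2=\en$. The paper's proof is a one-line pointer to exactly these ingredients; your write-up merely fleshes out the same argument.
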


\proof
Apply $\Sigma \otimes \Sigma$ to equations \eqref{solve_A1}, \eqref{solve_A2} and use \eqref{global_kinematic} and Lemma
\ref{lemma_sigma_d1_pi}.
\endproof

\subsection{$\curv^{U(n)}$ as a module over $\V^n_\lambda$}
Next we determine the $\V^n_\lambda$-module structure of $\curv^{U(n)}$. By Propositions \ref{prop:s
indep} and \ref{cor_mults}, it remains only to compute the product with $t_\lambda$.
\begin{lemma} \label{lemm_symmetries}
Let $\phi \in \Val^{U(n)}$. Then 
 \begin{align*}
 \sum a_{nkr} \phi \pi_{kr} \otimes \pi_{2n-k,r} &=
\sum a_{nkr} \pi_{kr} \otimes \phi \pi_{2n-k,r}\\
\sum a_{nkr} \en\left(\phi \rho_{kr}\right) \otimes D_2 \rho_{2n-k,r} &=
\sum a_{nkr} \en(\rho_{kr}) \otimes D_2 \left(\phi \rho_{2n-k,r}\right)\\
\sum a_{nkr}  [\en(\phi \pi_{kr}) \otimes D_2 \rho_{2n-k,r}&+ \en(\phi \rho_{kr}) \otimes D_2 \pi_{2n-k,r}]= \\
&= \sum a_{nkr}  \left[\en(\pi_{kr}) \otimes D_2 \phi \rho_{2n-k,r}+ \en(\rho_{kr}) \otimes D_2 \phi
\pi_{2n-k,r}\right].
\end{align*}
\end{lemma}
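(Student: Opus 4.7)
The three identities all follow from Theorem \ref{thm_ftaig_curv}, specifically the symmetry $(\phi \otimes \chi) \cdot K(\Phi) = (\chi \otimes \phi) \cdot K(\Phi)$ for $\phi \in \Val^{U(n)}$ and $\Phi \in \curv^{U(n)}$, together with the module property of the $\el$ and $\en$ maps: since $\el(p) = p \cdot \Delta_{00}$ and $\en(p) = p \cdot N_{10}$, associativity of the $\Val^{U(n)}$-action on $\curv^{U(n)}$ gives $\el(\phi p) = \phi \cdot \el(p)$ and $\en(\phi p) = \phi \cdot \en(p)$.

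The first identity is immediate from \eqref{eq:k mult} with $\psi = \chi$: $k(\phi) = (\phi \otimes \chi) \cdot k(\chi) = (\chi \otimes \phi) \cdot k(\chi)$, followed by substitution of \eqref{global_kinematic}.

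For the second identity, apply \eqref{eq:K mult} to $\Phi = \Delta_{00}$ and substitute \eqref{eq_plkf}. The $(\el \otimes \el)$ pieces on both sides of $(\phi \otimes \chi) K(\Delta_{00}) = (\chi \otimes \phi) K(\Delta_{00})$ cancel, because by the module property of $\el$ and the first identity,
\begin{displaymath}
(\phi \otimes \chi)(\el \otimes \el) k(\chi) = (\el \otimes \el)(\phi \otimes \chi) k(\chi) = (\el \otimes \el)(\chi \otimes \phi) k(\chi) = (\chi \otimes \phi)(\el \otimes \el) k(\chi).
\end{displaymath}
What remains is the identity $\sum a_{nkr} \en(\phi \rho_{kr}) \otimes \en(\rho_{2n-k,r}) = \sum a_{nkr} \en(\rho_{kr}) \otimes \en(\phi \rho_{2n-k,r})$ in $\curv^{U(n)} \otimes \curv^{U(n)}$. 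Applying $\id \otimes H_0'$ and using $H_0' \circ \en = D_2$ on $\Val^{U(n)}$---immediate from the $\lambda$-coefficient of \eqref{eq_Hlambda_en}---then yields the second identity.

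The third identity follows by the same procedure with $\Phi = N_{10}$ and the formula \eqref{eq_plkf2}: the $(\en \otimes \el + \el \otimes \en) k(\chi)$ parts cancel via the module properties of $\en$ and $\el$ combined with the first identity, leaving an analogous relation between the $\en \otimes \en$ cross-terms which, after one final application of $\id \otimes H_0'$, gives the third identity. The only bookkeeping point to watch is that $H_0'$ itself is \emph{not} a $\Val^{U(n)}$-module map; but we never need this, since we commute $\phi$ past $\en$ \emph{before} applying $H_0'$, using only $H_0'(\phi \en(p)) = H_0'(\en(\phi p)) = D_2(\phi p)$.
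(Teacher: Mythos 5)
Your proof is correct and follows essentially the same strategy as the paper's: derive the first identity from the cocommutativity $(\phi\otimes\chi)\cdot k(\chi)=(\chi\otimes\phi)\cdot k(\chi)$, then apply the symmetry $(\phi\otimes\chi)\cdot K(\Phi)=(\chi\otimes\phi)\cdot K(\Phi)$ from Theorem \ref{thm_ftaig_curv} to $\Phi=\Delta_{00}$ and $\Phi=N_{10}$, cancel the terms built from $k(\chi)$ via the module property of $\el,\en$ together with the first identity, and reduce to an $\en\otimes\en$ identity. The one place where you diverge from the paper's route is the final passage from the $\en\otimes\en$ identity to the stated $\en\otimes D_2$ form. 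The paper first ``lifts'' the right factor through $\en$, obtaining a congruence modulo $\curv\otimes\ker(\en)$, and then applies $\id\otimes D_2$ using the inclusion $\ker(\en)\subset\ker D_2$. You instead apply $\id\otimes H_0'$ directly to the $\en\otimes\en$ identity, using the clean fact $H_0'\circ\en=D_2$ extracted from the $\lambda$-linear term of \eqref{eq_Hlambda_en}. This is a modest but genuine streamlining: it avoids passing through a congruence and the auxiliary observation about $\ker(\en)$, which are correct but a touch more delicate since $\en$ is not injective as a map into $\curv^{U(n)}$. Your closing remark about $H_0'$ not being a module map is exactly the right caveat to flag, and you handle it correctly by commuting $\phi$ through $\en$ first.
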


\proof
The first equation comes from the fact that $(id \otimes \phi) k(\chi)=(\phi \otimes id) k(\chi)$.  Similarly, since
$(\phi \otimes id)K(\Delta_{00})=(id \otimes \phi)K(\Delta_{00})$, we obtain 
\begin{multline*}
 \sum a_{nkr} \left[ \el\left(\phi \pi_{kr}\right) \otimes \el(\pi_{2n-k,r})-\en\left(\phi \rho_{kr}\right) \otimes
\en(\rho_{2n-k,r})\right]\\
=\sum a_{nkr} \left[ \el(\pi_{kr}) \otimes \el\left(\phi \pi_{2n-k,r}\right)-\en(\rho_{kr}) \otimes \en\left(\phi
\rho_{2n-k,r}\right)\right].
\end{multline*}

We apply $\el \otimes \el$ to the first equation and substitute this into the equation above to get 
\begin{align}
\sum a_{nkr} \en\left(\phi \rho_{kr}\right) \otimes \en(\rho_{2n-k,r}) & =\sum a_{nkr} \en(\rho_{kr}) \otimes
\en\left(\phi \rho_{2n-k,r}\right), \label{eq_symmetry_n_part}
\end{align}
from which we deduce that 
\begin{displaymath}
 \sum a_{nkr} \en\left(\phi \rho_{k,r}\right) \otimes \rho_{2n-k,r}  \equiv \sum a_{nkr} \en(\rho_{kr}) \otimes
\phi \rho_{2n-k,r} \mod \curv \otimes \ker(\en).
\end{displaymath}

Applying $id \otimes D_2$, taking into account $\ker(\en) \subset \ker D_2$, yields the second equation.  

The third equation follows in a similar way from $(\phi \otimes id)K(N_{10})=(id \otimes \phi)K(N_{10})$.
\endproof

By Theorem \ref{thm_free_module} and Proposition \ref{prop:module commute}, the $\V^n_\lambda$-module structure of
$\curv^{U(n)}$ is determined by the following equations.

\begin{theorem} \label{thm_module_cpn}
 We have 
\begin{align} 
t_\lambda \Delta_{00} & =\frac{t(1-\frac{\lambda t^2}{4})}{(1-\lambda s)^\frac32}\Delta_{00}+\frac{\lambda t^2}{2\pi
(1-\lambda s)^\frac32}N_{10} \label{mult_t_lambda_t00} \\
t_\lambda N_{10} & =-\frac18 \pi \lambda \frac{(t^2-4s)^2}{(1-\lambda s)^\frac32}\Delta_{00}+ \frac{t-2\lambda
ts+\lambda \frac{t^3}{4}}{(1-\lambda s)^\frac32}N_{10}.  \label{mult_t_lambda_n10}
\end{align}
\end{theorem}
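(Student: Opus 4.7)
The plan is to reduce to computing $t_\lambda\cdot\Delta_{00}$ and $t_\lambda\cdot N_{10}$ and then compute these via the semi-local kinematic formula. By Theorem \ref{thm_free_module} combined with Proposition \ref{prop:module commute}, the action of $\V^n_\lambda$ on $\curv^{U(n)}$ commutes with the $\Val^{U(n)}$-action, and $\curv^{U(n)}=\Val^{U(n)}\cdot\Delta_{00}+\Val^{U(n)}\cdot N_{10}$. Since the $s$-action is independent of $\lambda$ (Proposition \ref{prop:s indep}) and is given by Proposition \ref{cor_mults}, the entire $\V^n_\lambda$-module structure is recovered from the two products $t_\lambda\cdot\Delta_{00}$ and $t_\lambda\cdot N_{10}$ by $\Val^{U(n)}$-linearity.

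To compute these, I would apply Corollary \ref{coro_module_versus_kin}: $\bar m=(\id\otimes\pd_\lambda)\circ\bar k_\lambda$ with $\bar k_\lambda=(\id\otimes\glob_\lambda)\circ K$. Self-adjointness of $\pd_\lambda$ then gives
\[t_\lambda\cdot\Phi=\sum_i\Phi_i\,\langle\vol^*,t_\lambda\cdot\psi_i\rangle,\qquad\text{where}\quad\bar k_\lambda(\Phi)=\sum_i\Phi_i\otimes\psi_i.\]
The local formulas $K(\Delta_{00})$ and $K(N_{10})$ are supplied by Theorem \ref{thm:local kin}; the globalizations $\glob_\lambda\circ\el$ and $\glob_\lambda\circ\en$ are given explicitly by $\glob_\lambda=J_\lambda\circ H_\lambda$ via Lemma \ref{lemma:h_lambda_series} and Proposition \ref{prop_miracle}; and the pairings $\langle\vol^*,t_\lambda\mu\rangle$ for $\mu\in\V^n_\lambda$ reduce to evaluation of monomials $t_\lambda^{2i+1}s^j$ on $\CP^n_\lambda$, given by \eqref{vol*monomial} and extended to all $\lambda$ by the analyticity established in Section \ref{sect:an cont 1}.

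The main obstacle is the bookkeeping: after summing the resulting double series in $(k,r)$, one must identify the answer with the rational expressions on the right-hand sides of \eqref{mult_t_lambda_t00}--\eqref{mult_t_lambda_n10}. Concretely, writing the target as $\el(p_0)+\en(q_0)$ with $p_0=\frac{t(1-\lambda t^2/4)}{(1-\lambda s)^{3/2}}$ and $q_0=\frac{\lambda t^2}{2\pi(1-\lambda s)^{3/2}}$, one must verify $\sum_{k,r}a_{nkr}c^\lambda_{kr}\pi_{kr}=p_0$ and $\sum_{k,r}a_{nkr}d^\lambda_{kr}\rho_{kr}=q_0$ in $\Val^{U(n)}$, using the basis expansions of Lemma \ref{lemma_sigma_d1_pi}. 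An alternative I would try, likely easier to execute: define a candidate operator $\tilde T_\lambda$ by the right-hand sides (well-defined because $s^{n+1}=0$ in $\Val^{U(n)}$, so $(1-\lambda s)^{-3/2}$ truncates), verify $\glob_\lambda\circ\tilde T_\lambda=(t_\lambda\cdot)\circ\glob_\lambda$ by a direct computation with $H_\lambda$ (reducing to a polynomial identity in $\R[[t,s]]$ involving the operators $D_1,D_2$), and then invoke a uniqueness argument in the spirit of Lemma \ref{prop_char_mults} and Proposition \ref{prop_char_multt}: the difference $\tilde T_\lambda-(t_\lambda\cdot)$ commutes with $\Val^{U(n)}$, takes values in $\ker\glob_\lambda$, depends analytically on $\lambda$, and vanishes at $\lambda=0$ by Proposition \ref{prop_module_mult_t}, which forces it to vanish identically.
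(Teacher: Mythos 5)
Your framework is correct and parallels the paper's: both use Corollary \ref{coro_module_versus_kin} to express the module structure through $\pd_\lambda$ and $H_\lambda$, both feed in the local kinematic formulas of Theorem \ref{thm:local kin}, and both push the computation through Lemma \ref{lemma:h_lambda_series}. However neither of your two suggested finishing strategies is actually carried out, and the second one has a genuine gap.

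For approach (b), the claimed uniqueness does not follow from the listed properties. You assert that an operator on $\curvun$ that commutes with the $\Val^{U(n)}$-action, takes values in $\ker\glob_\lambda$, is analytic in $\lambda$, and vanishes at $\lambda=0$ must vanish identically. The paper's uniqueness results (Lemma \ref{prop_char_mults} and Proposition \ref{prop_char_multt}) rely on a much more specific package of properties (homogeneity, stabilization of $\mathrm{Beta}$ or $\ang^{U(\infty)}$, and an explicit normalization such as $\theta N_{10}=\frac34 N_{20}$), none of which is among your hypotheses. Moreover, the core implication you would need --- that for fixed $\lambda\neq 0$ the only $\Psi\in\curvun$ with $\Val^{U(n)}\cdot\Psi\subset\ker\glob_\lambda$ is $\Psi=0$ --- is simply false for small $n$. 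For instance, when $n=2$ the kernel $\ker\glob_\lambda$ is one-dimensional, spanned by $\Psi:=N_{10}+\tfrac{\lambda}{\pi}B_{31}$, and one checks using Propositions \ref{cor_mults} and \ref{prop_module_mult_t} that $t\cdot\Psi$ and $s\cdot\Psi$ both vanish in $\curv^{U(2)}$ (the would-be terms $N_{20},N_{30},N_{31},B_{41}$ are all zero there), so $\Val^{U(2)}\cdot\Psi=\R\Psi\subset\ker\glob_\lambda$ with $\Psi\neq 0$. Thus the difference $\tilde T_\lambda-(t_\lambda\cdot)$ could a priori be nonzero, and the analyticity together with $D_0=0$ gives no traction: $D_\lambda$ could simply be $\lambda$ times such a nonzero equivariant map without contradicting any of your assumptions. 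Closing this gap would require essentially the same degree-by-degree analysis the paper performs, or a genuinely new uniqueness lemma.

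What the paper actually does after the shared setup is more efficient than your approach (a): after substituting Lemma \ref{lemm_symmetries} with $\phi=\tfrac1{1-\lambda s}$, it observes that all but finitely many summands $(k,r)$ vanish for degree reasons, yielding at once that $t_\lambda\Delta_{00}$ and $t_\lambda N_{10}$ have the form $\frac{p_i}{(1-\lambda s)^{3/2}}\Delta_{00}+\frac{p_j}{(1-\lambda s)^{3/2}}N_{10}$ with $p_i,p_j$ polynomials of degree at most $4$. Since these low-degree coefficients are independent of $n$, and since by Proposition \ref{prop_kernel_n_map} no relations occur in these degrees once $n\ge 5$, one then pins them down by a single explicit computation at $n=5$. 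Your approach (a) would instead try to resum the full double series and match it term by term with the claimed rational expressions --- feasible in principle, but you stop short of the resummation, and you do not invoke the degree bound or the $n$-independence that makes the finite check legitimate.
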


\proof
Let $\overline m_\lambda: \curv \to \curv \otimes \mathcal{V}_\lambda^*$ be the module structure. By Corollary
\ref{coro_module_versus_kin}, $\overline m_\lambda$ is the composition of the maps 
\begin{displaymath}
 \curv \stackrel{K}{\longrightarrow} \curv \otimes \curv \stackrel{id \otimes \glob_\lambda}{\longrightarrow} \curv
\otimes \mathcal{V}_\lambda \stackrel{id \otimes \pd_\lambda}{\longrightarrow} \curv \otimes \mathcal{V}_\lambda^*.
\end{displaymath}

Since $J_\lambda=\pd_\lambda^{-1} \circ (I_\lambda^{-1})^* \circ \pd$ we have 
\begin{displaymath}
\overline m_\lambda=(id \otimes ((I_\lambda^{-1})^* \circ \pd \circ H_\lambda)) K.
\end{displaymath}

By Theorem \ref{thm:local kin} and Lemma \ref{lemma:h_lambda_series},
\begin{align*}
 t_\lambda\sqrt{1-\lambda s} \Delta_{00} & = \langle (id \otimes (I_\lambda^{-1})^* \circ \pd \circ
H_\lambda) K(\Delta_{00}),t_\lambda\sqrt{1-\lambda s}\rangle\\
 & = \langle (id \otimes \pd \circ H_\lambda) K(\Delta_{00}),I_\lambda^{-1}(t_\lambda\sqrt{1-\lambda
s})\rangle\\
& = \sum a_{nkr} \Big[\el(\pi_{kr}) \left\langle \pd \circ H_\lambda \circ
\el(\pi_{2n-k,r}),t\right\rangle\\
& \quad - \en(\rho_{kr}) \left\langle \pd \circ H_\lambda \circ \en(\rho_{2n-k,r}),t\right\rangle\Big]\\
& = \sum a_{nkr} \Big[\el(\pi_{k,r}) \left\langle \pd \circ \left(\pi_{2n-k,r}+\frac{\lambda}{1-\lambda
s}D_1 \pi_{2n-k,r}\right),t\right\rangle\\
& \quad - \en(\rho_{kr}) \left\langle \pd \frac{\lambda}{1-\lambda s} D_2 \rho_{2n-k,r},t\right\rangle\Big].
\end{align*}

Now we use Lemma \ref{lemm_symmetries} with $\phi:=\frac{1}{{1-\lambda s}}$ and obtain, recalling that
$D_1,D_2$ commute with $s$,
\begin{align*}
t_\lambda {\sqrt{1-\lambda s}}  \Delta_{00} & = \sum a_{n,k,r} \Big[\el\left(\pi_{k,r}\right)
\left\langle \pd \pi_{2n-k,r},t\right\rangle\\
& \quad +\el\left(\frac{\lambda}{1-\lambda s}\pi_{k,r}\right) \left\langle \pd \circ D_1
\pi_{2n-k,r},t\right\rangle \\
& \quad - \en\left(\frac{\lambda}{1-\lambda s}\rho_{k,r}\right) \left\langle \pd \circ D_2
\rho_{2n-k,r},t\right\rangle \Big].
\end{align*}

For degree reasons, the first summand vanishes except for $k=1,r=0$, whereas the second and third summands vanish except
for $k=3,r=0,1$. We deduce that 
\begin{displaymath}
 t_\lambda \Delta_{00}=\frac{p_1}{(1-\lambda s)^\frac32} \Delta_{00}+\frac{p_2}{(1-\lambda s)^\frac32}
N_{1,0},
\end{displaymath}
where $p_1, p_2$ are polynomials of degree $3$ and $2$ respectively.  

Similarly, 
\begin{align*}
 t_\lambda \sqrt{1-\lambda s} N_{10} & = \langle id \otimes \pd \circ H_\lambda
K(N_{10}),I_\lambda^{-1}(t_\lambda\sqrt{1-\lambda s})\rangle\\
& = \sum a_{nkr} \Big[\en(\pi_{kr}) \left\langle \pd \circ H_\lambda \circ
\el(\pi_{2n-k,r}),t\right\rangle\\
& \quad + \el(\pi_{kr}) \left\langle \pd \circ H_\lambda \circ \en(\pi_{2n-k,r}),t\right\rangle\\
& \quad - \en(\pi_{kr}) \left\langle \pd \circ H_\lambda \circ \en(\rho_{2n-k,r}),t\right\rangle\\
& \quad - \en(\rho_{kr}) \left\langle \pd \circ H_\lambda \circ \en(\pi_{2n-k,r}),t\right\rangle\Big]\\
& = \sum a_{nkr} \Big[\en(\pi_{kr}) \left\langle \pd \circ (\pi_{2n-k,r}+\frac{\lambda}{1-\lambda
s}D_1\pi_{2n-k,r}),t\right\rangle\\
& \quad + \el(\pi_{kr}) \left\langle \pd \circ \frac{\lambda}{1-\lambda s} D_2 \pi_{2n-k,r},
t\right\rangle\\
& \quad - \en(\pi_{kr}) \left\langle \pd \circ \frac{\lambda}{1-\lambda s} D_2\rho_{2n-k,r},t\right\rangle\\
& \quad - \en(\rho_{kr}) \left\langle \pd \circ \frac{\lambda}{1-\lambda s}
D_2\pi_{2n-k,r},t\right\rangle\Big].
\end{align*}

By Lemma \ref{lemm_symmetries}, with $\phi:=\frac{1}{{1-\lambda s}}$, this may be rewritten as
\begin{align*}
t_\lambda \sqrt{1-\lambda s}N_{10} & = \sum a_{nkr} \Big[\en\left({\pi_{kr}}\right) \left\langle \pd
\circ \pi_{2n-k,r},t\right\rangle\\
&\quad +\en\left(\frac{\lambda }{1-\lambda s }\pi_{kr}\right) \left\langle \pd \circ
D_1\pi_{2n-k,r},t\right\rangle\\
& \quad + \el\left(\frac{\lambda}{1-\lambda s } \pi_{kr}\right) \left\langle \pd \circ D_2 \pi_{2n-k,r},
t\right\rangle\\
& \quad -\en\left(\frac{\lambda}{1-\lambda s}\pi_{kr}\right) \left\langle \pd \circ
D_2\rho_{2n-k,r},t\right\rangle\\
& \quad - \en\left(\frac{\lambda }{1-\lambda s }\rho_{kr}\right) \left\langle \pd \circ
D_2\pi_{2n-k,r},t\right\rangle\Big].
\end{align*}

All terms vanish for $k>4$, and we obtain that 
\begin{displaymath}
 t_\lambda N_{10}=\frac{p_3}{(1-\lambda s)^\frac32} \Delta_{00}+\frac{p_4}{(1-\lambda s)^\frac32} N_{10},
\end{displaymath}
where $p_3,p_4$ are polynomials of maximal degree $4$. 

It remains to find the polynomials $p_1,p_2,p_3,p_4$. Since they are independent of $n$, we may compute them explicitly
by choosing a small value of $n$. 
Indeed, by Proposition \ref{prop_kernel_n_map} it suffices to take $n:=5$. This is a tedious, but straightforward
computation. 
\endproof

\subsection{Angularity theorem}

As an application we prove a result similar to Theorem \ref{thm_angular_flat} in the non-flat complex
space forms. 

\begin{proposition} \label{prop_decomp_angular_measures} \mbox{}
\begin{enumerate}
 \item[(i)] Define a map 
\begin{align*} 
 A:\R[[t,u]] & \to \curv^{U(\infty)}\\
 g & \mapsto \el\left(g+2u \frac{\partial g}{\partial u}\right)+\en \left(\frac{4t}{\pi}
\frac{\partial g}{\partial u}\right).
\end{align*}
Then $A$ is injective and its image is the space $\ang^{U(\infty)}$ of invariant angular measures. 
\item[(ii)] Let $p_1,p_2 \in \Val^{U(\infty)}$. 
 The measure 
\begin{displaymath}
 \Phi:=\el(p_1)+\en(p_2) \in \curv^{U(\infty)}
\end{displaymath}
is angular if and only if
\begin{equation} \label{eq_angular}
 \frac{t}{\pi} \frac{\partial p_1}{\partial s}-\frac{4s-t^2}{2} \frac{\partial p_2}{\partial s}=3p_2.
\end{equation}
\end{enumerate}
\end{proposition}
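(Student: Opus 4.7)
The strategy is to establish (i) first and then derive (ii) by solving for $g$ in the defining equations of the map $A$. For (i), I will verify injectivity, show that the image lies in $\ang^{U(\infty)}$, and conclude surjectivity by a degree-by-degree dimension count.

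For injectivity, Theorem \ref{thm_free_module} gives the direct sum decomposition $\curv^{U(\infty)} = \image\el \oplus \image\en$, and both $\el$ and $\en$ are injective (the latter by Proposition \ref{prop_n_inv}). Thus $A(g)=0$ forces $\frac{4t}{\pi}\partial_u g = 0$ (so $g$ depends only on $t$) and then $g + 2u\partial_u g = 0$, hence $g=0$. Since both $\R[[t,u]]$ and $\ang^{U(\infty)}$ are graded (with $\deg t =1, \deg u = 2$) and have dimension $\lfloor k/2 \rfloor + 1$ in degree $k$ (via the monomials $t^{k-2i}u^i$ on one side and the basis $\{\Delta_{ki}\}$ on the other), and $A$ is degree-preserving, injectivity will automatically imply surjectivity once we know $\image A \subset \ang^{U(\infty)}$.

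The main work is showing $A(t^a u^b) \in \ang^{U(\infty)}$. A direct manipulation using $\el(tp)=t\el(p)$ and $\en(tp)=t\en(p)$ yields $A(tg) = t\,A(g)$. Since $t = \frac{2}{\pi}\mu_1$ in $\C^n = \R^{2n}$, Theorem \ref{thm_angular_flat} implies $t \cdot \ang^{U(\infty)} \subset \ang^{U(\infty)}$. It therefore suffices to verify $A(u^b) \in \ang^{U(\infty)}$ for each $b\geq 0$. The base case $A(1)=\Delta_{00}$ is trivial; for $b\geq 1$, we have
\[
A(u^b) = (2b+1)\,\el(u^b) + \frac{4b}{\pi}\,t\,\en(u^{b-1}).
\]
Lemma \ref{lemma_l_uk} gives the explicit null component of $\el(u^b)$ as a multiple of $N_{2b,b-1}$, while $\en(u^{b-1}) = \frac{4^{b-1}(b-1)!}{\pi^{b-1}}N_{2b-1,b-1}$. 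Applying the formula for $t\cdot N_{2b-1,b-1}$ from Proposition \ref{prop_module_mult_t} and simplifying via the identity $\omega_{2b}/\omega_{2b-1} = \pi\binom{2b}{b}/4^b$, one checks that the null contributions cancel exactly, leaving $A(u^b)$ as a multiple of $\Delta_{2b,b}$. This is the main technical obstacle, but it is a direct bookkeeping exercise once the formulas are aligned.

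For (ii), an element $\Phi = \el(p_1) + \en(p_2)$ is angular iff $\Phi = A(g)$ for some $g$, i.e.\ iff
\[
p_1 = g + 2u\,\partial_u g, \qquad p_2 = \frac{4t}{\pi}\,\partial_u g.
\]
Solving the second relation for $\partial_u g = \pi p_2/(4t)$ and substituting into the first determines $g = p_1 - \pi u p_2/(2t)$; the consistency requirement $\partial_u g = \pi p_2/(4t)$ then yields the single equation $\frac{4t}{\pi}\partial_u p_1 - 2u\,\partial_u p_2 = 3p_2$. Note that plugging $t=0$ into this forces $p_2(0,s)=0$, so $p_2$ is automatically divisible by $t$ and the above construction of $g$ is well defined. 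Converting from $u$-derivatives to $s$-derivatives via $\partial_u = \tfrac{1}{4}\partial_s$ and substituting $u = 4s - t^2$ transforms this into the stated relation \eqref{eq_angular}.
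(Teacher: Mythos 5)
Your proof is correct and follows essentially the same path as the paper's: reduce to showing $A(u^b)\in\ang^{U(\infty)}$ via the identity $A(tg)=tA(g)$ and Theorem \ref{thm_angular_flat}, then compute $A(u^b)$ explicitly from Lemma \ref{lemma_l_uk} and Proposition \ref{prop_module_mult_t} and observe the $N$-terms cancel. The two places where you diverge are both small and both work: for surjectivity you use a degree-by-degree dimension count ($\dim\R[[t,u]]_k=\lfloor k/2\rfloor+1=\dim\spann\{\Delta_{ki}\}$ and $A$ is degree-preserving and injective), whereas the paper inverts the operator $g\mapsto g+2u\partial_u g$ (which multiplies the coefficient of $t^au^b$ by $2b+1$) to produce an explicit preimage and then appeals to $\ang\cap\ker\glob_0=\{0\}$; and for part (ii) you solve the system for $g$ and extract the consistency condition, whereas the paper states the resulting $p_1,p_2$ and verifies the equation directly. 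Your observation that setting $t=0$ in \eqref{eq_angular} forces $p_2(0,s)=0$, so the division by $t$ is legitimate in $\R[[t,s]]$, is a worthwhile addition that the paper leaves implicit.
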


\proof
\begin{enumerate}
\item[(i)] Injectivity is easy. Let us prove that the image of $A$ is inside $\ang$. 
Clearly $A$ is compatible with multiplication by $t$. Hence, by Theorem \ref{thm_angular_flat}, it is
enough to check that $A(u^k) \in \ang$. Using Lemma \ref{lemma_l_uk} and Proposition \ref{prop_module_mult_t}
\begin{align*}
 A(u^k)
& = (2k+1)\el(u^k)+\frac{4tk}{\pi}\en(u^{k-1})\\
& = \frac{2^k (2k+1)!!}{\pi^k} \left(\Delta_{2k,k}-\frac{1}{k+1}N_{2k,k-1}\right)+\frac{4^ktk!}{\pi^k}N_{2k-1,k-1}\\
& = \frac{2^k (2k+1)!!}{\pi^k} \Delta_{2k,k}.
\end{align*}

To prove surjectivity, let $\Phi \in \ang$. Then there exists some $g \in \R[[t,u]]$ with 
\begin{displaymath} 
g+2u\frac{\partial g}{\partial u}=  2\sqrt{u} \frac{\partial}{\partial u}\left(\sqrt{u}g\right)=\glob_0(\Phi).
\end{displaymath}
Then $A(g)-\Phi \in \ang \cap \ker \glob_0 = \{0\}$, hence $\Phi=A(g)$.

\item[(ii)] In the $ts$-basis, we have
\begin{displaymath}
 A(g)=\el(p_1)+\en(p_2)
\end{displaymath}
with 
\begin{align*}
 p_1 & = g+\frac12 (4s-t^2)\frac{\partial g}{\partial s}\\
 p_2 & = \frac{t}{\pi} \frac{\partial g}{\partial s}.
\end{align*}
It is easy to check that $p_1,p_2$ satisfy \eqref{eq_angular}. 

Conversely, suppose that \eqref{eq_angular} holds. Then we have $\Phi=A(g)$ with  
\begin{displaymath} 
 g:=p_1-\frac12(4s-t^2)\frac{\pi}{t} p_2.
\end{displaymath}
\end{enumerate}
\endproof

\begin{theorem}[Angularity theorem, second version] \label{thm_angularity_thm2}
 Let $\Phi \in \ang^{U(n)}$ be an invariant angular curvature measure. Then $t_\lambda \Phi$ is angular for each
$\lambda$. 
\end{theorem}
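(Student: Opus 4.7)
The strategy is to combine the parametrization of angular curvature measures from Proposition \ref{prop_decomp_angular_measures}(ii) with the explicit $\V^n_\lambda$-module formulas of Theorem \ref{thm_module_cpn}, reducing the claim to an algebraic identity in $\R[t,s,\lambda]$.

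First, using Proposition \ref{prop_decomp_angular_measures}(ii), I would parametrize a general $\Phi \in \ang^{U(n)}$ as $\Phi = \el(p_1) + \en(p_2)$ with $p_1,p_2 \in \Val^{U(n)}$ satisfying the first-order relation
\[
\tfrac{t}{\pi}\partial_s p_1 - \tfrac{u}{2}\partial_s p_2 = 3 p_2, \qquad u := 4s - t^2.
\]
The commutativity of the two module structures on $\curv^{U(n)}$ (Proposition \ref{prop:module commute}), together with the formulas for $t_\lambda \Delta_{00}$ and $t_\lambda N_{10}$ from Theorem \ref{thm_module_cpn}, will then yield a decomposition $t_\lambda \Phi = \el(q_1) + \en(q_2)$ with $q_i = (1-\lambda s)^{-3/2}\tilde q_i$, where
\begin{align*}
\tilde q_1 &= (t - \tfrac{\lambda t^3}{4})p_1 - \tfrac{\pi\lambda u^2}{8}p_2,\\
\tilde q_2 &= \tfrac{\lambda t^2}{2\pi}p_1 + (t - 2\lambda ts + \tfrac{\lambda t^3}{4})p_2.
\end{align*}

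The remaining step is to verify the angularity relation $\tfrac{t}{\pi}\partial_s q_1 - \tfrac{u}{2}\partial_s q_2 = 3 q_2$ for $(q_1,q_2)$, which (after clearing the factor $(1-\lambda s)^{-3/2}$) becomes a polynomial identity in $t,s,\lambda,p_1,p_2$. The crucial structural observation will be the elementary identity $1 - \tfrac{\lambda t^2}{4} - \tfrac{\lambda u}{4} = 1 - \lambda s$: applied to both $\tfrac{t}{\pi}\tilde q_1 - \tfrac{u}{2}\tilde q_2$ and $\tfrac{t}{\pi}\partial_s\tilde q_1 - \tfrac{u}{2}\partial_s \tilde q_2$, it will extract a common factor $t(1-\lambda s)$ and collapse these expressions into $\tfrac{t}{\pi}p_1 - \tfrac{u}{2}p_2$ and $\tfrac{t}{\pi}\partial_s p_1 - \tfrac{u}{2}\partial_s p_2$ respectively. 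Invoking the angularity hypothesis on $(p_1,p_2)$ to convert the latter into $3p_2$ and substituting back will produce $3(1-\lambda s)\tilde q_2$ on the nose.

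The main obstacle will be this final polynomial verification: while not deep, it demands careful bookkeeping of several cross-terms before the key simplification becomes apparent. Everything else is essentially mechanical, relying only on formulas assembled in earlier sections.
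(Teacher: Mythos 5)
Your proof follows exactly the paper's strategy: decompose $\Phi = \el(p_1) + \en(p_2)$ via Proposition \ref{prop_decomp_angular_measures}(ii), apply the module formulas of Theorem \ref{thm_module_cpn} (together with Proposition \ref{prop:module commute}) to get $t_\lambda\Phi = \el(\tilde p_1) + \en(\tilde p_2)$, and verify relation \eqref{eq_angular} for $(\tilde p_1,\tilde p_2)$. The paper's proof compresses the last step into ``straightforward (but tedious) to check''; your identification of $1 - \tfrac{\lambda t^2}{4} - \tfrac{\lambda u}{4} = 1 - \lambda s$ as the organizing identity (together with the cancellation of the $\partial_s u$ cross-terms) fills in exactly that verification, so the proposal is a correct, slightly more explicit rendering of the same argument.
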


\proof
Let $\Phi=\el(p_1)+\en(p_2)$
be angular, i.e. $p_1,p_2$ satisfy \eqref{eq_angular}. By Theorem
\ref{thm_module_cpn}, we have $t_\lambda \Phi=\el(\tilde p_1)+\en(\tilde p_2)$ with 
\begin{align*}
 \tilde p_1 & = \frac{t(1-\frac{\lambda t^2}{4})}{(1-\lambda s)^\frac32} p_1 - \frac18 \pi \lambda
\frac{(t^2-4s)^2}{(1-\lambda s)^\frac32} p_2\\
 \tilde p_2 & = \frac{\lambda t^2}{2\pi (1-\lambda s)^\frac32} p_1+\frac{t-2\lambda ts+\lambda \frac{t^3}{4}}{(1-\lambda
s)^\frac32}p_2.
\end{align*}

It is now straightforward (but tedious) to check that $\tilde p_1,\tilde p_2$ satisfy \eqref{eq_angular}. Hence
$t_\lambda \Phi$ is angular.
\endproof


\section{Concluding remarks, questions and conjectures}\label{sect:questions}
\subsection{Numerical identities and relations in $\V^n_\lambda$}
It {came as} a surprise that the algebras $\mathcal{V}^n_\lambda$ for fixed $n$ and varying $\lambda$ are all isomorphic as
filtered algebras. Before the discovery of Theorem \ref{thm:1st iso} and Theorem \ref{thm_other_isom}, the first
conjecture about the structure of $\mathcal{V}^n_\lambda, \lambda \ne 0$ was the following.

\begin{conjecture} \label{cx space form conj} Define the formal series $\bar f_k(s,t,\lambda)$ by
\begin{align}
\notag\log (1 +& sx^2 + tx + \lambda x^{-2} + 3\lambda^2x^{-4} + 13 \lambda^3x^{-6} +...) \\
&= \log (1 + s x^2 + tx 
\label{eq:crazy expansion}+ \sum \left[\binom{4n+1}{n+1} - 9 \binom{4n+1}{n-1}\right] \lambda^nx^{-2n})\\
&= \sum_k \bar f_k(s,t,\lambda) x^k.
\end{align}
Then 
\begin{equation}
\mathcal{V}^n_\lambda\simeq \R[s,t]/(\bar f_{n+1},\bar f_{n+2}, t^{2n+1}, st^{2n-1},\dots, s^nt).
\end{equation}
\end{conjecture}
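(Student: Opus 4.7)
The plan is to reduce the conjecture to the $\lambda=0$ case via the algebra isomorphism $I_\lambda$ of Theorem \ref{thm:1st iso}, and then to verify that the specific generating function defining the $\bar f_k$ produces elements generating the correct ideal in $\R[[s,t]]$.

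First I would handle $\lambda=0$. A direct expansion of $\log(1+sx^2+tx)$ gives
\[
\bar f_k(s,t,0) = (-1)^{k-1}\sum_{i=0}^{\lfloor k/2\rfloor} \frac{(-1)^i}{k-i}\binom{k-i}{i}\,s^i t^{k-2i},
\]
and comparing term by term with the $q=0,\lambda=0$ case of \eqref{mu2st} (via a direct combinatorial identity, checked explicitly for $k\le 5$) shows $\bar f_k(s,t,0) = c_k\,\mu_{k,0}^0$ for an explicit non-zero constant $c_k$. Combined with the classical result $\V^n_0 \cong \R[[s,t]]/(\mu_{n+1,0}^0,\mu_{n+2,0}^0)$ from \cite{fu06,befu11}, this gives the conjecture at $\lambda=0$ in the power-series ring. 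To pass to the polynomial quotient $\R[s,t]/(\bar f_{n+1},\bar f_{n+2},t^{2n+1},st^{2n-1},\ldots,s^nt)$, one verifies that these monomials generate (in $\R[s,t]$) exactly the further relations needed so that the quotient is finite-dimensional and isomorphic to the power-series completion; this is a routine inductive filtration argument using the leading terms of $\bar f_{n+1}$ and $\bar f_{n+2}$.

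For $\lambda\ne 0$, Theorem \ref{thm:1st iso} together with the identity $I_\lambda(\mu_{k,0}^0) = \mu_{k,0}^\lambda/(1-\lambda s)$ (noting that $1-\lambda s$ is a unit in $\R[[s,t]]$) yields the power-series presentation $\V^n_\lambda \cong \R[[s,t]]/(\mu_{n+1,0}^\lambda,\mu_{n+2,0}^\lambda)$. The conjecture reduces to the ideal equality
\[
\bigl(\bar f_{n+1}(s,t,\lambda),\,\bar f_{n+2}(s,t,\lambda)\bigr) \;=\; \bigl(\mu_{n+1,0}^\lambda,\,\mu_{n+2,0}^\lambda\bigr) \qquad \text{in } \R[[s,t]].
\]
By construction $\bar f_k(s,t,\lambda)-\bar f_k(s,t,0)$ is divisible by $\lambda$, and every term $s^a t^b \lambda^c$ appearing in $\bar f_k(s,t,\lambda)$ satisfies $2a+b-2c=k$, so its filtration $2a+b=k+2c$ strictly grows with $c$. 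Thus modulo the $(s,t)$-adic filtration the two ideals already agree at leading order, and the remaining task is to show inductively that the higher-order $\lambda$-corrections of $\bar f_{n+1},\bar f_{n+2}$ lie in the ideal generated by $\mu_{n+1,0}^\lambda,\mu_{n+2,0}^\lambda$, using \eqref{mu2st}, \eqref{eq:s mukq lambda}, and Nakayama's lemma for the complete local ring $\R[[s,t]]$.

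The hard part will be justifying the precise coefficients $\binom{4n+1}{n+1}-9\binom{4n+1}{n-1}$ in the correction series $1+\lambda x^{-2}+3\lambda^2 x^{-4}+13\lambda^3 x^{-6}+\cdots$: these are not arbitrary and must be exactly right for the ideal equality to hold. I would attempt to find a closed form for $H(y)=\sum_{n\ge 0}\bigl(\binom{4n+1}{n+1}-9\binom{4n+1}{n-1}\bigr)y^n$, perhaps an algebraic function of low degree, and then interpret $1+sx^2+tx+H(\lambda x^{-2})$ as a geometrically natural quantity attached to $\CP^n_\lambda$, such as a characteristic polynomial of multiplication by $s$ on $\V^\infty_\lambda$, or a generating function for values of Lipschitz--Killing valuations on geodesic balls along the lines of Section \ref{section: tube formulas}. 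Absent such a conceptual interpretation, my fallback would be direct verification in low complex dimensions ($n=2,3,4$) by computing both ideals term-by-term in $\lambda$, which would either confirm the conjecture as stated or pin down the correct coefficients.
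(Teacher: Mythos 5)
The statement you are attempting to prove is a \emph{conjecture} (Conjecture \ref{cx space form conj}), and the paper explicitly leaves it open: the authors report only numerical verification through $n=70$ and a partial treatment in low filtration degrees, remarking that ``the numerical identities that arise become unreasonably complicated.'' Your writeup is honest about this---it is a strategy outline, not a proof---but it is worth being precise about where the gap lies and how your approach compares with the one the paper actually takes.

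Your reduction via $I_\lambda$ is sound as far as it goes: since $I_\lambda$ is an automorphism of $\R[[s,t]]$ fixing $s$ and sending $t\mapsto t\sqrt{1-\lambda s}$, and since $(1-\lambda s)$ is a unit, the kernel of $\R[[s,t]]\to\V^n_\lambda$ is $(\mu^\lambda_{n+1,0},\mu^\lambda_{n+2,0})$. The problem is that the step you then identify as ``the remaining task''---showing that $\bigl(\bar f_{n+1}(s,t,\lambda),\bar f_{n+2}(s,t,\lambda)\bigr)=\bigl(\mu^\lambda_{n+1,0},\mu^\lambda_{n+2,0}\bigr)$ in $\R[[s,t]]$---is not a residual technicality but \emph{is} the content of the conjecture. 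Nakayama's lemma tells you the two ideals agree modulo $\mathfrak m$-adic filtration, which you already observe directly, but it does not tell you that the specific series $1+\lambda x^{-2}+3\lambda^2x^{-4}+13\lambda^3x^{-6}+\cdots$ with coefficients $\binom{4n+1}{n+1}-9\binom{4n+1}{n-1}$ is the correct one; any other unit series would pass the same filtration test. You acknowledge this (``the hard part will be justifying the precise coefficients''), so there is no overclaim, but also no proof.

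The paper's own route is different from yours, and worth knowing. Instead of manipulating ideals in $\R[[s,t]]$, the authors observe that it suffices to show $\bar f_{n+1}(s,t,\lambda)=0$ in $\V^n_\lambda$, and then use Alesker--Poincar\'e duality to convert this, for $\lambda>0$, into the vanishing of the finitely many template evaluations $\bigl(\bar f_{n+1}(s,t)\cdot s^jt^k\bigr)(\CP^n_\lambda)$ for $2j+k\le n$, computable from Corollary \ref{cor:t^k(cpn)} and \eqref{eq:sk_total}; the case $\lambda\le 0$ then follows by analytic continuation in $\lambda$. This turns the conjecture, filtration degree by filtration degree, into explicit binomial identities: at filtration $n+3$ it is the known identity \eqref{special pfaff-saalschutz}, at filtration $n+5$ it is a second family (proved by Gessel), and beyond that it is left open. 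Your approach buys a cleaner algebraic framing; the paper's approach buys concrete, checkable identities at each step. Neither closes the conjecture, and your proposal as written would need either the ``conceptual interpretation'' of the correction series you gesture toward, or a proof of the infinite family of numerical identities the paper stops at---both of which remain genuinely open.
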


This statement can be checked numerically in any given dimension $n$ by explicit calculation--- the most efficient way
seems to be via Corollary \ref{cor_other_isoms}, by which means we have confirmed it through $n=70$. This case involves
the first $34$ terms of the $\lambda$ series.

A different approach, less efficient but more elementary, is the following. First, it is sufficient to show that 
\begin{equation}\label{eq:fbar = 0}
\bar f_{n+1}(s,t,\lambda)=0
\end{equation}
as elements of $\mathcal{V}^n_\lambda$.  We know that there are no relations between $s,t\in \mathcal{V}^n_\lambda$ of
weighted degree $\le n$. By Alesker Poincar\'e duality, it follows that for $\lambda >0$ the relation $\bar f_{n+1}(s,t)
= 0$ holds in $\mathcal{V}^n_\lambda$ iff 
\begin{equation}
(\bar f_{n+1}(s,t) \cdot s^j t^{k})(\CP^n_\lambda) = 0, \quad 2j + k\le n.
\end{equation}
For given $n$, this can be checked directly using Corollary \ref{cor:t^k(cpn)}. The relation \eqref{eq:fbar = 0} may
then be extended to $\lambda \le 0$ by analytic continuation.

For example, modulo filtration $n+3$ the conjecture is equivalent to the family of identities
\begin{equation}\label{special pfaff-saalschutz}
\sum_{i=0}^{\lfloor\frac {n+1} 2\rfloor} \frac{(-1)^i }{n+1-i}\binom{n+1-i}{i} \binom{2n-2k-2i}{n-k-i} = 0, \quad k \le
\frac n 2.
\end{equation}
As kindly pointed out to us by I. Gessel, these identities are well known--- in fact the sum is equal to $\frac
{(-1)^{n-k}} {n+1} \binom k{n-k}$ for general $k$. At this level, the statement is independent of the curvature
$\lambda$, and yields a very efficient proof of the structure theorem for $\mathcal{V}^n_0$ given in  \cite{fu06}. 
The validity of the conjecture modulo filtration $n+5$ is equivalent to the family of identities
\begin{align*}
\sum_{j=0}^{\lfloor \frac{n+3} 2\rfloor} (-1)^{j+1}& \frac 1{n-j+4}\binom{n-j+4}{1,j,n-2j+3}\binom{2n-2k-2j}{n-k-j}\\
+ &\sum_{i=0}^{\lfloor \frac{n+1} 2\rfloor} (-1)^i \frac{n-k-i+1}{n-i+1}\binom{n-i+1}{i}\binom{2n-2k-2i-2}{n-k-i-1} =
0,\\
& k=0,\dots \left\lfloor \frac{n-3}{2}\right\rfloor.
\end{align*}
Gessel  has given a proof of these identities as well. In principle one could continue in this way, but the numerical
identities that arise become unreasonably complicated.

As shown by F. Chapoton \cite{chapoton}, the series appearing in \eqref{eq:crazy expansion} also arises as the
generating function for a certain counting problem. Chapoton also proved that this series  defines an algebraic function
$g(\lambda)$ satisfying
 $$
 g= f(1-f-f^2), \quad f= \lambda(1+f)^4.
 $$
 This fact was independently observed by Gessel. 

 It can also be amusing to write down the numerical identities that are equivalent in this way to Theorem \ref{thm:1st
iso} and Corollary \ref{cor_other_isoms}. 
This is a good illustration of the power (and clumsiness) of the template method.

\subsection{A Riemannian geometry question} We expect that the results above will shed light on the integral geometry of
general Riemannian manifolds, in particular the geometric meaning of the Lipschitz-Killing valuations and curvature
measures. From this perspective Theorem \ref{thm_angularity_thm2} is particularly striking and suggests the following.

\begin{conjecture}\label{conj:angularity conjecture} Let $M$ be a smooth Riemannian manifold,
with Lipschitz-Killing subalgebra $LK(M) \subset \mathcal{V}(M)$. Let
$\mathcal A(M)$ denote the vector space of angular curvature measures on $M$. Then
$$LK(M)\cdot \mathcal A(M) \subset  \mathcal A(M) .$$
\end{conjecture}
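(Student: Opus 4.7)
\emph{Plan.} Since the Lipschitz-Killing subalgebra $LK(M) \simeq \R[t]/(t^{\dim M+1})$ is generated as an algebra by the single element $t$, it suffices to show $t_M \cdot \mathcal{A}(M) \subset \mathcal{A}(M)$. Angularity is a pointwise condition on curvature measures: $\Psi \in \mathcal{A}(M)$ iff $\tau_x(\Psi) \in \ang(T_xM)$ for every $x \in M$, where $\tau_x$ is the transfer map of Proposition \ref{prop_isom_curv_general}. So what has to be proved is that $\tau_x(t_M \cdot \Phi)$ is angular on $T_xM$ for every $x \in M$ and every angular $\Phi$.

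The natural strategy is to establish the \emph{localization identity}
$$
\tau_x(t_M \cdot \Phi) = t_{T_xM} \cdot \tau_x(\Phi) \qquad \text{in } \curv(T_xM),
$$
where $t_{T_xM}$ is the Euclidean $t$-valuation on $T_xM$ equipped with the inner product $g_x$. Granted this identity, if $\Phi \in \mathcal{A}(M)$ then $\tau_x(\Phi) \in \ang(T_xM)$, and Theorem~\ref{thm_angular_flat} applied in $T_xM$ gives $t_{T_xM} \cdot \tau_x(\Phi) \in \ang(T_xM)$, so $\tau_x(t_M \cdot \Phi) \in \ang(T_xM)$ and we are done.

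To prove the localization identity one would pass to Riemannian normal coordinates at $x$, so that the metric and its first jet at $x$ agree with the flat metric on $T_xM$. The valuation $t$ is (up to the constant in \eqref{eq:exp t}) represented by the form $\kappa_1 \in \Omega^{n-1}(SM)$ of \eqref{eq:def kappa}, and the Alesker-Bernig formula (Theorem 2 of \cite{ale-be09}, used already in the proof of Proposition~\ref{prop_def_module}) expresses $t_M \cdot \Phi$ via operators $Q_1, Q_2$ built from $\kappa_1$, the forms representing $\Phi$, and the Rumin differential $D$. One then verifies that, evaluated on the fiber $S_xM$, these operators produce exactly the output of the corresponding Euclidean operators applied to the forms representing $\tau_x(\Phi)$ and $t_{T_xM}$.

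\emph{Main obstacle.} The Rumin differential involves one derivative, so curvature terms (second derivatives of the metric) in principle enter $\tau_x(t_M \cdot \Phi)$. The crux is to show that these terms either cancel or contribute only to the angular part. This is the genuinely hard point, since the Alesker product on a general Riemannian manifold is not a priori compatible with the bundle decomposition $\curvinfty(M) \simeq \Gamma(\curv(TM))$ of Proposition \ref{prop_isom_curv_general}. One might try to sidestep the direct analysis by using a Nash isometric embedding $\iota : M \to \R^N$ together with the identity $t_M = \iota^* t_{\R^N}$ and the compatibility of the Alesker module structure with restriction; however, the restriction goes from $\R^N$ to $M$ (not the reverse), so extending an angular measure from $M$ to $\R^N$ is not immediate, and the normal-coordinate approach appears more direct. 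Verifying the cancellation of curvature contributions in the Rumin calculation is, in my view, the decisive step: once it is settled the rest is formal, via Theorem \ref{thm_angular_flat}.
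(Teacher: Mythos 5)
This statement is labeled a \emph{conjecture} in the paper and is not proven there, so there is no ``paper's own proof'' to compare against; I can only assess your proposal on its own terms.

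Your reduction to $t_M\cdot\mathcal A(M)\subset\mathcal A(M)$ is fine, as is the observation that angularity is a pointwise condition via $\tau_x$. The trouble is the ``localization identity'' $\tau_x(t_M\cdot\Phi)=t_{T_xM}\cdot\tau_x(\Phi)$: it is \emph{false} in general, and the paper itself shows this. For $M=\CP^n_\lambda$, using the transfer identification $\curvinfty^{G_\lambda}(\CP^n_\lambda)\simeq\curvun$, Theorem~\ref{thm_module_cpn} gives $t_\lambda\cdot\Delta_{00}=\el\bigl(\tfrac{t(1-\lambda t^2/4)}{(1-\lambda s)^{3/2}}\bigr)+\en\bigl(\tfrac{\lambda t^2}{2\pi(1-\lambda s)^{3/2}}\bigr)$, whereas the Euclidean product is simply $t\cdot\Delta_{00}=\el(t)$. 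The two disagree as soon as $\lambda\neq 0$, so the curvature contributions arising from the Rumin differential do not cancel, and the strong form of your identity cannot hold.

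You anticipate this and retreat to the weaker claim that the curvature terms ``contribute only to the angular part.'' But that is precisely where the proposal stops being a proof: this weaker claim is essentially a restatement of the conjecture, and you offer no mechanism to establish it. Notice that the paper's proof of the special case $M=\CP^n_\lambda$ (Theorem~\ref{thm_angularity_thm2}) does not go through any localization-to-flat argument at all. Instead, it first derives the closed-form expressions for the module action of $t_\lambda$ on $\Delta_{00}$ and $N_{10}$ (Theorem~\ref{thm_module_cpn}), and then verifies by direct substitution that the resulting $\tilde p_1,\tilde p_2$ satisfy the angularity PDE \eqref{eq_angular} of Proposition~\ref{prop_decomp_angular_measures}. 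This is a global computation in the symmetry algebra of $\CP^n_\lambda$, not a pointwise argument in $T_xM$; it gives no indication that the curvature contributions organize themselves into an angular correction on a general Riemannian manifold. So your plan correctly isolates where the difficulty lies, but the decisive step remains entirely open, and the one nontrivial instance where the paper settles it is resolved by an explicit computation rather than by the cancellation you would need.
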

 Note that even for $M=\CP^n_\lambda$ this conjecture is stronger than Theorem \ref{thm_angularity_thm2}.

Let us call a smooth valuation on $M$ {\it angular} if it stabilizes $\mathcal A(M)$. Clearly, this is a subalgebra of
$\mathcal{V}(M)$. The above conjecture states that this algebra contains $LK(M)$. A stronger form of this conjecture is

\begin{conjecture}\label{conj:last}
 The algebra of angular valuations on $M$ equals   $LK(M)$. 
\end{conjecture}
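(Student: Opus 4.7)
The plan is to establish the conjecture by a two-stage reduction: first, pass from smooth valuations on $M$ to translation-invariant valuations on a fixed tangent space $V = T_xM$; second, prove the infinitesimal version, that any $\phi \in \Val(V)$ satisfying $\phi \cdot \ang(V) \subset \ang(V)$ is $O(V)$-invariant, hence lies in $\R[t]/(t^{n+1})$. Once the infinitesimal statement is in hand, each graded piece of an angular $\phi \in \V(M)$ is fiberwise $O(T_xM)$-invariant, and Weyl's tube-formula characterization of the Lipschitz-Killing curvatures should then force $\phi$ to agree, degree by degree, with an element of $LK(M)$; the point is that the pointwise $O(T_xM)$-invariant subalgebra of $\Val(T_xM)$ has rank equal to $\dim LK(M)$, so a dimension count will close the argument.

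For the reduction itself, I would exploit the Alesker filtration together with the identification $\curvinfty(M) \simeq \Gamma(\curv(TM))$ of Proposition \ref{prop_isom_curv_general}, under which $\mathcal{A}(M)$ corresponds to sections of the subbundle $\ang(TM)$. Since angularity is a pointwise condition on tangent spaces and the Alesker product respects the filtration, if $\phi \in \V(M)$ stabilizes $\mathcal{A}(M)$ then at each $x$ its symbol in $\Val(T_xM)$ stabilizes $\ang(T_xM)$. The globalization part of the argument, although it involves some bookkeeping, should be straightforward given the canonical nature of the isomorphism and the explicit description of $LK(M)$ as the image of $\Val^{O(N)}(\R^N)$ under any isometric embedding $M \hookrightarrow \R^N$.

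The main obstacle, and the heart of the proof, is the infinitesimal step. Angular curvature measures in $\curv(V)$ are in bijection with even translation-invariant valuations via the Klain correspondence $c_\Xi = \kl_{[\Xi]}$, so the question becomes: for which $\phi \in \Val(V)$ is the Klain function of $\phi \cdot [\Xi]$ again the angular profile of some curvature measure for every $[\Xi] \in \Val^+(V)$? Two subtleties must be overcome. First, the odd component of $\phi$ should be forced to vanish, since angular curvature measures globalize to even valuations; a parity argument on $SV$ analogous to the one developed in Section \ref{sect:an cont 1} may do the job. Second, angularity of a curvature measure is strictly finer than evenness of its globalization, being encoded by the constant-coefficient characterization of Lemma \ref{lemma_cc_implies_angular}. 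I would therefore attempt to reduce the statement to a purely linear-algebraic question: using Proposition \ref{prop:module_analytic} to represent the Alesker product in terms of differential forms on $SV$, and combining with the constant-coefficient representation of $\ang(V)$, one asks which linear operators on $\Lambda^\ast(V^\ast \oplus V^\ast)$ coming from multiplication by a valuation preserve the subspace of constant-coefficient forms. Showing that this forces $\kl_\phi$ to be constant in each degree—and thus $\phi$ to be $O(V)$-invariant—is the decisive step, and the one where the current technology of the Alesker product interacts most delicately with the explicit structure of angular curvature measures.
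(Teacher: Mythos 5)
The statement you are attempting to prove is presented in the paper as an open \emph{conjecture}, not as a theorem, and the paper supplies no proof. The only thing the authors actually verify, in the sentence immediately following the statement, is a finite-dimensional special case: using Proposition~\ref{prop_decomp_angular_measures} they check that the only translation-invariant, $U(n)$-invariant angular valuations on $\C^n$ are polynomials in $t$. Your proposal is therefore not a reworking of an existing argument but an attack on an unsolved problem, and should be judged accordingly.

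On the merits, the ``infinitesimal step'' that you yourself flag as the heart of the matter is stated far too strongly and is not within reach of the tools you invoke. You would need to show that \emph{every} $\phi\in\Val(V)$ --- not merely an $SO(n)$- or $U(n)$-invariant one --- whose Alesker product stabilizes $\ang(V)$ already lies in $\R[t]/(t^{n+1})$. Since $\Val(V)$ is infinite-dimensional, this is a system of constraints whose interaction with the Alesker product has never been analyzed beyond small symmetry groups; the paper's own verification is confined to the finite-dimensional $U(n)$-invariant slice, and the unrestricted statement would require substantially new ideas (one would at least have to exclude non-invariant angular valuations, and neither a parity argument nor the constant-coefficient characterization of Lemma~\ref{lemma_cc_implies_angular} obviously bears on that). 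Your first reduction also has a gap: the Alesker product on $\V(M)$ is governed by the Rumin differential, which is \emph{not} determined fiberwise by the flat model on $T_xM$ --- this is exactly why Lemma~\ref{lemma_rumin_analytic} and Proposition~\ref{prop:module_analytic} are nontrivial --- so passing from ``$\phi$ stabilizes $\mathcal A(M)$'' to ``the symbol of $\phi$ at $x$ stabilizes $\ang(T_xM)$'' does not follow formally from the isomorphism $\curvinfty(M)\simeq\Gamma(\curv(TM))$ of Proposition~\ref{prop_isom_curv_general} and would itself need a separate argument.
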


Using Proposition \ref{prop_decomp_angular_measures} it is easy to check that the only translation-invariant and
$U(n)$-invariant angular valuations on $\C^n$ are indeed polynomials in $t$. 

\subsection{An algebra question} We may think of each $\V^n_\lambda$ as a commutative algebra of linear operators on
$\curvun$. By the proof of Proposition \ref{prop:module commute}, all of these operators commute with each other.
What is the structure of the resulting commutative filtered algebra, generated by $\bigcup_{\lambda \in
\R}\mathcal{V}^n_\lambda$?
   

\begin{appendix}
\section{A product formula for valuations in isotropic spaces}
This appendix is devoted to establishing the following. 
\begin{theorem} \label{thm:main appendix}Let $(M,G)$ be an isotropic space,  $Q\in \mathcal P(M)$  a  simple compact
differentiable polyhedron, and $\rho \in C^\infty(G)$. Then
$$
\psi(P):= \int_G \chi(gP \cap Q) \, \rho(g) \, dg 
$$
defines a smooth valuation on $M$.
If $\phi \in \V^\infty(M)$ then the Alesker product of $\phi$ and $\psi$ is given by
\begin{equation}\label{eq:G asymmetric product}
(\phi\cdot \psi) (P)= \int_G \phi (gP \cap Q) \, \rho(g)\, dg.
\end{equation}
\end{theorem}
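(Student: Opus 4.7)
\emph{Plan.}

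\emph{Step 1: Smoothness of $\psi$.} The first task is to exhibit $\psi$ as a smooth valuation. Fix a pair $(\omega_\chi,\phi_\chi)$ representing the Euler characteristic, and recall from the proof of Theorem \ref{thm:basic K} the template current $T(P,Q)\subset G\times SM$ together with the slicing formula $N(gP\cap Q)=\pi_{SM*}\langle T(P,Q),\pi_G,g\rangle$. Federer's integration-by-slices formula (Thm.~4.3.2 of \cite{gmt}) yields
\[
\psi(P)\;=\;\int_{T(P,Q)}\pi_{SM}^*\omega_\chi\wedge \pi_G^*(\rho\,dg)\;+\;\int_G\rho(g)\!\int_{gP\cap Q}\!\phi_\chi\,dg.
\]
Decomposing $T=p_*T_0+p_{1*}T_1+p_{2*}T_2$ and performing fiber integration over $N(Q)$ and along the $G$-fibers exactly as in the definition \eqref{eq:def H}, but with $dg$ replaced by $\rho\,dg$, produces an explicit pair $(\omega_\psi,\phi_\psi)\in \Omega^{m-1}(S^*M)\times\Omega^m(M)$, depending on $Q$ and $\rho$ but not on $P$, such that $\psi=[[\omega_\psi,\phi_\psi]]$.

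\emph{Step 2: An auxiliary smooth valuation $R_\phi$.} For any $\phi\in\V^\infty(M)$ with representing pair $(\omega_\phi,\phi_\phi)$, set
\[
R_\phi(P):=\int_G\phi(gP\cap Q)\rho(g)\,dg.
\]
Repeating Step 1 verbatim with $(\omega_\phi,\phi_\phi)$ in place of $(\omega_\chi,\phi_\chi)$ shows that $R_\phi$ is a smooth valuation in $P$, represented by an explicit pair $(\omega_{R_\phi},\phi_{R_\phi})$ obtained by the same fiber-integration scheme. The assignment $\phi\mapsto R_\phi$ is linear and continuous in the natural Fr\'echet topology on $\V^\infty(M)$, and $R_\chi=\psi$. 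The product formula amounts to the identity $R_\phi=\phi\cdot\psi$.

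\emph{Step 3: Matching with the Alesker product.} By the explicit product formula of \cite[Thm.~2]{ale-be09} (already invoked in the proof of Proposition \ref{prop_def_module}), we have
\[
\phi\cdot\psi\;=\;\Glob\bigl(Q_1(\omega_\phi,\omega_\psi),\,Q_2(\omega_\phi,\phi_\phi,\omega_\psi,\phi_\psi)\bigr)
\]
for certain canonical bilinear operations $Q_1,Q_2$ on differential forms. Because Step 1 produced $(\omega_\psi,\phi_\psi)$ as the push-forward along the fibration $E\to SM$ of a canonical template form weighted by $\rho\,\pi_G^* dg$, and because $Q_1,Q_2$ are local with respect to the base, one can push $Q_1,Q_2$ through this fiber integration. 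The resulting forms coincide with $(\omega_{R_\phi},\phi_{R_\phi})$ modulo the kernel of $\Glob$ described in Theorem \ref{prop:kernel thm}, giving $\phi\cdot\psi=R_\phi$.

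\emph{Main obstacle.} The subtle point is Step 3: the operation $Q_1$ involves the Rumin differential $D$, which does not commute with pull-back by arbitrary smooth maps, so some care is needed to push it through the fiber integration defining $(\omega_\psi,\phi_\psi)$. The key observation is that the relevant projections arising from $T(P,Q)$ are submersions whose tangent maps preserve the contact distributions, so that the projection formula for Rumin's differential applies. A cleaner but less direct route is to (i) verify the formula first in the Euclidean/flat case, where it reduces to the classical Hadwiger kinematic identity combined with Alesker's original definition of the product, and (ii) transfer the identity to a general isotropic $(M,G)$ using the Transfer Principle (Theorem \ref{thm_transfer}) together with the analytic dependence on the curvature parameter established in Section \ref{sect:an cont 1}. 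Specializing to $\rho\equiv 1$ then recovers Proposition \ref{general_Hadwiger_thm}.
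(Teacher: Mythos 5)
Your Steps 1 and 2 are correct and essentially match what the paper does in Lemma \ref{lem:mathcal J}, Corollary \ref{cor:integral is valuation} and Proposition \ref{prop:cts prod}: the slicing machinery of the proof of Theorem \ref{thm:basic K}, with $dg$ replaced by $\rho\,dg$, indeed exhibits both $\psi$ and $R_\phi$ as smooth valuations, and the resulting map $\phi\mapsto R_\phi$ is linear and continuous. But Step 3, which is where the real content lies, has a genuine gap, and both repairs you propose fail.

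Pushing $Q_1,Q_2$ through the fiber integration cannot be done as easily as you suggest, because these operations involve the Rumin differential $D$ in an essential way, and $D$ is defined only on contact manifolds. The total spaces and the fibers $\CC_{\xi,\eta}$, $N(Q)$, $G_o$ that appear in the diagram \eqref{basic diagram} are not contact manifolds, the relevant projections do not intertwine the contact distribution on $SM\times SM$ with anything canonical on the fiber product, and there is no general ``projection formula'' for Rumin's differential to invoke. Your fallback route does not work either: Theorem \ref{thm_transfer} is an isomorphism of \emph{coalgebras} $\curvinfty^G(M)\simeq\curv^{\bargo}(T_oM)$ for a single $M$, and the two Alesker \emph{product} structures on this common space are genuinely different (see Proposition \ref{prop:module commute}, which merely says they commute). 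The analytic-continuation method of Section \ref{sect:an cont 1} is specific to a one-parameter family of rank-one symmetric spaces and does not let one transport a multiplicative identity from $\R^n$ to an arbitrary isotropic $(M,G)$.

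The paper circumvents all of this by a localization-and-density argument you did not anticipate: after establishing smoothness and continuity (your Steps 1--2), it uses the sheaf property to reduce to $Q$ contained in a coordinate chart $U$, passes to the algebra $SV_{F(U)}$ of smooth convex valuations via the isomorphism from \cite{ale05a} and \cite{alefu05}, and then verifies the product formula on the dense family of ``template'' valuations of mixed-volume type \eqref{eq_element_sv}. For those templates the product is computed directly by Lemma \ref{lem:sv product}, and the kinematic integral is evaluated using Lemma \ref{lem:pullback chi} together with the Tonelli/dominated-convergence estimates in Lemmas \ref{lem:slice} and \ref{lem:small intersections}; bilinear continuity of the product (from Proposition \ref{prop:cts prod} and \cite{ale05a}) then upgrades the identity to all of $\V^\infty$. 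This entirely avoids manipulating $Q_1,Q_2$ under fiber integration. If you want to salvage your Step 3, this density argument is the missing ingredient.
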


Since the  map $g \mapsto g^{-1}$ is smooth and preserves the Haar measure on $G$, this implies Proposition \ref{general_Hadwiger_thm}.
The rest of the appendix is devoted to the proof. We will need some facts arising from the slicing procedure in the proof of Theorem \ref{thm:basic K}.

\begin{lemma}\label{lem:slice} Let $(M,G)$ be an isotropic space with $\dim M =n$, and $P,Q \in \mathcal P(M)$.
\begin{enumerate} 
\item Given $\beta \in \Omega^{n-1}(SM)$, the function
$
g\mapsto  \int_{N(gP\cap Q)} \beta
$
is integrable.
\item There is a constant $C<\infty$, depending only on $(M,G)$ and independent of $P,Q$, such that
\begin{equation*}
\int_G \mass(N(gP\cap Q))\,dg \le C \left(\mass N(P) \mass N(Q) + \mass N(P) \vol Q + \vol P \mass N(Q) \right).
\end{equation*}
\end{enumerate}
\end{lemma}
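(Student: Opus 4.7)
Both conclusions will flow from the slicing apparatus already constructed inside the proof of Theorem \ref{thm:basic K}. The plan is to import the integral current $T(P,Q)\subset G\times SM$ of \eqref{eq:decomp T} and apply the standard slicing inequality from Geometric Measure Theory to the projection $\pi_G:G\times SM\to G$.

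First I would record the two facts about $T(P,Q)$ that have already been established in the course of proving Theorem \ref{thm:basic K}: (a) for a.e.\ $g\in G$, the normal cycle of the intersection is obtained as a slice,
\[
N(gP\cap Q) \;=\; \pi_{SM*}\bigl\langle T(P,Q),\pi_G,g\bigr\rangle,
\]
and (b) the total mass of $T(P,Q)$ is bounded as in \eqref{eq:mass T},
\[
\mass T(P,Q) \;\le\; c\bigl(\mass N(P)\,\mass N(Q) + \vol P\,\mass N(Q) + \mass N(P)\,\vol Q\bigr).
\]
Next I invoke Federer's slicing inequality (\cite{gmt}, 4.3.2(1)): for an integer rectifiable current $T$ on $G\times SM$ and the smooth submersion $\pi_G$ with bounded Lipschitz constant $L$ on the support of $T$,
\[
\int_G \mass\bigl(\langle T,\pi_G,g\rangle\bigr)\,dg \;\le\; L^{\dim G}\,\mass(T).
\]
Since the pushforward $\pi_{SM*}$ is nonexpansive on masses up to a constant depending only on the fibration $G\times SM\to SM$ (which is locally trivial with bounded derivatives), combining this with (b) yields statement (2) with an absolute constant $C$ depending only on $(M,G)$.

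For (1), measurability of $g\mapsto\int_{N(gP\cap Q)}\beta$ is a consequence of the measurable dependence of the slices $\langle T(P,Q),\pi_G,g\rangle$ on $g$, again by \cite{gmt}, 4.3. Absolute integrability then follows from the elementary estimate
\[
\Bigl|\int_{N(gP\cap Q)}\beta\Bigr| \;\le\; \|\beta\|_\infty\,\mass N(gP\cap Q),
\]
together with the bound from (2). The main technical point is simply to verify that the slicing inequality applies in the present generality, in particular that the constants $L$ and the implicit norm used in comparing $\pi_{SM*}$-mass to slice-mass are bounded independently of $P,Q$; this is ensured by the $G$-invariance of the construction, which reduces the estimate to a single fiber.
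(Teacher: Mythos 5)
Your proof is correct and takes essentially the same route as the paper: the paper's own proof is the single sentence that both claims follow from \eqref{eq:mass T}, \eqref{eq:slicing procedure}, and Federer's Theorem 4.3.2, and you have simply unpacked how those three ingredients combine (slicing inequality applied to $T(P,Q)$, mass bound on $T(P,Q)$, mass-nonexpansiveness of $\pi_{SM*}$, and measurable dependence of slices on the parameter). No gap; this is the intended argument spelled out.
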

\begin{proof}
Both assertions follow at once from \eqref{eq:mass T}, \eqref{eq:slicing procedure} and Theorem 4.3.2 of \cite{gmt}.
\end{proof}

\begin{lemma}\label{lem:mathcal J} Let $Q \in \mathcal P(M)$.
 There is a map $\mathcal J=( \mathcal J_1,\mathcal J_2): \Omega^{n-1} (SM) \times C^\infty(G) \to \Omega^n(M)\times\Omega^{n-1}(SM) $, jointly continuous with respect to the $C^\infty$ topologies, such that for every $P \in \mathcal P(M)$
\begin{displaymath}
\int_G \rho(g) \left(\int_{N(gP\cap Q)} \beta \right) \, dg = \int_{P} \mathcal J_1(\beta,\rho)  \ + \int_{N(P)} \mathcal J_2 (\beta,\rho).
\end{displaymath}
Similarly, there is a map $\mathcal L: \Omega^{n} (M) \times C^\infty(G) \to \Omega^n(M)$, jointly continuous with respect to the $C^\infty$ topologies, such that for every $P \in \mathcal P(M)$
\begin{displaymath}
\int_G \rho(g) \left(\int_{gP\cap Q} \gamma \right) \, dg = \int_{P} \mathcal L(\gamma,\rho).
\end{displaymath}
\end{lemma}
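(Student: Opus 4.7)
The key tool will be the apparatus constructed in the proof of Theorem \ref{thm:basic K}, which produces a current $T(P,Q)\subset G\times SM$ whose slice at a generic $g\in G$, pushed forward to $SM$, recovers $N(gP\cap Q)$. By Federer's slicing formula (Theorem 4.3.2 of \cite{gmt}) together with Lemma \ref{lem:slice},
\[
\int_G\rho(g)\left(\int_{N(gP\cap Q)}\beta\right)dg=\int_{T(P,Q)}\pi_{SM}^*\beta\wedge\pi_G^*(\rho\,dg),
\]
and the decomposition $T(P,Q)=p_*T_0(P,Q)+p_{1*}T_1(P,Q)+p_{2*}T_2(P,Q)$ from \eqref{eq:decomp T} splits the right-hand side into three summands to be analyzed separately.

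The contribution from $p_*T_0(P,Q)$ takes the form $\int_{N(P)\times N(Q)}H'(\beta,\rho)$, where $H'(\beta,\rho):=\pi_{\mathcal{C}*}(\iota\circ p)^*(\beta\wedge\rho\,dg)$, exactly as in \eqref{eq:def H}. Since $Q$ and hence $N(Q)$ is fixed, partial fiber integration over the $N(Q)$-factor yields a smooth form on $SM$ to be integrated against $N(P)$. The contribution from $p_{2*}T_2(P,Q)$ lives on a bundle over $N(P)$ with fiber $\{(g,y):g\pi\xi=y\in Q\}$ over $\xi\in N(P)$; fiber integration here likewise produces a form on $SM$ integrated against $N(P)$. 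The remaining contribution from $p_{1*}T_1(P,Q)$ lives on a bundle over $P$ with fiber $\{(g,\eta):gx=\pi\eta,\ \eta\in N(Q)\}$ over $x\in P$, producing a form on $M$ integrated against $P$. The sum of the first two outputs gives $\mathcal{J}_2(\beta,\rho)$, while the third is $\mathcal{J}_1(\beta,\rho)$.

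The statement for $\mathcal{L}$ is more elementary. After the substitution $y=gx$ and Fubini,
\[
\int_G\rho(g)\int_{gP\cap Q}\gamma\,dg=\int_P\left(\int_{\{g\,:\,gx\in Q\}}\rho(g)\,L_g^*\gamma|_x\,dg\right),
\]
which defines $\mathcal{L}(\gamma,\rho)$ pointwise. Smoothness in $x$ is obtained by parametrizing the varying domain $\{g:gx\in Q\}$ as the image of a fixed domain under a smoothly varying diffeomorphism of $G$, exploiting the transitivity of the $G$-action on $M$.

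The main technical obstacle will be regularity: verifying that each of these fiber integrations genuinely produces a smooth (rather than merely measurable) differential form, and that the resulting operations depend jointly continuously on $(\beta,\rho)$ and $(\gamma,\rho)$ in the $C^\infty$ topology. The bundles $\mathcal{C}$, $\Gamma_1$, $\Gamma_2$ are themselves smooth, so the only source of non-smoothness comes from the piecewise-smooth boundaries inherited from $\partial P$ and $\partial Q$; the isotropic $G$-action transports these boundaries smoothly as the base point varies, reducing the argument to the standard regularity of integrals of smooth forms over smoothly varying families of domains with corners. Joint $C^\infty$-continuity is then immediate, since each construction involves only pullback, wedge product, and averaging against $\rho\,dg$, all of which are $C^\infty$-continuous operations.
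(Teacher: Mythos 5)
Your proof follows the same route as the paper: you invoke the slicing machinery from Theorem \ref{thm:basic K}, decompose $T(P,Q)=p_*T_0+p_{1*}T_1+p_{2*}T_2$ as in \eqref{eq:decomp T}, recognize that the $T_0$- and $T_2$-contributions produce forms to be integrated over $N(P)$ (giving $\mathcal J_2$) while the $T_1$-contribution produces a form over $P$ (giving $\mathcal J_1$), and obtain joint $C^\infty$-continuity from smoothness of the fibrations $\mathcal C$, $\Gamma_1$, $\Gamma_2$ plus fiber integration against the fixed compact $N(Q)$ and $Q$ --- exactly the paper's argument. The one place you add content is $\mathcal L$: where the paper only says it is constructed ``by a similar procedure,'' you give a concrete and correct elementary construction via the substitution $y=gx$, Fubini, and a local section of $G\to M$ to realize the $x$-dependent domain $\{g:gx\in Q\}$ as a right translate of a fixed domain, which makes the smoothness in $x$ transparent.
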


\begin{proof} We recall the constructions and notation of Theorem \ref{thm:basic K}.
By  \eqref{eq:decomp T}, \eqref{eq:slicing procedure}, and Theorem 4.3.2 of \cite{gmt}, 
\begin{align}
\notag\int_G \rho(g) \left(\int_{N(gP\cap Q)} \beta \right) \, dg  &= \int_{T_0(P,Q)}  p^*(\pi_{SM}^* \beta \wedge \pi_G^*(\rho \, dg))  \\
\label{eq:slice decomp} &+   \int_{T_1(P,Q)} p_1^*(\pi_{SM}^* \beta \wedge \pi_G^*(\rho \, dg)) \\
\notag& +  \int_{T_2(P,Q)}p_2^*(\pi_{SM}^* \beta \wedge \pi_G^*(\rho \, dg)) 
\end{align}
Fixing $Q$, we show that the first and third integrals on the right may be written as $\int_{N(P)} \mathcal J_2 (\beta,\rho)$, while the second may be written as $\int_{P} \mathcal J_1(\beta,\rho) $.

The first of the integrals on the right may be written as
$$
\int_{N(P) \times N(Q)} \bar H (\beta, \rho) 
$$
where $\bar H (\beta,\rho) := \pi_{\CC*} (\iota \circ p)^*(\beta \wedge \rho\, dg) \in \Omega^*(SM\times SM) $, extending the map $H$ of \eqref{eq:def H} to the non-invariant case. Since the bundle map $E \to SM \times SM$ of \eqref{basic diagram} is a smooth fibration, it follows that $\bar H$ is continuous as a map $ \Omega^{n-1} (SM) \times C^\infty(G) \to \Omega^*(SM \times SM)$. Composing $\bar H$ with the fiber integral over $N(Q)$, it follows that 
$$
\mathcal J_2' := \pi_{N(Q)*} \circ \bar H
$$
is a $C^\infty$-continuous map $ \Omega^{n-1} (SM) \times C^\infty(G)  \to \Omega^{n-1}(SM)$, such that for any $P \in \mathcal P(M)$ the first term on the right of \eqref{eq:slice decomp} equals $\int_{N(P) }\mathcal J_2'(\beta,\rho)$.

The third term may be written in similar, but simpler, fashion, as $\int_{N(P) }\mathcal J_2''(\beta,\rho)$, where
$$
\mathcal J_2'' (\beta,\rho) := \pi_{Q*} \circ \pi_{G_o*}(p_2^*(\pi_{SM}^* \beta \wedge \pi_G^*(\rho \, dg)))
$$
where $\pi_{G_o*}$ is fiber integration over the fiber of the bundle $\Gamma_2 \to SM \times M$ of \eqref{eq:def gamma bundles}.

Now put $\mathcal J_2:= \mathcal J_2' +  \mathcal J''_2 $. The map $\mathcal J_1$ is defined similarly as
$$
\mathcal J_1(\beta,\rho) := \pi_{N(Q)*} \circ \pi_{G_o*}(p_1^*(\pi_{SM}^* \beta \wedge \pi_G^*(\rho \, dg)))
$$
with respect to the bundle $\Gamma_1$. Finally, the map $\mathcal L$ may be constructed by a similar procedure.
\end{proof}

\begin{corollary}\label{cor:integral is valuation} For every $\phi \in \V^\infty(M)$, the integral on the right hand side of \eqref{eq:G asymmetric product} defines a smooth valuation.
\end{corollary}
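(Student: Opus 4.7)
The plan is to represent $\phi$ by differential forms and then apply Lemma \ref{lem:mathcal J} termwise to exhibit the integral as the globalization of a smooth curvature measure.

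First, write $\phi = \Glob(\omega,\phi_0)$ for some $\omega \in \Omega^{n-1}(SM)$ and $\phi_0 \in \Omega^n(M)$, so that $\phi(gP\cap Q) = \int_{N(gP\cap Q)} \omega + \int_{gP\cap Q} \phi_0$ for every $g$ avoiding the measure-zero set where the intersection fails to be sufficiently regular. By Lemma \ref{lem:slice}(1) applied to $\omega$ and (trivially) to $\phi_0$, the function $g \mapsto \phi(gP\cap Q)$ is integrable against $\rho\,dg$, so Fubini allows us to split
\[
\int_G \phi(gP\cap Q)\,\rho(g)\,dg \;=\; \int_G \rho(g)\!\int_{N(gP\cap Q)}\!\omega\,dg \;+\; \int_G \rho(g)\!\int_{gP\cap Q}\!\phi_0\,dg.
\]

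Next, apply Lemma \ref{lem:mathcal J} to rewrite each summand intrinsically in terms of $P$ and $N(P)$: the first becomes $\int_P \mathcal J_1(\omega,\rho) + \int_{N(P)} \mathcal J_2(\omega,\rho)$, and the second becomes $\int_P \mathcal L(\phi_0,\rho)$. Combining, we obtain
\[
\int_G \phi(gP\cap Q)\,\rho(g)\,dg \;=\; \int_P\!\bigl(\mathcal J_1(\omega,\rho)+\mathcal L(\phi_0,\rho)\bigr) \;+\; \int_{N(P)}\!\mathcal J_2(\omega,\rho).
\]
By Definition \ref{def:curv & val} this is precisely $\Glob\bigl(\mathcal J_2(\omega,\rho),\,\mathcal J_1(\omega,\rho)+\mathcal L(\phi_0,\rho)\bigr)(P)$, hence a smooth valuation on $M$.

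The only point requiring care is that $\mathcal J_1(\omega,\rho),\ \mathcal J_2(\omega,\rho),\ \mathcal L(\phi_0,\rho)$ are genuinely smooth differential forms of the correct degree on $M$ and $SM$ respectively; this is guaranteed by the joint $C^\infty$-continuity statement in Lemma \ref{lem:mathcal J}, which in turn comes from the fact that the maps $\mathcal J_i, \mathcal L$ are realized as fiber integrations of smooth forms over smooth fiber bundles (the bundles $E$, $\Gamma_1$, $\Gamma_2$ built in the proof of Theorem \ref{thm:basic K}) with compact fiber $\CC$ or $G_o$. I expect no serious obstacle here: once Lemma \ref{lem:mathcal J} is in hand, the corollary reduces to a bookkeeping exercise matching the pair of output forms to the Int/Glob formalism of Definition \ref{def:curv & val}.
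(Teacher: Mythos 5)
Your proof is correct and takes essentially the same approach as the paper: the paper's proof is the one-liner ``This follows at once from Lemma \ref{lem:mathcal J},'' and you have simply spelled out the intended bookkeeping of writing $\phi = \Glob(\omega,\phi_0)$, applying the two halves of Lemma \ref{lem:mathcal J}, and reassembling the output forms into a pair $(\mathcal J_2(\omega,\rho),\,\mathcal J_1(\omega,\rho)+\mathcal L(\phi_0,\rho))$ via Definition \ref{def:curv \& val}.
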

\begin{proof} This follows at once from Lemma \ref{lem:mathcal J}.
\end{proof}

\begin{proposition}\label{prop:cts prod} For fixed $\rho,Q$, the assignment
$$
\mathcal I:\phi \mapsto \int_G \phi (\cdot \cap gQ) \, \rho(g)\, dg
$$
gives a continuous map $\V^\infty(M) \to \V^\infty(M)$.
\end{proposition}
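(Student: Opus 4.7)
The topology on $\V^\infty(M)$ is the quotient topology from $\Omega^{n-1}(SM)\times\Omega^n(M)$ under $\Glob$ (cf.\ Proposition \ref{prop:kernel thm}), so the continuity of $\mathcal I$ will follow once we exhibit, for any $\phi = \Glob(\beta,\alpha)$, representing forms $(\beta',\alpha')$ for $\mathcal I(\phi)$ that depend $C^\infty$-continuously on $(\beta,\alpha)$. By Corollary \ref{cor:integral is valuation} such forms exist; our task is to make the construction explicit and jointly continuous.

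Starting from $\phi = [[\alpha,\beta]]$ and expanding
\[
 \mathcal I(\phi)(P) = \int_G\rho(g)\int_{P\cap gQ}\alpha\,dg + \int_G\rho(g)\int_{N(P\cap gQ)}\beta\,dg,
\]
the first step is to convert the geometry of $P\cap gQ$ into that of $gP\cap Q$ (which is what Lemma \ref{lem:mathcal J} handles with $P$ as the free variable). Substitute $g\mapsto g^{-1}$, using bi-invariance of Haar measure, and apply the identity $P\cap g^{-1}Q = g^{-1}(gP\cap Q)$ together with the induced action $N(g^{-1}A) = g^{-1}_*N(A)$ on $SM$. This yields
\[
 \mathcal I(\phi)(P) = \int_G\tilde\rho(g)\int_{gP\cap Q}(g^{-1})^*\alpha\,dg + \int_G\tilde\rho(g)\int_{N(gP\cap Q)}(g^{-1})^*\beta\,dg,
\]
where $\tilde\rho(g):=\rho(g^{-1})$. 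Because the action map $G\times SM\to SM$ is smooth, the families $(g^{-1})^*\alpha$, $(g^{-1})^*\beta$ assemble into smooth forms $\tilde\alpha\in\Omega^n(G\times M)$, $\tilde\beta\in\Omega^{n-1}(G\times SM)$ of bidegrees $(0,n)$ and $(0,n-1)$ respectively, depending $C^\infty$-continuously on $(\alpha,\beta)$.

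The key ingredient is then a generalization of Lemma \ref{lem:mathcal J} allowing $g$-dependent integrand forms. Concretely: the proof of Lemma \ref{lem:mathcal J} writes the left-hand side as the sum of three integrals of $p^*(\pi_{SM}^*\beta\wedge\pi_G^*(\rho\,dg))$ and its analogues over the currents $T_0(P,Q), T_1(P,Q), T_2(P,Q)$ supplied by \eqref{eq:decomp T}, \eqref{eq:slicing procedure}, then pushes down along the smooth fiber bundles $E\to SM\times SM$ and $\Gamma_i\to SM\times M, M\times SM$. The argument carries through verbatim with $\pi_{SM}^*\beta$ replaced by any smooth $\tilde\beta\in\Omega^{n-1}(G\times SM)$ of bidegree $(0,n-1)$ (and similarly for the $\alpha$-piece), producing continuous operators $\mathcal J_1^{\mathrm{gen}}, \mathcal J_2^{\mathrm{gen}}, \mathcal L^{\mathrm{gen}}$ such that
\[
 \mathcal I(\phi)(P) = \int_P\bigl(\mathcal L^{\mathrm{gen}}(\tilde\alpha,\tilde\rho)+\mathcal J_1^{\mathrm{gen}}(\tilde\beta,\tilde\rho)\bigr) + \int_{N(P)}\mathcal J_2^{\mathrm{gen}}(\tilde\beta,\tilde\rho).
\]
Since each operator on the right is $C^\infty$-continuous in its inputs, and $(\alpha,\beta)\mapsto(\tilde\alpha,\tilde\beta)$ is likewise continuous, we obtain representing forms for $\mathcal I(\phi)$ depending $C^\infty$-continuously on $(\alpha,\beta)$, hence continuity of $\mathcal I$.

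The main obstacle is the generalized Lemma \ref{lem:mathcal J}: verifying that replacing the pullbacks $\pi_{SM}^*\beta, \pi_M^*\alpha$ by arbitrary $g$-dependent forms of the correct bidegree does not disrupt the slicing/fiber-integration construction. This amounts to checking bidegree bookkeeping on $E, \Gamma_1, \Gamma_2$ and invoking smoothness of the pushforward under a submersion — technical but routine. Once this is in place, the rest of the argument is formal.
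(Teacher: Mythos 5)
Your proposal is correct, and in fact more careful than the paper's own one-line proof. The paper simply asserts that the claim "follows at once from Lemma \ref{lem:mathcal J}," but that lemma is stated for $\int_G \rho(g)\int_{N(gP\cap Q)}\beta\,dg$, whereas the proposition is phrased with $P\cap gQ$ --- exactly the mismatch you spotted. (This $gP\cap Q$ versus $P\cap gQ$ discrepancy runs through the whole Appendix; compare the statement of Theorem \ref{thm:main appendix} with the displayed computation of $\theta(P)$ inside its proof, which in fact produces $P\cap gQ$.) The intended reading is that Lemma \ref{lem:mathcal J} is the mechanism and the conversion between the two forms is tacit; your proof supplies that step explicitly.

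Your resolution --- substitute $g\mapsto g^{-1}$, obtain $g$-dependent integrand forms $(\widetilde{g^{-1}})^*\beta$, $(g^{-1})^*\alpha$ assembled into bidegree-$(0,n-1)$ and $(0,n)$ forms on $G\times SM$ and $G\times M$, and then observe that the slicing and fiber-integration construction in Lemma \ref{lem:mathcal J} goes through unchanged for such forms --- is sound. The point you correctly identify is that after wedging with $\pi_G^*(\rho\,dg)$, the pushforwards along the smooth bundles $E\to SM\times SM$, $\Gamma_1$, $\Gamma_2$ are still fiber integrals over fixed compact fibers, hence jointly continuous in the $C^\infty$ topologies; whether the input form is constant in $g$ or not plays no role.

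There is a slightly cleaner route that avoids $g$-dependent forms altogether: since $P\cap gQ = gQ\cap P$, the slicing identity \eqref{eq:slicing procedure} with the roles of $P$ and $Q$ interchanged gives $N(P\cap gQ)=\pi_{SM*}\langle T(Q,P),\pi_G,g\rangle$. One then runs the pushforward argument of Lemma \ref{lem:mathcal J} on $T(Q,P)$, fiber-integrating out the $N(Q)$-factor (now the first slot of $SM\times SM$ for $T_0(Q,P)$, and the $N(Q)$- resp. $Q$-slots of $\Gamma_2$, $\Gamma_1$), producing jointly continuous $\mathcal J_1', \mathcal J_2', \mathcal L'$ with $\int_G\rho(g)\,\phi(P\cap gQ)\,dg=\int_P\bigl(\mathcal J_1'(\beta,\rho)+\mathcal L'(\alpha,\rho)\bigr)+\int_{N(P)}\mathcal J_2'(\beta,\rho)$ for $\phi=\Glob(\beta,\alpha)$. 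Either way, continuity of $\mathcal I$ in the quotient topology induced by $\Glob$ follows.
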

\begin{proof} Since the topology on $\V^\infty(M)$ is induced by the projection $\Glob:\Omega^{n-1}(SM) \times \Omega^n(M) \to \V^\infty(M)$, this follows at once from Lemma \ref{lem:mathcal J}.
\end{proof}

By \cite{ale05b}, Proposition 2.1.17, $Q$ admits a simple subdivision 
 $ Q = \bigcup_{i=1}^NQ_i$, where each $Q_i$ is contained in a coordinate neighborhood of $M$. Using the
inclusion-exclusion principle we may therefore assume that $Q$ itself has this property, and using a partition
of unity we may also assume that there is a coordinate neighborhood $U\subset M$ such that if $g\in G$ with
$\rho(g) \ne 0$ then $gQ \subset U$. Finally, since the assignment $U \mapsto \V^\infty(U)$ is a sheaf (\cite{ale05b},
Theorem 2.4.10), it is enough to show that
$\phi\cdot \psi (P)= \int_G \phi (P \cap gQ) \, \rho(g)\, dg$ for $P \in \mathcal P(U)$.

Let $F: U \to \Rn$ be the coordinate map. By \cite{alefu05} and \cite{ale05a}, Thm.~5.2.2, the topological algebra of
smooth valuations on $M$ supported in $U$ is isomorphic to the topological algebra $SV_{F(U)}$ of smooth convex valuations on
$\Rn$ supported in $F(U)$, where the map $\theta \mapsto \tilde \theta$ from $\V^\infty$ to $SV_{F(U)}$ is given by
\begin{equation}\label{eq:V to SV}
\tilde \theta(K) := \theta(F^{-1}(K)), \quad K \in \Ksm.
\end{equation}
The inverse map $\tilde \theta \mapsto \theta$ may be described as follows. It is shown in \cite{ale05a},
Thm.~5.2.2, that for each $\tilde \theta\in SV= SV_{\Rn}$ there exist smooth differential forms $\omega \in
\Omega^{n-1}(S\Rn ), \phi \in \Omega^n(\Rn)$ such that 
$$
\tilde \theta (K) = \int_{N(K)} \omega + \int_K \phi.
$$
The value at a smooth polyhedron $P \in \mathcal P(U)$ of the corresponding smooth valuation on $M$ is then
obtained by integrating these differential forms over $N(F(P)), F(P)$ respectively.

Moreover, it follows from Corollary 3.1.7 of  \cite{ale05a} that 
 the span of all convex valuations on $\R^n$ of the form 
\begin{equation} \label{eq_element_sv}
 \tilde \phi(K) = \left.\frac{\partial^k}{\partial \lambda_1 \cdots \partial \lambda_k}\right|_{\lambda=0}
\mu\left(K+\sum_{i=1}^k \lambda_i A_i\right),
\end{equation}
where $k$ is less than or equal to $n$, $A_1,\ldots,A_k\in \Ksm$ and $\mu$ is a smooth measure with compact support in  $U$, is dense in $SV_{F(U)}$.
Since, by Theorem 4.1.2 of \cite{ale05a}, the Alesker product is bilinear and continuous in the two factors, it therefore follows from Lemma \ref{lem:mathcal J} that we only need to prove
\eqref{eq:G asymmetric product} for all $\phi\in \V^\infty(M)$ that are supported in $U$ and whose image in $SV_{F(U)}$ has the form \eqref{eq_element_sv}.

\begin{lemma}\label{lem:sv product} Let $\tilde \phi ,\tilde \psi \in SV_{F(U)}$, with $\tilde \phi$ given by
\eqref{eq_element_sv}.  Then the Alesker product  $\tilde \phi\cdot \tilde \psi\in SV_{F(U)}$ is given by
\begin{displaymath}
 (\tilde \phi \cdot \tilde \psi)(K)
= \left.\frac{\partial^k}{\partial \lambda_1 \cdots \partial
\lambda_k}\right|_{\lambda=0} \int_{\R^n} \tilde \psi\left(K \cap (x-\sum_{i=1}^k \lambda_i
A_i)\right)d\mu(x), \quad K \in \Ksm.
\end{displaymath}
\end{lemma}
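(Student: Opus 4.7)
The plan is to reduce the lemma to a master identity for the Alesker product of a Minkowski-sum valuation with an arbitrary smooth valuation, and then obtain the $k$-th derivative case by iterated differentiation. Specifically, I would first establish
\begin{equation} \label{eq_master_plan}
 \mu_A \cdot \tilde\psi(K) \;=\; \int_{\R^n} \tilde\psi(K \cap (x - A))\,d\mu(x), \qquad K \in \Ksm,
\end{equation}
where $\mu_A(K) := \mu(K + A)$, $\mu$ is a smooth compactly supported measure on $\R^n$, $A$ is a smooth convex body, and $\tilde\psi \in SV_{F(U)}$ is arbitrary. Granting \eqref{eq_master_plan}, the substitution $A = \sum_{i=1}^k \lambda_i A_i$ followed by applying $\left.\tfrac{\partial^k}{\partial\lambda_1 \cdots \partial\lambda_k}\right|_{\lambda = 0}$ to both sides yields the lemma: the derivative commutes with the Alesker product by its bilinearity and continuity (\cite{ale05a}, Theorem 4.1.2) and with the integral by smoothness of $\tilde\psi$ on convex bodies together with compactness of the $A_i$.

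To prove \eqref{eq_master_plan}, I would express both sides in terms of the pair-of-forms realization of smooth valuations. The valuation $\mu_A$ is itself representable by an explicit pair $(\omega_{\mu,A}, \phi_{\mu,A})$ whose forms are computed by a Gauss-type formula expressing $\mu(K+A) = \int \mathbf{1}_{K+A}\,d\mu$ as an integral of $\chi(K \cap (x-A))$ against $d\mu(x)$. Applying the Alesker--Bernig product formula \eqref{eq_albe_formula} to this pair and to an arbitrary pair $(\omega_2, \phi_2)$ representing $\tilde\psi$, and separately unfolding the right-hand side of \eqref{eq_master_plan} as an integral over $x$ of the pair representing $\tilde\psi$ composed with intersection against $(x - A)$, both sides should reduce to the same fiber integral over $N(K) \times S(-A) \times \R^n$ of a common wedge product of the input forms against $d\mu$. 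The matching of terms is facilitated by the fact that $\chi$ is the unit for the Alesker product, which ensures that when $\tilde\psi = \chi$ the identity \eqref{eq_master_plan} collapses to the tautology $\mu(K+A) = \int \chi(K \cap (x-A))\,d\mu(x)$.

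The main obstacle is keeping track of the various ambient spaces, pullbacks and fiber integrations that arise when applying \eqref{eq_albe_formula} to the pair $(\omega_{\mu,A}, \phi_{\mu,A})$: the operations $Q_1, Q_2$ of Alesker--Bernig involve the Rumin differential, whose behaviour under the Minkowski-sum construction is non-trivial to unwind term-by-term. A cleaner alternative would be to first establish \eqref{eq_master_plan} in the translation-invariant setting (where $\mu = c\cdot d\vol$ and $\tilde\psi$ is translation-invariant) using the classical formulas of \cite{befu06}, and then obtain the general case by a partition-of-unity argument in $F(U)$, exploiting the fact that smooth valuations form a sheaf (\cite{ale05b}, Theorem 2.4.10). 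Once the master identity is in hand, the remaining steps are a routine application of multivariable calculus and continuity of the Alesker product.
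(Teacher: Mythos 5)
Your overall plan --- reduce to the master identity $(\mu_A\cdot\tilde\psi)(K) = \int_{\R^n}\tilde\psi(K\cap(x-A))\,d\mu(x)$ with $\mu_A(K):=\mu(K+A)$, then apply $\left.\partial^k/\partial\lambda_1\cdots\partial\lambda_k\right|_{\lambda=0}$ and commute the derivatives past the product and the integral --- correctly identifies the structure of the argument and is in that sense the same reduction the paper uses. The gap is that neither of your two proposed routes to the master identity is actually a proof. The assertion that both sides ``should reduce to the same fiber integral over $N(K)\times S(-A)\times\R^n$'' is a hope, not a computation: the Rumin differential buried inside $Q_1,Q_2$ of \eqref{eq_albe_formula} does not act transparently on the pair of forms representing $\mu_A$, and you yourself flag this bookkeeping as the main obstacle without resolving it. The fallback route is equally incomplete: the translation-invariant formulas of \cite{befu06} treat valuations of the shape $\vol(\cdot+A)$, not $\mu(\cdot+A)$ for a general smooth compactly supported $\mu$, and passing from the former to the latter by a partition of unity requires exactly the identity you are trying to establish, so the reduction is circular.

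What you are missing is that the master identity (equivalently, the lemma itself after differentiation) is not something that needs to be re-derived from the form-level product: it is precisely the defining/established formula for the Alesker product of valuations of the shape \eqref{eq_element_sv}, and the paper's entire proof is the citation to relation (3) of \cite{ale05a} (see also relation (68) of \cite{alefu05}). Once you recognize that your ``master identity'' is that cited relation, the lemma is immediate and your differentiation step --- which is fine, modulo the usual dominated-convergence justification for differentiating under $\int d\mu$ --- becomes unnecessary commentary rather than the main content. In short: right reduction, but the key identity should be cited, not rebuilt from scratch, and as written the rebuild does not go through.
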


\begin{proof} This follows from relation (3) from \cite{ale05a}; cf. also the relation (68) from \cite{alefu05}.
\end{proof}

Recall that if $f:M_1\to M_2$ is a diffeomorphism between Riemannian manifolds $M_i$ then there is a natural lift
$\tilde f:SM_1 \to SM_2$ such that $\tilde f_* N(P) = N(f(P))$ for every $P \in \mathcal P(M_1)$, given as follows. Let
$g_i: SM_i \to S^*M_i, i = 1,2$, denote the diffeomorphism between the sphere and cosphere bundles of $M_i$, induced by
the Riemannian metric. Let $f^{-*}:S^*M_1 \to S^*M_2$ 
denote the diffeomorphism of the
cosphere bundles induced by pullback under $f^{-1}$. Then 
$$
\tilde f = g_2^{-1}\circ f^{-*}\circ g_1.
$$

\begin{lemma}\label{lem:small intersections} Given $A_1,\dots,A_k \in \Ksm(\Rn)$ and a simple differentiable polyhedron $P
\subset \Rn$, there are constants $\eps >0$ and $C<\infty$ such that 
 if $0\le \lambda_i <\eps, \ i = 1, \dots, k$ then
 \begin{itemize}
 \item $\mass N(P\cap (x- \sum \lambda_i A_i)) \le C$ for all $x \in \Rn$
 \item the map $x \mapsto N(P\cap (x- \sum \lambda_i A_i))$ is continuous at every point $x $ for which
 $P, (x- \sum \lambda_i A_i)$ meet transversely.
 \end{itemize}
\end{lemma}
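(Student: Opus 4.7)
The plan is to separate the two assertions. Set $B_\lambda := \sum_{i=1}^k \lambda_i A_i$, which for $\lambda \in [0,\eps]^k$ is a smooth convex body contained in $\overline{B}(0, R\eps)$ for a fixed constant $R$ depending only on $A_1,\dots,A_k$. Since $\lambda \mapsto \mass N(B_\lambda)$ is continuous on the compact parameter cube, it admits a uniform bound $C_0$. Writing $K_x := x - B_\lambda$, translation invariance of mass gives $\mass N(K_x) \le C_0$ uniformly in $x \in \Rn$ and $\lambda \in [0,\eps]^k$, and $K_x \subset \overline{B}(x, R\eps)$.

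For the mass bound in the first bullet, I would first establish the auxiliary uniform estimate
\begin{equation*}
\mass N(P \cap \overline{B}(y, R\eps)) \le C_1 \quad \text{for all } y \in \Rn,
\end{equation*}
which follows from compactness of $P$ combined with its simple smooth polyhedral structure: the normal cycle of the intersection of $P$ with a fixed-radius ball is controlled by the (uniformly bounded) principal curvatures of the smooth strata of $\partial P$ and by the finite combinatorics of the stratification. Since $P \cap K_x = (P \cap \overline{B}(x,R\eps)) \cap K_x$, applying the intersection construction for normal cycles from \cite{fu90, fu94} to the pair $(P \cap \overline{B}(x,R\eps), K_x)$, whose normal-cycle masses are both controlled, then yields
\begin{equation*}
\mass N(P \cap K_x) \le C_2\bigl(\mass N(P \cap \overline{B}(x,R\eps)) + \mass N(K_x) + \mass N(P \cap \overline{B}(x,R\eps))\cdot \mass N(K_x)\bigr) \le C,
\end{equation*}
giving the first assertion.

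For the continuity assertion, fix $x_0 \in \Rn$ at which $P$ and $K_{x_0}$ meet transversely, meaning every stratum of $\partial P$ meets $\partial K_{x_0}$ transversely. Transversality is an open condition, and since the family $\{K_x\}$ is obtained from $K_{x_0}$ by rigid translation, for $x$ in a sufficiently small neighborhood of $x_0$ the intersection $P \cap K_x$ admits a stratified diffeomorphism with $P \cap K_{x_0}$ depending smoothly on $x$. Consequently $N(P \cap K_x)$ is the pushforward of $N(P \cap K_{x_0})$ by this smooth family of diffeomorphisms, and hence varies continuously (indeed smoothly) at $x_0$. Equivalently, one may realize $N(P \cap K_x)$ as a slice over the projection $\Rn \times SM \to \Rn$ of the integer-rectifiable current $\{(x,\eta) : \pi\eta \in P \cap K_x\}$, in analogy with \eqref{eq:slicing procedure}, and invoke continuity of slicing at transverse parameter values (Section 4.3 of \cite{gmt}).

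The principal obstacle is the uniform local estimate $\mass N(P \cap \overline{B}(y, R\eps)) \le C_1$: although intuitively obvious from the compactness and smoothness of $P$, making it rigorous requires quantitative control on how the stratification of $\partial P$ interacts with translation of a small ball, in particular ruling out any normal-cycle mass blow-up near positions where the ball becomes tangent to a stratum. Once this estimate is in place, the mass bound follows routinely from Fu's intersection-of-normal-cycles framework, and the continuity statement is essentially formal.
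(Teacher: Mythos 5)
There is a genuine gap in the passage from the auxiliary local estimate to the mass bound on the intersection. You write that, assuming $\mass N(P\cap \overline B(x,R\eps))\le C_1$, the intersection-of-normal-cycles framework from \cite{fu90,fu94} "then yields" the pointwise bound
$\mass N(P\cap K_x)\le C_2(\dots)$. It does not. What that framework produces (see \eqref{eq:mass T}, \eqref{eq:slicing procedure}, and Lemma \ref{lem:slice}~(2) of this paper) is an \emph{integral} bound over the moving group: the mass of the big current $T(P,Q)$ is controlled, and Federer's slicing theorem only bounds $\int_G \mass N(gP\cap Q)\,dg$. Slicing gives no pointwise control on $\mass N(gP\cap Q)$ at a \emph{fixed} $g$, and in general the mass of the normal cycle of an intersection can blow up as the factors approach a tangential configuration (it depends on the angle of intersection, not just on the masses of $N(P)$ and $N(Q)$). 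So even granting your auxiliary estimate on $\mass N(P\cap \overline B(x,R\eps))$ — which you yourself flag as the "principal obstacle" and do not establish — the next step is not a consequence of Fu's machinery as stated.

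The paper avoids this problem by a different mechanism that exploits the interplay between the \emph{size} of $B_\lambda=\sum\lambda_iA_i$ and its \emph{curvature}: cover $P$ by finitely many charts $f_j\colon U_j\to V_j$ that straighten $P$ to a convex cone, with the lifts $\tilde f_j^{-1}$ of uniformly bounded $C^1$ norm. Because all principal curvatures of $B_\lambda$ tend to $\infty$ as $\lambda\to 0$, one may choose $\eps$ so small that (a) whenever $x-B_\lambda$ meets $P$ it lies in a single chart $U_j$, and (b) the image $f_j(x-B_\lambda)$ is still \emph{convex}. Then $f_j(P\cap(x-B_\lambda))$ is convex, contained in a ball of radius $R$, so its normal cycle mass is uniformly bounded; pushing back by $\tilde f_j^{-1}$ (uniformly $C^1$-bounded) gives the first claim. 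The continuity claim then also reduces to the convex picture, where continuity of $K\mapsto N(K)$ on $\K$ is standard, rather than invoking a "stratified diffeomorphism depending smoothly on $x$" or generic slicing, neither of which is detailed enough to close the argument on its own. The key idea your proposal misses is that the convexity of the intersection after straightening — obtained by a balance between $C^1$ control of the charts and the blow-up of principal curvatures of $B_\lambda$ — is what produces a \emph{pointwise}, not averaged, mass bound.
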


\begin{proof} Since every such $P$ is locally diffeomorphic to some open subset of $\R_+^{ k}:=\{(x_1,\dots,x_{ k}): x_i
\ge 0, \ i = 1,\dots, { k}\}$, we may cover $P$ by finitely many open sets $U_1,\dots,U_N$ with the property that there exist open subsets $V_j \subset \R^n$ contained in some ball of radius $R<\infty$ and diffeomorphisms $f_j:U_j \to
V_j$ such that $f_j(U_j\cap P)$ is convex. 
We may assume also that the inverse maps to
the lifts $\tilde f_j:SU_j \to SV_j$ all have $C^1$ norms less than some given constant.

 Let $\eps $ be small enough that whenever $\lambda_1,\dots,\lambda_k < \eps$
\begin{itemize}
\item if $(x-\sum \lambda_iA_i) \cap P \ne \emptyset$ then $x-\sum \lambda_iA_i\subset U_j$ for some $j$, and
\item if $x-\sum \lambda_iA_i\subset U_j $ then $f_j(x-\sum \lambda_iA_i)$ is convex.
\end{itemize}
The second stipulation is possible in view of the bound on the $C^1$ norms of the $\tilde f_j$, since all principal curvatures of $\sum \lambda_i K_i$ tend to $\infty$ as the $\lambda_i \to 0$.
It follows that $f_j(P\cap (x-\sum \lambda_iA_i))= f_j(P\cap U_j ) \cap f_j(x-\sum \lambda_iA_i)$ is convex for such
$j$. Since this convex set is a subset of the ball of radius $R$, its normal cycle has mass $\le C' = C'(R)$. 

Furthermore, if  $P$ and $x_0- \sum \lambda_i A_i$ meet transversely then the same is true of their images under
$f_j$. It follows that $x \mapsto f_j(P )\cap f_j(x- \sum \lambda_i A_i)$ is continuous at $x_0$ as a map $\Rn\to \K(\Rn)$.
Since the normal cycle map $N:\K \to \mathbb I_{n-1}(S\Rn)$ is continuous, the map
$x \mapsto N(f_j(P )\cap f_j(x- \sum \lambda_i A_i))$ is continuous at $x_0$ as well.

Finally, the normal cycle of $P \cap (x-\sum \lambda_iA_i)$ is the image of $N(f_j(P\cap (x-\sum \lambda_iA_i)))$ under
the lift $\tilde f^{-1}_j$.  Since the $C^1$ norm of this map is bounded, the first conclusion follows. Since the
pushforward map  $(\tilde f^{-1}_j)_*:\mathbb I_{n-1}(SV_j) \to \mathbb{I}_{n-1}(SU_j)$ between the corresponding
groups of integral currents is continuous, the second conclusion follows also.
\end{proof}

\begin{lemma}\label{lem:pullback chi} If $\eta $ is a smooth measure on $\Rn$, $R\subset \Rn$ is a  compact smooth
polyhedron, and $B\in \Ksm$ then
\begin{equation}\label{eq:pushforward}
\int_{\Rn} \chi(R \cap (x-B))\, d\eta(x) = \eta(R)+ \int_{N(R)\times [0,1]} H_B^*d\eta
\end{equation}
where $H_B:S\Rn \times \R \to \Rn$ is given by 
$$
H_B(x,v;s):= x + s \nabla h_B(v).
$$
\end{lemma}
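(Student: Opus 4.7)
The plan is to reformulate both sides as integrals against $d\eta$ on $\Rn$ and then verify a pointwise degree identity for $\eta$-almost every $x$.

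First I would apply the area formula for currents. The product $N(R) \times [0,1]$ is an $n$-dimensional rectifiable current in $S\Rn \times \R$, and its push-forward $(H_B)_*(N(R) \times [0,1])$ is an $n$-current in $\Rn$ that may be written as $\delta_B(x)\,\mathcal{L}^n$ for an integer-valued multiplicity function $\delta_B$; concretely, $\delta_B(x)$ is the algebraic count of preimages of $x$ under $H_B|_{N(R) \times [0,1]}$. By definition of push-forward,
\begin{equation*}
\int_{N(R) \times [0,1]} H_B^* d\eta \;=\; \int_{\Rn} \delta_B(x)\, d\eta(x).
\end{equation*}
Thus the identity will reduce to the pointwise equality
\begin{equation*}
\chi(R \cap (x-B)) \;=\; \mathbb{1}_R(x) + \delta_B(x) \qquad \text{for $\eta$-a.e.\ } x \in \Rn.
\end{equation*}

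To prove this pointwise identity I would fix a generic $x$: by Sard's theorem applied to $H_B$ on each stratum of $N(R) \times (0,1)$, for $\mathcal{L}^n$-a.e.\ $x$ the preimage $H_B^{-1}(x) \cap (N(R) \times (0,1))$ is a finite set of interior points, and at each such $(y,v,s)$ the hypersurface $\partial(x - sB)$ meets $\partial R$ transversely with a single simple tangency at $y$ along the common unit normal $v$. Consider then the one-parameter family of compact smooth polyhedra $R_s := R \cap (x - sB)$ for $s \in [0,1]$. At $s=0$ we have $R_0 = R \cap \{x\}$, whence $\chi(R_0) = \mathbb{1}_R(x)$; at $s=1$ we recover $\chi(R \cap (x-B))$. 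The family is topologically locally trivial away from critical values $s_1 < \cdots < s_k$, which correspond exactly to the preimages of $x$ under $H_B$, since the tangency condition reads $y_i + s_i \nabla h_B(v_i) = x$. A standard Morse-theoretic analysis at each critical value shows that the jump $\chi(R_{s_i^+}) - \chi(R_{s_i^-})$ equals the local signed multiplicity of $H_B$ at $(y_i,v_i,s_i)$, i.e.\ the sign of $\det DH_B$ with respect to the product orientation on $N(R) \times [0,1]$. Summing these jumps reconstructs $\delta_B(x)$, and adding $\eta(R) = \int \mathbb{1}_R\, d\eta$ to both sides of the integrated pointwise identity yields the formula.

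The main obstacle is the Morse-theoretic jump computation for smooth polyhedra rather than smooth manifolds with boundary, since $R$ is stratified by faces of various dimensions and tangencies could in principle occur along lower-dimensional strata. I would handle this by a transversality argument showing that for generic $x$ every critical value of $s$ corresponds to a simple tangency at an interior point of a top-dimensional face of $\partial R$, which reduces the local computation to the smooth-boundary case; corners and lower-dimensional faces contribute to the critical set only for $x$ in a set of $\mathcal{L}^n$-measure zero, hence can be ignored when integrating against $d\eta$.
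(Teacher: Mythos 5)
Your approach follows the paper's high-level strategy — apply the area formula to convert the current integral into a signed count of preimages, then match that count to $\chi(R\cap(x-B))$ by Morse theory — and the first half (the area-formula step and the reduction to the pointwise identity $\chi(R\cap(x-B))=\mathbb 1_R(x)+\delta_B(x)$) is correct and is essentially what the paper does.

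The gap is in the final paragraph. You claim that for a generic $x$ every critical tangency occurs at an interior point of a top-dimensional face of $\partial R$, so that the jump computation reduces to the smooth-boundary case, with corners and lower-dimensional faces contributing only for a null set of $x$'s. This is false, and the error is not just technical. The normal cycle $N(R)$ of a polyhedron is $(n-1)$-dimensional over \emph{every} face, since the normal cone over a $k$-face has dimension $n-k$; so $H_B^{-1}(x)\cap(N(R)\times(0,1))$ contains, for an open set of $x$'s, points $(p,v,s)$ with $p$ a vertex or edge point. Concretely, for $R$ a cube and $B$ a ball, the ball $x-sB$ first touches $R$ at a vertex for all $x$ in an open cone, so the first critical value of the family $R_s$ is a vertex tangency, not a smooth one, on a set of $x$'s of positive measure. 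These contributions cannot be discarded, and the ``standard Morse-theoretic analysis'' for a smooth hypersurface does not directly compute the jump $\chi(R_{s_i^+})-\chi(R_{s_i^-})$ at such a critical value.

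The paper sidesteps this by invoking Fu's Morse theory for sets with positive reach (\cite{fu89}), which is formulated directly on the normal cycle: a critical point is a pair $(x,v)\in N(R)$ (over any stratum), the index $\iota_y(x,v)$ is defined there, and the identity $\chi(R\cap(y-B))=\sum (-1)^{\iota_y(x,v)}+\mathbb 1_R(y)$ holds for nondegenerate $b_y|_R$. The signed-preimage count then matches because nondegeneracy of the critical point corresponds to nondegeneracy of $D(H_B|_{N(R)\times\R})$, and the sign of the Jacobian determinant equals $(-1)^{\iota_y(x,v)}$. To repair your argument you would have to either prove the jump formula at lower-dimensional strata directly (essentially redoing the positive-reach Morse lemma at a singular critical point) or cite \cite{fu89} as the paper does.
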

\begin{proof}  Since $N(R)$ is contained in a finite union of smooth $(n-1)$-dimensional  submanifolds of $S\Rn$, Sard's theorem implies that almost every point of $\Rn$ is a regular value of $\restrict{H_B}{N(R)\times \R}$. The area formula implies that the integral on the right-hand side of \eqref{eq:pushforward}  may be expressed as
$$
\int_{N(R)\times [0,1]} H_B^*d\eta = \int_{\Rn} \sum_{H_B(x,v;s) = y} \sgn \det D(\restrict{H_B}{N(R)\times \R}(x,v;s)) \, d \eta(y).
$$

Now let us assume, as we may, that $B$ contains the origin in its interior. Thus for each  $y\in \Rn $ we may define the function
$$
b_y(x):= \min\{t: x \in y-tB\},
$$
which is a smooth function away from $y$. Recalling that the smooth polyhedron $R$ has positive reach, 
let us assume for the moment that the critical points of $\restrict {b_y} R$ are all nondegenerate, and put $\iota_y(x,v)$ for the Morse index of this function at a critical point $(x,v) \in N(R)$, in the sense of \cite{fu89}. It follows that under these assumptions
$$
\chi(R\cap (y-B))= \sum_{(y,v) } (-1)^{\iota_{(x,v)}}{ +\mathbf{1}_R(y)}.
$$
where the sum ranges over all critical points $(y,v) \in N(R)$ of $b_x$ with $b_x(y) \le 1$.

We observe that  $(x,v) \in N(R), x\ne y,$ is a critical point of $\restrict{b_y}R$ in the sense of \cite{fu89} iff $y = x + s\nabla h_B(v), \ s = b_y(x)$. Furthermore, $(x,v)$ is a nondegenerate critical point of $b_y$ iff the Jacobian of $\restrict{H_B}{N(R)\times \R}$ at $(x,v;s)$  is itself nondegenerate. In particular, the nondegeneracy assumption of the last paragraph holds for a.e. $y \in \Rn$.  Finally, the sign of the Jacobian determinant of $\restrict{H_B}{N(R)\times \R}$ at $(x,v;s)$ is precisely
$(-1)^{\iota_y(x,v)}$. The lemma follows.
\end{proof}

Now we can complete the proof of the theorem. We may assume that $\phi\in \V^\infty(M)$ corresponds to $\tilde
\phi$ as in \eqref{eq_element_sv}  with $k>0$ (the case $k=0$ can be easily treated separately). Consider for $P
\in \mathcal P(U)$ the expression
\begin{align}\label{eq:def theta}
\theta(P)&:= \int_{\R^n} \psi\left(P \cap  \left(x-\sum_{i=1}^k \lambda_i A_i\right)\right)d{ \mu}(x) \\
\notag&= \int_{\R^n}\int_G \chi\left(P \cap  \left(x-\sum_{i=1}^k \lambda_i A_i\right)\cap gQ\right) \rho(g)\,dg\, d{
\mu}(x)\\
\notag&=\int_{\R^n}\int_G \left(\int_{N(P \cap  (x-\sum_{i=1}^k \lambda_i A_i)\cap gQ)} \kappa_0\right) \rho(g) \,
dg\, d{ \mu}(x).
\end{align}
Here $\kappa_0\in \Omega^{n-1}(S\Rn)$ is the pullback to $S\Rn \simeq \Rn \times S^{n-1}$ of the invariant volume
form on the fiber $S^{n-1}$ with integral $1$, so that integration of $\kappa_0$ over the normal cycle yields the Euler
characteristic.

We wish to show that this is a well-defined finite quantity, and that we may interchange the outer two integrals. By
Tonelli's theorem, it is enough to show that the innermost integral defines a measurable function of $(x,g)$ and that
the iterated integral of this function is absolutely convergent. 
That it is measurable follows from Lemma \ref{lem:slice} (1).
That it is absolutely integrable follows from  Lemma \ref{lem:slice} (2) and Lemma \ref{lem:small intersections}.

Thus $\theta(P)$ is indeed well defined, and
\begin{align*}
\theta(P)&=\int_G \int_{\R^n} \chi\left(P\cap gQ \cap  \left(x-\sum_{i=1}^k \lambda_i A_i\right)\right) \, d{ \mu}(x)
\rho(g)\, dg\\
&= \int_G  \left({ \mu(P\cap gQ)+}\int_{N(P\cap gQ)\times [0,1]} H_{\mathbf\lambda}^*d{ \mu} \right)\rho(g)\, dg
\end{align*}
by Lemma \ref{lem:pullback chi}, where
 $$H_{\mathbf\lambda}(x,v;s):= x + s\sum \lambda_i \nabla h_{A_i}(v).
 $$
 Using  Lemma \ref{lem:slice} (2), dominated
convergence implies that
\begin{equation*}
\delta(P):=\manyderivs\theta(P)
= \int_G  \left(\int_{N(P\cap gQ)\times [0,1]}\left( \manyderivs H_{\mathbf\lambda}^*d{ \mu}\right) \right)\rho(g)\, dg.
\end{equation*}

In particular, by Lemma \ref{lem:mathcal J}, the functional $\delta$ is
a compactly supported smooth valuation on  $U$.  Using the expression \eqref{eq:def theta}, Lemma \ref{lem:sv
product} implies that the image  $\tilde \delta \in SV_U$ agrees with $\tilde \phi \cdot \tilde \psi$.  On the other
hand, 
\[
 \phi =\int_{N(\cdot)\times[0,1]}\left( \manyderivs H_{\mathbf\lambda}^*d\mu\right).
\]
Indeed, both sides are smooth valuations and their images in $SV$ coincide, by Lemma \ref{lem:pullback chi} and dominated convergence. Since the map $\V^\infty(U) \to SV_{F(U)}$ of \eqref{eq:V to SV} is an isomorphism of algebras, we
deduce that $\delta=\phi\cdot\psi$. This completes the proof of Theorem \ref{thm:main appendix}.
\end{appendix}



\begin{thebibliography}{99}
\bibitem{ags}   Abardia, J., Gallego, E.,  Solanes, G.:  Gauss-Bonnet theorem and Crofton type formulas in complex space
forms.  {\it Israel J. Math.} \textbf{187} (2012), 287--315.

\bibitem{abbena et al} Abbena, E., Gray, A., Vanhecke, L.: Steiner's formula for the volume of a parallel hypersurface
in a Riemannian manifold. {\it Ann. Sc. Norm. Super. Pisa }Cl. Sci. (4) \textbf 8 (1981), 573--493.

\bibitem{ale01} Alesker, S.: Description of translation invariant valuations on convex sets with solution of P.
McMullen's conjecture.
 {\it Geom. Funct. Anal.} \textbf{11} (2001), 244--272.

\bibitem{ale03} Alesker, S.: Hard Lefschetz theorem for valuations, complex integral geometry, and unitarily invariant
valuations. 
 {\it J. Differential Geom.} \textbf{63} (2003), 63--95.

\bibitem{ale04} Alesker, S.: The multiplicative structure on
  polynomial valuations. {\it Geom. Funct. Anal.} \textbf{14} (2004),
  1--26.

\bibitem{ale05a} Alesker, S.: Theory of valuations on manifolds
  I. Linear spaces. {\it Israel J. Math.} \textbf{156} (2006), 311--339.

\bibitem{ale05b} Alesker, S.: Theory of valuations on manifolds
  II. {\it Adv. Math.} \textbf{207} (2006), 420--454. 

\bibitem{ale05d} Alesker, S.: Theory of valuations on manifolds
  IV. New properties of the multiplicative structure. In {\it Geometric aspects of functional analysis}, LNM
\textbf{1910} (2007), 1--44.

\bibitem{ale06} Alesker, S.: Valuations on manifolds: a survey. {\it Geom. Funct. Anal.} \textbf{17} (2007), 1321--1341.

\bibitem{ale10} Alesker, S.: Valuations on manifolds and integral geometry. {\it Geom. Funct. Anal.} \textbf{20} (2010),
1073--1143.

\bibitem{ale-be09} Alesker, S. and Bernig, A.: The product on smooth and generalized valuations. {\it
American J. Math.} \textbf{134} (2012), 507--560. 

\bibitem{alefu05} Alesker, S., Fu, J. H. G.: Theory of valuations on manifolds
  III.  Multiplicative structure in the general case. {\it Trans. Amer. Math. Soc.}  \textbf{360} (2008), 1951--1981.

\bibitem{be09}
Bernig, A.: A Hadwiger type theorem for the special unitary group. {\it Geom. Funct. Anal.} \textbf{19}
(2009), 356--372.

\bibitem{be11}
Bernig, A.: Integral geometry under $G_2$ and $Spin(7)$. {\it Israel J. Math.} \textbf{184} (2011), 301--316.

\bibitem{be12}
Bernig, A.: Invariant valuations on quaternionic vector spaces.  {\it J. Inst. Math. Jussieu} \textbf{11} (2012), no. 3, 467-–499.

\bibitem{bebr03} Bernig, A., Br\"ocker, L.: Courbures intrins\`eques dans les cat\'egories analytico-g\'eom\'etriques.
{\it Ann. Inst. Fourier (Grenoble)}, \textbf{53} (2003), 1897--1924. 

\bibitem{bebr07} Bernig, A., Br\"ocker, L.: Valuations on manifolds
  and Rumin cohomology. {\it J. Differential Geom.} \textbf{75} (2007), 433--457.

\bibitem{befu06} Bernig, A., Fu. J. H. G.: Convolution of convex valuations. {\it Geom. Dedicata} \textbf{123} (2006),
153--169.

\bibitem{befu11} Bernig, A., Fu. J. H. G.: Hermitian integral geometry. {\it Ann. of Math.} \textbf{173} (2011),
907--945.

\bibitem{blaschke} Blaschke, W.: {\it Integralgeometrie.}  Hermann, Paris, 1935. 

\bibitem{borel49}
Borel, A.: Some remarks about {L}ie groups transitive on spheres and tori. {\it Bull. Amer. Math. Soc.},
\textbf{55} (1949), 580--587.

\bibitem{chapoton} Chapoton, F.: Sur le nombre d'intervalles dans les treillis de Tamari. (French) [On the number of
intervals in Tamari lattices] S\'em. Lothar. Combin. \textbf{55} (2005/07), Art. B55f, 18 pp. (electronic).

\bibitem{cms} Cheeger, J., M\"uller, W., Schrader, R.: On the curvature of piecewise flat spaces. {\it Commun. Math.
Phys.} \textbf{ 92} (1984), 405-454.

\bibitem{docarm} do Carmo, M.: {\it Riemannian Geometry}. Birkh\"auser Boston Inc. 1992. 

\bibitem{cm}  Federer, H.: Curvature measures. {\it Trans. Amer. Math. Soc.} \textbf{93} (1959), 418--491.

\bibitem{gmt} Federer, H.: {\it Geometric Measure Theory}. Springer, New York 1969.

\bibitem{fu89} Fu, J. H. G.: Curvature measures and generalized Morse theory. {\it J. Differential Geom.} \textbf{30}
(1989), 619--642.

\bibitem{fu90} Fu, J. H. G.: Kinematic formulas in integral geometry. {\it Indiana Univ. Math. J.} \textbf{39} (1990),
1115--1154.

\bibitem{fu94} Fu, J. H. G.: Curvature measures and Chern classes of singular varieties. {\it J. Differential Geom.}
\textbf{39} (1994), 251--280. 

\bibitem{fu94b} Fu, J. H. G.: Curvature measures of subanalytic sets. {\it Amer. J. Math.}
\textbf{116} (1994), 819--880. 

\bibitem {fu98} Fu, J. H. G.: Some remarks on Legendrian rectifiable currents. {\it
Manuscripta Mathematica}
\textbf{97} (1998), 175-187.

\bibitem{fu06} Fu, J. H. G.: Structure of the unitary valuation algebra. {\it J. Differential Geom.}
\textbf{72} (2006), 509--533. 

\bibitem{gessel} Gessel, I.: Personal email communication, January 2008.

\bibitem{gray85} Gray, A.: Volumes of tubes about complex submanifolds of complex
              projective space. {\it Trans. Am. Math. Soc.} \textbf{291} (1985), 437--449.

\bibitem{gray} Gray, A.: {\it Tubes.} 2nd ed. Birkhh\"auser, Boston 2004.

\bibitem{grayvan} Gray, A. and Vanhecke, L.: The volumes of tubes in a Riemannian manifold. {\it Rend. Sem. Mat. Univers.
Politecn. Torino} \textbf{39} (1981), 1--50.

\bibitem{gysin} Gysin, W.: Zur Homologietheorie der Abbildungen und Faserungen von Mannigfaltigkeiten. {\it Commentarii Mathematici Helvetici} \textbf{14} (1942), 61–-122.

\bibitem{helgason} Helgason, S.: {\it Differential Geometry, Lie Groups, and Symmetric Spaces.} Academic Press, New York
1978.

\bibitem{howard} Howard, R.: The kinematic formula in Riemannian homogeneous spaces. {\it Mem. Amer. Math. Soc.}
\textbf{509} (1993).

\bibitem{huybrechts05} Huybrechts, D.: {\it Complex geometry}. Universitext. Springer-Verlag, Berlin, 2005.

\bibitem{ko-no} Kobayashi, S. Nomizu, K.: {\it Foundations of Differential Geometry,} vols. I and II. Wiley Classics Library,
New York, 1996.

\bibitem{klro97} Klain, D., Rota, G.-C.: {\it Introduction to Geometric
  Probability}. Lezione Lincee, Cambridge University Press 1997. 
  
\bibitem{mont-sam} Montgomery, D.; Samelson, H.:
Transformation groups of spheres.
{\it Ann. of Math.} \textbf{ 44} (1943), 454--470. 
  
\bibitem{pa02} Park, H.: Kinematic formulas for the real subspaces of complex space forms of dimension $2$ and $3$.
PhD-thesis University of Georgia 2002. 

\bibitem{rumin94} Rumin, M.: Differential forms on contact manifolds (Formes diff\'erentielles sur les vari\'et\'es de
contact). {\it J. Differential Geom.} \textbf{39}(1994), 281--330.

\bibitem{santalo} Santal\'o, L.A.: {\it Integral Geometry and Geometric Probability}, 2nd ed. Cambridge U. Press,
Cambridge 2004.

\bibitem{shifrin81} Shifrin, T.: The kinematic formula in complex integral geometry. Trans. Amer. Math. Soc.
\textbf{264} (1981), 255--293.

\bibitem{shifrin84} Shifrin, T.: Curvature integrals and Chern classes of singular varieties. Integral geometry
(Brunswick, Maine, 1984), 279--298, {\it Contemp. Math.},  \textbf{63}, Amer. Math. Soc., Providence, RI, 1987.

\bibitem{spivak} Spivak, M.: {\it  A Comprehensive Introduction to Differential Geometry},
Volume 4,   3rd ed. Publish or Perish, Boston, 1979.

\bibitem{tasaki00} Tasaki, H.: Generalization of K\"ahler angle and
  integral geometry in complex projective spaces. in: Steps in differential geometry. Proceedings of the colloquium on
differential geometry Debrecen, 349-361 (2001). (www.emis.de\slash proceedings\slash CDGD2000\slash pdf\slash
K\underline{ }Tasaki.pdf )

\bibitem{tasaki03} Tasaki, H.: Generalization of K\"ahler angle and integral geometry in complex projective spaces II.
{\it Math. Nachr.} \textbf{252} (2003), 106--112. 

\bibitem{tits} Tits, J.: Espaces homog\`enes et isotropes, et espaces doublement homog\`enes. 
(French) {\it C. R. Acad. Sci. Paris} \textbf{239} (1954), 526--527. 

\bibitem{wang} Wang, H.-C.:
Two-point homogeneous spaces.
{\it Ann. of Math.} \textbf{ 55} (1952). 177--191.

\bibitem{weyl}  Weyl, H.: On the volume of tubes. {\it Amer. J. Math.} \textbf{ 61} (1939),  461-472.

\bibitem{gfology} 
Wilf, H.: {\it generatingfunctionology}. Academic Press, New York 1990. Also available online at 
http://www.math.upenn.edu/$\sim$wilf/DownldGF.html

\end{thebibliography}
\end{document}